\documentclass[11pt, reqno]{amsart}
\usepackage[dvipsnames]{xcolor}
\usepackage{lscape}
\definecolor{gB}{HTML}{2b83ba}
\definecolor{gR}{HTML}{d7191c}
\definecolor{gP}{HTML}{902fa3}
\definecolor{gG}{HTML}{00A64F}

\definecolor{OIred}{rgb}{0.90,0.20,0.20}
\definecolor{OIorange}{rgb}{0.90,0.60,0.0}
\definecolor{OIblue}{rgb}{0.0, 0.45, 0.70}
\definecolor{OIgreen}{rgb}{0.0,0.60,0.50}

\makeatletter
\setlength{\@fptop}{0pt plus 1fil}
\setlength{\@fpbot}{0pt plus 1fil}
\makeatother

\usepackage[margin=0cm]{caption}

\usepackage{hyperref}
\hypersetup{
	colorlinks=true,
	citecolor=OIblue,
	linkcolor=OIblue,
	filecolor=magenta,      
	urlcolor=OIblue,
}
\usepackage[capitalise,noabbrev, nameinlink]{cleveref}

\usepackage[a4paper,hmarginratio=1:1, bottom=1.0in]{geometry}
\setlength{\marginparwidth}{2.6cm}

\usepackage{amsmath, amsthm, amssymb, graphicx, dsfont, stmaryrd, extarrows}
\usepackage[T1]{fontenc}
\usepackage{textcomp}
\usepackage{lmodern}
\usepackage{microtype}

\usepackage[color,matrix,arrow,all,cmtip]{xy}
\providecommand{\1}{}
\renewcommand{\1}{\mathds{1}}
\newcommand{\unit}{\mathds{1}}

\usepackage{amscd}
\usepackage{mathrsfs}
\usepackage{tikz-cd}
\usetikzlibrary{matrix,patterns}
\usetikzlibrary{decorations.pathmorphing}
\usetikzlibrary{shapes.geometric}
\usepackage[colorinlistoftodos]{todonotes}
\usepackage{color}
\usepackage{bbm}
\usepackage{verbatim}
\usepackage[mathscr]{euscript}
\usepackage[shortlabels]{enumitem}

\numberwithin{equation}{section}
\theoremstyle{plain}
\newtheorem{thm}[equation]{Theorem}

\newtheorem*{thm*}{Theorem}
\newtheorem*{prop*}{Proposition}
\newtheorem*{mthm*}{Meta Theorem}
\newtheorem*{cor*}{Corollary}

\newtheorem{prop}[equation]{Proposition}

\newtheorem{cor}[equation]{Corollary}       
\newtheorem{lem}[equation]{Lemma}

\theoremstyle{definition} 
\newtheorem{defn}[equation]{Definition} 
\crefname{defn}{Definition}{Definitions}
\newtheorem{hyp}[equation]{Hypothesis}
\newtheorem{ex}[equation]{Example}

\newtheorem{rem}[equation]{Remark} 
\crefname{rem}{Remark}{Remarks}  

\newtheorem{nota}[equation]{Notation}
\crefname{nota}{Notation}{Notations}
\newtheorem{con}[equation]{Construction}

\newcommand{\A}{\mathscr{A}}
\newcommand{\C}{\mathscr{C}}
\newcommand{\D}{\mathscr{D}}

\newcommand{\g}{\mathfrak{g}}
\newcommand{\p}{\mathfrak{p}}
\newcommand{\q}{\mathfrak{q}}

\newcommand{\m}{\mathfrak{m}}

\newcommand{\T}{\mathsf{T}}
\newcommand{\Sp}{\mathrm{Sp}}
\newcommand{\cubeL}{\mrm{Cb}}
\newcommand{\cubeR}{\mrm{Cb}^\msf{m}}
\newcommand{\cubeLR}{\mrm{CbCb}^\msf{m}}
\newcommand{\cubeLRplus}{\mrm{CbCb}^\msf{m}_+}
\newcommand{\Rwithout}[1]{R_{\backslash{#1}}}
\newcommand{\equals}[1]{\mrm{Pc}({#1})}
\newcommand{\equalsmix}[1]{\mrm{Pc}^\msf{m}({#1})}
\newcommand{\glue}[1]{\mrm{PcPc}^\msf{m}({#1})}
\newcommand{\glueplus}[1]{\mrm{PcPc}^\msf{m}_+({#1})}
\newcommand{\biggerthan}[2]{\C^{\geqslant {#1}}({#2})}
\newcommand{\biggerthanf}[2]{\C_\mrm{c}^{\geqslant {#1}}({#2})}

\newcommand{\s}[1]{(#1)}

\renewcommand{\mod}[2][]{\mathrm{Mod}_{#1}(#2)}

\newcommand{\mc}[1]{\mathcal{#1}}

\renewcommand{\phi}{\varphi}

\newcommand{\Q}         {{\mathbb{Q}}}
\newcommand{\Z}         {{\mathbb{Z}}}

\newcommand{\Hom}       {\operatorname{Hom}}
\newcommand{\uHom}{\underline{\Hom}}
\newcommand{\Map}       {\operatorname{Map}}

\newcommand{\supp}{\operatorname{supp}}

\newcommand{\cF}{\mathcal{F}}

\newcommand{\cK}{\mathcal{K}}

\newcommand{\tensor}{\otimes}

\renewcommand{\dim}{\mathrm{dim}}

\newcommand{\tors}{\mathrm{tors}}
\newcommand{\ad}{\mathrm{ad}}
                                
\newcommand{\X}{\mathscr{A}}
                                
\newcommand{\Spc}{\mathrm{Spc}}                                
\newcommand{\downcl}{\land}
\newcommand{\upcl}{\lor}

\newcommand{\Lambdale}[1]{\Lambda_{\leqslant #1}} 
\newcommand{\Gammale}[1]{\Gamma_{\leqslant #1}} 
\newcommand{\Lge}[1]{L_{\geqslant #1}} 
\newcommand{\Elr}[1]{E\langle  #1\rangle}

\newcommand{\fp}{\mathfrak{p}}

\newcommand{\mrm}[1]{\mathrm{#1}}
\newcommand{\msf}[1]{\mathsf{#1}}

\newcommand{\Jordan}[1]{\textcolor{red}{#1}}

\newcommand{\cad}{\C_{\mrm{ad}}}

\newcommand{\ct}{\C^\X_{\mrm{t}}}
\newcommand{\cf}[1]{\C_{\mrm{c}}^{\geqslant {#1}}}

\newcommand{\ext}[3]{\mathsf{opr}_{#1}^{#2}(#3)}
\newcommand{\res}[2]{\mathsf{laxr}^{#1}(#2)}

\newcommand{\Cat}{\mathsf{Cat}}    

\DeclareRobustCommand{\SkipTocEntry}[5]{}
\setlength{\parindent}{0cm}
\setlength{\parskip}{0.8ex}
\title{Torsion models for tensor-triangulated categories}
\begin{document}
\author{Scott Balchin}
\address[Balchin]{Queen's University, Belfast, BT7 1NN, UK}
\email{s.balchin@qub.ac.uk}
\author{J.P.C. Greenlees}
\address[Greenlees]{Warwick Mathematics Institute, Zeeman Building, Coventry, CV4 7AL, UK}
\email{john.greenlees@warwick.ac.uk}
\author{Luca Pol}
\address[Pol]{Fakult\"{a}t f\"{u}r Mathematik, Universit\"{a}t Regensburg, Universit\"{a}tsstra{\ss}e 31, 93053 Regensburg, Germany}
\email{luca.pol@ur.de}
\author{Jordan Williamson}
\address[Williamson]{Department of Algebra, Faculty of Mathematics and Physics, Charles University in Prague, Sokolovsk\'{a} 83, 186 75 Praha, Czech Republic}
\email{williamson@karlin.mff.cuni.cz}
\maketitle

\begin{abstract}
Given a rigidly-compactly generated tensor-triangulated category
whose Balmer spectrum is finite dimensional and Noetherian, we
construct a \emph{torsion model} for it, which
is equivalent to the original tensor-triangulated category. The torsion model is
determined in an adelic fashion by objects with singleton
supports. This categorifies the Cousin complex from algebra, and the
process of reconstructing a spectrum from its monochromatic layers in chromatic stable homotopy theory. This model is inspired by work of the second author in rational equivariant stable homotopy theory \cite{Greenlees23}, and extends previous work~\cite{torsion1} of the authors from the one-dimensional setting. 
\end{abstract}
\setcounter{tocdepth}{1}

\makeatletter
\def\@tocline#1#2#3#4#5#6#7{\relax
  \ifnum #1>\c@tocdepth 
  \else
    \par \addpenalty\@secpenalty\addvspace{#2}%
    \begingroup \hyphenpenalty\@M
    \@ifempty{#4}{%
      \@tempdima\csname r@tocindent\number#1\endcsname\relax
    }{%
      \@tempdima#4\relax
    }%
    \parindent\z@ \leftskip#3\relax \advance\leftskip\@tempdima\relax
    \rightskip\@pnumwidth plus4em \parfillskip-\@pnumwidth
    #5\leavevmode\hskip-\@tempdima
      \ifcase #1
       \or\or \hskip 1em \or \hskip 2em \else \hskip 3em \fi%
      #6\nobreak\relax
    \hfill\hbox to\@pnumwidth{\@tocpagenum{#7}}\par
    \nobreak
    \endgroup
  \fi}
\makeatother

\tableofcontents

\section{Introduction}
Tensor-triangulated categories appear naturally in many contexts: 
in algebra through the derived category of a commutative ring, in 
representation theory via the stable module category, and in topology, 
via the category of spectra or its chromatic, equivariant, and motivic 
variants. In each case there is a corresponding support theory:
local cohomology in algebra and algebraic geometry, support
varieties in modular representation theory~\cite{BCR,Carlson},
geometric isotropy in equivariant topology and chromatic
support~\cite{HS} in chromatic stable homotopy
theory. Objects with small support can be viewed as building
blocks. Accordingly, one may analyse complicated objects by breaking
them into pieces with small support, and recording the data  necessary
for reassembly. In this paper, we implement this procedure, showing
that a nice enough
tensor-triangulated category has a
model built from  objects with singleton supports and assembly data.

We consider a rigidly-compactly generated tensor-triangulated category $\T$, and write $\T^\omega$ for the full subcategory of compact objects. 
Commutative rings and their modules can be studied by geometric
methods using  the Zariski spectrum. More generally, from the tensor-triangulated
category $\T$, one may form the  Balmer spectrum 
$\Spc(\T^\omega)$~\cite{Balmer05},  whose points are the thick
$\otimes$-ideals which are prime. This is designed to provide a
universal support theory
for compact objects in $\T$,  and in fact it  classifies the thick
$\otimes$-ideals of compact objects. 

The geometry is simplest when the Balmer spectrum is Noetherian and
finite dimensional. Just as in algebra, one may associate to any
Balmer prime $\p$  a localization functor $L_\p$ which picks out the
part of objects supported above $\p$, a derived torsion functor
$\Gamma_\p$ which picks out the part supported below $\p$, and a
derived completion functor $\Lambda_\p$. We note that objects
$\Gamma_\p L_\p X$ with support precisely at $\p$ play a distinguished
role both conceptually, and in applications. For instance, understanding the localizing subcategories generated by such objects controls the behaviour of arbitrary localizing subcategories as in the theory of stratifications~\cite{BHS, BIK}.

In~\cite{adelicm}, the first two authors constructed an \emph{adelic model} for a
tensor-triangulated category $\T$ which makes precise how to
reconstruct $\T$ from localized complete pieces. However, from the point of view of support, the objects in the adelic model are large, as they have support spread across many primes, so one may seek an alternative model in which the atomic pieces are supported at single primes only. 

In order to construct decompositions of objects $X$ of $\T$ into pieces with small support, one filters the Balmer spectrum by dimension
\[\varnothing = \Spc(\T^\omega)_{\leqslant -1} \subseteq \Spc(\T^\omega)_{\leqslant 0} \subseteq \Spc(\T^\omega)_{\leqslant 1} \subseteq \cdots \subseteq \Spc(\T^\omega).\] This filtration yields localization functors $\Lge{i}$ supported at dimension $i$ and above, and torsion functors $\Gammale{i}$ supported at dimension $i$ and below, for each $i$. Applying these at the level of objects produces a tower
\[X = \Lge{0}X \to \Lge{1}X \to \Lge{2}X \to \cdots \]
and each fibre $\mrm{fib}(\Lge{i}X \to \Lge{i+1}X) \simeq \Gammale{i}\Lge{i}X$ splits into its contributions from each prime of dimension $i$, namely \[\Gammale{i}\Lge{i}X \simeq \bigoplus_{\mrm{dim}(\p) = i}\Gamma_\p L_\p X.\] In algebra, these small pieces are the local cohomology modules $H_\p^*(X_\p)$ with the above filtration giving the Cousin complex, and in chromatic homotopy theory, these building blocks are the monochromatic layers.

The goal of this paper is to promote the previous discussion to an
equivalence of categories, and to do so in a way that is as algebraic
as possible in well-behaved examples.
For now, we state an informal version of our main theorem. We will motivate this and provide a more formal version in the remainder of the introduction.
\begin{thm*}[Informal version]
A rigidly-compactly generated tensor-triangulated category with finite dimensional Noetherian
Balmer spectrum 
has a {\em torsion model} $\T_\mrm{t}$, in the sense that there is an
equivalence of categories $\T \simeq \T_\mrm{t}$
where the objects of $\T_\mrm{t}$ are diagrams whose vertices are
determined by objects of the form $\Gamma_\p L_\p X$ together with
gluing data of an adelic nature. 
\end{thm*}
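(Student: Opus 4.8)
The plan is to build $\T_\mrm{t}$ as an iterated gluing of module categories attached to the strata of the Balmer spectrum, to compare it with $\T$ via an explicit functor assembled from the dimensionwise (co)localizations, and to prove the comparison is an equivalence by induction on $d = \dim \Spc(\T^\omega)$. First I would fix the indexing data from the dimension filtration $\Spc(\T^\omega)_{\leqslant i}$. For each $i$ let $\T(i)$ be the ``pure stratum'' category at dimension $i$, assembled from modules over the torsion-local units $\Gamma_\p L_\p \unit$ for $\dim(\p) = i$; by the Noetherian hypothesis this is a rigidly-compactly generated tensor-triangulated category supported at dimension $i$, and (up to a suitable cellularization) $\T(i) \simeq \prod_{\dim(\p) = i}\mathrm{Mod}(\Gamma_\p L_\p \unit)$. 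The torsion model is then the lax limit $\T_\mrm{t} := \laxlim$ of a diagram $P \to \Cat$ whose vertices are the $\T(i)$ for $0 \leqslant i \leqslant d$ and whose transition functors $\T(i) \to \T(i-1)$ are of ``complete-then-localize'' type, recording how a dimension-$i$ stratum interacts with the one below; the shape $P$ is a poset built from $\{0,\dots,d\}$ — linear in the simplest formulation, or the subset cube in the refined adelic formulation — so that objects of $\T_\mrm{t}$ are precisely diagrams of vertices together with gluing data of the advertised adelic form. This generalises the $d \leqslant 1$ model of \cite{torsion1} and is modelled on the adelic category of \cite{adelicm} and the rational equivariant picture of \cite{Greenlees23}.

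Next I would construct the comparison functor $\T \to \T_\mrm{t}$, sending $X$ to the diagram whose value at level $i$ is $\Gammale{i}\Lge{i}X \simeq \bigoplus_{\dim(\p) = i}\Gamma_\p L_\p X \in \T(i)$, with coherence data between adjacent levels supplied by the canonical maps in the localization sequences $\Gammale{i-1} \to \mathrm{id} \to \Lge{i}$ together with their completions. Checking that this assembles into a lax cone — hence lands in the lax limit — is a diagram chase from the standard compatibilities between torsion, localization and completion functors, and monoidality is inherited from that of each $\Gammale{i}$ and $\Lge{i}$. To see the comparison is an equivalence I would induct on $d$: the case $d = -1$ is empty and $d = 0$ reduces to $\T \simeq \prod_\p \T_\p$ with each local factor already torsion-complete, so the model is tautological. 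For the inductive step, split off the top stratum using the smashing localization $\Lge{d}$ (the finite localization away from the Thomason subset $\Spc(\T^\omega)_{\leqslant d-1}$): this exhibits $\T$ as a recollement of $\Lge{d}\T$ and of the complementary torsion category $\Gammale{d-1}\T$, the latter supported on $\Spc(\T^\omega)_{\leqslant d-1}$ and so covered by the inductive hypothesis, while $\Lge{d}\T$ breaks up as a product over the dimension-$d$ primes and is identified with $\T(d)$. On the other side the lax limit defining $\T_\mrm{t}$ decomposes compatibly as a gluing of the lax limit over $\{0,\dots,d-1\}$ with the top vertex $\T(d)$ along the transition functor, and one checks the comparison respects both decompositions.

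\textbf{The main obstacle} is the gluing step: one must prove that $X$ is recovered from its top-stratum localization $\Lge{d}X$, its completion along the dimension-$(\leqslant d-1)$ part, and the comparison between them — a ``fracture'' or ``Tate'' square — and that this square is exactly what the adelic transition functor encodes. The delicate points are that the completion functors $\Lambda_\p$ need not be smashing, so one works throughout with cellularizations and must control how cellularizations at primes of different dimensions interact; that the tensor structure must be transported coherently through an iterated lax limit; and that the adelic gluing data must be shown to be neither too little (for full faithfulness) nor too much (for essential surjectivity) — which is precisely where finite-dimensionality (so the induction terminates) and the Noetherian hypothesis (so supports and the relevant smashing and completion functors behave) are used essentially. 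A secondary bookkeeping task is to set up $\T(i)$ and the transition functors so that everything stays within $\PrL$ and the lax limit computes as expected; the one-dimensional case and the adelic model supply the template for this.
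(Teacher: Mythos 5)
Your overall strategy — stratify by dimension, build a model whose vertices are modules supported at single strata, glue via localization/completion, and prove the comparison is an equivalence — matches the spirit of the paper, but the route you propose is genuinely different from what the authors do, and the difference is not cosmetic: the step you flag as the ``main obstacle'' is exactly where the paper introduces an intermediate structure (the \emph{adelic model}) that your sketch does not reproduce, and without which your inductive argument is incomplete.

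The paper does \emph{not} prove the theorem by induction on $d$ via recollements. Instead it proceeds in two stages. First (\cref{thm-adelicm}) it establishes an adelic model $\C_\ad^\X$: a full subcategory of diagrams over a punctured $(d{+}1)$-cube of ``adelic'' commutative algebras $\1_\ad^\X(A)$, built as iterated $\prod L \Lambda$'s. The key input is that $\mathrm{lim}(\1_\ad^\X) \simeq \1$ (\cref{limitsofcubes}), proved by a cubical reduction principle (taking iterated fibres toward the initial vertex) rather than by a recollement. Second, the torsion model is obtained from $\C_\ad^\X$ by a sequence of cofibre functors (\cref{fromdto0}, \cref{cor:functors}), and the content of \cref{torsionmodel} is identifying the image of $\C_\ad^\X$ under these cofibres as the full subcategory $\C_\mrm{t}$ with mono-dimensional support conditions and extension-of-scalars conditions. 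This structure — modules over the adelic rings $\1_\ad(A)$ for \emph{all} subsets $A$, not just the linear chain $\{0,\dots,d\}$ — is where the ``gluing data of an adelic nature'' lives, and it is richer than the linear lax limit you default to; you hedge toward ``the subset cube in the refined adelic formulation'' but do not actually set it up, and the linear version alone will not recover the adelic gluing that the theorem promises.

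The genuine gaps in your proposal are the following. (1) You identify the fracture square as the main obstacle but leave it unresolved; in the paper this is precisely the adelic approximation \cref{limitsofcubes}, which is non-trivial and requires the splitting lemmas of \S 2.B together with the cubical reduction argument. A recollement only gives the crude gluing along $\Lge{d} \dashv \iota$; the completion data enters only once you pass to adelic rings, and one must then prove the limit over the whole cube recovers $\1$. (2) Your inductive step applies the inductive hypothesis to $\Gammale{d-1}\T$, treating it as a tensor-triangulated category of the same type. While $\Gammale{d-1}\T$ (equivalently $\T/\Lge{d}\T$) is rigidly-compactly generated with the expected Balmer spectrum of dimension $d{-}1$, it is not automatic that its internal torsion/localization/completion functors at lower primes agree with the restriction of the corresponding functors on $\T$, nor that its torsion model (built internally) identifies with the ``lower part'' of $\T_\mrm{t}$. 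This compatibility is exactly the sort of thing that would need a proof, and you assert it (``one checks the comparison respects both decompositions'') without giving one. (3) Your $\T(i) \simeq \prod_\p \mathrm{Mod}(\Gamma_\p L_\p \unit)$ is not quite the paper's pure stratum: the paper works with modules over the adelic ring $\1_\ad^\X(i) = \prod_\p L_\p \Lambda_\p \1$ satisfying a torsion condition; these categories are equivalent by the MGM equivalence but the adelic presentation is essential for the gluing and for the algebraicity goals. In short, your plan has the right shape but the load-bearing lemma (the adelic approximation), the shape of the gluing diagram (the cube, not the chain), and the legitimacy of the recollement-based induction all need substantive work that your sketch does not supply; the paper sidesteps the induction entirely by proving the adelic model first and then taking iterated cofibres.
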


We now turn to motivating the construction of the torsion model. For this, we will need to assume that $\T$ admits an $\infty$-categorical enhancement $\C$.
This permits us to make two
essential constructions: 
firstly, we can take limits of diagrams of categories, and
secondly, we can consider categories of modules over highly
structured rings. The irreducible 1-dimensional case is described in
\cite{torsion1}. In that case there is a single generic point $\g$ and
potentially infinitely many closed points $\m$. The adelic model is based on
the diagram
$$\xymatrix{
   &L_\g\1\ar[d] \\
  \prod_\m \Lambda_\m \unit \ar[r] &  L_\g\prod_\m \Lambda_\m \unit
}$$
whose pullback is the unit object $\1$,
and the torsion model is based on the diagram
$$\xymatrix{
L_\g\1\ar[d] &\\
L_\g\prod_\m \Lambda_\m \unit \ar[r] &\Sigma\bigoplus_\m\Gamma_{\m}\1
}$$
which is obtained by taking the horizontal cofibre in the previous diagram.
Here we reconstruct $\1$ from the torsion data $\Gamma_\m\1$ at the closed
points and the generic point $L_\g\unit$. We note in passing 
that we view the objects as modules over completed rings, which makes the process more algebraic. 

The focus of the present paper is on the higher dimensional case where
 the combinatorics is more complicated and we need more sophisticated
 machinery to permit us to implement our reconstruction scheme. 
We explain the construction in the case when $\Spc(\T^\omega)$ is two-dimensional: this is sufficiently large to demonstrate some of the complexity in defining a torsion model in the general case, whilst being small enough to draw the required diagrams in a legible fashion. We refer the reader to \cref{sec:thetorsionmodel} for the precise constructions in the general case.

Let us consider the unit $\1$ of $\C$, and explain the necessary
gluing data required to reconstruct $\1$ from the pieces $\Gamma_\p
L_\p \1$. The  adelic model provides the first step towards this. For
simplicity,
we suppose that the Balmer spectrum has a unique generic point $\g$ (i.e., it is irreducible), and write $\p$ for arbitrary primes of dimension $1$, and $\m$ for primes of dimension $0$. The adelic cube $\1_\ad$ in this setting takes the form 
\[
\xymatrix@R=1em@C=0em{
& \prod_\p L_\p \Lambda_\p \1 \ar[dd] \ar[rr] & & L_\g \prod_\p L_\p \Lambda_\p \1 \ar[dd] & \\ & & \textcolor{OIred}{L_\g \1} \ar[ur] \ar[dd] \\
& \prod_\p L_\p \prod_\m \Lambda_\m \1 \ar[rr]|-\hole & & L_\g \prod_\p L_\p \prod_\m \Lambda_\m \1 & \\
\prod_\m \Lambda_\m \1 \ar[rr] \ar[ur] & & L_\g \prod_\m \Lambda_\m \1  \ar[ur] 
}
\]
and the limit of this cube is $\1$, so that the data of any object $X \in \C$ may be recovered from local and complete data. One notes that the adelic cube already contains the information from the top dimensional building block $L_\g\1$ which we indicate in red. In order to replace some data by information concentrated only at single one-dimensional primes, mirroring the objectwise decomposition above, we take cofibres towards the right to obtain the diagram
\[
\xymatrix@R=1em@C=0em{
& L_\g \prod_\p L_\p \Lambda_\p \1 \ar[dd] \ar@[OIblue][rr] && \textcolor{OIred}{\Sigma \bigoplus_\p \Gamma_\p L_\p\1} \ar[dd] \\
 \textcolor{OIred}{L_\g \1} \ar[ur] \ar[dd] & & \\
 & L_\g \prod_\p L_\p \prod_\m \Lambda_\m \1 \ar@[OIblue][rr] && \Sigma \bigoplus_\p \Gamma_\p L_\p \1 \otimes \prod_\m \Lambda_\m \1 \\
 L_\g \prod_\m \Lambda_\m \1  \ar[ur] \ar@[OIblue][rr] && \Sigma \Gammale{1} \prod_\m \Lambda_\m \1 \ar[ur]
}
\]
in which the blue maps are the new maps obtained by taking cofibres. We note that to obtain the above forms for the cofibres, we appeal to some general splitting results which we prove in \cref{sec:splitting}.

Now we have the building blocks at dimensions 1 and 2, it remains to replace some data by the 0-dimensional building blocks. In order to do this, we take the cofibre of the map $\Sigma\Gammale{1}\prod_\m \Lambda_\m\1 \to \Sigma\bigoplus_\p \Gamma_\p L_\p \1 \otimes \prod\Lambda_\m\1$ to obtain the diagram
\[
\xymatrix@R=1em@C=0em{
& L_\g \prod_\p L_\p \Lambda_\p \1 \ar[dd] \ar@[OIblue][rr] &&  \textcolor{OIred}{\Sigma \bigoplus_\p \Gamma_\p L_\p\1} \ar[dd] \\
\textcolor{OIred}{L_\g \1} \ar[ur] \ar[dd] & & & & \\
 & L_\g \prod_\p L_\p \prod_\m \Lambda_\m \1 \ar@[OIblue][rr] && \Sigma \bigoplus_\p \Gamma_\p L_\p \1 \otimes \prod_\m \Lambda_\m \1 \ar@[OIblue][rr]  & & \textcolor{OIred}{\Sigma^2 \bigoplus_\m \Gamma_\m \1} \\
 L_\g \prod_\m \Lambda_\m \1  \ar[ur] \ar@[OIblue][rr] && \Sigma \Gammale{1} \prod_\m \Lambda_\m \1 \ar[ur] & &
}
\]
in which we have highlighted the building blocks with singleton
supports. Applying this recipe to any object $X$ of $\C$, we obtain a similar diagram $X_\tors$. Reversing this procedure, that is, taking fibres to the left and then taking the limit of the resulting diagram, one may recover any object in $\C$ from a diagram of the above form.

By extracting the salient features of the above reconstruction process, one is led to consider the category $\C_\mrm{t}$ whose objects $M$ are the diagrams
\[
\xymatrix@=1em{
& M(21^2) \ar@{-->}[dd] \ar[rr] && M(1^1) \ar@{-->}[dd] \\
 M(2^2) \ar@{-->}[ur] \ar@{-->}[dd] & & & & \\
 & M(210^2) \ar[rr] && M(10^1) \ar[rr] && M(0^0) \\
 M(20^2)  \ar@{-->}[ur] \ar[rr] && M(0^1) \ar@{-->}[ur]
}
\]
where:
\begin{enumerate}
\item $M(A^i)$ is a module over the adelic ring $\1_\ad(A)$;
\item $M(i^i)$ has support consisting only of primes of dimension $i$, so \[M(i^i) \simeq \bigoplus_{\dim(\p)=i} M(\p)\] where for all $\p$, $M(\p)$ is either 0 or has support precisely $\p$;
\item a dashed map $M(A^i) \dashrightarrow M(B^i)$ indicates an equivalence after extending scalars, that is, a $\1_\ad(B)$-module map $\1_\ad(B) \otimes_{\1_\ad(A)} M(A^i) \to M(B^i)$ which is moreover an equivalence;
\item a solid map $M((A \cup i)^i) \to M(A^{i-1})$ indicates a map after restricting scalars, that is, a $\1_\ad(A)$-module map $\mrm{res}^{A \cup i}_A M((A \cup i)^i) \to M(A^{i-1})$;
\item $M(0^1) \to M(10^1) \to M(0^0)$ is a cofibre sequence.
\end{enumerate}
We refer the reader to \cref{sec:thetorsionmodel} for the precise detailed construction of the category $\C_\mrm{t}$.

One sees that for any $X \in \C$, the diagram $X_\tors$ is an object of $\C_\mrm{t}$ by construction. Our main theorem says the converse is in fact true, so that diagrams of the above form correspond precisely to objects of $\C$.
\begin{thm*}[{\ref{torsionmodel}}]
Let $\C$ be a rigidly-compactly generated, symmetric monoidal, stable $\infty$-category whose Balmer spectrum is finite dimensional and Noetherian. The functor \[(-)_\tors\colon \C \xrightarrow{\sim} \C_\mrm{t}\] is an equivalence of $\infty$-categories.
\end{thm*}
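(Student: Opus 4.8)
The plan is to build on the adelic model of \cite{adelicm}, which provides an equivalence $\C \simeq \C_\ad$, where $\C_\ad$ is the (possibly lax) limit of the diagram of module categories $\mathrm{Mod}_{\1_\ad(A)}(\C)$ indexed by the adelic cube, and where the comparison functor sends $X$ to its adelic diagram $X_\ad$. Since $\C_\mrm{t}$ is, by construction, obtained from the adelic cube by a finite sequence of reshaping operations — each of which replaces the data of a structure morphism by the associated cofibre sequence — the strategy is to factor $(-)_\tors$ as $\C \xrightarrow{\sim} \C_\ad \to \C_\mrm{t}$ and prove that the second functor, which performs this reshaping at the level of diagram categories, is an equivalence.

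First I would, for arbitrary finite dimension $d$, fix explicit index posets presenting both $\C_\ad$ and $\C_\mrm{t}$ as limits of diagrams of module categories: for the adelic model the indexing comes from subsets of $\{0,1,\dots,d\}$ (equivalently, chains of specialisations), while for the torsion model one adjoins the diagonal vertices $M(i^i)$ obtained as iterated cofibres along the localisation directions. This reduces the comparison to a purely combinatorial reshaping of diagrams valued in stable presentable $\infty$-categories. The conceptual input is the standard fact that, in a stable $\infty$-category, the datum of a morphism $f\colon X \to Y$ is equivalent — coherently, as an equivalence between the arrow category and the category of cofibre sequences — to the datum of the cofibre map $Y \to \mathrm{cofib}(f)$ together with the sequence interpolating it; iterating this turns a tower of localisations into its top term together with its graded pieces and attaching data.

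Second I would establish the reshaping lemma: modifying the index category of a diagram of stable presentable $\infty$-categories by adjoining such cofibre vertices induces an equivalence on limits. It is here that the splitting results of \cref{sec:splitting} are used, to guarantee that the vertices produced by the reshaping satisfy the defining conditions of $\C_\mrm{t}$: that a cofibre taken along a localisation direction has support concentrated at a single dimension $i$ and splits as $\bigoplus_{\dim(\p)=i} M(\p)$, and that $M(0^1) \to M(10^1) \to M(0^0)$ is a cofibre sequence. Throughout, the module structures over the adelic rings $\1_\ad(A)$ must be carried along: each cofibre inherits a module structure because the relevant localisation and completion functors are compatible with the tensor product in the appropriate range, so the dashed (extension-of-scalars) and solid (restriction-of-scalars) decorations are preserved. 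Finally, unwinding the construction — first on $\1$ and then on a general object via the module structure $X_\ad \simeq \1_\ad \otimes X$ — identifies the composite $\C \xrightarrow{\sim} \C_\ad \to \C_\mrm{t}$ with $(-)_\tors$.

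I expect the main obstacle to be the coherence bookkeeping in the reshaping step: the iterated cofibres must be taken $\infty$-categorically and compatibly, so that they organise into an honest equivalence of $\infty$-categories rather than merely an equivalence of homotopy categories or a levelwise statement. The way to keep this under control is to realise the entire reshaping as a single limit comparison along an explicitly described functor between index categories, rather than as an ad hoc sequence of pushouts, and then to verify the requisite cofinality statements; this combinatorial verification is the technical heart of the argument.
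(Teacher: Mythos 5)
Your overall architecture — factor $(-)_{\tors}$ through the adelic equivalence, then identify the passage $\C_\ad \to \C_\mrm{t}$ as an iterated cofibre construction, and control the $\infty$-categorical coherence by realising the reshaping as a single limit comparison — matches the paper's strategy at the ambient level (Sections 5–8 of the paper do exactly this coherence work, proving $\equals{\1_\ad^\X} \simeq \biggerthanf{0}{\1_\ad^\X}$). Your identification of the composite with $(-)_{\tors}$ via $X_\ad \simeq \1_\ad \otimes X$ also matches. So the shape is right.

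However, there is a genuine gap, and it is \emph{not} the coherence bookkeeping you flag as the main obstacle. The issue is that neither $\C_\ad$ nor $\C_\mrm{t}$ is simply a limit of module categories over the index poset: each is a \emph{full subcategory} of the corresponding lax limit, cut out by conditions. Your ``reshaping lemma'' as stated — adjoining cofibre vertices induces an equivalence on limits — would give the equivalence of the ambient diagram categories, but that alone does not imply $\C_\ad \simeq \C_\mrm{t}$. One must further show that the defining conditions correspond under the reshaping, and this has two directions. Your proposal handles only the forward one: you correctly note that the splitting results of \cref{sec:splitting} show the cofibres produced from an adelic object land in $\C_\mrm{t}$ (mono-dimensional support, cofibre sequences along the bottom row, extension-of-scalars equivalences for the dashed arrows). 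What is missing is the converse: given $M \in \C_\mrm{t}$, taking iterated fibres must land back in $\C_\ad$, i.e.\ the adjoint structure maps $\mrm{ext}_A^{A\cup i}\msf{R}(M)(A) \to \msf{R}(M)(A \cup i)$ must be equivalences even for those vertices $\msf{R}(M)(A)$ with $d \notin A$ that are only defined as fibres and are not directly visible in $M$. This is where the real content is, and it does not follow from any purely combinatorial statement about limits: it requires a support-theoretic vanishing lemma — that an object with mono-dimensional support $i < d$ becomes zero after extension of scalars along $\1_\ad(i) \to \1_\ad(A \cup d)$ — fed into a downward induction on filtration degree. Without such a lemma, one cannot conclude that $\mrm{ext}_A^{A\cup d} M(A^{d-1}) \simeq 0$, which is the crux of why the fibre sequence defining $\msf{R}(M)(A)$ collapses to the extension-of-scalars equivalence needed for membership in $\C_\ad$. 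In short: your proposal supplies the equivalence of ambient categories and the forward restriction, but is silent on the backward restriction, which is where the torsion conditions actually earn their keep and where the Noetherian/finite-dimensional hypotheses are genuinely used.
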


We give some examples in \cref{sec:examples} which demonstrate that the above theorem may be viewed as a categorification of Cousin complex from algebra, and of monochromatic layers in chromatic stable homotopy theory.

In fact, we also prove variants of this theorem by introducing the
concept of \emph{assembly data}, which provides a compatible partition
of the primes of each dimension. This gives less extreme localization,
completion, and torsion functors by collecting together primes in the
same set of the partition, and then builds models based on these functors. 
There are two main motivating examples here. Firstly, one may consider
the coarsest partition, with all primes of the same dimension
considered together. This gives a very crude approach to modelling objects from $\C$, which splits up
objects according to the dimension filtration, so that the terms in
the diagram concern $\Lge{i}$, $\Gammale{i}$ and
$\Lambdale{i}$. Secondly, in the key example of rational torus
equivariant spectra, we can partition subgroups according to
their identity component. This adapts the approach taken in Greenlees--Shipley~\cite{GreenleesShipley18} to the general tensor-triangular setting, and provides a bridge between the results of~\cite{GreenleesShipley18} and the form of the adelic model in~\cite{adelicm}, see \cref{sec:assembly} and \cref{subsec:torusspec} for more details and discussion. The advantage of working over adelic rings in the torsion model is highlighted by the example of rational torus equivariant spectra: the adelic rings are well-understood, so that one may use the torsion model as a basis for building an \emph{algebraic} model. 

\addtocontents{toc}{\SkipTocEntry}
\subsection*{Other categorical decompositions}
The idea of splitting a category into smaller pieces is well established in mathematics, and in recent years, there have been several offerings in this direction. This paper generalises the one-dimensional case established by the authors in~\cite{torsion1}, although the strategy here also gives a new perspective and approach to the one-dimensional version. Most closely related to the torsion model, are the adelic models in the sense of~\cite{adelicm} and their one-dimensional variants in~\cite{adelic1}. However, there are other abstract reconstruction theorems appearing in~\cite{AMGR, prismatic}.

\addtocontents{toc}{\SkipTocEntry}
\subsection*{Structure of the paper}
We begin in \cref{sec:background} by firstly recalling the necessary background from tensor-triangular geometry, before introducing the notion of assembly data. In \cref{sec:cubes} we introduce categories of cubes and punctured cubes, which lays the groundwork for establishing the adelic model in \cref{sec:adelic}. We note that this is not just a recapitulation of~\cite{adelicm}; we transport the argument into the setting of $\infty$-categories and also show that the model applies relative to any assembly data. In \cref{sec:cofibres,catsofcofs,sec:punctured} we construct various cofibre functors between categories of cubes, which provides the framework for passing from the adelic to the torsion model. 
\cref{sec:global} iterates the process of taking cofibres, before we prove the main theorem of the paper in \cref{sec:thetorsionmodel}. Finally, in \cref{sec:examples} we illuminate the theory with some explicit examples in algebra, stable homotopy theory, and rational equivariant stable homotopy theory.

\addtocontents{toc}{\SkipTocEntry}
\subsection*{Conventions}
Throughout the paper we work in the setting of $\infty$-categories. We will frequently refer to $\infty$-categories instead as categories, unless we wish to make emphasis.

\addtocontents{toc}{\SkipTocEntry}
\subsection*{Acknowledgements}
The first named author was supported by the European Research Council (ERC) under Horizon Europe Grant No.~101042990. The second author is grateful for support under EPSRC Grant number EP/P031080/2. The third author is supported by the SFB 1085 Higher Invariants in Regensburg. The fourth named author is supported by the project PRIMUS/23/SCI/006 from Charles University, and by Charles University Research Centre
program No. UNCE/24/SCI/022. 

The authors would like to thank the Hausdorff Research Institute for Mathematics for the hospitality in the context of the Trimester program ``Spectral Methods in Algebra, Geometry, and Topology'', funded by the Deutsche Forschungsgemeinschaft (DFG, German Research Foundation) under Germany’s Excellence Strategy – EXC-2047/1 – 390685813. We are grateful to the Mathematisches Forschungsinstitut Oberwolfach for its support and hospitality during a Research in Pairs stay in 2022, which made for an engaging mathematical environment during the duration of the stay. The authors would like to thank the Isaac Newton Institute for Mathematical
Sciences for the support and hospitality during the programme `Topology, representation theory and higher structures' when work on this paper was undertaken. This was supported by EPSRC Grant Number EP/R014604/1.

\section{Localization functors in tensor-triangular geometry}\label{sec:background}

In this section, we begin by recalling the necessary formalism of tensor-triangular geometry in the sense of \cite{Balmer05}, and then prove various important preliminary results which we will require throughout the course of the paper. 

In what follows we will always assume that $\C$ is a rigidly-compactly generated, stable, symmetric monoidal $\infty$-category such that $\tensor$ commutes with colimits separately in each variable. We write $\1$ for the tensor unit, $\uHom$ for the internal hom functor of $\C$, and $\C^\omega$ for the full subcategory spanned by the compact objects. Under these assumptions, the homotopy category $h\C$ is a rigidly-compactly generated tensor-triangulated category. We will sometimes abuse notation and write $\C$ when we intend to work with the tensor-triangulated homotopy category.

Recall that a full subcategory of $\C$ or $\C^\omega$ is \emph{thick} if it is stable and closed under retracts. A full subcategory of $\C$ is said to be \emph{localizing} if it is a thick subcategory which is moreover closed under filtered colimits in $\C$. A thick subcategory of $\C^\omega$ is a \emph{thick $\otimes$-ideal} if it closed under tensor products with arbitrary objects of $\C^\omega$, and a localizing subcategory of $\C$ is moreover a \emph{localizing $\otimes$-ideal} if it is closed under tensor products with arbitrary objects of $\C$. A thick $\otimes$-ideal $\p$ of $\C^\omega$ is said to be \emph{prime} if whenever $X \otimes Y \in \p$, then either $X$ or $Y$ is in $\p$. 

Associated to any such $\C$ is a topological space $\Spc(\C^\omega)$ called the Balmer spectrum~\cite{Balmer05}. This is a spectral space whose points are the prime thick $\otimes$-ideals of $\C^\omega$. The closed subsets of this space are generated by
\[
\supp(X) := \{\fp \in \Spc( \C^\omega) \mid X \not\in \fp \}
\]
as $X$ runs through all objects of $\C^\omega$. We will see how to extend this support theory to arbitrary objects of $\C$ later on in this section.

We can now introduce the main hypothesis that we will require of $\C$.

\begin{hyp}\label{hyp:hyp}
Henceforth, $\C = (\C , \tensor, \1)$ will be a rigidly-compactly generated, stable, symmetric monoidal $\infty$-category such that $\tensor$ commutes with colimits separately in each variable, and such that $\Spc( \C^\omega)$ is a finite-dimensional Noetherian topological space.
\end{hyp}

The Noetherian hypothesis on the Balmer spectrum affords us some benefits as we now recall, after giving a preliminary definition.
For $\p,\q \in \Spc(\C^\omega)$, we say that $\q$ is a \emph{specialization of $\p$} if $\q \in \overline{\{\p\}}$, where $\overline{\{\p\}}$ denotes the closure of $\{\p\}$ in $\Spc(\C^\omega)$. A subset $S \subseteq \Spc(\C^\omega)$ is \emph{specialization closed} if $\p \in S$ implies that $\overline{\{\p\}} \subseteq S$. We consider the poset structure on the Balmer spectrum given by the specialization ordering, that is $\q \leqslant \p$ if and only if $\q$ is a specialization of $\p$. In particular, closed points are minimal with respect to this choice.

\begin{thm}\label{balmerres}
Let $\C$ be as in \cref{hyp:hyp} and $\mathcal{I}$ be a thick $\otimes$-ideal of  $\C$. Then the assignment 
\[
\mathcal{I} \mapsto \supp(\mathcal{I}):=\bigcup_{X\in \mathcal{I}}\supp(X)
\]
defines a bijection between the set of thick $\otimes$-ideals of $\C^\omega$  and the set of specialization closed subsets of $\Spc( \C^\omega)$.
\end{thm}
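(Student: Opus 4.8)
The plan is to obtain this as a special case of Balmer's classification of thick $\otimes$-ideals from \cite{Balmer05}, checking that \cref{hyp:hyp} reduces the general statement to the form stated above. Working at the level of the tensor-triangulated homotopy category, $\C^\omega$ is an essentially small rigid tensor-triangulated category because $\C$ is rigidly-compactly generated. For any such category, \cite{Balmer05} produces an inclusion-preserving bijection between the radical thick $\otimes$-ideals of $\C^\omega$ and the \emph{Thomason subsets} of $\Spc(\C^\omega)$ --- those subsets which are unions of closed subsets each having quasi-compact complement --- implemented by $\mathcal{I} \mapsto \supp(\mathcal{I})$, with inverse $S \mapsto \{X \in \C^\omega \mid \supp(X) \subseteq S\}$.

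Two observations then identify this with the claimed statement. First, every thick $\otimes$-ideal of $\C^\omega$ is radical: if $X$ is compact, hence dualizable with dual $X^\vee$, then the coevaluation and evaluation maps exhibit $X$ as a retract of $X \otimes X^\vee \otimes X$; so if $X^{\otimes 2}$ lies in a thick $\otimes$-ideal $\mathcal{I}$ then so does $X^\vee \otimes X^{\otimes 2} \simeq X \otimes X^\vee \otimes X$, and therefore so does $X$. Iterating, with the tensor power roughly halving at each stage, shows that $X^{\otimes n} \in \mathcal{I}$ for some $n \geqslant 1$ implies $X \in \mathcal{I}$; hence ``radical thick $\otimes$-ideal'' may be replaced by ``thick $\otimes$-ideal''. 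Second, since $\Spc(\C^\omega)$ is Noetherian, every open subset is quasi-compact, so a subset is Thomason if and only if it is a union of closed subsets, that is, if and only if it is specialization closed. Combining these two observations with Balmer's bijection proves the theorem.

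Were one to avoid citing \cite{Balmer05}, the substance lies entirely in that reference. The crux is the identity $\mathcal{I} = \{X \in \C^\omega \mid \supp(X) \subseteq \supp(\mathcal{I})\}$, which gives injectivity of $\mathcal{I} \mapsto \supp(\mathcal{I})$ and rests on the detection of $\otimes$-nilpotence: $\supp(X) = \varnothing$ forces $X \simeq 0$, and more precisely $X$ lies in the thick $\otimes$-ideal generated by $Y$ whenever $\supp(X) \subseteq \supp(Y)$. Surjectivity is where the Noetherian hypothesis really enters: every closed subset of $\Spc(\C^\omega)$ is the support of a single compact object, so any specialization closed subset, being a union of closed subsets, is a union of supports of compact objects and hence equals $\supp(\mathcal{I})$ for the thick $\otimes$-ideal $\mathcal{I}$ that they generate. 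Since all of this is carried out in \cite{Balmer05}, I would simply invoke it; the only genuine content beyond that citation is the elementary bookkeeping of the two reductions above.
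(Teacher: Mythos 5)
Your proposal is correct and follows exactly the route the paper takes: the paper's proof simply cites \cite[Theorem 4.10]{Balmer05} together with \cite[Remark 4.11]{Balmer05} (radical $=$ all thick $\otimes$-ideals, by rigidity) and \cite[Proposition 2.4]{BalmerFiltrations} (Thomason $=$ specialization closed under the Noetherian hypothesis), which are precisely the two reductions you spell out. The only difference is that you unpack the content of the cited results rather than deferring to them, which is a reasonable thing to do but not a different argument.
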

\begin{proof}
This is a special case of \cite[Theorem 4.10]{Balmer05}, using~\cite[Remark 4.11]{Balmer05} and~\cite[Proposition 2.4]{BalmerFiltrations} to obtain the above form.
\end{proof}

With the above definitions and result, we can now record an important consequence of the Noetherian assumption.

\begin{lem}[{\cite[Corollary 7.14]{BalmerFavi11}}]\label{lem:cons}
Let $\C$ be as in \cref{hyp:hyp}. For every $\fp \in \Spc(\C^\omega)$, there is a compact object $K_\fp \in \C^\omega$ such that $\supp(K_\fp) = \overline{\{\fp\}}$. Such an object is called a \emph{Koszul object for $\fp$}.
\end{lem}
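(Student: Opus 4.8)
The plan is to realise $K_\fp$ as a finite $\otimes$-product of compact objects, with the Noetherian hypothesis doing the work of reducing an a priori infinite intersection of supports to a finite one. Two standard facts about the Balmer topology are the only inputs. First, the sets $U_X := \Spc(\C^\omega) \setminus \supp(X)$, as $X$ ranges over $\C^\omega$, form a basis of opens for $\Spc(\C^\omega)$: this is immediate from the description of the closed sets as those generated by the $\supp(X)$, together with the identities $\supp(X \oplus Y) = \supp(X) \cup \supp(Y)$ which show the $U_X$ are closed under finite intersection. Second, for $X, Y \in \C^\omega$ one has $\supp(X \otimes Y) = \supp(X) \cap \supp(Y)$, equivalently $U_{X \otimes Y} = U_X \cup U_Y$; this is a direct consequence of the definition of a prime $\otimes$-ideal (if $X \otimes Y \notin \fq$ then neither $X$ nor $Y$ lies in the ideal $\fq$, and conversely by primality).

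With these in hand, consider the open set $U := \Spc(\C^\omega) \setminus \overline{\{\fp\}}$. Since $\Spc(\C^\omega)$ is Noetherian by \cref{hyp:hyp}, every open subset is quasi-compact, so $U$ is quasi-compact. Writing $U = \bigcup_{X \in \CS} U_X$ as a union of basic opens and passing to a finite subcover yields compact objects $X_1, \dots, X_n$ with
\[
U = U_{X_1} \cup \dots \cup U_{X_n} = U_{X_1 \otimes \cdots \otimes X_n}.
\]
Taking complements gives $\overline{\{\fp\}} = \supp(X_1 \otimes \cdots \otimes X_n)$, so $K_\fp := X_1 \otimes \cdots \otimes X_n$ is the desired compact object (with the convention that the empty $\otimes$-product is $\1$, which covers the degenerate case $U = \varnothing$, i.e.\ $\overline{\{\fp\}} = \Spc(\C^\omega) = \supp(\1)$).

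There is essentially no obstacle here beyond assembling these ingredients; the single point of content is that Noetherianity upgrades "$\overline{\{\fp\}}$ is an intersection of supports of compacts'' (which \cref{balmerres} already provides, as $\overline{\{\fp\}}$ is specialization closed) to "$\overline{\{\fp\}}$ is a \emph{finite} such intersection''. Indeed, an alternative phrasing routes entirely through \cref{balmerres}: the thick $\otimes$-ideal $\I$ corresponding to the specialization closed set $\overline{\{\fp\}}$ satisfies $\supp(\I) = \overline{\{\fp\}}$, and the same quasi-compactness argument shows $\I$ is generated by a single compact object, whose support is then $\overline{\{\fp\}}$.
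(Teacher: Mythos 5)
Your proof is correct and essentially reconstructs the standard argument behind the cited result (the paper itself only cites \cite[Corollary 7.14]{BalmerFavi11} and gives no proof). Both facts you invoke are standard: the sets $U_X$ with $X$ compact form a basis for the Balmer topology, and $\supp(X\otimes Y)=\supp(X)\cap\supp(Y)$ for compacts $X,Y$ by primality. The reduction via quasi-compactness of open subsets of a Noetherian space to a finite subcover, followed by taking the tensor product, is exactly the point of content, and you handle the degenerate case $\overline{\{\fp\}}=\Spc(\C^\omega)$ cleanly with the convention that the empty $\otimes$-product is $\1$. This is, in effect, a direct verification that in a Noetherian spectral space every closed set is Thomason closed (i.e.\ has quasi-compact complement) and hence is the support of a compact object; Balmer--Favi reach the same conclusion via the Thomason/Balmer classification, so your alternative closing paragraph routing through \cref{balmerres} aligns with their viewpoint. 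No gaps.
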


We note that such Koszul objects for a fixed prime $\p$ are not unique, but by \cref{balmerres} any two generate the same thick $\otimes$-ideal.


\subsection{Torsion, localization, and completion}

We will now move towards introducing torsion, localization, and completion functors on $\C$ with respect to a Balmer prime (or family thereof). Much of the theory that we discuss here works in a far larger generality, but we have chosen to extract just the features that we need using the Noetherian hypothesis. 

Given a specialization closed subset $V \subseteq \Spc( \C^\omega)$, we let $\cK(V) = \{K_\p \mid \p \in V\}.$ We write $\Gamma_V\C$ for the localizing $\otimes$-ideal of $\C$ generated by $\cK(V)$. 
We then define $L_{V^c} \C = (\Gamma_V\C)^\perp$ and $\Lambda_V \C = (L_{V^c}\C)^\perp$, where \[\mc{X}^\perp := \{Y \in \C \mid \Hom(X,Y) \simeq 0 \text{ for all $X \in \mc{X}$}\}.\] There are corresponding inclusion functors
\[
\iota_{\Gamma}\colon \Gamma_V \C \hookrightarrow \C, \quad
\iota_{L} \colon L_{V^c}\C \hookrightarrow \C, \quad
\iota_{\Lambda} \colon \Lambda_V \C \hookrightarrow \C.
\]
The functors $\iota_\Gamma$ and $\iota_L$ have right adjoints denoted by $\Gamma_V$ and $\Delta_{V^c}$ respectively, and the functors $\iota_L$ and $\iota_\Lambda$ have left adjoints denoted $L_{V^c}$ and $\Lambda_V$ respectively.  We will often write $\Gamma_V$ for the composite $\iota_\Gamma\Gamma_V$ and similarly for the other functors. There are natural cofibre sequences
	 \[\Gamma_V X \to X \to L_{V^c}X \qquad \text{and} \qquad
    \Delta_{V^c}X \to X \to \Lambda_V X\]
for all $X \in \C$. We refer to $\Gamma_V$, $L_{V^c}$, and $\Lambda_V$ as torsion, localization, and completion respectively.

We now record some key properties of these functors.
\begin{prop}\label{prop:localduality}
Let $\C$ be as in \cref{hyp:hyp} and $V$ be a specialization closed subset of $\Spc(\C^\omega)$.
\begin{enumerate}[label=(\arabic*)]
\item \label{item:smashing} The localization $L_{V^c}$ is smashing, and the colocalization $\Gamma_V$ is smashing.
\item \label{item:mgm} There are natural equivalences of endofunctors on $\C$
\[\Lambda_V \Gamma_V \xrightarrow{\sim} \Lambda_V \quad \mrm{and} \quad \Gamma_V \xrightarrow{\sim} \Gamma_V \Lambda_V\]
which we shall call the \emph{MGM equivalence}. As such $\Gamma_V\C$ and $\Lambda_V\C$ are equivalent $\infty$-categories.
\item When viewed as endofunctors on $\C$ via the inclusions, the functors $(\Gamma_V, \Lambda_V)$ form an adjoint pair in that there is a natural equivalence
    \[\uHom(\Gamma_V X, Y) \simeq \uHom(X, \Lambda_V Y)\]
    for all $X,Y \in \C$. In particular we have $\uHom(\Gamma_V \unit, Y) \simeq \Lambda_V Y$.
    \item \label{item:hptybicart} For every $X \in \C$  there is a pullback square
    \[
    \xymatrix{X \ar[r] \ar[d] & L_{V^c}X \ar[d] \\ \Lambda_V X \ar[r] & L_{V^c} \Lambda_V X \rlap{.}}
    \]
    whose vertical and horizontal fibres are $\Delta_{V^c}X$ and $\Gamma_V X$ respectively.
\end{enumerate}
\end{prop}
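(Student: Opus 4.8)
The plan is to deduce all four statements from the two cofibre sequences $\Gamma_V X \to X \to L_{V^c}X$ and $\Delta_{V^c}X \to X \to \Lambda_V X$, the adjunctions defining the functors, and the fact that $\Gamma_V\C$ is generated by the \emph{compact-in-$\C$} Koszul objects $K_\p$ (using \cref{lem:cons} and the Noetherian hypothesis). I would organize the proof around first establishing \ref{item:smashing}, since smashing-ness of $\Gamma_V$ is the engine that drives the rest.

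\emph{Step 1 (smashing).} For \ref{item:smashing}, the key point is that $\Gamma_V\C$ is a localizing $\otimes$-ideal generated by the objects $\cK(V) = \{K_\p \mid \p \in V\}$, each of which is compact in $\C$. I would first note that $\Gamma_V\unit$, being built from compact generators, is such that $\Gamma_V\unit \otimes (-)$ preserves colimits; then I would show that the natural map $\Gamma_V\unit \otimes X \to X$ exhibits $\Gamma_V\unit \otimes X$ as the colocalization $\Gamma_V X$. For this, one checks (a) $\Gamma_V\unit \otimes X \in \Gamma_V\C$ — this holds because $\Gamma_V\C$ is a localizing $\otimes$-ideal containing $\Gamma_V\unit$; and (b) the cofibre $X/(\Gamma_V\unit\otimes X) \cong (L_{V^c}\unit)\otimes X$ lies in $L_{V^c}\C = (\Gamma_V\C)^\perp$ — this uses that $\Hom(K_\p, (L_{V^c}\unit)\otimes X) \cong \Hom(K_\p \otimes DX', L_{V^c}\unit)$-type rigidity manipulations reduce it to $\Hom$ out of a compact object in $\Gamma_V\C$ into $L_{V^c}\unit$, which vanishes. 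Hence $\Gamma_V \simeq \Gamma_V\unit \otimes (-)$ is smashing, and dually $L_{V^c}\simeq L_{V^c}\unit\otimes(-)$ is smashing.

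\emph{Step 2 (MGM and the adjunction).} Given Step 1, statement (3) is formal: $\uHom(\Gamma_V X, Y) \simeq \uHom(\Gamma_V\unit \otimes X, Y)\simeq \uHom(X, \uHom(\Gamma_V\unit, Y))$, and one identifies $\uHom(\Gamma_V\unit, Y)$ with $\Lambda_V Y$ by checking it is $L_{V^c}$-local — equivalently $\Gamma_V\C$-colocal-orthogonal — and that the unit $Y \to \uHom(\Gamma_V\unit, Y)$ becomes an equivalence after applying $\Gamma_V$, using rigidity of the generators $K_\p$ again; this is precisely the universal property defining $\Lambda_V$ as the reflection onto $\Lambda_V\C = (L_{V^c}\C)^\perp$. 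Taking $X = \unit$ gives $\uHom(\Gamma_V\unit, Y)\simeq \Lambda_V Y$. For \ref{item:mgm}, I would apply the adjunction: $\Gamma_V\Lambda_V Y \simeq \Gamma_V\unit\otimes\uHom(\Gamma_V\unit, Y)$, and the counit $\Gamma_V\Lambda_V Y \to \Gamma_V Y$ is an equivalence because smashing $\Gamma_V$ kills the $\Gamma_V$-acyclic fibre $\Delta_{V^c}Y$ of $Y \to \Lambda_V Y$; dually, applying $\Lambda_V$ to $\Gamma_V Y \to Y \to L_{V^c}Y$ and using $\Lambda_V L_{V^c}\unit \simeq 0$ (again since $L_{V^c}\unit$ is $\Gamma_V$-acyclic) yields $\Lambda_V\Gamma_V Y\xrightarrow{\sim}\Lambda_V Y$. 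The induced equivalence of categories $\Gamma_V\C \simeq \Lambda_V\C$ then follows from these unit/counit identities.

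\emph{Step 3 (the pullback square).} For \ref{item:hptybicart}, I would construct the square by mapping the cofibre sequence $\Gamma_V X \to X \to L_{V^c}X$ to the cofibre sequence $\Gamma_V\Lambda_V X \to \Lambda_V X \to L_{V^c}\Lambda_V X$ via the completion map $X \to \Lambda_V X$. By \ref{item:mgm} the left-hand vertical map $\Gamma_V X \to \Gamma_V\Lambda_V X$ is an equivalence, so the square of horizontal cofibres is a pullback (a square of horizontal cofibre sequences in a stable category is cartesian iff the map on fibres is an equivalence). The horizontal fibre is then $\Gamma_V X$ by construction, and the vertical fibre is the fibre of $X \to \Lambda_V X$, which is $\Delta_{V^c}X$ by definition. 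The main obstacle throughout is Step 1: carefully justifying that $\Gamma_V$ is smashing — i.e., that tensoring with $\Gamma_V\unit$ computes the colocalization — which is where rigidity of $\C$, compactness of the Koszul objects, and the Noetherian hypothesis (ensuring such compact $K_\p$ exist with the right support) all genuinely enter; once that is in hand, statements (2)–(4) are essentially formal consequences of the adjunctions and the cofibre sequences.
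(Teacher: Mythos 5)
The paper does not actually prove this proposition: it simply cites \cite[Theorem 3.3.5]{HoveyPalmieriStrickland97}, \cite[Theorem 2.21]{BarthelHeardValenzuela18}, and \cite{Greenlees01b}. So your proposal is genuinely more work than the text provides, and the overall strategy you sketch — establish smashing first from the compactness of the Koszul generators, then derive the adjunction formally, then the MGM equivalence, then the fracture square — is exactly the shape of the argument in those references. Steps 2 and 3 are correct as written: the identification of $\uHom(\Gamma_V\unit, Y)$ with $\Lambda_V Y$, the two halves of the MGM equivalence via killing the torsion-free/complete-free pieces, and the pullback square via the cube-reduction observation that a map of cofibre sequences with equivalent fibres gives a cartesian square are all sound and essentially standard once smashing-ness is in hand.

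There is, however, a real wobble in Step 1 at the point where you write the identity $\Hom(K_\p, (L_{V^c}\unit)\otimes X) \cong \Hom(K_\p \otimes DX', L_{V^c}\unit)$. This does not type-check for general $X$: you cannot dualize $X$, and the object $X'$ is undefined. The correct rigidity move is to dualize the \emph{compact} object $K_\p$, giving $\Hom(K_\p, L_{V^c}\unit\otimes X)\simeq\Hom(\unit, DK_\p\otimes L_{V^c}\unit\otimes X)$, and then to prove separately that $DK_\p\otimes L_{V^c}\unit\simeq 0$. This last point is a statement about a compact object, and here is where rigidity is used properly: for any compact $C$ one has $\Hom(C, DK_\p\otimes L_{V^c}\unit)\simeq\Hom(C\otimes K_\p, L_{V^c}\unit)$, and $C\otimes K_\p$ is compact with support contained in $V$, hence lies in $\Gamma_V\C$, so the mapping space vanishes; since $\C$ is compactly generated this forces $DK_\p\otimes L_{V^c}\unit\simeq 0$. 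With that lemma in place your conclusion that $L_{V^c}\unit\otimes X$ is $L_{V^c}$-local, and hence that $\Gamma_V\simeq\Gamma_V\unit\otimes(-)$ is smashing, goes through. So the gap is local and fixable, but as written the displayed formula in Step 1 is not a legitimate manipulation.
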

\begin{proof}
This may be found in~\cite[Theorem 3.3.5]{HoveyPalmieriStrickland97} and~\cite[Theorem 2.21]{BarthelHeardValenzuela18}; also see~\cite{Greenlees01b}. 
\end{proof}

We will be interested in several particular specialization closed subsets of the Balmer spectrum. As such we give these distinguished notation as follows.
\begin{nota}\label{Lpnotation}\leavevmode
\begin{enumerate}[label=(\alph*)]
\item\label{nota1} For each prime $\fp \in \Spc(\C^\omega)$ we may consider the specialization closed subset $ \downcl(\p)=\{\q \mid \q \subseteq \p\} $. We note that $\downcl(\p) = \overline{\{\p\}}$. We write $\Gamma_\p$ and $\Lambda_\p$ for the torsion and completion functors associated to this specialization closed subset.
\item\label{nota2} For each prime $\fp \in \Spc(\C^\omega)$, consider the specialization closed subset $\upcl(\p)^c$ where $\upcl(\p)=\{\q \mid \p \subseteq \q\}$. We write $L_\p$ for the associated localization functor.
\item\label{nota3} When $V = \{\p \mid \mrm{dim}(\p) \leqslant n\}$ we write $\Gammale{n}$, $\Lge{n+1}$ and $\Lambdale{n}$ for the associated torsion, localization, and completion functors.
\end{enumerate}
\end{nota} 

We refer the reader to~\cite[\S 5]{torsion1} for several examples of these functors in concrete terms. Here, we justify the notation in parts \ref{nota1} and \ref{nota2} by noting that when $\C = \msf{D}(R)$, the derived category of a commutative Noetherian ring $R$, the functors $\Gamma_\p$, $L_\p$, and $\Lambda_\p$ are the familiar derived torsion, localization, and derived completion functors. The notation in \ref{nota3} is justified by the behaviour of supports, as we will see below.

We emphasize that our notational choice means
\[\Gamma_\p X \to X \to L_\p X\] is generally \emph{not} a cofibre sequence,
since  the support of $L_\p X$ is the up-closure of $\p$ (for example, the
prime $\p$ will usually lie in the support of both $\Gamma_\p X$ and $L_\p X$).

Using the functors recalled above we can define a well-behaved notion 
of support for objects $X \in \C$. Indeed, picking a specialization closed subset $V$ of the Balmer spectrum, one should see $\Gamma_V$ as detecting those objects supported at $V$, and $L_{V^c}$ as detecting objects supported away from $V$. This allows us to isolate objects supported exactly at a single prime ideal. In particular, we recall the following definition
\[
\supp(X)=\{\p \in \Spc(\C^\omega) \mid L_\p\Gamma_\p X \not \simeq 0\}.
\]
We refer the reader to~\cite[Proposition 4.5]{torsion1} for a list of properties that 
this support function satisfies. Here we recall a few key properties which we will use repeatedly:
\begin{enumerate}[label=(\alph*)]
\item the two notions of support we have defined agree on compact objects;
\item $\supp(\Gammale{n}\1)$ (resp., $\supp(\Lge{n+1}\1)$) consists precisely of 
the set of primes of dimension less than or equal to $n$ (resp., greater than or equal to $n+1$);
\item we have the following \emph{detection property}: an object $X \in \C$ is zero if and only if $\Gamma_\p L_\p X \simeq 0$ for all $\p \in \Spc(\C^\omega)$, if and only if $\mrm{supp}(X) = \varnothing$.
\item we have $\supp(X \otimes Y) \subseteq \supp(X) \cap \supp(Y)$ for all $ X,Y\in\C$.
\end{enumerate}

\begin{defn}
Let $\p \in \Spc(\C^\omega)$. We say that $X \in \C$ is:
\begin{enumerate}[label=(\alph*)]
\item \emph{$\p$-torsion} if the canonical map $\Gamma_\p X \to X$ is an equivalence;
\item \emph{$\p$-local} if the canonical map $X \to L_\p X$ is an equivalence;
\item \emph{$\p$-complete} if the canonical map $X \to \Lambda_\p X$ is an equivalence.
\end{enumerate}
In a similar way, one defines $V$-torsion and $V$-complete objects for arbitrary specialization closed subsets $V$ of $\Spc(\C^\omega)$.
\end{defn}

%

\begin{lem}\label{lem:torsioniscell}
Let $V$ be a specialization closed subset of $\Spc(\C^\omega)$ and $f\colon X \to Y$ be a map in $\C$. Then $\Gamma_V(f)$ is an equivalence if and only if $K_\p \otimes f$ is an equivalence for all $\p\in V$.
\end{lem}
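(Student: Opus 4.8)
The statement is that $\Gamma_V(f)$ is an equivalence precisely when $K_\p \otimes f$ is an equivalence for every $\p \in V$. The plan is to reduce both conditions to statements about the localizing $\otimes$-ideal $\Gamma_V\C$, which by definition is generated by $\cK(V) = \{K_\p \mid \p \in V\}$.

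First I would observe that $\Gamma_V(f)$ being an equivalence is equivalent to $\cofib(f)$ being sent to zero by the colocalization functor $\Gamma_V$, i.e.\ to $\Gamma_V \cofib(f) \simeq 0$. Since $\Gamma_V$ is the colocalization onto $\Gamma_V\C$, this happens if and only if $\cofib(f)$ lies in the right-orthogonal $(\Gamma_V\C)^\perp = L_{V^c}\C$; equivalently, if and only if $\Hom(Z, \cofib(f)) \simeq 0$ for all $Z \in \Gamma_V\C$. Because $\Gamma_V\C$ is a localizing $\otimes$-ideal generated by $\cK(V)$, and $\Hom(-, \cofib(f))$ sends colimits and shifts to limits and shifts, this orthogonality can be tested on the generators together with their tensor translates: $\cofib(f) \in L_{V^c}\C$ if and only if $\Hom(K_\p \otimes W, \cofib(f)) \simeq 0$ for all $\p \in V$ and all $W \in \C$. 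Using rigidity/the tensor-hom adjunction, $\Hom(K_\p \otimes W, \cofib(f)) \simeq \Hom(W, \uHom(K_\p, \cofib(f))) \simeq \Hom(W, K_\p^\vee \otimes \cofib(f))$, so this vanishing for all $W$ is equivalent to $K_\p^\vee \otimes \cofib(f) \simeq 0$, hence (since $K_\p$ is compact with dualizable dual, and $K_\p$ is built from $K_\p^\vee$ up to shifts and retracts, or simply by symmetry of the argument) to $K_\p \otimes \cofib(f) \simeq 0$. Finally $K_\p \otimes \cofib(f) \simeq \cofib(K_\p \otimes f) \simeq 0$ is equivalent to $K_\p \otimes f$ being an equivalence, which completes both directions.

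The cleanest way to organize this is probably: (i) $\Gamma_V(f)$ is an equivalence $\iff \Gamma_V C \simeq 0$ where $C = \cofib(f)$; (ii) $\Gamma_V C \simeq 0 \iff C \in L_{V^c}\C = (\Gamma_V\C)^\perp$; (iii) membership in $(\Gamma_V\C)^\perp$ is detected by the generating set $\cK(V)$ and its $\otimes$-translates, because $\Gamma_V\C$ is the localizing $\otimes$-ideal they generate and $\Hom(-,C)$ is exact and sends filtered colimits to limits; (iv) translate the resulting family of vanishing conditions $\Hom(K_\p \otimes W, C)\simeq 0$ into $K_\p \otimes C \simeq 0$ via duality; (v) identify $K_\p \otimes C \simeq \cofib(K_\p\otimes f)$.

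The main obstacle is step (iii): making precise that orthogonality to a localizing $\otimes$-ideal can be checked on a generating set. One must be careful that $\Gamma_V\C$ is generated as a \emph{localizing $\otimes$-ideal}, so the relevant ``building blocks'' are not just the $K_\p$ but all $K_\p \otimes W$; the class $\{Z : \Hom(Z,C)\simeq 0\}$ is a localizing subcategory (closed under colimits, shifts, retracts because $\Hom(-,C)$ is exact and continuous) that contains all $K_\p \otimes W$, hence contains the whole localizing $\otimes$-ideal they generate. Everything else is a routine manipulation with the tensor-hom adjunction and the fact that $\otimes$ is exact, so the proof should be short once this orthogonality principle is invoked (it may already be standard, e.g.\ from the construction of $\Gamma_V$ as a cellularization/colocalization, in which case one can simply cite it).
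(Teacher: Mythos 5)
Your argument is correct, but it takes a genuinely different route from the paper's. You reduce the claim to vanishing of the cofibre $C=\mathrm{cofib}(f)$, identify $\Gamma_V C\simeq 0$ with $C\in(\Gamma_V\C)^\perp$, test that orthogonality against the generating set $K_\p\otimes W$, and then use rigidity of $K_\p$ and the tensor–hom adjunction to convert $\Hom(K_\p\otimes W,C)\simeq 0$ (for all $W$) into $K_\p\otimes C\simeq 0$. The paper instead exploits the smashing property directly: writing $\Gamma_V(f)=\Gamma_V\1\otimes f$, the implication ``$K_\p\otimes f$ equivalence for all $\p$'' $\Rightarrow$ ``$\Gamma_V(f)$ equivalence'' follows because $\Gamma_V\1\in\mathrm{Loc}^\otimes(K_\p\mid\p\in V)$ and the class of $W$ making $W\otimes f$ an equivalence is a localizing $\otimes$-ideal; the converse follows from the one-line computation $K_\p\otimes f\simeq\Gamma_V(K_\p)\otimes f\simeq K_\p\otimes\Gamma_V(f)$, using that each $K_\p$ with $\p\in V$ is $V$-torsion. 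The paper's route is shorter because smashing lets one move $\Gamma_V$ through the tensor without passing to orthogonal complements; your route is more robust in that it would work verbatim for any colocalization presented as $(-)^\perp$ of an ideal generated by dualizables, at the cost of needing the duality manipulations (including the small point that $K_\p^\vee\otimes C\simeq 0$ iff $K_\p\otimes C\simeq 0$, which you flag and is fine via the retract $K_\p\to K_\p\otimes K_\p^\vee\otimes K_\p\to K_\p$). Either argument is acceptable; if you want to match the paper's economy, use $\Gamma_V(f)\simeq\Gamma_V\1\otimes f$ and the two observations above.
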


\begin{proof}
 Since $\Gamma_V\1 \in \mrm{Loc}^\otimes(K_\p \mid \p \in V)$, the latter condition implies the former. The converse holds as $K_\p \otimes f \simeq \Gamma_V(K_\p) \otimes f \simeq K_\p \otimes \Gamma_V(f)$.
\end{proof}

The following proposition explains how the torsion functor behaves with respect to infinite products. 

\begin{prop}\label{prop:Gammaproducts}
Let $V$ be a specialization closed subset of $\Spc(\C^\omega)$ and let $\Gamma_V$ denote the associated torsion functor. For any set $\{X_i\}$ of objects in $\C$, the natural map \[\Gamma_V \prod_i \Gamma_V X_i \xrightarrow[\quad]{\sim} \Gamma_V \prod_i X_i\] is an equivalence.
\end{prop}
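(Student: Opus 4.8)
The plan is to show the natural map $\Gamma_V \prod_i \Gamma_V X_i \to \Gamma_V \prod_i X_i$ is an equivalence by invoking the cellularity criterion of \cref{lem:torsioniscell}: it suffices to prove that $K_\p \otimes (-)$ carries the map $\prod_i \Gamma_V X_i \to \prod_i X_i$ (induced by the counits $\Gamma_V X_i \to X_i$) to an equivalence for every $\p \in V$. Since each $K_\p$ is compact, tensoring with $K_\p$ commutes with the infinite product, so the map in question is identified with $\prod_i (K_\p \otimes \Gamma_V X_i) \to \prod_i (K_\p \otimes X_i)$. Thus I am reduced to showing that $K_\p \otimes \Gamma_V X_i \to K_\p \otimes X_i$ is an equivalence for each individual $i$ and each $\p \in V$.

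This last claim is where the real content lies, and it follows from the identity $K_\p \otimes \Gamma_V Y \simeq \Gamma_V(K_\p) \otimes Y \simeq K_\p \otimes Y$ already used in the proof of \cref{lem:torsioniscell}: since $\Gamma_V$ is smashing (\cref{prop:localduality}\ref{item:smashing}), $\Gamma_V Y \simeq \Gamma_V \1 \otimes Y$, and $\supp(K_\p) = \overline{\{\p\}} \subseteq V$ forces $\Gamma_V K_\p \simeq K_\p$ (equivalently, $K_\p$ is $V$-torsion because it lies in $\Gamma_V\C$ by construction of $\cK(V)$). Hence $K_\p \otimes \Gamma_V Y \simeq (\Gamma_V \1 \otimes K_\p) \otimes Y \simeq K_\p \otimes Y$ naturally in $Y$, and applying this with $Y = X_i$ gives the needed objectwise equivalence.

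Assembling the pieces: $K_\p \otimes (\prod_i \Gamma_V X_i \to \prod_i X_i)$ is a product of equivalences, hence an equivalence, for every $\p \in V$; by \cref{lem:torsioniscell} applied to the map $\prod_i \Gamma_V X_i \to \prod_i X_i$ this means $\Gamma_V$ of that map is an equivalence, which is precisely the assertion. I expect the main (minor) obstacle to be bookkeeping around the naturality of the smashing equivalence $\Gamma_V(-) \simeq \Gamma_V\1 \otimes (-)$ and making sure the map being analysed is genuinely the one induced by the counits; once that is pinned down the argument is formal. An alternative, essentially equivalent route would be to observe directly that $\prod_i \Gamma_V X_i \to \prod_i X_i$ becomes an equivalence after applying $\Gamma_V\1 \otimes (-)$, using compactness of the generators $K_\p$ of $\Gamma_V\C$ to commute the relevant colimit-defined functor past the product; I would present whichever is shorter, but the cellularity criterion seems cleanest.
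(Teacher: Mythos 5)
Your proposal is correct and follows essentially the same route as the paper: reduce via \cref{lem:torsioniscell} to the statement after tensoring with $K_\p$, use compactness (hence dualizability) of $K_\p$ to commute $K_\p \otimes -$ past the product, and conclude from $K_\p \otimes \Gamma_V X_i \simeq K_\p \otimes X_i$ for $\p \in V$. The only difference is that you spell out the justification of that last equivalence (via the smashing property and $\Gamma_V K_\p \simeq K_\p$), which the paper takes as already established from the proof of \cref{lem:torsioniscell}.
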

\begin{proof}
Note that there is a canonical map $f\colon \prod_i \Gamma_V X_i  \to \prod_i X_i$. By \cref{lem:torsioniscell}, it suffices to show that the map $K_\p \otimes f\colon K_\p \otimes \prod_i \Gamma_V X_i \to K_\p \otimes \prod_i X_i$ is an equivalence for all $\p\in V$. This now follows from the observation that as $K_\p$ is compact and hence dualizable, $K_\p \otimes -$ commutes with products, together with the fact that $K_\p \otimes \Gamma_V X_i\simeq K_\p \otimes X_i$ for all $\p \in V$.
\end{proof}

\subsection{Splitting properties}\label{sec:splitting}
In this section we prove some splitting properties of the torsion and localization functors, which we will use throughout the paper. For a subset $P$ of $\Spc(\C^\omega)$, we write $\mrm{max}(P)$ and $\mrm{min}(P)$ for the set of maximal (resp., minimal) elements of $P$ under the specialization ordering. 

We first prove a splitting property of $\Gamma_V$ and then turn to the splitting of $L_V$ afterwards.
\begin{prop}\label{splitting} Let $X \in \C$ and $V$ be a specialization closed subset of $\Spc(\C^\omega)$.
If $\supp(X) \cap V \subseteq \max(V)$, then the canonical map
\[\bigoplus_{\p \in \mrm{max}(V)}\Gamma_\p X  \xrightarrow[\quad]{\sim} \Gamma_VX   \]
is an equivalence.
\end{prop}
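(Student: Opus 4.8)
The plan is to verify that the canonical map becomes an equivalence after applying $\Gamma_\q L_\q$ for every $\q\in\Spc(\C^\omega)$; since $\Gamma_\q L_\q$ is exact, this suffices by the detection property recalled above (apply it to the cofibre of the map). Write $c\colon\bigoplus_{\p\in\max(V)}\Gamma_\p X\to\Gamma_V X$ for the map in question, assembled from the comparison maps $c_\p\colon\Gamma_\p X\to\Gamma_V X$ of colocalizations attached to the inclusions $\downcl(\p)\subseteq V$; these are characterised by $\epsilon_V\circ c_\p=\epsilon_\p$, where $\epsilon$ denotes the relevant counit $\Gamma_{(-)}X\to X$. Because $\Gamma_\q$ and $L_\q$ are smashing by \cref{prop:localduality}\ref{item:smashing}, they commute with one another and $\Gamma_\q L_\q$ commutes with coproducts, so $\Gamma_\q L_\q c=\bigoplus_\p\Gamma_\q L_\q c_\p$.

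The computational heart of the argument is the identity: for any specialization closed $W\subseteq\Spc(\C^\omega)$ and any $\q$,
\[
\Gamma_\q L_\q\Gamma_W X\;\simeq\;\begin{cases}\Gamma_\q L_\q X,&\q\in W,\\[1mm]0,&\q\notin W,\end{cases}
\]
naturally in $X$, and for $\q\in W$ this equivalence is $\Gamma_\q L_\q$ applied to the counit $\Gamma_W X\to X$. To see it, apply the exact functor $\Gamma_\q L_\q$ to $\Gamma_W X\to X\to L_{W^c}X$. If $\q\notin W$ then $\q\notin\supp(\Gamma_W X)$ (recall $\supp(\Gamma_W X)\subseteq W$, as $\Gamma_W X\simeq\Gamma_W\1\otimes X$ and $\supp(\Gamma_W\1)\subseteq W$), so $\Gamma_\q L_\q\Gamma_W X\simeq 0$. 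If $\q\in W$ then specialization closedness gives $\downcl(\q)\subseteq W$, hence $\Gamma_\q\C\subseteq\Gamma_W\C$ and therefore $L_{W^c}X\in(\Gamma_W\C)^\perp\subseteq(\Gamma_\q\C)^\perp$; thus $\Gamma_\q L_{W^c}X\simeq 0$, so $\Gamma_\q L_\q L_{W^c}X\simeq L_\q\Gamma_\q L_{W^c}X\simeq 0$, which forces $\Gamma_\q L_\q\Gamma_W X\xrightarrow{\ \sim\ }\Gamma_\q L_\q X$.

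It then remains to run the case analysis on $\q$, applying this identity with $W=V$ and with $W=\downcl(\p)$ (so $\Gamma_\q L_\q\Gamma_\p X\simeq\Gamma_\q L_\q X$ if $\q\leqslant\p$, and $\simeq 0$ otherwise). If $\q\notin V$, both source and target of $\Gamma_\q L_\q c$ vanish. If $\q\in V$ but $\q\notin\supp(X)$, then $\Gamma_\q L_\q X\simeq 0$ and again both sides vanish. The substantive case is $\q\in\supp(X)\cap V$: the hypothesis forces $\q\in\max(V)$, so $\q$ is the \emph{only} $\p\in\max(V)$ with $\q\leqslant\p$, and $\Gamma_\q L_\q c$ collapses to $\Gamma_\q L_\q c_\q\colon\Gamma_\q L_\q\Gamma_\q X\to\Gamma_\q L_\q\Gamma_V X$. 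Applying $\Gamma_\q L_\q$ to $\epsilon_V\circ c_\q=\epsilon_\q$, and noting that $\Gamma_\q L_\q\epsilon_\q$ and $\Gamma_\q L_\q\epsilon_V$ are equivalences (both instances of the identity above), two-out-of-three shows $\Gamma_\q L_\q c_\q$ is an equivalence.

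I expect the only genuine obstacle to be this last case: one must confirm that the \emph{canonical} coproduct map — not merely some abstract comparison of the two sides — induces the equivalence after applying $\Gamma_\q L_\q$, which is exactly where the compatibility $\epsilon_V\circ c_\q=\epsilon_\q$ is used. A minor bookkeeping point is that $\Gamma_\p$ and $L_\p$ are torsion and localization for different specialization closed sets, but since $\Gamma_\p$, $\Gamma_V$ and $L_\q$ are all smashing, the relevant functors commute freely, which is what makes the displayed identity clean.
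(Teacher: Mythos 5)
Your proof is correct, and it takes a route that is close in spirit but technically distinct from the paper's. The paper invokes Lemma~\ref{lem:torsioniscell} (detection by the Koszul objects $K_\q\otimes-$), which first requires checking that $\bigoplus_{\p\in\max(V)}\Gamma_\p X$ is $V$-torsion so that one may apply $\Gamma_V$ harmlessly, and then verifies $K_\q\otimes(\mathrm{map})$ is an equivalence only for $\q\in V$. You instead use the detection property for objects (an object is zero iff $\Gamma_\q L_\q$ of it is zero for all $\q$), applied to the cofibre of the canonical map, which lets you run a clean case split over \emph{all} $\q$ without the preliminary $V$-torsion check. The computational engine behind both is the same (supports of torsion/local pieces), but your key identity $\Gamma_\q L_\q\Gamma_W X\simeq\Gamma_\q L_\q X$ (if $\q\in W$) and $\simeq 0$ (otherwise) packages it more symmetrically, and the two-out-of-three argument using the compatibility $\epsilon_V\circ c_\q=\epsilon_\q$ of colocalization counits is an elegant way to pin down that the equivalence really is the canonical map. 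What the paper's $K_\q$-based approach buys is directness from the compact generation machinery already in place (Lemma~\ref{lem:torsioniscell}); what yours buys is avoiding the auxiliary $V$-torsion step and staying entirely at the level of the object-detection property. Both are valid; yours is arguably slightly more conceptual.
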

\begin{proof}
By \cref{lem:torsioniscell} it suffices to show:
\begin{enumerate}
\item $\bigoplus_{\p \in \mrm{max}(V)}\Gamma_\p X$ is $V$-torsion, and,
\item the natural map $\beta_{\q} \colon \bigoplus_{\p \in \mrm{max}(V)} K_\q \otimes\Gamma_\p X \to K_\q\otimes X$ is an equivalence for all $\q \in V$.
\end{enumerate}
The first follows as $\mrm{supp}(\Gamma_\p X) = \downcl(\p) \cap \mrm{supp}(X) \subseteq V$ since $V$ is specialization closed. For the second condition, we argue via support. If $\q \not\in \max(V)$, then the source and target of $\beta_\q$ are both 0 as $\downcl(\q) \cap \supp(X) = \varnothing$. If $\q \in \max(V)$, then $K_\q \otimes \Gamma_\p X \simeq 0$ for all $\p \in \mrm{max}(V)$ with $\p \neq \q$ by the assumption on the support of $X$, and hence \[\oplus_{\p \in \max(V)} K_\q \otimes \Gamma_\p X \simeq K_\q \otimes \Gamma_\q X \simeq K_\q \otimes X\] as required.
\end{proof}

In order to prove an analogous splitting property for $L_{V^c}$, we require the following two lemmas.
\begin{lem}\label{lemma1}
Let $X \in \C$ and $V$ be a specialization closed subset of $\Spc(\C^\omega)$. Write $W = \downcl(\min(V^c))$ for the specialization closure of $\min(V^c)$. Then \[\Gamma_W\prod_{\p \in \min(V^c)} L_\p X \simeq \bigoplus_{\p \in \min(V^c)} \Gamma_\p L_\p X.\]
\end{lem}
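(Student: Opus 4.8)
The plan is to reduce the statement to the splitting proposition \cref{splitting} applied to the object $\prod_{\p \in \min(V^c)} L_\p X$ and the specialization closed subset $W = \downcl(\min(V^c))$. Since the primes in $\min(V^c)$ are, by definition, minimal in $V^c$, and $W$ is the specialization closure of $\min(V^c)$, I expect that $\min(V^c)$ is exactly the set of maximal elements of $W$: indeed any $\p \in \min(V^c)$ is maximal in $W$ because if $\p \leqslant \q$ with $\q \in W$ then $\q \in V^c$ (as $V^c$ is generization closed) and $\q \leqslant \p'$ for some $\p' \in \min(V^c)$, forcing a chain $\p \leqslant \q \leqslant \p'$ in $\min(V^c)$, which since these are minimal in $V^c$ gives $\p = \q = \p'$. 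Conversely every element of $W$ specializes from some element of $\min(V^c)$, so $\max(W) = \min(V^c)$. Thus \cref{splitting} gives the desired equivalence
\[\bigoplus_{\p \in \min(V^c)} \Gamma_\p\!\left(\prod_{\q \in \min(V^c)} L_\q X\right) \xrightarrow{\;\sim\;} \Gamma_W \prod_{\q \in \min(V^c)} L_\q X,\]
provided the hypothesis $\supp\big(\prod_\q L_\q X\big) \cap W \subseteq \max(W) = \min(V^c)$ holds, and provided we can identify $\Gamma_\p\big(\prod_\q L_\q X\big)$ with $\Gamma_\p L_\p X$.

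For the support hypothesis, I would argue that $\supp\big(\prod_\q L_\q X\big) \cap W \subseteq \min(V^c)$. A prime $\r \in W$ lies below some $\p \in \min(V^c)$; if moreover $\r \in \supp(\prod_\q L_\q X)$, then $\r \in \supp(L_\q X)$ for at least one $\q$ (using that support of a product is contained in the union of supports — or more carefully, that $\Gamma_\r L_\r$ detects a nonzero summand, which one checks using compactness of the Koszul object $K_\r$ so that $K_\r \otimes -$ commutes with the product as in the proof of \cref{prop:Gammaproducts}). But $\supp(L_\q X) \subseteq \upcl(\q)$, so $\q \leqslant \r \leqslant \p$ with $\q, \p \in \min(V^c)$, both minimal in $V^c$, hence $\q = \r = \p$ and $\r \in \min(V^c)$ as needed.

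For the identification of the local summand, I would show $\Gamma_\p\big(\prod_{\q \in \min(V^c)} L_\q X\big) \simeq \Gamma_\p L_\p X$ for each $\p \in \min(V^c)$. By \cref{lem:torsioniscell} it suffices to check that $K_\r \otimes -$ agrees on the natural map $\prod_\q L_\q X \to L_\p X$ (projection to the $\p$-factor composed with nothing, or rather the appropriate comparison) for all $\r \in \downcl(\p)$. Since $K_\r$ is compact hence dualizable, $K_\r \otimes \prod_\q L_\q X \simeq \prod_\q K_\r \otimes L_\q X$; and for $\r \leqslant \p$ with $\p$ minimal in $V^c$, one has $K_\r \otimes L_\q X \simeq 0$ unless $\r \in \supp(L_\q X) \subseteq \upcl(\q)$, i.e.\ unless $\q \leqslant \r \leqslant \p$, which forces $\q = \p$; so only the $\p$-factor survives and $K_\r \otimes \prod_\q L_\q X \simeq K_\r \otimes L_\p X$. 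The main obstacle I anticipate is the bookkeeping around supports of infinite products — ensuring that "$\supp$ of a product is contained in the union of the supports" is applied legitimately (it genuinely requires the compactness/dualizability of Koszul objects, not a general fact about supports) — but this is exactly the mechanism already used in \cref{prop:Gammaproducts}, so it should go through cleanly.
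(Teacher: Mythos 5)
Your argument follows the same route as the paper: reduce to \cref{splitting} via the identification $\max(W)=\min(V^c)$, verify the support hypothesis, and identify each summand $\Gamma_\p\bigl(\prod_\q L_\q X\bigr)\simeq\Gamma_\p L_\p X$; the summand identification you give via \cref{lem:torsioniscell} is sound and is essentially the paper's use of \cref{prop:Gammaproducts} together with $\Gamma_\q L_\p\simeq 0$ for $\p\neq\q$ in $\min(V^c)$.

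The one step that needs tightening is the support hypothesis. The general claim that ``support of a product is contained in the union of the supports'' is \emph{false} for this notion of support when the product is infinite: $\Gamma_\p L_\p(-)\simeq\Gamma_\p L_\p\1\otimes(-)$, and $\Gamma_\p L_\p\1$ is typically not dualizable, so it does not commute with infinite products. Your ``more careful'' parenthetical does not quite close the gap either: $K_\r\otimes -$ does commute with products, but $\r\in\supp(Z)$ is detected by $K_\r\otimes L_\r Z\neq 0$, and it is the smashing localization $L_\r$ sitting in the middle that fails to commute with products. A clean fix is available and is essentially what the paper has in mind: each $L_\q X$ for $\q\in\min(V^c)$ is $L_{V^c}$-local (since $\upcl(\q)\subseteq V^c$, using that $V^c$ is up-closed), local objects for a Bousfield localization are closed under products, and an $L_{V^c}$-local object has support contained in $V^c$; intersecting with $W$ then gives $\min(V^c)=\max(W)$. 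Alternatively, show directly that for $\r\in W\setminus\max(W)$ one has $\Gamma_\r L_\q X\simeq 0$ for every $\q\in\min(V^c)$ (a support computation: $\downcl(\r)\cap\upcl(\q)=\varnothing$), whence $\Gamma_\r\prod_\q L_\q X\simeq 0$ by \cref{prop:Gammaproducts}. With the support hypothesis secured in one of these ways, your proof is correct.
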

\begin{proof}
Note that $\max(W) = \min(V^c)$ so $\supp(\prod_{\p \in \min(V^c)}L_\p X) \cap W \subseteq \mrm{max}(W)$. Therefore
\[\Gamma_W \prod_{\p \in \min(V^c)}L_\p X \simeq \bigoplus_{\q \in \min(V^c)} \Gamma_\q \prod_{\p \in \min(V^c)} \Gamma_\q L_\p X\] by \cref{splitting} and \cref{prop:Gammaproducts}. The statement follows since $\Gamma_\q L_\p \simeq 0$ if $\p \neq \q$ for all $\p,\q \in \min(V^c)$.
\end{proof}

\begin{lem}\label{lemma2}
Let $V$ be a specialization closed subset of $\Spc(\C^\omega)$ and write $W = \downcl(\min(V^c))$. Let $Y \in \C$ be such that $\supp(Y) \subseteq W$. Then the natural map
\[\Hom(Y, \bigoplus_{\p \in \min(V^c)} L_\p X) \to \Hom(Y, \prod_{\p \in \min(V^c)} L_\p X)\]
is an equivalence for all $X \in \C$.
\end{lem}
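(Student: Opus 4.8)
The plan is to reduce the claim to a statement about the support of the cofibre of the natural map $\bigoplus_{\p} L_\p X \to \prod_{\p} L_\p X$, where $\p$ ranges over $\min(V^c)$. Write $F$ for this cofibre, so that there is a cofibre sequence $\bigoplus_\p L_\p X \to \prod_\p L_\p X \to F$. Applying $\Hom(Y,-)$ turns the asserted equivalence into the statement $\Hom(Y,F) \simeq 0$. Since $\supp(Y) \subseteq W$, by the detection property it suffices to show that $Y$ lies in $\mrm{Loc}^\otimes(K_\p \mid \p \in W)$ — indeed $\supp(Y) \subseteq W$ implies $\Gamma_W Y \simeq Y$, so $Y \in \Gamma_W\C = \mrm{Loc}^\otimes(K_\p \mid \p \in W)$ — and hence it is enough to prove that $\Hom(K_\p, F) \simeq 0$, or even just $K_\p \otimes F \simeq 0$, for every $\p \in W$. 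Because $K_\p$ is compact and dualizable, $K_\p \otimes -$ commutes with both coproducts and products, so $K_\p \otimes F$ is the cofibre of $\bigoplus_\q (K_\p \otimes L_\q X) \to \prod_\q (K_\p \otimes L_\q X)$.

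Now I would analyse $K_\p \otimes L_\q X$ for $\p \in W$ and $\q \in \min(V^c)$. Since $\max(W) = \min(V^c)$, every $\p \in W$ satisfies $\p \leqslant \q_0$ for some (hence at most finitely many, by the Noetherian hypothesis — in fact I only need: for a fixed $\p$, the set of $\q \in \min(V^c)$ with $\p \in \downcl(\q)$ is finite, since distinct minimal elements of $V^c$ are incomparable and $\supp(K_\p)=\downcl(\p)$ is covered by finitely many of them). The key point is that $K_\p \otimes L_\q X$ is supported in $\supp(K_\p) \cap \supp(L_\q X) = \downcl(\p) \cap \upcl(\q)$; this is nonempty only when $\p \in \downcl(\q)$, i.e. $\q$ is one of finitely many minimal points above $\p$. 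Therefore in the (co)product $\bigoplus_\q K_\p \otimes L_\q X \to \prod_\q K_\p\otimes L_\q X$ all but finitely many terms vanish, so the map is an equivalence and its cofibre $K_\p \otimes F$ is zero. This holds for all $\p \in W$, giving $K_\p \otimes F \simeq 0$ and hence $\Hom(Y,F)\simeq 0$, which is the desired equivalence.

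The step I expect to be the main obstacle is the finiteness argument: carefully justifying that for each fixed $\p \in W$ only finitely many $\q \in \min(V^c)$ have $\p \leqslant \q$. The cleanest route is via the compact Koszul object: $\supp(K_\p) = \downcl(\p)$ is a closed, hence (in a Noetherian spectral space) quasi-compact, subset, and $\{\upcl(\q) \cap \downcl(\p)\}_{\q \in \min(V^c)}$ are the (relatively open, pairwise disjoint on the relevant locus — or at least, having distinct generic points) pieces cutting it out; incomparability of the minimal points forces the index set of nonempty pieces to be finite. Alternatively one can invoke that $\downcl(\p)$ has finitely many generic points, or argue directly from $\Gamma_\p L_\p$ decomposition results. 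Once this finiteness is in hand, the rest is formal manipulation with (co)products, compactness of $K_\p$, the detection property, and the identification $\Gamma_W\C = \mrm{Loc}^\otimes(K_\p \mid \p\in W)$.
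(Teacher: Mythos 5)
Your overall strategy is valid and genuinely different from the paper's: you pass to the cofibre $F$ of $\bigoplus_\p L_\p X \to \prod_\p L_\p X$, use $\supp(Y) \subseteq W$ to place $Y$ in $\Gamma_W \C = \mrm{Loc}^\otimes(K_\p \mid \p \in W)$, and reduce to showing $K_\p \otimes F \simeq 0$ for all $\p \in W$; compactness of $K_\p$ then lets you commute $K_\p \otimes -$ past the (co)product. By contrast, the paper uses $\Hom(Y,-)\simeq\Hom(Y,\Gamma_W(-))$ together with \cref{lemma1} to identify $\Gamma_W$ of both sides as $\bigoplus_\p \Gamma_\p L_\p X$. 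Your route avoids invoking \cref{lemma1} at the cost of a direct argument about the cofibre.

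However, there is a direction error in the support computation that, as written, sinks the argument. You claim $\downcl(\p)\cap\upcl(\q)$ is nonempty only when $\p\in\downcl(\q)$, i.e.\ $\p\leqslant\q$, and then try to show there are finitely many $\q\in\min(V^c)$ lying \emph{above} $\p$. In fact $\downcl(\p)\cap\upcl(\q)=\{\mathfrak{r}:\q\leqslant\mathfrak{r}\leqslant\p\}$ is nonempty precisely when $\q\leqslant\p$, the opposite direction. Moreover the finiteness you attempt to justify is false in general: if $R$ is a two-dimensional local Noetherian domain and $V=\{\mathfrak{m}\}$, then $\p=\mathfrak{m}\in W$ lies below all of the (possibly infinitely many) height-one primes, which constitute $\min(V^c)$. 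So "finitely many minimal points above $\p$" does not hold, and the proposed covering argument for $\supp(K_\p)$ does not produce it. The good news is that once the direction is corrected the situation becomes trivial: if $\q\in\min(V^c)$ with $\q\leqslant\p$, then since $\p\in W$ there is $\q'\in\min(V^c)$ with $\p\leqslant\q'$, so $\q\leqslant\p\leqslant\q'$ forces $\q=\q'=\p$ because $\min(V^c)$ is an antichain. Thus at most one term in the (co)product is nonzero and $K_\p\otimes F\simeq 0$ follows. You should replace your finiteness paragraph with this one-line observation.
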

\begin{proof}
By the assumption on the support of $Y$, we see that $\Hom(Y, -) \simeq \Hom(Y, \Gamma_W(-))$. Now \[\Gamma_W \bigoplus_{\p \in \min(V^c)} L_\p X \simeq \bigoplus_{\p \in \min(V^c)} \Gamma_\p L_\p X \simeq \Gamma_W \prod_{\p \in \min(V^c)}L_\p X\] where the first equivalence follows from the observation that $\Gamma_W L_\p \simeq \Gamma_\p L_\p$ since $\max(W) = \min(V^c)$, while the second equivalence follows from \cref{lemma1}. As such, the claim follows.
\end{proof}

With the previous two lemmas, we can now prove the promised splitting of localization.
\begin{prop}\label{splitL}
Let $X \in \C$ and $V$ be a specialization closed subset of $\Spc(\C^\omega)$. If $\supp(X) \cap V^c \subseteq \min(V^c)$, then \[L_{V^c}X \xrightarrow[\quad]{\sim} \bigoplus_{\p \in \mrm{min}(V^c)}L_\p X.\]
\end{prop}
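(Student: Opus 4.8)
The plan is to dualize the argument for \cref{splitting}, using that $L_{V^c}$ is a localization rather than a colocalization. First I would observe that it suffices to produce a map $L_{V^c}X \to \bigoplus_{\p\in\min(V^c)} L_\p X$ and check it is an equivalence; the natural candidate is the map induced by the localizations $L_{V^c}X \to L_\p X$ for each $\p \in \min(V^c)$ (note $\upcl(\p)^c \subseteq V$ since $\p \notin V$ and $V$ is specialization closed, so $L_\p$ factors through $L_{V^c}$), landing a priori in the product $\prod_{\p\in\min(V^c)} L_\p X$; one must then argue it lands in the direct sum. Since $L_{V^c}X$ is $V^c$-local (it is $(\Gamma_V\1)$-local), to check a map out of it is an equivalence it is equivalent to check it after applying $L_{V^c}$, or more usefully to test against the localizing ideal generated by the relevant Koszul objects. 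The cleanest route is: a map $f$ with $L_{V^c}$-local target and source is an equivalence iff $\uHom(\Gamma_\p\1, f)$, equivalently $K_\p\otimes f$ after suitable smashing, is an equivalence for all $\p\in V^c$; this uses \cref{prop:localduality}\ref{item:hptybicart} and the detection property together with the hypothesis $\supp(X)\cap V^c\subseteq\min(V^c)$.

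More concretely, I would proceed as follows. By the detection property it suffices to show $\Gamma_\q L_\q$ applied to the map $L_{V^c}X \to \bigoplus_{\p\in\min(V^c)}L_\p X$ is an equivalence for all $\q\in\Spc(\C^\omega)$. For $\q \in V$ both sides vanish: the target because $\supp(\bigoplus L_\p X) \subseteq \upcl(\min(V^c)) \subseteq V^c$, and the source because $\supp(L_{V^c}X)\subseteq V^c$. For $\q\in V^c$, the hypothesis forces $\Gamma_\q L_\q X \simeq 0$ unless $\q\in\min(V^c)$, so we may assume $\q\in\min(V^c)$; then $\Gamma_\q L_\q L_\p X \simeq 0$ for $\p\in\min(V^c)$ with $\p\neq\q$ (as $\q\notin\upcl(\p)$), while $\Gamma_\q L_\q L_{V^c}X \simeq \Gamma_\q L_\q X \simeq \Gamma_\q L_\q L_\q X$ since $L_\q$ factors through $L_{V^c}$. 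Hence on $\Gamma_\q L_\q$ the map becomes the identity of $\Gamma_\q L_\q X$. The one subtlety requiring care is the passage from the product to the direct sum: the map $L_{V^c}X \to \prod_{\p}L_\p X$ need not factor through $\bigoplus_\p L_\p X$ on the nose, so I would instead define the target as the direct sum from the start and show the comparison map $\bigoplus_\p L_\p X \to \prod_\p L_\p X$ becomes an equivalence after $\Gamma_W$ (with $W = \downcl(\min(V^c))$), which is precisely \cref{lemma1} and \cref{lemma2}; since $L_{V^c}X$ has support in $W$ after the relevant reductions — more precisely $\Hom(\Gamma_\q\1, -)$ only sees the $\Gamma_W$-part for $\q\in\min(V^c)$ — \cref{lemma2} lets us replace the product by the sum when mapping out of the relevant torsion objects.

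The main obstacle I expect is this product-versus-sum issue: one wants a genuine map $L_{V^c}X \to \bigoplus_{\p\in\min(V^c)}L_\p X$, not merely a map to the product, and direct sums and products of localizations genuinely differ when $\min(V^c)$ is infinite. The resolution, as prepared by \cref{lemma1} and \cref{lemma2}, is that mapping \emph{out of} an object supported on $W=\downcl(\min(V^c))$ cannot distinguish $\bigoplus_\p L_\p X$ from $\prod_\p L_\p X$; applying this with source $\Gamma_\q\1$ (or testing via $K_\q\otimes-$) for $\q\in\min(V^c)$, together with the detection property, upgrades the evident product-valued map to the desired direct-sum equivalence. Everything else — the vanishing computations for $\q\in V$ and the identification on $\Gamma_\q L_\q$ for $\q\in\min(V^c)$ — is a routine support bookkeeping exercise using \cref{prop:localduality} and \cref{Lpnotation}.
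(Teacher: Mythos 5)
Your proposal is correct and follows essentially the same route as the paper: produce the map into the direct sum via \cref{lemma2}, then use the detection property to reduce to $\Gamma_\q L_\q$ for $\q \in \min(V^c)$ and check by support bookkeeping. Two small points of cleanup: first, the inclusion justifying that $L_\p$ factors through $L_{V^c}$ should read $V \subseteq \upcl(\p)^c$, not $\upcl(\p)^c \subseteq V$ (the latter is false in general since $V^c$ may contain points incomparable with $\p$); second, the product-versus-sum factoring is handled more directly than your outline suggests --- since the hypothesis gives $\supp(L_{V^c}X)\subseteq\min(V^c)\subseteq W$, you can apply \cref{lemma2} with $Y=L_{V^c}X$ itself to conclude that $\Hom(L_{V^c}X,\bigoplus_\p L_\p X)\simeq \Hom(L_{V^c}X,\prod_\p L_\p X)$, producing the desired lift immediately, with no need to test against $\Gamma_\q\1$ separately.
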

\begin{proof}
The canonical map $L_{V^c} X \to \prod_{\p \in \min(V^c)} L_\p X$ factors through the direct sum by \cref{lemma2}. We denote this factoring by $s\colon L_{V^c} X \to \oplus_{\p \in \min(V^c)} L_\p X$. By the detection property it suffices to show that $\Gamma_\q L_\q (s)$ is an equivalence for all $\q \in \Spc(\C^\omega)$. Note that \[\supp(L_{V^c} X) \subseteq \min(V^c) \quad \text{and} \quad \supp(\bigoplus_{\p \in \min(V^c)} L_\p X) \subseteq \min(V^c) \] by the assumption on the support of $X$. Therefore we may reduce to checking that $\Gamma_\q L_\q (s)$ is an equivalence for $\q \in \min(V^c)$. Now this follows by inspection.
\end{proof}

\subsection{Assembly data}\label{sec:assembly}
The specialization poset of the Balmer spectrum provides a natural filtration on objects. However, there are some occasions, such as  in the key example of rational torus-equivariant spectra explored in \cref{subsec:torusspec}, where one may want to index over a different poset. In this section, we will make this precise by introducing assembly data.
Recall that we write $\leqslant$ for the natural poset structure on the Balmer spectrum induced by the specialization order.

\begin{defn}\label{posetrefinement}
Let $\C$ be as in \cref{hyp:hyp}. \emph{Assembly data} for $\C$ consists of the data of a subposet $\X \subseteq \Spc(\C^\omega)$ and a poset map $\alpha\colon \Spc(\C^\omega) \to \X$ such that:
\begin{enumerate}[label=(\alph*)]
\item $\X$ is a retract of $\Spc(\C^\omega)$ via $\alpha$, i.e., $\alpha(x) = x$ for all $x \in \X$;
\item $\alpha$ preserves the dimension of elements, where the dimension of $x$ is defined to be the length of a maximal chain of elements less than $x$.
\end{enumerate}
\end{defn}

\begin{rem}
We note that it follows that the inclusion $\X \hookrightarrow \Spc(\C^\omega)$ also preserves dimension.
\end{rem}

We write $\downcl$ for the down closure in $\Spc( \C^\omega)$ with respect to the specialization ordering $\leqslant$, and write $\downcl_\X$ for the down closure in $\X$, and similarly for up cones. We note that for any specialization closed subset $V \subseteq \X$, we have that $\alpha^{-1}(V)$ is specialization closed in $\Spc(\C^\omega)$ since $\alpha$ is a poset map. As such, we may make the following definition.
\begin{defn}\label{assembledfunctors}
For any specialization closed subset $V$ of $\X$ we define functors \[\Gamma^\X_V := \Gamma_{\alpha^{-1}V} \qquad \Lambda^\X_V := \Lambda_{\alpha^{-1} V} \qquad L^\X_{V^c} := L_{\alpha^{-1} V^c}.\]
As in \cref{Lpnotation}, for every $x \in \X$ we  write \[\Gamma^\X_x := \Gamma_{\downcl_\X(x)}^\X \qquad \Lambda^\X_x := \Lambda^\X_{\downcl_\X(x)} \qquad L^\X_x := L_{\upcl_\X(x)}^\X.\]
\end{defn}

Let us record some examples to illuminate the above definitions.
\begin{ex}\label{assemblyexamples} \leavevmode
\begin{enumerate}[label=(\alph*)]
\item One may always take $\X = \Spc(\C^\omega)$ and $\alpha$ to be the identity. In this case, the functors defined in \cref{assembledfunctors} agree with those of \cref{Lpnotation}. One may think of this as the finest assembly data on $\C$. \label{finestassembly}
\item If $\Spc(\C^\omega)$ has dimension $d$, then one may always view $[d] = \{0 < 1 < \cdots < d\}$ as a subposet of $\Spc(\C^\omega)$ by specifying a distinguished Balmer prime of each dimension. The dimension function $\mrm{dim}\colon \Spc(\C^\omega) \to [d]$ provides assembly data, and for any $i \in [d]$, we have \[\Gamma_i^\X = \Gammale{i} \qquad \Lambda^\X_i = \Lambdale{i} \qquad L^\X_i = \Lge{i}.\] One may think of this as the coarsest assembly data on $\C$.
\item \label{rationalassembly} When $\C = L_\Q \mathrm{Sp}_{\mathbb{T}^r}$ is the category of rational torus equivariant-spectra for a torus of rank $r$, the Balmer spectrum is given by $\mrm{Sub}(\mathbb{T}^r)$ with cotoral inclusions (i.e., $H \leqslant K$ if and only if $K/H$ is a torus)~\cite{Greenlees19}. We may take $\X$ to be the poset of connected subgroups of $G$, and $\alpha = \mrm{conn}$ to be the function taking the connected component of the identity. We refer the reader to \cref{subsec:torusspec} for further information on this example. \cref{fig:figure1,fig:figure2,fig:figure3} provide a graphical description of this poset and the aforementioned different choices of assembly data in the case of $r=2$.  The shaded triangles in \cref{fig:figure1} represent the fact that there are infinitely many subgroups of dimension 1 associated to each connected one-dimensional subgroup (i.e., to each subtori).
\end{enumerate}
\end{ex}

\begin{figure}[h]
\begin{minipage}[t]{0.29\linewidth}
\resizebox{\textwidth}{!}{
\begin{tikzpicture}[yscale=2]
\node[circle,draw=black, fill=black, inner sep=0pt, minimum size=5pt] (0) at (0,0) {};
\node[circle,draw=black, fill=black, inner sep=0pt, minimum size=5pt] (10) at (-3,-1.5) {};
\node[circle,draw=black, fill=black, inner sep=0pt, minimum size=5pt] (11) at (-1.5,-1.5) {};
\node[circle,draw=black, fill=black, inner sep=0pt, minimum size=5pt] (12) at (0,-1.5) {};
\node[circle,draw=black, fill=black, inner sep=0pt, minimum size=5pt] (13) at (1.5,-1.5) {};
\node (15) at (3,-1.5) {$\cdots$};
\node[circle,draw=black, fill=black, inner sep=0pt, minimum size=5pt] (20) at (-3,-3) {};
\node[circle,draw=black, fill=black, inner sep=0pt, minimum size=5pt] (21) at (-1.5,-3) {};
\node[circle,draw=black, fill=black, inner sep=0pt, minimum size=5pt] (22) at (0,-3) {};
\node[circle,draw=black, fill=black, inner sep=0pt, minimum size=5pt] (23) at (1.5,-3) {};
\node (25) at (3,-3) {$\cdots$};

\draw[->, shorten >=0.1cm, line join=round,decorate, decoration={zigzag, segment length=6, amplitude=.9,post=lineto, post length=2pt}] (10) -- (20);
\draw[->, shorten >=0.1cm, line join=round,decorate, decoration={zigzag, segment length=6, amplitude=.9,post=lineto, post length=2pt}] (11) -- (20);
\draw[->, shorten >=0.1cm, line join=round,decorate, decoration={zigzag, segment length=6, amplitude=.9,post=lineto, post length=2pt}] (12) -- (20);
\draw[->, shorten >=0.1cm, line join=round,decorate, decoration={zigzag, segment length=6, amplitude=.9,post=lineto, post length=2pt}] (13) -- (20);
\draw[->, shorten >=0.1cm, line join=round,decorate, decoration={zigzag, segment length=6, amplitude=.9,post=lineto, post length=2pt}] (15) -- (20);

\draw[->, shorten >=0.1cm, line join=round,decorate, decoration={zigzag, segment length=6, amplitude=.9,post=lineto, post length=2pt}] (10) -- (21);
\draw[->, shorten >=0.1cm, line join=round,decorate, decoration={zigzag, segment length=6, amplitude=.9,post=lineto, post length=2pt}] (10) -- (23);
\draw[->, shorten >=0.1cm, line join=round,decorate, decoration={zigzag, segment length=6, amplitude=.9,post=lineto, post length=2pt}] (10) -- (25);

\draw[->, shorten >=0.1cm, line join=round,decorate, decoration={zigzag, segment length=6, amplitude=.9,post=lineto, post length=2pt}] (11) -- (21);
\draw[->, shorten >=0.1cm, line join=round,decorate, decoration={zigzag, segment length=6, amplitude=.9,post=lineto, post length=2pt}] (11) -- (22);
\draw[->, shorten >=0.1cm, line join=round,decorate, decoration={zigzag, segment length=6, amplitude=.9,post=lineto, post length=2pt}] (11) -- (25);

\draw[->, shorten >=0.1cm, line join=round,decorate, decoration={zigzag, segment length=6, amplitude=.9,post=lineto, post length=2pt}] (12) -- (21);
\draw[->, shorten >=0.1cm, line join=round,decorate, decoration={zigzag, segment length=6, amplitude=.9,post=lineto, post length=2pt}] (12) -- (25);

\draw[->, shorten >=0.1cm, line join=round,decorate, decoration={zigzag, segment length=6, amplitude=.9,post=lineto, post length=2pt}] (13) -- (25);
\draw[->, shorten >=0.1cm, line join=round,decorate, decoration={zigzag, segment length=6, amplitude=.9,post=lineto, post length=2pt}] (13) -- (23);

\draw[->, shorten >=0.1cm, line join=round,decorate, decoration={zigzag, segment length=6, amplitude=.9,post=lineto, post length=2pt}] (15) -- (25);
\draw[->, shorten >=0.1cm, line join=round,decorate, decoration={zigzag, segment length=6, amplitude=.9,post=lineto, post length=2pt}] (15) -- (23);
\draw[->, shorten >=0.1cm, line join=round,decorate, decoration={zigzag, segment length=6, amplitude=.9,post=lineto, post length=2pt}] (15) -- (22);
\draw[->, shorten >=0.1cm, line join=round,decorate, decoration={zigzag, segment length=6, amplitude=.9,post=lineto, post length=2pt}] (15) -- (21);

\draw[white, fill=white] (-3,-1.5) circle (.2cm);
\draw[white, fill=white] (-1.5,-1.5) circle (.2cm);
\draw[white, fill=white] (0,-1.5) circle (.2cm);
\draw[white, fill=white] (1.5,-1.5) circle (.2cm);

\draw[fill = black!10, draw=black!40] (-3,-1.5) -- (-3.35,-1.25) -- (-2.65,-1.25) -- cycle;
\draw[fill = black!10, draw=black!40] (-1.5,-1.5) -- (-1.85,-1.25) -- (-1.15,-1.25) -- cycle;
\draw[fill = black!10, draw=black!40] (0,-1.5) -- (-0.35,-1.25) -- (0.35,-1.25) -- cycle;
\draw[fill = black!10, draw=black!40] (1.5,-1.5) -- (1.85,-1.25) -- (1.15,-1.25) -- cycle;

\node[circle,draw=black, fill=black, inner sep=0pt, minimum size=5pt] at (-3,-1.5) {};
\node[circle,draw=black, fill=black, inner sep=0pt, minimum size=5pt] at (-1.5,-1.5) {};
\node[circle,draw=black, fill=black, inner sep=0pt, minimum size=5pt] at (0,-1.5) {};
\node[circle,draw=black, fill=black, inner sep=0pt, minimum size=5pt] at (1.5,-1.5) {};

\draw[->, shorten >=0.1cm, line join=round,decorate, decoration={zigzag, segment length=6, amplitude=.9,post=lineto, post length=2pt}] (0) -- (-3,-1.25);
\draw[->, shorten >=0.1cm, line join=round,decorate, decoration={zigzag, segment length=6, amplitude=.9,post=lineto, post length=2pt}] (0) -- (-1.5,-1.25);
\draw[->, shorten >=0.1cm, line join=round,decorate, decoration={zigzag, segment length=6, amplitude=.9,post=lineto, post length=2pt}] (0) -- (0,-1.25);
\draw[->, shorten >=0.1cm, line join=round,decorate, decoration={zigzag, segment length=6, amplitude=.9,post=lineto, post length=2pt}] (0) -- (1.5,-1.25);
\draw[->, shorten >=0.1cm, line join=round,decorate, decoration={zigzag, segment length=6, amplitude=.9,post=lineto, post length=2pt}] (0) -- (15);


\draw[white, fill=white] (0,0) circle (.2cm);
\node[circle,draw=black, fill=black, inner sep=0pt, minimum size=5pt] at (0,0) {};
\draw (-3.5,0.25) -- (3.5,0.25) -- (3.5,-3.25) -- (-3.5,-3.25) -- cycle;

\end{tikzpicture}

}
\caption{The poset arising from the Balmer spectrum of $L_\Q \mathsf{Sp}_{\mathbb{T}^2}$.\\}
\label{fig:figure1}
\end{minipage}%
\hfill
\begin{minipage}[t]{0.29\linewidth}

\resizebox{\textwidth}{!}{
\begin{tikzpicture}[yscale=2]
\node[circle,draw=black, fill=black, inner sep=0pt, minimum size=5pt] (0) at (0,0) {};
\node[circle,draw=black, fill=black, inner sep=0pt, minimum size=5pt] (10) at (-3,-1.5) {};
\node[circle,draw=black, fill=black, inner sep=0pt, minimum size=5pt] (11) at (-1.5,-1.5) {};
\node[circle,draw=black, fill=black, inner sep=0pt, minimum size=5pt] (12) at (0,-1.5) {};
\node[circle,draw=black, fill=black, inner sep=0pt, minimum size=5pt] (13) at (1.5,-1.5) {};
\node (15) at (3,-1.5) {$\cdots$};
\node[circle,draw=black, fill=black, inner sep=0pt, minimum size=5pt] (22) at (0,-3) {};

\draw[->, shorten >=0.1cm, line join=round,decorate, decoration={zigzag, segment length=6, amplitude=.9,post=lineto, post length=2pt}] (10) -- (22);
\draw[->, shorten >=0.1cm, line join=round,decorate, decoration={zigzag, segment length=6, amplitude=.9,post=lineto, post length=2pt}] (11) -- (22);
\draw[->, shorten >=0.1cm, line join=round,decorate, decoration={zigzag, segment length=6, amplitude=.9,post=lineto, post length=2pt}] (12) -- (22);
\draw[->, shorten >=0.1cm, line join=round,decorate, decoration={zigzag, segment length=6, amplitude=.9,post=lineto, post length=2pt}] (13) -- (22);
\draw[->, shorten >=0.1cm, line join=round,decorate, decoration={zigzag, segment length=6, amplitude=.9,post=lineto, post length=2pt}] (15) -- (22);

\draw[white, fill=white] (-3,-1.5) circle (.2cm);
\draw[white, fill=white] (-1.5,-1.5) circle (.2cm);
\draw[white, fill=white] (0,-1.5) circle (.2cm);
\draw[white, fill=white] (1.5,-1.5) circle (.2cm);
\node[circle,draw=black, fill=black, inner sep=0pt, minimum size=5pt] at (-3,-1.5) {};
\node[circle,draw=black, fill=black, inner sep=0pt, minimum size=5pt] at (-1.5,-1.5) {};
\node[circle,draw=black, fill=black, inner sep=0pt, minimum size=5pt] at (0,-1.5) {};
\node[circle,draw=black, fill=black, inner sep=0pt, minimum size=5pt] at (1.5,-1.5) {};

\draw[->, shorten >=0.1cm, line join=round,decorate, decoration={zigzag, segment length=6, amplitude=.9,post=lineto, post length=2pt}] (0) -- (10);
\draw[->, shorten >=0.1cm, line join=round,decorate, decoration={zigzag, segment length=6, amplitude=.9,post=lineto, post length=2pt}] (0) -- (11);
\draw[->, shorten >=0.1cm, line join=round,decorate, decoration={zigzag, segment length=6, amplitude=.9,post=lineto, post length=2pt}] (0) -- (12);
\draw[->, shorten >=0.1cm, line join=round,decorate, decoration={zigzag, segment length=6, amplitude=.9,post=lineto, post length=2pt}] (0) -- (13);
\draw[->, shorten >=0.1cm, line join=round,decorate, decoration={zigzag, segment length=6, amplitude=.9,post=lineto, post length=2pt}] (0) -- (15);

\draw[white, fill=white] (0,0) circle (.2cm);
\node[circle,draw=black, fill=black, inner sep=0pt, minimum size=5pt] at (0,0) {};
\draw (-3.5,0.25) -- (3.5,0.25) -- (3.5,-3.25) -- (-3.5,-3.25) -- cycle;
\end{tikzpicture}
}
\caption{The assembly data given by applying the function $\mrm{conn}$ to \cref{fig:figure1}.}
\label{fig:figure2}
\end{minipage}%
\hfill
\begin{minipage}[t]{0.28\linewidth}
\vspace{-41.1mm}
\resizebox{\textwidth}{!}{
\begin{tikzpicture}[yscale=2]
\node[circle,draw=black, fill=black, inner sep=0pt, minimum size=5pt] (0) at (0,0) {};
\node[circle,draw=black, fill=black, inner sep=0pt, minimum size=5pt] (12) at (0,-1.5) {};
\node[circle,draw=black, fill=black, inner sep=0pt, minimum size=5pt] (22) at (0,-3) {};

\draw[->, shorten >=0.1cm, line join=round,decorate, decoration={zigzag, segment length=6, amplitude=.9,post=lineto, post length=2pt}] (12) -- (22);

\draw[white, fill=white] (0,-1.5) circle (.2cm);
\node[circle,draw=black, fill=black, inner sep=0pt, minimum size=5pt] at (0,-1.5) {};

\draw[->, shorten >=0.1cm, line join=round,decorate, decoration={zigzag, segment length=6, amplitude=.9,post=lineto, post length=2pt}] (0) -- (12);

\draw[white, fill=white] (0,0) circle (.2cm);
\node[circle,draw=black, fill=black, inner sep=0pt, minimum size=5pt] at (0,0) {};
\draw (-3.5,0.25) -- (3.5,0.25) -- (3.5,-3.25) -- (-3.5,-3.25) -- cycle;
\end{tikzpicture}
}
\caption{The coarsest assembly data for $L_\Q \mathsf{Sp}_{\mathbb{T}^2}$.\\ \\}
\label{fig:figure3}
\end{minipage}%
\end{figure}

Note that for any specialization closed subset $V$ of $\X$, and $X \in \C$ we have \[\supp(\Gamma_V^\X X) = \alpha^{-1}V \cap \supp(X) \quad \text{and} \quad \supp(L_{V^c}^\X X) = \alpha^{-1}V^c \cap \supp(X).\] 
This demonstrates how the assembly data controls the new localization functors.

We end this subsection by recording a couple of properties the localization functors based on assembly data satisfy.
\begin{lem}\label{Pmonoidal}
Let $x \in \X$. The endofunctors $L^\X_x$ and $\Lambda^\X_x$ are lax symmetric monoidal, and as such they preserve commutative algebra objects.
\end{lem}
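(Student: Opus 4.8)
The plan is to reduce the statement about the assembled functors $L_x^\X$ and $\Lambda_x^\X$ to the corresponding statement for the basic localization and completion functors $L_{V^c}$ and $\Lambda_V$ attached to a specialization closed subset $V$ of the Balmer spectrum, and then to recall why those are lax symmetric monoidal. By \cref{assembledfunctors}, $L_x^\X = L_{\alpha^{-1}(\upcl_\X(x))^c}$ and $\Lambda_x^\X = \Lambda_{\alpha^{-1}(\downcl_\X(x))}$, so it suffices to prove the claim for an arbitrary specialization closed $W \subseteq \Spc(\C^\omega)$ (here $W = \alpha^{-1}(\downcl_\X(x))$ in the completion case, and in the localization case $\alpha^{-1}(\upcl_\X(x))$ is the complement of a specialization closed set since $\alpha$ is a poset map, as noted before \cref{assembledfunctors}). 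Thus the whole lemma is subsumed by: for any specialization closed $W$, the smashing localization $L_{W^c}$ and the completion $\Lambda_W$ are lax symmetric monoidal endofunctors of $\C$.

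For $L_{W^c}$: by \cref{prop:localduality}\ref{item:smashing} the localization is smashing, meaning $L_{W^c}X \simeq L_{W^c}\1 \otimes X$ naturally. Since $L_{W^c}$ is a localization at a $\otimes$-ideal, $L_{W^c}\1$ carries a canonical commutative algebra structure (it is idempotent, $L_{W^c}\1 \otimes L_{W^c}\1 \simeq L_{W^c}\1$) and $L_{W^c}(-) \simeq L_{W^c}\1 \otimes (-)$ is then lax symmetric monoidal via the multiplication $L_{W^c}\1 \otimes L_{W^c}\1 \to L_{W^c}\1$ and unit $\1 \to L_{W^c}\1$; the coherence is standard for base change along a commutative algebra, or one simply invokes that a smashing Bousfield localization is symmetric monoidal. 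Consequently $L_{W^c}$ preserves commutative algebra objects.

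For $\Lambda_W$: one route is the MGM equivalence. By \cref{prop:localduality}, $\Lambda_W Y \simeq \uHom(\Gamma_W\1, Y)$, and $\Gamma_W\1$ is a (non-unital, or rather coalgebra-flavoured) idempotent with $\Gamma_W\1 \otimes \Gamma_W\1 \simeq \Gamma_W\1$; dualizing, $\uHom(\Gamma_W\1,-)$ inherits a lax symmetric monoidal structure from the cocommutative coalgebra structure on $\Gamma_W\1$, giving the lax structure maps $\Lambda_W X \otimes \Lambda_W Y = \uHom(\Gamma_W\1,X)\otimes\uHom(\Gamma_W\1,Y) \to \uHom(\Gamma_W\1 \otimes \Gamma_W\1, X\otimes Y) \simeq \uHom(\Gamma_W\1, X\otimes Y) = \Lambda_W(X\otimes Y)$ and unit $\1 \to \uHom(\Gamma_W\1,\1) = \Lambda_W\1$. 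Alternatively, and perhaps more cleanly, $\Lambda_W$ is right adjoint to the inclusion $\Lambda_W\C \hookrightarrow \C$ and $\Lambda_W\C$ is a symmetric monoidal localization of $\C$ (the localization inverting the $\Gamma_W\1$-equivalences is symmetric monoidal because the class of such equivalences is a $\otimes$-ideal, $\Gamma_W\1$ being generated by a thick $\otimes$-ideal of compacts), so its right adjoint is automatically lax symmetric monoidal.

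The only genuine point requiring care---the step I would flag as the main obstacle---is checking that the relevant localizations are \emph{symmetric monoidal Bousfield localizations}, i.e.\ that the class of maps inverted by $L_{W^c}$ (resp.\ by $\Lambda_W$) is closed under tensoring with arbitrary objects of $\C$. For $L_{W^c}$ this is immediate from smashing. For $\Lambda_W$ one uses that $\Lambda_W$-equivalences coincide with $\Gamma_W$-equivalences (by the cofibre sequence $\Delta_{W^c}X \to X \to \Lambda_W X$ together with \cref{prop:localduality}), and a map $f$ is a $\Gamma_W$-equivalence iff $K_\p \otimes f$ is an equivalence for all $\p \in W$ by \cref{lem:torsioniscell}; since each $K_\p$ is dualizable, this class is visibly a $\otimes$-ideal. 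Given this, a standard result on symmetric monoidal $\infty$-categorical localizations (e.g.\ \cite[Proposition 2.2.1.9]{Lurie17} type statements) yields that $\Lambda_W\C$ and $L_{W^c}\C$ inherit symmetric monoidal structures for which the localization functors are symmetric monoidal, and hence their right adjoints---which are the inclusions' right adjoints $\Lambda_W$ and, after the smashing identification, $L_{W^c}$ itself---are lax symmetric monoidal. Preservation of commutative algebra objects is then formal, since lax symmetric monoidal functors carry $\calg{}$ to $\calg{}$.
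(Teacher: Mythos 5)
Your proposal follows essentially the same route as the paper: reduce to the claim that $L_{V^c}$ and $\Lambda_V$ are lax symmetric monoidal for any specialization closed $V$, and then check the criterion of \cite[Lemma~3.4]{GepnerGrothNikolaus15} (equivalently \cite[Proposition~2.2.1.9]{HA}) that local equivalences form a $\otimes$-ideal; for $L_{V^c}$ this follows from smashing, and for $\Lambda_V$ from the characterisation of $\Lambda_V$-equivalences in terms of $K_\p \otimes f$ via \cref{lem:torsioniscell} and the MGM equivalence. The extra variants you sketch (the idempotent-algebra/base-change description of $L_{W^c}$, the cocommutative-coalgebra route for $\Lambda_W$) are fine but unnecessary once the $\otimes$-ideal condition is verified. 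One small slip: you write that $\Lambda_W$ is right adjoint to the inclusion $\Lambda_W\C \hookrightarrow \C$; in the paper's conventions $\Lambda_W$ is the \emph{left} adjoint $\iota_\Lambda \dashv \cdots$ — wait, rather $\Lambda_W$ is left adjoint to $\iota_\Lambda$. What is lax symmetric monoidal for formal reasons is the fully faithful right adjoint $\iota_\Lambda$, and the endofunctor one cares about is the composite $\iota_\Lambda \circ \Lambda_W$; the conclusion is the same, but the adjunction handedness should be stated the other way round.
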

\begin{proof}
This is a special case of a more general result, namely that for any specialization closed subset $V$ of $\mrm{Spc}(\C^\omega)$, the endofunctors $L_{V^c}$ and $\Lambda_V$ are lax symmetric monoidal. In order to prove this, by~\cite[Lemma 3.4]{GepnerGrothNikolaus15} (also see~\cite[Proposition 2.2.1.9]{HA}) it suffices to show that if $X \to Y$ is an $L_{V^c}$-local (resp., $\Lambda_V$-local) equivalence, then $X \otimes Z \to Y \otimes Z$ is an $L_{V^c}$-local (resp., $\Lambda_V$-local) equivalence for all $Z \in \C$. This holds for $L_{V^c}$ since it is a smashing localization. For $\Lambda_V$, we note that a map $f\colon X \to Y$ in $\C$ is such that $\Lambda_V(f)$ is an equivalence if and only if $K(\p) \otimes f$ is an equivalence for all $\p \in \C$; this follows from \cref{lem:torsioniscell} together with the MGM equivalence, or alternatively, see for example~\cite[Theorem 4.2]{GreenleesMay95b}. The claim then follows from this.
\end{proof}

We set the following convention which we will use throughout the paper.

\begin{nota}\label{dimsubscript}
Unless otherwise stated, we use subscripts to denote the dimension of elements, so that $x_n$ denotes an arbitrary element of $\X$ of dimension $n$, and $\p_i$ denotes a prime in $\Spc(\C^\omega)$ of dimension $i$.
\end{nota}

\begin{lem}\label{maxL}
Let $(\X,\alpha)$ be assembly data for $\C$ and $X \in \C$.
\begin{enumerate}[label=(\alph*)]
\item\label{LXsplit} Let $x \in \X$. If $\supp(X) \cap \alpha^{-1}(\upcl_\X(x)) \subseteq \alpha^{-1}(x)$, then \[L^\X_{x}X \simeq \bigoplus_{\alpha(\p) = x} L_{\p}X.\]
\item\label{maxLsplit} When $d = \mrm{dim}(\C)$, we have $L_{\geqslant d}X \simeq \bigoplus_{x_d} L_{x_d}^\X X.$
\end{enumerate}
\end{lem}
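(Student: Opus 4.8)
The plan is to deduce both parts from the localization--splitting result \cref{splitL}; the only real content is to unwind the assembly-data notation and to identify which minimal primes actually contribute.

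For part \ref{LXsplit}, unwinding \cref{assembledfunctors} (together with \cref{Lpnotation}) identifies $L^\X_x$ with $L_S$, where $S := \alpha^{-1}(\upcl_\X(x)) = \{\p \mid \alpha(\p) \geqslant x\}$ is an up-set in $\Spc(\C^\omega)$ (so $S^c$ is specialization closed). The key observation --- and the only place where the dimension-preservation axiom of assembly data is used --- is that $\alpha^{-1}(x) \subseteq \min(S)$: if $\q \leqslant \p$ with $\p \in \alpha^{-1}(x)$ and $\q \in S$, then $x \leqslant \alpha(\q) \leqslant \alpha(\p) = x$, so $\alpha(\q) = \alpha(\p)$, and since $\alpha$ preserves dimension while a strict specialization strictly lowers dimension, this forces $\q = \p$. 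Combined with the hypothesis $\supp(X) \cap S \subseteq \alpha^{-1}(x)$, this gives $\supp(X) \cap S \subseteq \min(S)$, so \cref{splitL} applies and yields $L^\X_x X \simeq \bigoplus_{\p \in \min(S)} L_\p X$. It then remains to see that the summands with $\p \in \min(S) \setminus \alpha^{-1}(x)$ vanish: for such $\p$ and any $\q \geqslant \p$ we have $\q \in S$ (as $S$ is an up-set), so if moreover $\q \in \supp(X)$ then $\alpha(\q) = x$ by hypothesis, whence $x \leqslant \alpha(\p) \leqslant \alpha(\q) = x$ forces $\alpha(\p) = x$, a contradiction; hence $\supp(L_\p X) = \upcl(\p) \cap \supp(X) = \varnothing$ (by the support identities recorded just before the statement, with $\X = \Spc(\C^\omega)$ and $\alpha = \mathrm{id}$), and so $L_\p X \simeq 0$ by the detection property. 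Thus $L^\X_x X \simeq \bigoplus_{\alpha(\p) = x} L_\p X$.

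For part \ref{maxLsplit}, since $\dim(\C) = d$ the functor $L_{\geqslant d}$ is the localization $L_{V^c}$ with $V^c = \{\p \mid \dim(\p) = d\}$, which is an antichain (distinct comparable primes have distinct dimensions); hence $\min(V^c) = V^c$, the hypothesis of \cref{splitL} is automatic, and $L_{\geqslant d}X \simeq \bigoplus_{\dim(\p) = d} L_\p X$. Now every $x_d \in \X$ of dimension $d$ is maximal in $\X$ (as $\X$ has dimension at most $d$), so $\upcl_\X(x_d) = \{x_d\}$ and the hypothesis of part \ref{LXsplit} holds vacuously for $x_d$; applying \ref{LXsplit} to each such $x_d$ and regrouping along the partition $\{\p \mid \dim(\p) = d\} = \bigsqcup_{x_d} \alpha^{-1}(x_d)$ (valid because $\alpha$ preserves dimension) gives $L_{\geqslant d}X \simeq \bigoplus_{x_d} \bigoplus_{\alpha(\p) = x_d} L_\p X \simeq \bigoplus_{x_d} L^\X_{x_d}X$, as required. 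I do not anticipate a serious obstacle here: once \cref{splitL} is in hand the argument is essentially bookkeeping, the one genuinely non-formal ingredient being the poset fact that $\alpha^{-1}(x) \subseteq \min(\alpha^{-1}(\upcl_\X(x)))$ together with the vanishing of $L_\p X$ for the remaining minimal primes, both of which rest on $\alpha$ preserving dimension.
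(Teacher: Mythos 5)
Your proof is correct and follows the same basic route as the paper (identify $L^\X_x$ with $L_S$ for $S = \alpha^{-1}(\upcl_\X(x))$, apply \cref{splitL}, then bookkeep). The interesting point is that you have actually repaired a small gap in the paper's argument. The paper asserts that the minimal elements of $S$ are \emph{precisely} $\alpha^{-1}(x)$, but only the inclusion $\alpha^{-1}(x) \subseteq \min(S)$ (which you prove) is valid in general. For instance, take $\Spc(\C^\omega) = \{\g, \p, x, y\}$ with $\dim(\g) = \dim(\p) = 1$, $\dim(x) = \dim(y) = 0$, $x < \g$, $y < \g$, $y < \p$, and assembly data $\X = \{\g, x, y\}$, $\alpha(\p) = \g$, $\alpha|_\X = \mathrm{id}$: then for this $x$ one has $S = \{x, \g, \p\}$, $\alpha^{-1}(x) = \{x\}$, but $\min(S) = \{x, \p\}$ since the unique prime below $\p$ is $y \notin S$. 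So \cref{splitL} a priori produces extra summands indexed by $\min(S) \setminus \alpha^{-1}(x)$, and your argument that these vanish --- via the support hypothesis on $X$, the identity $\supp(L_\p X) = \upcl(\p) \cap \supp(X)$, and the detection property --- is exactly what is needed to reach the stated conclusion. Your part \ref{maxLsplit} also matches the paper's, modulo the order in which the two ingredients (part \ref{LXsplit} and \cref{splitL}) are applied, with the same observation that the hypothesis of \ref{LXsplit} is vacuous for top-dimensional $x_d$ because $\upcl_\X(x_d) = \{x_d\}$.
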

\begin{proof}
For part \ref{LXsplit}, firstly note that $\alpha^{-1}(\upcl_\X(x)^c)^c = \alpha^{-1}(\upcl_\X(x))$ and the minimal elements in here are precisely those $\p$ such that $\alpha(\p) = x$ as $\alpha$ is order-preserving, and dimension-preserving. By definition $L^\X_xX = L_{\alpha^{-1}(\upcl_\X(x))}X$, and so by \cref{splitL}, we know that this is equivalent to \[\bigoplus_{\p \in \alpha^{-1}(x)} L_\p X\] as required. For part \ref{maxLsplit}, we have \[\bigoplus_{x_d} L_{x_d}^\X X \simeq \bigoplus_{x_d} \bigoplus_{\alpha(\p_d) = x_d} L_{\p_d}X \simeq \bigoplus_{\p_d} L_{\p_d} X \simeq \Lge{d}X\] using part \ref{LXsplit} and \cref{splitL}.
\end{proof}

\subsection{Objects with mono-dimensional support}
In this subsection we define the key players in this paper, objects with mono-dimensional support.
\begin{defn}
An object $X\in \C$ has \emph{mono-dimensional support $i$} if and only if $\supp(X)$ consists only of primes of dimension $i$, or equivalently, if $X$ is both $\Gammale{i}$-torsion and $\Lge{i}$-local.
\end{defn}

\begin{ex}
For $0 \leqslant i \leqslant d$, we write $\Elr{i} = \Gammale{i}\Lge{i}\1$. By construction this has mono-dimensional support $i$, and given any other object $X$ with mono-dimensional support $i$, we have $X\simeq X \otimes\Elr{i}$. For this reason the object $\Elr{i}$ will play a crucial role throughout this paper. We note that there are cofibre sequences $\Gammale{i-1}\1 \to \Gammale{i}\1 \to \Elr{i}$ for each $i$. These objects with mono-dimensional support admit a splitting as shown in the following.
\end{ex}

\begin{lem}\label{lem:Epointy}
Let $0 \leqslant i \leqslant d$. We have $\Elr{i} \simeq \bigoplus_{\p_i}\Gamma_{\p_i} L_{\p_i}\1$.
\end{lem}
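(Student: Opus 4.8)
The plan is to realize $\Elr{i} = \Gammale{i}\Lge{i}\1$ as a direct sum indexed by the primes of dimension exactly $i$ by combining the two splitting results from \cref{sec:splitting}. The key observation is that the primes of dimension $i$ are precisely the maximal elements of the specialization closed set $V = \{\p \mid \dim(\p) \leqslant i\}$, and simultaneously the minimal elements of the complement $V^c = \{\p \mid \dim(\p) \geqslant i\}$ of the specialization closed set $V' = \{\p \mid \dim(\p) \leqslant i-1\}$.

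First I would apply \cref{splitL} to $X = \1$ with the specialization closed set $V' = \{\p \mid \dim(\p) \leqslant i-1\}$. Since $\supp(\1) = \Spc(\C^\omega)$, the hypothesis $\supp(\1) \cap (V')^c \subseteq \min((V')^c)$ is \emph{not} satisfied in general, so I cannot apply it directly to $\1$. Instead I would apply it to $X = \Lge{i}\1$: by property (b) of the support function recalled in the excerpt, $\supp(\Lge{i}\1)$ consists precisely of the primes of dimension $\geqslant i$, which is exactly $(V')^c$, and its minimal elements $\min((V')^c)$ are the primes of dimension exactly $i$. Hence $\supp(\Lge{i}\1) \cap (V')^c = (V')^c \subseteq \min((V')^c)$ fails too — but here every prime in $\supp(\Lge{i}\1)$ of dimension $> i$ still needs to be dealt with. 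So this route needs care: what I actually want is to first apply $\Gammale{i}$, which kills the primes of dimension $> i$.

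The cleaner route is to apply \cref{splitting} directly to $X = \Lge{i}\1$ with $V = \{\p \mid \dim(\p)\leqslant i\}$. We have $\supp(\Lge{i}\1) = \{\p \mid \dim(\p)\geqslant i\}$, so $\supp(\Lge{i}\1) \cap V = \{\p \mid \dim(\p) = i\} = \max(V)$, and the hypothesis of \cref{splitting} is satisfied. Therefore
\[
\Elr{i} = \Gammale{i}\Lge{i}\1 = \Gamma_V \Lge{i}\1 \simeq \bigoplus_{\p \in \max(V)} \Gamma_\p \Lge{i}\1 = \bigoplus_{\dim(\p)=i} \Gamma_\p \Lge{i}\1.
\]
It then remains to identify $\Gamma_\p \Lge{i}\1 \simeq \Gamma_\p L_\p \1$ for each prime $\p$ of dimension $i$. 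This is the step I expect to require the most thought: I would argue that $\Gamma_\p$ only sees the part of an object supported on $\downcl(\p) = \overline{\{\p\}}$, and on this set the localizations $\Lge{i}$ and $L_\p$ agree because $\downcl(\p) \cap \{\p \mid \dim(\p) \geqslant i\} = \{\p\} = \downcl(\p) \cap \upcl(\p)$ (using that $\p$ has dimension $i$, so no prime strictly below $\p$ has dimension $\geqslant i$). Concretely, by \cref{lem:torsioniscell} it suffices to check that $K_\q \otimes (\Lge{i}\1 \to L_\p \1)$, or rather the relevant comparison map, becomes an equivalence after $\otimes K_\q$ for all $\q \in \downcl(\p)$; the only such $\q$ is $\q = \p$ itself (the map $\Lge{i}\1 \to L_\p\1$ need not exist on the nose, so I would instead compare $\Gamma_\p\Lge{i}\1$ and $\Gamma_\p L_\p\1$ via their common further localization/torsion, or invoke that both are the image of $\1$ under $\Gamma_\p$ composed with a localization away from primes not below or not above $\p$, which near $\p$ coincide). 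Alternatively, one can apply \cref{splitL} to the object $\Gamma_\p\1$ (whose support is contained in $\downcl(\p)$) to split off its $L_\p$-localization cleanly, and then commute $\Gamma_\p$ past the resulting sum. Either way, the main obstacle is this last local comparison, which is a routine but slightly fiddly support-theoretic manipulation once the global splitting via \cref{splitting} is in place.
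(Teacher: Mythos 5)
Your proposal is correct and takes essentially the same approach as the paper's (one-line) proof, which simply cites \cref{splitting} and \cref{splitL}. The first half of your argument is exactly right: applying \cref{splitting} to $X = \Lge{i}\1$ with $V = \{\p \mid \dim(\p)\leqslant i\}$ gives $\Elr{i} = \Gammale{i}\Lge{i}\1 \simeq \bigoplus_{\dim(\p)=i}\Gamma_\p\Lge{i}\1$ since $\supp(\Lge{i}\1)\cap V = \max(V)$.

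The only place where you hedge is the identification $\Gamma_\p\Lge{i}\1 \simeq \Gamma_\p L_\p\1$, which you call ``fiddly'' and sketch several candidate arguments for. In fact the last alternative you mention — applying \cref{splitL} to $\Gamma_\p\1$ — closes the gap cleanly and is surely what the paper has in mind, given that it cites precisely these two lemmas. Since $\supp(\Gamma_\p\1) = \downcl(\p)$ and $\p$ has dimension $i$, one has $\supp(\Gamma_\p\1)\cap\{\dim\geqslant i\} = \{\p\} \subseteq \min\{\dim\geqslant i\}$, so \cref{splitL} gives $\Lge{i}\Gamma_\p\1 \simeq \bigoplus_{\q_i}L_{\q_i}\Gamma_\p\1 \simeq L_\p\Gamma_\p\1$ (the other summands vanish by support). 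Then using that $\Gamma_\p$, $L_\p$, and $\Lge{i}$ are all smashing (\cref{prop:localduality}\ref{item:smashing}) — and hence commute — one concludes $\Gamma_\p\Lge{i}\1 \simeq \Lge{i}\Gamma_\p\1 \simeq L_\p\Gamma_\p\1 \simeq \Gamma_\p L_\p\1$, with no need for the auxiliary $K_\q$-cell check. (Incidentally, the map $\Lge{i}\1\to L_\p\1$ you worried might not exist does exist, since $\{\dim\leqslant i-1\}\subseteq\upcl(\p)^c$ for $\p$ of dimension $i$, but the \cref{splitL}-plus-smashing argument avoids needing it.)
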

\begin{proof}
This follows from~\cref{splitting} and \cref{splitL}.
\end{proof}

\section{Cubes and punctured cubes}\label{sec:cubes}
The adelic and torsion model are built out of categories whose objects are cubes in which the vertices do not live in the same category. In this section, we introduce the necessary formalism to deal with such situations.
\subsection{Constructing cubes}
Let $\C$ be a presentably symmetric monoidal stable $\infty$-category. Throughout the paper we use diagrams indexed on the power set $\mc{P}([i])$ of $[i] = \{0,1,\cdots,i\}$, which can be viewed as $(i+1)$-cubes. We set the convention that $[-1] = \varnothing$. 
\begin{nota}\label{dj}
Fix $j \in [i]$. We write $d_j\colon \mc{P}([i{-}1]) \to \mc{P}([i])$ for the inclusion of the $i$-face indexed by $j$ into the whole $(i+1)$-cube, and $d_{\backslash j}\colon \mc{P}([i{-}1]) \to \mc{P}([i])$ for the inclusion of the $i$-face whose entries do not contain $j$ into the whole cube.
\end{nota}

Fix a diagram of rings $R\colon \mc{P}([i]) \to \msf{CAlg}(\C)$. 
\begin{nota}\label{cubefaces}
For every $0\leqslant j \leqslant i$ we denote by $R_j$ the $i$-cube of rings spanned by all elements containing $j$, and by $\Rwithout{j}$ the $i$-cube of rings spanned by all elements not containing $j$. Observe that every $i$-face of $R$ is of one of this types. 
\end{nota}

\begin{nota}
For $A \subseteq B \in \mathcal{P}([i])$, there is a ring map $R(A) \to R(B)$ and hence a restriction map $\mrm{res}^B_A\colon\mod{R(B)} \to \mod{R(A)}$, which has a left adjoint $\mrm{ext}_A^B\colon \mod{R(A)} \to \mod{R(B)}$ given by extension of scalars. 
\end{nota}  

We now construct a category $\cubeL(R)$, which informally speaking, has as objects diagrams of modules $M$ over the diagram of rings $R$, consisting of an $R(A)$-module $M(A)$ for each $A \in \mc{P}([i])$, together with structure maps $M(A) \to \mrm{res}^B_AM(B)$ for any inclusion $A \subseteq B$ in $\mc{P}([i])$. The following definition makes this precise $\infty$-categorically, and the particular form of definition we give will allow us to make inductive proofs later on.
\begin{defn}\label{defn:cubeL}
	Let $R\colon \mc{P}([i]) \to \msf{CAlg}(\C)$ be a diagram of rings. We inductively define a category $\cubeL(R)$ together with a functor $\pi_{i}\colon \cubeL(R) \to \mrm{Fun}(\mc{P}([i]),\mod{R(\varnothing)})$. We note that the index $i$ comes from the diagram of rings $R$. When $i=-1$ we set $\cubeL(R) = \mod{R(\varnothing)}$ and $\pi_{-1} = \mrm{id}$. For $i \geqslant 0$ we then define $\cubeL(R)$ via the pullback 
	\[ 
	\begin{tikzcd}[column sep=0.5cm]
		\cubeL(R) \ar[r, "\pi_{i}"] \ar[d, "{(\rho_{i},(\rho_j, \rho_{\backslash j})_j)}"'] & \mrm{Fun}(\mc{P}([i]), \mod{R(\varnothing)}) \ar[d, "{(d_{i}^*,(d_j^*, d_{\backslash j}^*)_j)}"] \\
		\cubeL(R_{i}) \times \prod_{j=0}^{i-1}\left(\cubeL(R_j) \times \cubeL(\Rwithout{j})\right) \ar[r] & \substack{\mrm{Fun}(\mc{P}([i{-}1]), \mod{R(\varnothing)}) \\ \times \\ \prod_{j=0}^{i-1}\mrm{Fun}(\mc{P}([i{-}1]), \mod{R(\varnothing)})^{\times 2}}
	\end{tikzcd}
	\]
	where $d_j$ and $d_{\backslash j}$ are as in \cref{dj}.
	The bottom horizontal map of the above square is given by the product of the following two maps
	\[
	\cubeL(R_{i}) \xrightarrow{(\mrm{res}^i_{\varnothing})_* \circ \pi_{i-1}} \mrm{Fun}(\mc{P}([i{-}1]), \mod{R(\varnothing)})
	\]
	and 
	\[ 
	\prod_{j=0}^{i-1}\left(\cubeL(R_j) \times \cubeL(\Rwithout{j})\right) \xrightarrow{ \prod_j ((\mrm{res}^j_{\varnothing})_* \circ \pi_{i-1}) \times \pi_{i-1}} 
	\prod_{j=0}^{i-1}\mrm{Fun}(\mc{P}([i{-}1]), \mod{R(\varnothing)})^{\times 2}.
	\]
	Since $\varnothing$ is initial, there is always a restriction functor $\mrm{res}^A_\varnothing$, and the map $\pi_{i}$ applies these restriction functors to push the whole diagram back into $R(\varnothing)$-modules.
\end{defn}

\begin{rem}
Since we will make several similar constructions in the following sections, let us briefly elaborate on the idea behind the previous definition. The top right vertex of the pullback determines the shape of the objects of $\cubeL(R)$, namely it forces them to be $(i+1)$-cubes. The bottom left vertex determines the extra structure which these objects have, by restricting to faces of the cube; by induction, this gives objects the claimed module structures. The bottom right vertex contains the information about how to glue the shape of the objects together with the extra structure they have. The following examples will further clarify this idea.
\end{rem}

\begin{ex}
	If $i=0$, the data of an object in $\cubeL(R)$ is the data of
        an $R(\varnothing)$-module $M(\varnothing)$, an $R(0)$-module
        $M(0)$, and a map of $R(\varnothing)$-modules $M(\varnothing)
        \to \mrm{res}^0_\varnothing M(0)$. In this case $\pi_0\colon \cubeL(R) \to \mrm{Fun}(\mc{P}([0]), \mod{R(\varnothing)})$ sends the above data to $M(\varnothing) \to \mrm{res}^0_\varnothing M(0)$. 
\end{ex}

\begin{ex}\label{ex-dim1-cubeL}
	If $i=1$, the data of an object in $\cubeL(R)$ is the data of modules $M(\varnothing), M(0), M(1)$ and $M(10)$ over the rings indicated by the subset together with module maps $f_{\varnothing}^1 \colon M(\varnothing) \to \mrm{res}^1_{\varnothing}M(1)$, $f_{0}^{10} \colon M(0) \to \mrm{res}^{10}_{0}M(10)$, $f_{1}^{10} \colon M(1) \to \mrm{res}^{10}_{0}M(10)$ and $f_{\varnothing}^0 \colon M(\varnothing) \to \mrm{res}^0_{\varnothing}M(0)$ making the diagram
	\[
	\begin{tikzcd}[column sep=2cm]
		M(\varnothing) \arrow[r, "f_{\varnothing}^1"] \arrow[d,"f_{\varnothing}^{0}"'] & \mrm{res}^1_{\varnothing} M(1) \arrow[d, "\mrm{res}^1_{\varnothing}f_1^{10}"]\\
		\mrm{res}^0_{\varnothing} M(0) \arrow[r, "\mrm{res}_{\varnothing}^0 f_0^{10}"'] & \mrm{res}^0_{\varnothing}\mrm{res}^{10}_0 M(10) \simeq  \mrm{res}_{\varnothing}^1 \mrm{res}_1^{10}M(10)
	\end{tikzcd}
	\]
	commute.
	We note that the map $\rho_1\colon \cubeL(R)\to\cubeL(R_1)$ records only the right vertical arrow in the above square. 
\end{ex}

\begin{rem}\label{rem:dataofcubeL}
 An object $M$ of $\cubeL(R)$ can be described as the data of:
 \begin{enumerate}[label=(\roman*)]
 	\item for each $A \in \mathcal{P}([i])$ an $R(A)$-module $M(A)$;
	\item for each inclusion $A \subseteq B$ in $\mathcal{P}([i])$ a map $M(A) \to \mathrm{res}_{A}^{B} M(B)$ of $R(A)$-modules.
 \end{enumerate}
These objects and maps assemble into an $(i+1)$-cube which is required to commute.
\end{rem}

\begin{prop}\label{restricttofaces}
	Let $R\colon \mc{P}([i]) \to \msf{CAlg}(\C)$ be a diagram of rings, and let $R_\sigma$ be a face of $R$ (of arbitrary dimension). Then there exists a functor $\rho_\sigma\colon\cubeL(R) \to \cubeL(R_\sigma)$ which restricts to the face corresponding to $R_\sigma$, that is, given $M \in \cubeL(R)$ as in \cref{rem:dataofcubeL}, $\rho_\sigma(M)$ forgets the part of $M$ indexed on $A \not\in \sigma$.
\end{prop}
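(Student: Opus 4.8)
The natural approach is to mimic the inductive structure of Definition~\ref{defn:cubeL}. Recall that $\cubeL(R)$ is defined as a pullback, and the left-hand leg of that pullback is precisely the map
\[
(\rho_i,(\rho_j,\rho_{\backslash j})_j)\colon \cubeL(R) \longrightarrow \cubeL(R_i) \times \prod_{j=0}^{i-1}\bigl(\cubeL(R_j)\times\cubeL(\Rwithout{j})\bigr).
\]
So the projections $\rho_i$, and $\rho_j$, $\rho_{\backslash j}$ for $0\leqslant j\leqslant i-1$ already exist \emph{by construction}: they are the component maps of this leg. Thus for the $i$-dimensional faces of $R$ (the ones of the form $R_j$ or $\Rwithout{j}$), there is nothing to prove. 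The content of the proposition is that one can iterate: any face $R_\sigma$ of arbitrary dimension $k\leqslant i$ is obtained by a chain of such face inclusions, so one should produce $\rho_\sigma$ as a composite.

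I would argue by downward induction on $i-k$ where $k=\dim\sigma$. The base case $k=i$ is the identity (or, if one prefers, the case where $R_\sigma=R$), and the case $k=i-1$ is handled directly by the structural projections above together with the observation that every $i$-face of $R$ is of the form $R_j$ or $\Rwithout{j}$ (noted in \cref{cubefaces}). For the inductive step: given a face $R_\sigma$ with $\dim\sigma=k<i$, choose an index $j\in[i]$ so that $\sigma$ lies inside one of the two $i$-faces $R_j$ or $\Rwithout{j}$ (this is always possible: pick $j$ either in every element of $\sigma$, or in no element of $\sigma$ — at least one holds since $\sigma$ is a proper subcube). Say $\sigma\subseteq d_{\backslash j}(\mc P([i-1]))$; then $R_\sigma$ is a face of the $i$-cube of rings $\Rwithout{j}$, of dimension $k$, and $i-1-k< i-k$. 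By the inductive hypothesis applied to the diagram $\Rwithout{j}\colon \mc P([i-1])\to \calg{\C}$ there is a functor $\rho_\sigma^{\backslash j}\colon \cubeL(\Rwithout{j})\to \cubeL(R_\sigma)$ restricting to that face, and one sets $\rho_\sigma := \rho_\sigma^{\backslash j}\circ \rho_{\backslash j}$. The case $\sigma\subseteq d_j(\mc P([i-1]))$ is symmetric, using $\rho_j$. Unwinding the inductive description of $\pi_i$ (the bottom horizontal map of the pullback square applies $(\mrm{res}^j_\varnothing)_*\circ\pi_{i-1}$ on the relevant factor), one checks on underlying diagrams that the composite indeed forgets exactly the vertices $M(A)$ with $A\notin\sigma$ and retains the claimed $R(A)$-module structures from \cref{rem:dataofcubeL}; this is the routine bookkeeping that I would only sketch.

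The one genuine subtlety — and the step I expect to require the most care — is \emph{well-definedness}: a low-dimensional face $\sigma$ typically sits inside several different $i$-faces (there may be many valid choices of $j$, and for a given $j$ one might route through $R_j$ or through a further sub-face), so one must check the resulting composite $\rho_\sigma$ is independent of these choices, or at least canonically so. The cleanest way to finesse this is not to make arbitrary choices at all but to observe that the whole collection of pullback squares in \cref{defn:cubeL}, as $i$ varies, is compatible: the various $\cubeL(R_\sigma)$ together with the projection functors form a diagram indexed by the poset of faces of $\mc P([i])$, and $\cubeL(R)$ maps to the limit. Concretely, one shows by induction that for nested faces $\sigma'\subseteq\sigma$ the square of $\rho$'s commutes (up to canonical equivalence), using the uniqueness of maps into a pullback; then $\rho_\sigma$ for any $\sigma$ is the unique-up-to-equivalence functor compatible with the two or three structural projections landing in it. Granting this coherence, the explicit description of $\rho_\sigma(M)$ on objects follows by tracing through \cref{ex-dim1-cubeL}-style unwindings, and the proof is complete.
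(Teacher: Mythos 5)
There is a genuine gap, and it appears at the very start of your argument. You write that the structural projections give $\rho_i$ together with $\rho_j, \rho_{\backslash j}$ for $0\leqslant j\leqslant i-1$, and you conclude from this that ``for the $i$-dimensional faces of $R$ (the ones of the form $R_j$ or $\Rwithout{j}$), there is nothing to prove.'' But there are $2(i+1)$ many $i$-faces of an $(i+1)$-cube, and the structural leg of the defining pullback supplies only $2i+1$ of them. The one that is missing is precisely $\rho_{\backslash i}\colon\cubeL(R)\to\cubeL(\Rwithout{i})$ — the restriction to the face whose entries avoid the top index $i$. Your own enumeration of the available structural projections shows it is not among them, yet your plan treats it as already given. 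Since $\Rwithout{i}$ is itself an $i$-face, your downward induction on $i-\dim\sigma$ never reaches it: it is not a composite through a larger face, it must be built from scratch. The entire content of the paper's proof is the inductive construction of this one functor $\rho_{\backslash i}$ (base case $i=0$ via $\pi_0$ followed by $d_{\backslash 0}^*$; inductive step by producing compatible maps into the three factors of the pullback defining $\cubeL(\Rwithout{i})$), and your write-up skips it entirely.

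Separately, the well-definedness/coherence concern you flag at the end is, in the framework the paper uses, not actually the delicate point: once one has the $i$-face restrictions, composing them gives canonical maps, and any needed coherence is exactly what the pullback description records. The time spent there would be better redirected to the missing construction of $\rho_{\backslash i}$.
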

\begin{proof}
	It suffices to restrict to the case when $R_\sigma$ is an $i$-face, so that $R_\sigma = R_j$ or $R_{\backslash j}$. In order to do this, we prove by induction that given any $R\colon \mc{P}([i]) \to \msf{CAlg}(\C)$, that there exists restriction functors $\rho_j\colon \cubeL(R) \to \cubeL(R_j)$ and $\rho_{\backslash j}\colon \cubeL(R) \to \cubeL(R_{\backslash j})$ for all $0 \leqslant j \leqslant i$. 
	Note that we have $\rho_j$ for all $0 \leqslant j \leqslant i$ and $\rho_{\backslash j}$ for all $0 \leqslant j \leqslant i-1$ by \cref{defn:cubeL}, so it suffices to deal with the case $\cubeL(R) \to \cubeL(R_{\backslash i})$.
	
	For the base case $i=0$, we define $\rho_{\backslash 0}$ to be the composite \[\cubeL(R) \xrightarrow{\pi_0} \mrm{Fun}(\mc{P}([0]), \mod{R(\varnothing)}) \xrightarrow{d_{\backslash 0}^*} \mod{R(\varnothing)} = \cubeL(R_{\backslash 0}).\]

	So suppose the claim holds for $i=k-1$, and fix an $R\colon \mc{P}([k]) \to \msf{CAlg}(\C)$. In order to construct the restriction map $\rho_{\backslash k}\colon \cubeL(R) \to \cubeL(\Rwithout{k})$, by definition of $\cubeL(R_{\backslash k})$ as a pullback, it suffices to construct compatible functors:
	\begin{enumerate}
		\item $\cubeL(R) \to \mrm{Fun}(\mc{P}([k{-}1]), \mod{R_{\backslash k}(\varnothing)})$,
		\item $\cubeL(R) \to \cubeL((R_{\backslash k})_{j})$ for $0 \leqslant j \leqslant k-1$,
		\item $\cubeL(R) \to \cubeL((R_{\backslash k})_{\backslash j})$ for all $0 \leqslant j \leqslant k-2$.
	\end{enumerate} 
	For (1), we note that $R_{\backslash k}(\varnothing) = R(\varnothing)$. We have a functor \[\pi_{k}\colon \cubeL(R) \to \mrm{Fun}(\mc{P}([k]), \mod{R(\varnothing)})\] and we postcompose this with the restriction along the inclusion $\mc{P}([k{-}1]) \to \mc{P}([k])$ which takes the face avoiding $k$. For (2), we take the composite \[\cubeL(R) \xrightarrow{\rho_j} \cubeL(R_j) \xrightarrow{\rho_{\backslash k}} \cubeL((\Rwithout{k})_j)\] where the latter map exists by the inductive hypothesis. For (3), we take the composite \[\cubeL(R) \xrightarrow{\rho_{\backslash j}} \cubeL(\Rwithout{j}) \xrightarrow{\rho_{\backslash k}} \cubeL((\Rwithout{k})_{\backslash j})\] where again the last map exists by the inductive hypothesis.
\end{proof}

We now turn to defining punctured cubes. We write $\mc{P}([i])_{\neq\varnothing}$ for the full subcategory of $\mc{P}([i])$ consisting of non-empty subsets of $[i]$. We note that this can be pictured as a punctured $(i+1)$-cube.
\begin{defn}\label{Cequals} 
Let $R\colon \mc{P}([i]) \to \msf{CAlg}(\C)$ be a diagram of rings. We define a category $\equals{R}$. When $i=0$ we set $\equals{R} = \mod{R(0)}$. For $i \geqslant 1$ we define $\equals{R}$ as the pullback
\[\begin{tikzcd}[column sep=2cm]
\equals{R} \arrow[r, "\pi_i^\mrm{Pc}"] \arrow[d, "(\tau_j)_j"'] & \mrm{Fun}(\mc{P}([i])_{\neq\varnothing}, \mod{R(\varnothing)}) \arrow[d, "(d_j^*)_j"]\\
\prod_{j=0}^i \cubeL(R_j) \arrow[r,"\prod(\mrm{res}_{\varnothing}^j)_* \circ \pi_{i-1}"'] & \prod_{j=0}^i  \mrm{Fun}(\mc{P}([i{-}1]), \mod{R(\varnothing)}).
\end{tikzcd}\]
\end{defn}

\begin{ex}
For $i=1$, an object in $\equals{R}$ can be represented by a punctured $2$-cube which is obtained from the square in \Cref{ex-dim1-cubeL} by removing the module $M(\varnothing)$.
\end{ex}

\begin{rem}\label{rem:dataofpc}
In an analogous fashion to \cref{rem:dataofcubeL}, an object $M$ of $\equals{R}$ can be described as the data of:
 \begin{enumerate}[label=(\roman*)]
 	\item for each $A \in \mathcal{P}([i])_{\neq \varnothing}$ an $R(A)$-module $M(A)$;
	\item\label{structuremaps} for each inclusion $A \subseteq B$ in $\mathcal{P}([i])_{\neq \varnothing}$ a map $M(A) \to \mathrm{res}_{A}^{B} M(B)$ of $R(A)$-modules.
 \end{enumerate}
The data can be assembled into a punctured $(i+1)$-cube whose faces are required to commute. We note that by adjunction, the structure maps in \ref{structuremaps} give maps $\mrm{ext}_A^B M(A) \to M(B)$ for each $A \subseteq B$ in $\mc{P}([i])_{\neq\varnothing}$, which we refer to as \emph{adjoint structure maps}.
\end{rem}

\subsection{Adjunctions} We now discuss various adjunctions between the underlying category and the category of cubes.
\begin{lem}\label{leftadjointoncubes}
Let $R\colon \mc{P}([i]) \to \msf{CAlg}(\C)$ be a diagram of rings. There is a functor $R\otimes_{R(\varnothing)} -\colon \mod{R(\varnothing)} \to \cubeL(R)$ given by $(R \otimes X)(A) = R(A) \otimes_{R(\varnothing)} X$. Moreover, this has a right adjoint defined by the composite \[\cubeL(R) \xrightarrow{\pi_i} \mrm{Fun}(\mc{P}([i]), \mod{R(\varnothing)}) \xrightarrow{\mrm{lim}} \mod{R(\varnothing)}.\] In other words, the right adjoint applies restriction of scalars to the cube so that each vertex is an $R(\varnothing)$-module and then takes the limit of this cube. 
\end{lem}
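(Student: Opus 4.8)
The plan is to establish both claims by induction on $i$, using the pullback presentation of $\cubeL(R)$ from \cref{defn:cubeL} and the concrete description of its objects in \cref{rem:dataofcubeL}. Conceptually the statement should be thought of as follows: since $\varnothing$ is the initial object of $\mc{P}([i])$, taking the limit over $\mc{P}([i])$ agrees with evaluation at $\varnothing$, so the composite $\lim\circ\pi_i$ simply reads off the bottom vertex $M(\varnothing)$ of a cube; dually, $R\otimes_{R(\varnothing)}-$ is the ``free filling'' of a cube from its bottom vertex, i.e.\ the constant cube on $X$ with scalars extended pointwise. Thus there is little to prove beyond making these manipulations precise and coherent $\infty$-categorically, which is the one genuine difficulty.

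\emph{Constructing $F:=R\otimes_{R(\varnothing)}-$.} For $i=-1$ we have $\cubeL(R)=\mod{R(\varnothing)}$ and $F=\mrm{id}$. For the inductive step, recall that since $\varnothing$ is initial in $\mc{P}([i])$, for each $A$ there is a ring map $R(\varnothing)\to R(A)$ and hence an adjunction $\mrm{ext}_\varnothing^A\dashv\mrm{res}_\varnothing^A$. To define $F\colon\mod{R(\varnothing)}\to\cubeL(R)$ via the pullback square of \cref{defn:cubeL}, I would supply three compatible functors out of $\mod{R(\varnothing)}$: to $\mrm{Fun}(\mc{P}([i]),\mod{R(\varnothing)})$, the functor $X\mapsto\ul{R}\otimes_{R(\varnothing)}X$, where $\ul{R}$ denotes the underlying $R(\varnothing)$-module diagram $A\mapsto\mrm{res}_\varnothing^A R(A)$ of the ring diagram $R$, so that pointwise this is $A\mapsto\mrm{res}_\varnothing^A\mrm{ext}_\varnothing^A X$; to each face $\cubeL(R_j)$ (for $0\leqslant j\leqslant i$), the free functor supplied by the inductive hypothesis precomposed with $\mrm{ext}_\varnothing^j\colon\mod{R(\varnothing)}\to\mod{R(j)}$; and to each face $\cubeL(\Rwithout{j})$ (for $0\leqslant j\leqslant i-1$), the free functor directly, using that $\Rwithout{j}(\varnothing)=R(\varnothing)$. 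Compatibility with the bottom-right corner of the square is then routine, since both composites compute $A\mapsto\mrm{res}_\varnothing^A\mrm{ext}_\varnothing^A X$ on the relevant $i$-faces. The resulting $F$ satisfies $(FX)(A)=R(A)\otimes_{R(\varnothing)}X$, with structure maps the units of $\mrm{ext}\dashv\mrm{res}$.

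\emph{The adjunction.} It remains to produce a natural equivalence
\[\Map_{\cubeL(R)}(FX,M)\simeq\Map_{\mod{R(\varnothing)}}(X,\lim\pi_i M).\]
By \cref{rem:dataofcubeL} a map $FX\to M$ is a compatible family of $R(A)$-module maps $R(A)\otimes_{R(\varnothing)}X\to M(A)$; transposing each along $\mrm{ext}_\varnothing^A\dashv\mrm{res}_\varnothing^A$ converts this into a compatible family of $R(\varnothing)$-module maps $X\to\mrm{res}_\varnothing^A M(A)$, i.e.\ a cone with apex $X$ over the diagram $\pi_i M$, i.e.\ a map $X\to\lim\pi_i M$. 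To upgrade this to a coherent equivalence of mapping spaces, I would induct once more: $\Map_{\cubeL(R)}(FX,M)$ is computed as the pullback, over the square of \cref{defn:cubeL}, of the mapping spaces in the $\cubeL$ of the $i$-faces and in $\mrm{Fun}(\mc{P}([i]),\mod{R(\varnothing)})$; the inductive hypothesis identifies each of these with a mapping space in a module category, and the pullback reassembles to $\Map_{\mod{R(\varnothing)}}(X,\lim\pi_i M)$. A tidier alternative is to show, again by induction along the pullback, that $\pi_i$ itself has a left adjoint $L_{\pi_i}$ with $(L_{\pi_i}D)(A)=\mrm{ext}_\varnothing^A D(A)$; since $\lim$ has the constant-diagram functor $\Delta$ as left adjoint, $L_{\pi_i}\circ\Delta$ is left adjoint to $\lim\circ\pi_i$, and it sends $X$ to the cube $A\mapsto\mrm{ext}_\varnothing^A X=R(A)\otimes_{R(\varnothing)}X$, i.e.\ it agrees with $F$.

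\emph{The main obstacle.} The only real difficulty is coherence: producing $F$ as an honest $\infty$-functor and the adjunction as genuine $\infty$-categorical data rather than as a bijection of homotopy classes of maps. The inductive pullback description of $\cubeL(R)$ is precisely what makes this manageable, since at each stage one only glues together adjunctions already constructed for the lower-dimensional faces, using that limits and mapping spaces in a pullback of $\infty$-categories are the corresponding pullbacks; everything ultimately reduces to the single-ring adjunction $\mrm{ext}_\varnothing^A\dashv\mrm{res}_\varnothing^A$ and its naturality in $A$.
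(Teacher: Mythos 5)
Your proposal follows the same inductive scheme as the paper: build $F$ by supplying compatible functors to the three legs of the defining pullback, then get the right adjoint by propagating adjunctions through the pullback. The two points you correctly identify as ``the only real difficulty'' are exactly where the paper invokes external machinery rather than arguing directly: (a) producing the coherent diagram $\ul{R} = (A\mapsto\mrm{res}_\varnothing^A R(A))$ in $\mrm{Fun}(\mc{P}([i]),\mod{R(\varnothing)})$, which you simply write down, is handled by citing a straightening/unstraightening-type theorem; and (b) the claim that $\Map_{\cubeL(R)}(FX,M)$ ``reassembles'' as a pullback of mapping spaces into $\Map(X,\lim\pi_i M)$ is precisely the content of a general result asserting that a functor defined into a pullback of $\infty$-categories admits a right adjoint whenever its components do, computed componentwise. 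So your argument is essentially a rederivation of that lemma in this special case, which is fine but should be flagged as the nontrivial step rather than left as ``the pullback reassembles.'' Your ``tidier alternative'' ($F\simeq L_{\pi_i}\circ\Delta$ with $\Delta\dashv\lim$ and $L_{\pi_i}\dashv\pi_i$) would work, but establishing that $\pi_i$ has a left adjoint of the claimed form and is compatible with $F$ requires the same inductive pullback argument, so it does not shorten the proof.
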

\begin{proof}
To prove the existence of the functor $R \otimes_{R(\varnothing)} -$ we argue by induction on $i$. For the base case $i=-1$, $\cubeL(R) = \mod{R(\varnothing)}$ and hence the claim the trivial. So suppose the claim is true for $i=k-1$. Now fix $R\colon \mc{P}([k]) \to \msf{CAlg}(\C)$. Since $\cubeL(R)$ is defined as a pullback, it suffices to give compatible maps:
\begin{enumerate}
\item $\mod{R(\varnothing)} \to \mrm{Fun}(\mc{P}([k]), \mod{R(\varnothing)})$
\item $\mod{R(\varnothing)} \to \cubeL(R_k) \times \prod_{j=0}^{k-1}(\cubeL(R_j) \times \cubeL(\Rwithout{j}))$. 
\end{enumerate}
For the latter, on each component we take the functors which exist by inductive hypothesis.
For the former we take the functor adjoint to the functor 
\[\mc{P}([k]) \times \mod{R(\varnothing)} \xrightarrow{\mathrm{res}_{\varnothing}(R) \times \mrm{id}} \mod{R(\varnothing)} \times \mod{R(\varnothing)} \xrightarrow{-\otimes_{R(\varnothing)}-} \mod{R(\varnothing)}\] 
where $\mathrm{res}_{\varnothing}(R)$ is the functor that sends any subset $A$ to $\mrm{res}^A_{\varnothing}R(A)$. In order to construct $\mathrm{res}_{\varnothing}(R)$ we first consider the functor $\mod{R}\colon \mc{P}([k])^{\mathrm{op}} \to \Cat_\infty$ sending a subset $A$ to $\mod{R(A)}$ and any inclusion $A \subseteq B$ to the restriction of scalars functor $\mathrm{res}^B_A \colon \mod{R(B)}\to \mod{R(A)}$. The functor $\mathrm{res}_{\varnothing}(R)$ is obtained from the functor $\mod{R}$ by applying \cite[Theorem 7.15]{GLP2024}. This concludes the proof of the existence of the functor $R \otimes_{R(\varnothing)} -$.

For the existence of the right adjoint, we again argue by induction on $i$. When $i=-1$, $R \otimes_{R(\varnothing)} -$ is the identity and so has a right adjoint. So suppose the claim holds for $i=k-1$, and fix a diagram of rings $R\colon \mc{P}([k]) \to \msf{CAlg}(\C)$. In order to construct a right adjoint to $R \otimes_{R(\varnothing)} -$, by \cite[Theorem 5.5]{rightadjoints} it suffices to construct right adjoints on each of the vertices of the defining pullback of $\cubeL(R)$. In the bottom left vertex we have right adjoints by the induction hypothesis, and in the top right vertex we have an adjoint given by taking the limit. Therefore $R\otimes_{R(\varnothing)} -\colon \mod{R(\varnothing)} \to \cubeL(R)$ has a right adjoint, and it is of the claimed form by \cite[Theorem 5.5]{rightadjoints}.
\end{proof}


So far, since the diagrams in $\cubeL(R)$ have an initial object, the limit just picks out this initial object. So now we move to the punctured setting, and show that the previous two results hold true in this setting also.
\begin{prop}\label{puncturedcubeadjunction}
Let $R\colon \mc{P}([i]) \to \msf{CAlg}(\C)$ be a diagram of rings. There is a functor \[R \otimes_{R(\varnothing)} -\colon \mod{R(\varnothing)} \to \equals{R}\] which has a right adjoint given by the composite \[\equals{R} \xrightarrow{\pi_i^\mrm{Pc}} \mrm{Fun}(\mc{P}([i])_{\neq\varnothing}, \mod{R(\varnothing)}) \xrightarrow{\mrm{lim}} \mod{R(\varnothing)}\] where $\pi_i^\mrm{Pc}$ is as in \cref{Cequals}.  
\end{prop}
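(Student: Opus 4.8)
The plan is to deduce \cref{puncturedcubeadjunction} from \cref{leftadjointoncubes} by factoring the functor through $\cubeL(R)$, rather than redoing the inductive construction from scratch. The inclusion $\mc{P}([i])_{\neq\varnothing}\hookrightarrow\mc{P}([i])$ should induce a forgetful functor $U\colon\cubeL(R)\to\equals{R}$ that discards the vertex indexed by $\varnothing$: comparing the defining pullbacks of \cref{defn:cubeL} and \cref{Cequals}, one has a restriction functor $\mrm{Fun}(\mc{P}([i]),\mod{R(\varnothing)})\to\mrm{Fun}(\mc{P}([i])_{\neq\varnothing},\mod{R(\varnothing)})$ on the upper-right corner, and the projection $\cubeL(R_i)\times\prod_{j=0}^{i-1}(\cubeL(R_j)\times\cubeL(\Rwithout{j}))\to\prod_{j=0}^{i}\cubeL(R_j)$ forgetting the $\cubeL(\Rwithout{j})$ factors on the lower-left corner; since each $d_j$ and $d_i$ has image inside $\mc{P}([i])_{\neq\varnothing}$, these two functors agree over the lower-right corner and hence induce $U$ (when $i=0$ one simply takes $U=\rho_0$). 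I then \emph{define} $R\otimes_{R(\varnothing)}-\colon\mod{R(\varnothing)}\to\equals{R}$ to be the composite of $U$ with the functor $R\otimes_{R(\varnothing)}-\colon\mod{R(\varnothing)}\to\cubeL(R)$ of \cref{leftadjointoncubes}; by construction $(R\otimes X)(A)=R(A)\otimes_{R(\varnothing)}X$ for all $A\in\mc{P}([i])_{\neq\varnothing}$, as wanted.

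For the right adjoint, a right adjoint of a composite is the composite of the right adjoints in reverse order, and \cref{leftadjointoncubes} identifies the right adjoint of $R\otimes_{R(\varnothing)}-\colon\mod{R(\varnothing)}\to\cubeL(R)$ with $\mrm{lim}\circ\pi_i$. So it remains to produce a right adjoint $V$ of $U$ and to compute $\pi_i\circ V$. I claim $V\colon\equals{R}\to\cubeL(R)$ is the functor that fills in the missing vertex with the limit: $V(M)(A)=M(A)$ for $A\neq\varnothing$, and $V(M)(\varnothing)=\mrm{lim}\,\pi_i^{\mrm{Pc}}(M)$ computed in $\mod{R(\varnothing)}$, with the structure maps out of $\varnothing$ the canonical limit projections. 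That $U\dashv V$ can be checked directly: for $N\in\cubeL(R)$ and $M\in\equals{R}$, a map $N\to V(M)$ is the data of a map $U(N)\to M$ together with an $R(\varnothing)$-module map $N(\varnothing)\to V(M)(\varnothing)$ compatible with the structure maps of $N$ and the limit projections defining $V(M)(\varnothing)$, and the universal property of the limit makes the space of such compatible maps contractible; hence $\Map_{\cubeL(R)}(N,V(M))\to\Map_{\equals{R}}(U(N),M)$ is an equivalence. (Alternatively, $V$ may be assembled from the universal property of the pullback defining $\cubeL(R)$ together with \cite[Theorem 5.5]{rightadjoints}, exactly as in \cref{leftadjointoncubes}.) Finally, $\pi_i(V(M))$ is the $(i+1)$-cube of $R(\varnothing)$-modules extending $\pi_i^{\mrm{Pc}}(M)$ by placing $\mrm{lim}\,\pi_i^{\mrm{Pc}}(M)$ at $\varnothing$; since $\varnothing$ is initial in $\mc{P}([i])$, the limit over $\mc{P}([i])$ of any diagram is its value at $\varnothing$, so $\mrm{lim}\circ\pi_i\circ V=\mrm{lim}\circ\pi_i^{\mrm{Pc}}$, which is the asserted right adjoint.

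The step I expect to demand the most care is the $\infty$-categorical coherence underlying the above: that the two corner functors genuinely glue to $U$ (compatibility over the lower-right corner of the defining pullbacks, up to coherent homotopy), and that $V$ assembles into a functor $\equals{R}\to\cubeL(R)$ — or, on the alternative route, that the hypotheses of \cite[Theorem 5.5]{rightadjoints} are verified. These coherences are of exactly the same nature as those already handled in \cref{leftadjointoncubes}, via \cite[Theorem 7.15]{GLP2024} for the functoriality of restriction of scalars and \cite[Theorem 5.5]{rightadjoints} for producing right adjoints of functors into pullbacks, so I do not anticipate a genuinely new difficulty. The only feature truly absent in \cref{leftadjointoncubes} is that $\mc{P}([i])_{\neq\varnothing}$ has no initial object, so — unlike in the cube case, where the limit collapses to evaluation at $\varnothing$ — the composite $\mrm{lim}\circ\pi_i^{\mrm{Pc}}$ is a genuine (punctured) limit; this is precisely the content the proposition records.
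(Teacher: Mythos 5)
Your proof is correct, and it takes a genuinely different route from the paper's. The paper constructs $R\otimes_{R(\varnothing)}-\colon\mod{R(\varnothing)}\to\equals{R}$ directly via the universal property of the pullback defining $\equals{R}$, supplying compatible functors into $\mrm{Fun}(\mc{P}([i])_{\neq\varnothing},\mod{R(\varnothing)})$ (by restricting the cube functor from \cref{leftadjointoncubes}) and into $\prod_{j=0}^i\cubeL(R_j)$, and then invokes \cite[Theorem 5.5]{rightadjoints} once to produce the right adjoint, whose form it leaves implicit. You instead factor the functor as $U\circ(R\otimes_{R(\varnothing)}-)_{\cubeL(R)}$ through an apex-discarding functor $U\colon\cubeL(R)\to\equals{R}$ — which the paper never isolates — prove $U$ has a right adjoint $V$ given by the punctured limit, and compose right adjoints. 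This buys a cleaner identification of the right adjoint: the equality $\lim\circ\,\pi_i\circ V=\lim\circ\,\pi_i^{\mrm{Pc}}$ (because $\varnothing$ is initial in the full cube) makes the claimed form transparent, whereas the paper's route leaves the reader to unwind \cite[Theorem 5.5]{rightadjoints}. The cost is that you must establish $U$ as a functor of pullbacks (and $V$ as its right adjoint), which, as you note, is the same kind of coherence work already done in \cref{leftadjointoncubes}; your direct verification of the adjunction $U\dashv V$ is stated informally, but your proposed alternative via the universal property of the defining pullback together with \cite[Theorem 5.5]{rightadjoints} is the rigorous version and is exactly what's needed.
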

\begin{proof}
In order to construct the functor $R \otimes_{R(\varnothing)} -\colon \mod{R(\varnothing)} \to \equals{R}$, it suffices to give compatible functors $\mod{R(\varnothing)} \to \mrm{Fun}(\mc{P}([i])_{\neq\varnothing}, \mod{R(\varnothing)})$ and $\mod{R(\varnothing)} \to \prod_{j=0}^i \cubeL(R_j)$ by the defining pullback. For the former we take the functor \[\mod{R(\varnothing)} \to \mrm{Fun}(\mc{P}([i]), \mod{R(\varnothing)}) \to \mrm{Fun}(\mc{P}([i])_{\neq\varnothing}, \mod{R(\varnothing)})\] where the first functor is as in the proof of \cref{leftadjointoncubes}, and the second is restriction along the evident inclusion. We note that this functor has a right adjoint given by taking the limit. For the functor $\mod{R(\varnothing)} \to \prod_{j=0}^i \cubeL(R_j)$ we take the functor constructed in \cref{leftadjointoncubes} on each component, which has a right adjoint. As such we conclude by applying \cite[Theorem 5.5]{rightadjoints}.
\end{proof}

\section{The adelic model}\label{sec:adelic}
In this section, we will prove the existence of the adelic model for a tensor-triangulated category $\C$, based on assembly data $(\X, \alpha)$. In particular, this generalises the adelic model of~\cite{adelicm}, thus bridging a gap between the model of \cite{adelicm}  and the version of~\cite{GreenleesShipley18}, as we will discuss in \cref{subsec:torusspec}.
Some of the arguments in this section follow similar ideas to those of~\cite{adelicm}. However, that paper works model categorically, so here we note that the argument applies equally well in the $\infty$-categorical setting. The adelic model constructed here provides the first step towards building the torsion model as we will see in the following sections. 

\subsection{Constructing the model}
Henceforth we assume that $\C$ is as in \cref{hyp:hyp}, and we fix assembly data $(\X, \alpha)$ for $\C$. We write $d$ for the dimension of the Balmer spectrum $\Spc(\C^\omega)$. The reader may find it helpful to keep in mind the `finest' assembly data of \cref{assemblyexamples}\ref{finestassembly}.

\begin{defn}\label{def:adeliccube}
Let $A = \{i_0{<}i_1{<}\ldots{<}i_{n-1}{<} i_n\}$ be a non-empty subset of $\{0,1, \ldots, d\}$. We define
\[\1_\ad^\X(A) = \prod_{\substack{x_n \in \X \\ \dim(x_n) = i_n}} L_{x_n}^\X\prod_{\substack{x_{n-1} \in \X \\ \dim(x_{n-1}) = i_{n-1}}} L_{x_{n-1}}^\X \cdots \prod_{\substack{x_0 \in \X \\ \dim(x_0) = i_0}} L_{x_0}^\X\Lambda_{x_0}^\X\1.\]
The resulting object is a commutative algebra object in $\C$ by \cref{Pmonoidal}. 
\end{defn}


\begin{rem}\label{differentcubes}
We note that there are two differences between the adelic cube defined
in \cref{def:adeliccube} and the one given in \cite{adelicm}. Firstly,
here we consider an arbitrary choice of assembly data on $\C$, while in \cite{adelicm} only the finest assembly data is considered. 
More significantly, we correct a mistake
in \cite{adelicm}: in \cref{def:adeliccube} the products are taken
over all primes, this rectifies an error in \cite{adelicm}, where the
products are indexed by inclusion chains of primes (for example,
one may check that restricting to inclusion chains is incorrect in
dimension 1 if the Balmer spectrum is reducible). The proof given in
\cite{adelicm} applies without change when the indexing is corrected,
but it will be repeated here. 
\end{rem}

\begin{lem}\label{1adfunctor}
The assignment $A \mapsto \1_\ad^\X(A)$ (and $\varnothing \mapsto \1$) defines a functor \[\1_\ad^\X\colon \mc{P}([d]) \to \msf{CAlg}(\C)\] which we call the \emph{adelic cube}, and therefore by restriction also a functor $\1_\ad^\X\colon \mc{P}([d])_{\neq\varnothing} \to \msf{CAlg}(\C)$ which we call the \emph{punctured adelic cube}.
\end{lem}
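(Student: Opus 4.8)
The plan is to construct the structure maps explicitly and then check the composition (cocycle) conditions, proceeding by induction on $d$. The basic ingredients are the localization and completion unit transformations $\mathrm{id}_\C \to L^\X_x$ and $\mathrm{id}_\C \to \Lambda^\X_x$ for $x \in \X$, which are lax symmetric monoidal by \cref{Pmonoidal} and hence induce natural transformations of endofunctors of $\msf{CAlg}(\C)$, together with the diagonal maps $R \to \prod_{x \in S}R$ for a fixed-dimension slice $S$ of $\X$ (products in $\msf{CAlg}(\C)$ being computed in $\C$). Composing these, for each $k \in \{0,\ldots,d\}$ we obtain two ``layer operations'' on $\msf{CAlg}(\C)$, namely $\msf{L}_k(-) := \prod_{\dim x = k}L^\X_x(-)$ and $\widehat{\msf{L}}_k(-) := \prod_{\dim x = k}L^\X_x\Lambda^\X_x(-)$, each equipped with a unit transformation out of $\mathrm{id}$; and the bookkeeping observation underlying everything is that $\1_\ad^\X(\{i_0 < \cdots < i_n\}) = \msf{L}_{i_n}\cdots\msf{L}_{i_1}\widehat{\msf{L}}_{i_0}(\1)$, so that enlarging a subset always amounts to inserting a layer (applying a unit transformation) somewhere in this string.

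For an atomic inclusion $A \subseteq A\cup\{j\}$ with $A = \{i_0<\cdots<i_n\}$ there are three cases. If $j > i_n$ the new layer is outermost, and the structure map is the unit of $\msf{L}_j$ at $\1_\ad^\X(A)$. If $i_{k-1}<j<i_k$, we whisker the unit of $\msf{L}_j$ with the outer string $\msf{L}_{i_n}\cdots\msf{L}_{i_k}$ and evaluate at $\msf{L}_{i_{k-1}}\cdots\widehat{\msf{L}}_{i_0}(\1)$. The remaining case $j < i_0$ is the delicate one: the completion must migrate from the bottom layer $\widehat{\msf{L}}_{i_0}$ to a new bottom layer $\widehat{\msf{L}}_j$, so we need a natural map $\msf{L}_{i_0}\widehat{\msf{L}}_{i_0}(\1) \to \msf{L}_{i_0}\widehat{\msf{L}}_j(\1)$ to whisker with $\msf{L}_{i_n}\cdots\msf{L}_{i_1}$. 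For this I would first record the identity $\Lambda^\X_V\Lambda^\X_{V'}\simeq\Lambda^\X_V$ for specialization closed $V \subseteq V'$ in $\X$ --- a formal consequence of $\Gamma^\X_V$ being smashing together with the adjunction and MGM equivalence of \cref{prop:localduality} --- which supplies comparison maps $\Lambda^\X_x\1 \to \Lambda^\X_{x'}\1$ whenever $x' \leqslant x$; then I would use the splitting results of \cref{sec:splitting} to show that after applying the localization $\prod_{\dim x = i_0}L^\X_x$ only the factors of $\widehat{\msf{L}}_j(\1)$ indexed by primes below the relevant dimension-$i_0$ prime survive, so that these comparison maps assemble into the required map. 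This is exactly the point at which the (incorrect) chain-indexing of \cite{adelicm} and the corrected indexing of \cref{def:adeliccube} are reconciled, cf.\ \cref{differentcubes}.

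It then remains to verify that these maps compose correctly, so that $\1_\ad^\X$ is a functor $\mc{P}([d]) \to \msf{CAlg}(\C)$ --- the structure maps land in $\msf{CAlg}(\C)$ because the $L^\X_x$ and $\Lambda^\X_x$ are lax monoidal. I would do this by induction on $d$: the restriction of $\1_\ad^\X$ to subsets not containing $d$ is the $(d{-}1)$-dimensional adelic cube, which is a functor by the inductive hypothesis; the restriction to subsets containing $d$ is obtained by applying the outermost layer $\msf{L}_d$ to the $(d{-}1)$-dimensional cube, except that the value at $\{d\}$ is $\widehat{\msf{L}}_d(\1)$ rather than $\msf{L}_d(\1)$; and the maps connecting the two halves are units of $\msf{L}_d$. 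Coherence of the total diagram then reduces to coherence of the maps emanating from the new singleton $\{d\}$ (in particular the migration map of the delicate case), which is a naturality check. The main obstacle, as flagged above, is precisely this migration step and its interaction with the infinite products in \cref{def:adeliccube}; everything else is formal and parallels \cite{adelicm}.
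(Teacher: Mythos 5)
Your strategy is genuinely different from the paper's. The paper constructs the whole cube in one stroke by iterating the unit transformations $\eta_i\colon\mrm{id} \Rightarrow \prod_{x_i}L^\X_{x_i}$ starting from the arrow $\1 \to \prod_{x_0}\Lambda^\X_{x_0}\1$, so that applying a natural transformation to a $k$-cube automatically produces a coherent $(k+1)$-cube and no case analysis on individual structure maps is needed. You instead build the structure maps directly, and you correctly flag the case $j < i_0$ --- the ``migration'' of the completion layer --- as the delicate one; that is indeed where all the difficulty in an explicit construction would sit.

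The proposed resolution of that step has a genuine gap. You claim that after applying $\prod_{\dim x = i_0}L^\X_x$, only the factors of $\widehat{\msf{L}}_j(\1) = \prod_{\dim y = j}L^\X_y\Lambda^\X_y\1$ indexed by $y \leqslant x$ survive, so that the comparison maps $\Lambda^\X_x\1 \to \Lambda^\X_y\1$ (which only exist for $y \leqslant x$) assemble into the required map. This vanishing is false. Take $\C = \msf{D}(\Z[x])$ with the finest assembly data, let $x$ be the dimension-one prime $(x)$ and $y$ the closed point $\m = (2, x+1)$; these are incomparable. Then $L_{(x)}L_\m\Lambda_\m\1 \simeq (\Z[x]^\wedge_\m)_{(x)}$ is a nonzero ring: $\Z[x]^\wedge_\m$ is a complete two-dimensional local domain, and inverting the multiplicative set $\{f \in \Z[x] : f(0) \neq 0\}$ cannot annihilate a domain. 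The surviving part is concentrated at the generic point $(0) \in \upcl(x) \cap \upcl(y)$, and the splitting lemmas of \cref{sec:splitting} give no vanishing here since they concern support conditions that are not satisfied. So the factors with $y \not\leqslant x$ do not die, and no map out of them is supplied. (There is also a small misstatement: the source of the migration map should be $\widehat{\msf{L}}_{i_0}(\1)$, not $\msf{L}_{i_0}\widehat{\msf{L}}_{i_0}(\1)$; the latter double-counts the outer localization.) The upshot is that the migration map requires a different idea than factor cancellation, and the outline does not provide one.
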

\begin{proof}
We construct this by induction on $d$. For each $x_i$ there is a natural transformation $\mrm{id} \Rightarrow L_{x_i}^\X$, and these assemble into a natural transformation $\eta_i\colon\mrm{id} \Rightarrow \prod_{x_i} L_{x_i}^\X$. 
When $d=1$ we obtain the functor via applying the natural transformation $\eta_1$ to the map $\1 \to \prod_{x_0}\Lambda_{x_0}^\X\1$. Now for $d\geqslant 1$, by induction we may suppose we have constructed the $d$-cube $C$, and we obtain the $(d+1)$-cube as the morphism $C \to \prod_d L_{x_d}^\X C$ induced by the natural transformation $\eta_d$. As the localizations are lax symmetric monoidal by \cref{Pmonoidal}, we see that the constructed functor lands in $\msf{CAlg}(\C)$ using \cite[Lemma 3.6]{GepnerGrothNikolaus15}. 
\end{proof}

\begin{ex}
When $d=2$ and the Balmer spectrum is irreducible, writing $\g$ for the unique generic point, $\p$ for the primes of dimension 1, and $\m$ for the primes of dimension 0, the punctured adelic cube with respect to the finest assembly data is
\[
\xymatrix@=1em{
& \prod_\p L_\p \Lambda_\p \1 \ar[dd] \ar[rr] & & L_\g \prod_\p L_\p \Lambda_\p \1 \ar[dd] & \\ & & L_\g \1 \ar[ur] \ar[dd] \\
& \prod_\p L_\p \prod_\m \Lambda_\m \1 \ar[rr]|-\hole & & L_\g \prod_\p L_\p \prod_\m \Lambda_\m \1 & \\
\prod_\m \Lambda_\m \1 \ar[rr] \ar[ur] & & L_\g \prod_\m \Lambda_\m \1  \ar[ur] 
}
\]
\end{ex}

With the definition of the punctured adelic cube in hand, we may now define the adelic model.  

\begin{defn}
The \emph{adelic model} $\C_\ad^\X$ for $\C$ is defined to be the full subcategory of $\equals{\1_\ad}$ on those objects $M$ for which the adjoint structure maps 
\[
\mathrm{ext}_A^B M(A) \to M(B)
\]
are equivalences for all inclusions $A \subseteq B$ (see \cref{rem:dataofpc}).
\end{defn}

\subsection{Adelic approximation}
In this section we show that the pullback of the punctured adelic cube $\1_\ad^\X$ is the tensor unit $\1$. This is one key input into the proof that $\C_\ad^\X$ is equivalent to $\C$. 

We start with the following lemma which we will use frequently throughout the paper. We recall from \cref{dimsubscript} the convention that $x_i$ denotes an element of $\X$ of dimension $i$.
\begin{lem}\label{epointyproduct}
Let $(\X,\alpha)$ be assembly data for $\C$, and $0 \leqslant i \leqslant d$. 
\begin{enumerate}[label=(\alph*)]
\item\label{epointy1} Let $X \in \C$. Then we have \[\Gammale{i}\prod_{x_i} L^\X_{x_i}X \simeq \Elr{i} \otimes X.\]
\item\label{epointy2} We have $\Elr{i} \simeq \Elr{i} \otimes \1_\ad^\X(i)$.
\end{enumerate}
\end{lem}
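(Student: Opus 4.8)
The plan is to prove both parts by appealing to the splitting results of \cref{sec:splitting} together with \cref{def:adeliccube}. For part \ref{epointy1}, I would first observe that by definition $\prod_{x_i} L^\X_{x_i}X = L^\X_{i}$-type localization refined by the assembly data, but more usefully, that $\Gammale{i}$ only depends on the localizing $\otimes$-ideal generated by the Koszul objects $K_\p$ for $\dim(\p) \leqslant i$. By \cref{lem:torsioniscell} it suffices to check the claim after tensoring with $K_\q$ for each $\q$ with $\dim(\q) \leqslant i$. Since $K_\q$ is dualizable it commutes with the product $\prod_{x_i}$, so one reduces to analysing $K_\q \otimes L^\X_{x_i} X$ for a fixed $x_i$. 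When $\dim(\q) < i$ this vanishes (as $\supp(L^\X_{x_i}X)$ consists of primes $\p$ with $\alpha(\p) = x_i$, hence of dimension $i$, while $\supp(K_\q) = \overline{\{\q\}}$ contains no such prime strictly above $\q$), and when $\dim(\q) = i$ only the factor with $x_i = \alpha(\q)$ survives, giving $K_\q \otimes L_\q X$. Comparing this with $K_\q \otimes \Elr{i} \otimes X = K_\q \otimes \Gammale{i}\Lge{i}\1 \otimes X$, and using that $\Elr{i}$ has mono-dimensional support $i$ so $K_\q \otimes \Elr{i} \simeq K_\q \otimes \Gamma_\q L_\q \1$ by \cref{lem:Epointy} and \cref{splitting}, \cref{splitL}, the two sides agree; I would then invoke \cref{lem:torsioniscell} to conclude the natural comparison map is an equivalence. (The natural map itself comes from $\Gammale{i}$ applied to the projection/unit maps, and one should check it is the \emph{canonical} one, but this is formal.)

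For part \ref{epointy2}, the strategy is to tensor the defining expression of $\1_\ad^\X(i) = \prod_{x_i} L^\X_{x_i}\Lambda^\X_{x_i}\1$ with $\Elr{i}$ and simplify one localization/completion at a time. Since $\Elr{i}$ has mono-dimensional support $i$, tensoring kills everything except the dimension-$i$ contributions; concretely, $\Elr{i} \otimes \prod_{x_i} L^\X_{x_i}\Lambda^\X_{x_i}\1$ should be computed by first applying part \ref{epointy1} (with $X = \1$, noting $\Lambda$ does not appear there in the stated form, so one needs the refinement with the completion inserted, or else argues directly). The cleanest route is: $\Elr{i} \simeq \Gammale{i}\Lge{i}\1$, and tensoring a $\Gammale{i}$-torsion $\Lge{i}$-local object with $\prod_{x_i}L^\X_{x_i}\Lambda^\X_{x_i}\1$, one uses that $\Elr{i} \otimes L^\X_{x_i}(-) \simeq \Elr{i} \otimes L^\X_{x_i}\Lambda^\X_{x_i}(-)$ would follow if $\Elr{i} \otimes -$ sees only the primes above/at $x_i$ where localization and completion interact trivially after applying $\Gamma_{x_i}$ — here the MGM equivalence $\Gamma_V \simeq \Gamma_V\Lambda_V$ of \cref{prop:localduality}\ref{item:mgm} is the key tool, since $\Elr{i}\otimes -$ is built from $\Gamma$'s. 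So I expect: $\Elr{i} \otimes \1_\ad^\X(i) \simeq \Gammale{i}\Lge{i}(\prod_{x_i}L^\X_{x_i}\Lambda^\X_{x_i}\1)$, then apply part \ref{epointy1} to rewrite $\Gammale{i}\prod_{x_i}L^\X_{x_i}(\Lambda^\X_{x_i}\1)$ as $\Elr{i}\otimes$ (something), and then use that $\Gamma_\p L_\p \Lambda_\p \1 \simeq \Gamma_\p L_\p \1$ prime-by-prime (again MGM) to collapse the completion, yielding $\Elr{i}\otimes\1 = \Elr{i}$.

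\textbf{Main obstacle.} The routine part is the support bookkeeping; the delicate part is part \ref{epointy1}, specifically verifying that the \emph{canonical} comparison map (not merely an abstract equivalence of objects) is an equivalence, and handling the assembly-data refinement correctly — in particular that $\supp(L^\X_{x_i}X)$ really is concentrated in dimension exactly $i$ and that the Koszul-object computation isolates a single index $x_i$. A secondary subtlety in part \ref{epointy2} is making precise the interaction of the tensor $\Elr{i}\otimes -$ with the \emph{product} over $x_i$ and with the nested completion $\Lambda^\X_{x_i}$: one must be careful that $\Elr{i}$ being dualizable is \emph{not} available (it is generally not compact), so commuting it past the product requires instead the $\Gamma$-product formula of \cref{prop:Gammaproducts} applied after rewriting $\Elr{i}\otimes(-)$ as $\Gammale{i}(-)$ tensored appropriately. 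I expect this is where the argument needs the most care, and where one leans hardest on \cref{prop:Gammaproducts}, \cref{splitting}, \cref{splitL}, and the MGM equivalence.
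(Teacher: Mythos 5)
Your proof of part~\ref{epointy1} takes a genuinely different route from the paper's. The paper works forward, peeling apart the product by repeated use of \cref{prop:Gammaproducts}, \cref{splitting}, and \cref{maxL}\ref{LXsplit} to exhibit $\Gammale{i}\prod_{x_i}L^\X_{x_i}X$ explicitly as $\bigoplus_{\p_i}\Gamma_{\p_i}L_{\p_i}X$; your plan instead constructs the natural comparison map $\Gammale{i}(g)$, with $g\colon \Lge{i}X \to \prod_{x_i}L^\X_{x_i}X$ the product of the factorizations of localizations, and detects that it is an equivalence by tensoring with Koszul objects via \cref{lem:torsioniscell}. This is a clean and economical alternative and it does go through — but one of your intermediate justifications is false as stated: $\supp(L^\X_{x_i}X)$ is \emph{not} contained in the set of primes $\p$ with $\alpha(\p) = x_i$ of dimension exactly $i$; rather $\supp(L^\X_{x_i}X) = \alpha^{-1}(\upcl_\X(x_i)) \cap \supp(X)$, which typically contains primes of every dimension ${\geqslant} i$. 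The conclusion you draw (vanishing for $\dim(\q)<i$) is still correct, but the right reason is simply that $\supp(K_\q) = \downcl(\q)$ has dimension ${<}\,i$ throughout, hence disjoint from a set concentrated in dimensions ${\geqslant}i$. Similarly, the identification $K_\q \otimes L^\X_{\alpha(\q)}X \simeq K_\q \otimes L_\q X$ when $\dim(\q) = i$ is true but not because $L^\X_{x_i}$ splits into a sum of $L_\p$'s in general (it does not, absent a support hypothesis as in \cref{maxL}\ref{LXsplit}); instead one should use the factorization $L_\q X \to L^\X_{\alpha(\q)}X$ and observe that the cofibre has support in $\alpha^{-1}(\upcl_\X(\alpha(\q))) \setminus \upcl(\q)$, which is disjoint from $\downcl(\q)$ by a dimension count, so tensoring with $K_\q$ kills it; the analogous point applies to the comparison with $K_\q\otimes \Lge{i}X$. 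With those two fixes your argument for~\ref{epointy1} is sound. For part~\ref{epointy2} you correctly recognize that part~\ref{epointy1} does not apply verbatim (the inner object depends on the product index through $\Lambda^\X_{x_i}$), and your plan to redo the argument with the completion inserted and collapse it at the end by the MGM equivalence is exactly what the paper does; here the two arguments coincide in spirit.
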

\begin{proof}
For part \ref{epointy1}, we have \[\Gammale{i}\prod_{x_i} L^\X_{x_i}X \simeq \Gammale{i}\prod_{x_i} \Gammale{i}L^\X_{x_i}X\] by \cref{prop:Gammaproducts}. 
Now $\Gammale{i}L_{x_i}^\X X \simeq \oplus_{\p_i} \Gamma_{\p_i}L_{x_i}^\X X \simeq \oplus_{\p_i} L_{x_i}^\X\Gamma_{\p_i}X$ by \cref{splitting}. 
Note that $\downcl(\p_i) \cap \alpha^{-1}(\upcl_\X(x_i)) \subseteq \alpha^{-1}(x_i)$ since $\alpha$ is dimension preserving. 
Therefore by \cref{maxL}\ref{LXsplit}, \[L_{x_i}^\X \Gamma_{\p_i}X \simeq \bigoplus_{\alpha(\q_i)=x_i} L_{\q_i}\Gamma_{\p_i}X.\] 
But $L_{\q_i}\Gamma_{\p_i} \simeq 0$ unless $\p_i = \q_i$. 
Therefore combining all these observations, we have \[\Gammale{i}\prod_{x_i} L^\X_{x_i}X \simeq \Gammale{i}\prod_{x_i}\bigoplus_{\alpha(\q_i) = x_i} \Gamma_{\q_i}L_{\q_i}X.\] Now $\prod_{x_i}\oplus_{\alpha(\q_i)=x_i} \Gamma_{\q_i}L_{\q_i}X$ is $L_{\geqslant i}$-local,
so by applying \cref{splitting} and \cref{prop:Gammaproducts}, we see that \[\Gammale{i}\prod_{x_i} L^\X_{x_i}X \simeq \bigoplus_{\p_i} \Gamma_{\p_i}\prod_{x_i} \bigoplus_{\alpha(\q_i) = x_i} \Gamma_{\q_i}L_{\q_i} X \simeq \bigoplus_{\p_i} \Gamma_{\p_i}\prod_{x_i} \bigoplus_{\alpha(\q_i) = x_i} \Gamma_{\p_i}\Gamma_{\q_i}L_{\q_i} X.\] 
But since $\Gamma_{\p_i}L_{\q_i}X \simeq 0$ whenever $\p_i \neq \q_i$, we see that this is moreover equivalent to \[\bigoplus_{\p_i}\Gamma_{\p_i}L_{\p_i}X \simeq \Elr{i} \otimes X\] using \cref{lem:Epointy}, which completes the proof. 
For part \ref{epointy2}, we note that \[\Elr{i} \otimes \1_\ad^\X(i) = \Lge{i}\Gammale{i} \prod_{x_i} L_{x_i}^\X \Lambda_{x_i}^\X \1.\] Following the same steps as in the proof of part \ref{epointy1}, one sees that this is equivalent to \[\Lge{i}\bigoplus_{\p_i} \Gamma_{\p_i}\prod_{x_i} \bigoplus_{\alpha(\q_i) = x_i} \Gamma_{\p_i}\Gamma_{\q_i}L_{\q_i}\Lambda_{\q_i}\1\] so the result follows from the MGM equivalence together with \cref{lem:Epointy}.
\end{proof}

The next result is a higher dimensional version of the pullback square of \cref{prop:localduality}\ref{item:hptybicart}.

\begin{thm}\label{limitsofcubes}
Let $(\X,\alpha)$ be assembly data for $\C$. Consider the punctured adelic cube $\1_\ad^\X$ from \cref{1adfunctor}. The natural map \[\1 \to \mrm{lim}(\1_\ad^\X)\] is an equivalence.
\end{thm}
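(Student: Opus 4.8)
The plan is to induct on the dimension $d$ of $\Spc(\C^\omega)$, using the recursive structure of the adelic cube recorded in \cref{1adfunctor}: the $(d{+}1)$-cube $\1_\ad^\X$ is built as the morphism of $d$-cubes $C \to \prod_{x_d} L_{x_d}^\X C$ induced by the natural transformation $\eta_d\colon \mrm{id} \Rightarrow \prod_{x_d} L_{x_d}^\X$, where $C$ is (essentially) the $d$-dimensional adelic cube. The base case $d = 0$ is exactly the cofibre sequence / pullback square $\Gammale{-1}\1 \to \1 \to \prod_{x_0}\Lambda_{x_0}^\X\1$, which is the completion square of \cref{prop:localduality}\ref{item:hptybicart} applied to $V = \{\p \mid \dim \p \le 0\}$ under the assembly data, together with the observation $\Gamma_{\le -1}\1 = 0$.

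For the inductive step, I would compute $\mrm{lim}(\1_\ad^\X)$ by first taking the limit in the `$L_{x_d}^\X$-direction'. Writing the punctured $(d{+}1)$-cube as a map from the punctured $d$-cube for $C$ to the full $d$-cube for $\prod_{x_d} L_{x_d}^\X C$, the total limit is the pullback of
\[
\mrm{lim}(\text{punctured } C\text{-cube}) \longrightarrow \mrm{lim}(\text{full }\textstyle\prod_{x_d} L_{x_d}^\X C\text{-cube}) \longleftarrow \mrm{lim}(\text{punctured }\textstyle\prod_{x_d} L_{x_d}^\X C\text{-cube}),
\]
where the right-hand term, being the limit over the full cube with initial vertex $\prod_{x_d}L_{x_d}^\X \1$, is just $\prod_{x_d} L_{x_d}^\X\1$, and the left-hand term is $\mrm{lim}$ of the punctured $d$-dimensional adelic cube, which by induction is $\1$ (one must check that localizing a $d$-dimensional adelic cube at $\prod_{x_d} L_{x_d}^\X$ still has the correct shape to apply the inductive hypothesis — this uses that $L_{x_d}^\X$ is smashing, \cref{prop:localduality}\ref{item:smashing}, so it commutes with the finite limits defining the cube). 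Thus $\mrm{lim}(\1_\ad^\X)$ is identified with the pullback of $\1 \to \prod_{x_d} L_{x_d}^\X \1 \leftarrow \prod_{x_d} L_{x_d}^\X \1$ along appropriate maps; I would then argue this pullback is $\1$ by showing the relevant horizontal fibre is $\Gammale{d-1}\1$ versus $\Gammale{d}\1$ and invoking the detection property, or more directly by identifying the square with the canonical pullback square of \cref{prop:localduality}\ref{item:hptybicart} for the specialization closed subset $V$ of dimension $\le d-1$ primes.

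The key computational input is \cref{epointyproduct}: part \ref{epointy1} tells us that the $d$-dimensional torsion of $\prod_{x_d} L_{x_d}^\X(-)$ is $\Elr{d}\otimes(-)$, and this is precisely what controls the fibres of the maps in the bottom face of the cube, ensuring they assemble correctly. Concretely, after reducing to the pullback square above, the fibre of $\1 \to \prod_{x_d} L_{x_d}^\X \1$ should be $\Gammale{d}\1 / \Gammale{d-1}\1 \simeq \Elr{d}$ up to a shift — no, more carefully: the fibre of the horizontal map in the square realizing $\mrm{lim}$ will be $\Gamma_{\le d-1}\1$ (from the inductive structure) while the fibre of the comparison map $\1 \to \prod L_{x_d}^\X\1$ involves $\Gammale{d}\1$; reconciling these via the cofibre sequence $\Gammale{d-1}\1 \to \Gammale{d}\1 \to \Elr{d}$ is what makes the pullback come out to be $\1$.

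\textbf{Main obstacle.} The principal difficulty is bookkeeping: one must set up the induction so that the recursive pullback definition of $\cubeL$ and $\equals{-}$ (from \cref{defn:cubeL,Cequals}) interacts cleanly with the recursive definition of the adelic cube, and in particular verify that localizing the $d$-dimensional adelic cube at $\prod_{x_d} L_{x_d}^\X$ is again (up to the modules appearing) an adelic-type cube to which the inductive hypothesis applies. This requires knowing that $L_{x_d}^\X$ commutes with the products and localizations appearing at lower vertices — which follows from smashing-ness (\cref{prop:localduality}\ref{item:smashing}) and \cref{prop:Gammaproducts} — and that the `puncture' behaves well under taking limits face-by-face. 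The homotopy-theoretic content, by contrast, is comparatively soft: it is entirely concentrated in the pullback square of \cref{prop:localduality}\ref{item:hptybicart} and the splitting/torsion identities of \cref{splitting,splitL,epointyproduct}, all of which are available.
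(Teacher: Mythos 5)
Your approach — induct on the dimension $d$ of the Balmer spectrum, splitting the punctured $(d{+}1)$-cube along the ``$L^\X_{x_d}$-direction'' and reducing to a lower-dimensional instance plus a pullback — is organized differently from the paper's, which runs a downward induction on cube dimension $i$ using iterated fibres and the cubical reduction principle. There is, however, a genuine gap in your inductive step.

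First, a minor point: the cospan is not quite right. Splitting $\mc{P}([d])_{\neq\varnothing}$ by whether $d$ lies in the indexing set, the limit is the pullback of the lower punctured-cube limit and $\1_\ad^\X(\{d\})\simeq\Lge{d}\1$ over the limit of the \emph{upper punctured} cube; you have ``full'' and ``punctured'' swapped in the middle of your cospan. More seriously, the claim that ``the left-hand term is $\lim$ of the punctured $d$-dimensional adelic cube, which by induction is $\1$'' is false. The lower face $\{\1_\ad^\X(A)\colon A\in\mc{P}([d-1])_{\neq\varnothing}\}$ is built from the very same functors $L^\X_{x_i},\Lambda^\X_{x_i}$ acting on $\C$, whose spectrum is still $d$-dimensional; it is \emph{not} the punctured adelic cube of a $(d{-}1)$-dimensional tensor-triangulated category, so the inductive hypothesis does not apply. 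Indeed the statement is simply wrong: for $\C=\msf{D}(\Z)$ (so $d=1$) the lower punctured cube is the single vertex $\prod_p L_{(p)}\Lambda_{(p)}\Z\simeq\prod_p\Z_p$, which is not $\Z$. What the pullback decomposition actually reduces you to (pass to horizontal fibres, using exactness of $\Lge{d}$ to commute it past the finite limit) is the new statement that $\Gammale{d-1}\1\to\Gammale{d-1}\lim_{\mc{P}([d-1])_{\neq\varnothing}}\1_\ad^\X$ is an equivalence. That must itself be proved by a further recursion tracking $\Gammale{i}$ at each stage; carried out, it is essentially the paper's argument, which sets $(\1_\ad^\X)_f^i(A)=\mrm{fib}\bigl((\1_\ad^\X)_f^{i+1}(A)\to(\1_\ad^\X)_f^{i+1}(A\cup i)\bigr)$ and proves $(\1_\ad^\X)_f^i(A)\simeq\Gammale{i-1}\1_\ad^\X(A)$ by downward induction on $i$, with \cref{epointyproduct}\ref{epointy1} as the inductive engine and \cref{epointyproduct}\ref{epointy2} at the bottom. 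Your identification of \cref{epointyproduct} and \cref{prop:localduality}\ref{item:hptybicart} as the key soft inputs is correct; it is the book-keeping of the induction — precisely what you flagged as the main obstacle — that is broken.
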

\begin{proof}
Given an $(i+1)$-cube, that is, a functor $C\colon \mathcal{P}([i]) \to \C$ , we define an $i$-cube $F_i(C)$ via \[F_i(C)(A) = \mrm{fib}(C(A) \to C(A \cup i)).\] The cubical reduction principle~\cite[\S 8.B]{adelicm} states that $C$ is a pullback $(i+1)$-cube if and only if $F_i(C)$ is a pullback $i$-cube.

Throughout this proof we consider $\1_\ad^\X$ as a $(d+1)$-cube with value at $\varnothing$ given by the unit object $\1$. We define an $i$-cube $(\1_\ad^\X)_f^i$ in an iterative fashion via \[(\1_\ad^\X)_f^i = F_i((\1_\ad^\X)_f^{i+1})\] for all $1 \leqslant i \leqslant d$, with the convention that $(\1_\ad^\X)^{d+1}_f = \1_\ad^\X$. By the cubical reduction principle, to show that $\1_\ad^\X$ is a pullback $(d+1)$-cube, it suffices to show that $(\1_\ad^\X)_f^1$ is a pullback $1$-cube. 

We claim that $(\1_\ad^\X)_f^i(A) \simeq \Gammale{i-1}\1_\ad^\X(A)$ for all $A \in \mathcal{P}([i{-}1])$ and $1 \leqslant i \leqslant d+1$. We prove this by downward induction on $i$. The base case $i = d+1$ is immediate since $\Gammale{d}$ is the identity. So suppose that the claim holds for some $i+1$. Then \begin{align*}(\1_\ad^\X)_f^i &= \mrm{fib}\left((\1_\ad^\X)_f^{i+1}(A) \to (\1_\ad^\X)_f^{i+1}(A \cup i+1)\right) & \\
&\simeq \mrm{fib}\left(\Gammale{i}\1_\ad^\X(A) \to \Gammale{i}\1_\ad^\X(A \cup i)\right) &\text{by induction hypothesis} \\
&\simeq \mrm{fib}\left(\Gammale{i}\1_\ad^\X(A) \to \Gammale{i}\prod_{x_i} L_{x_i}^\X\1_\ad^\X(A)\right) & \text{by definition of $\1_\ad^\X(A \cup i)$}\\
&\simeq \mrm{fib}\left(\Gammale{i}\1_\ad^\X(A) \to \Elr{i} \otimes \1_\ad^\X(A)\right) &\text{by \cref{epointyproduct}\ref{epointy1}} \\
&\simeq \Gammale{i-1}\1_\ad^\X(A)
\end{align*}
which proves the claim by induction.

We take $i=1$ in the above claim, to see that $(\1_\ad^\X)_f^1 = (\Gammale{0}\1 \to \Gammale{0}\1_\ad^\X(0))$. We need to show this is a pullback $1$-cube, that is, the map is an equivalence. But this follows from \cref{epointyproduct}\ref{epointy2}, which completes the proof.
\end{proof}

\subsection{Establishing the adelic model}
We turn to proving that $\C$ is equivalent to $\C_\ad^\X$. 
\begin{prop}\label{adelicadjunction}
There is an adjunction \[\1_\ad^\X \otimes -\colon \C \rightleftarrows \cad^\X : \lim\] where $(\1_\ad^\X \otimes X)(A) = \1_\ad^\X(A) \otimes X$ for all $A \in \mc{P}([d])_{\neq\varnothing}$ and $X \in \C$.
\end{prop}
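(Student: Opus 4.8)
The plan is to deduce this from the adjunction of \cref{puncturedcubeadjunction} by corestricting the left adjoint to the full subcategory $\cad^\X \subseteq \equals{\1_\ad^\X}$. Since $\1$ is the tensor unit of $\C$, there is a canonical symmetric monoidal equivalence $\mod{\1} \simeq \C$ carrying $-\otimes_{\1}-$ to the tensor product of $\C$, which I will use throughout without further comment. Applying \cref{puncturedcubeadjunction} to the diagram of rings $\1_\ad^\X \colon \mc{P}([d]) \to \msf{CAlg}(\C)$ from \cref{1adfunctor}, whose value at $\varnothing$ is $\1$, produces an adjunction
\[
\1_\ad^\X \otimes_{\1} - \colon \C \rightleftarrows \equals{\1_\ad^\X} : \lim \circ \pi_d^\mrm{Pc},
\]
and by \cref{leftadjointoncubes} the left adjoint sends $X$ to the punctured cube with vertices $(\1_\ad^\X \otimes X)(A) = \1_\ad^\X(A) \otimes X$ and structure maps the base-change maps induced by the ring maps $\1_\ad^\X(A) \to \1_\ad^\X(B)$.

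The first step is then to check that $\1_\ad^\X \otimes X$ lies in $\cad^\X$ for every $X \in \C$. For an inclusion $A \subseteq B$ in $\mc{P}([d])_{\neq\varnothing}$, the adjoint structure map of $\1_\ad^\X \otimes X$ in the sense of \cref{rem:dataofpc} is, by transitivity of extension of scalars $\mrm{ext}_A^B \circ \mrm{ext}_\varnothing^A \simeq \mrm{ext}_\varnothing^B$, identified with the canonical equivalence
\[
\mrm{ext}_A^B\big(\1_\ad^\X(A) \otimes X\big) \simeq \1_\ad^\X(B) \otimes_{\1_\ad^\X(A)} \big(\1_\ad^\X(A) \otimes X\big) \xrightarrow{\sim} \1_\ad^\X(B) \otimes X = (\1_\ad^\X \otimes X)(B).
\]
Hence the left adjoint factors, uniquely since $\cad^\X$ is full, through a functor $\1_\ad^\X \otimes - \colon \C \to \cad^\X$.

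Finally, since $\cad^\X$ is a full subcategory of $\equals{\1_\ad^\X}$ containing the essential image of the left adjoint, this corestricted functor is left adjoint to the restriction of $\lim \circ \pi_d^\mrm{Pc}$ to $\cad^\X$ — that is, to the functor sending $M \in \cad^\X$ to the limit of the punctured cube obtained from $M$ by restricting every vertex down to a $\1$-module, which is the functor $\lim$ in the statement. Indeed, for $X \in \C$ and $M \in \cad^\X$ one has $\Map_{\cad^\X}(\1_\ad^\X \otimes X, M) \simeq \Map_{\equals{\1_\ad^\X}}(\1_\ad^\X \otimes X, M) \simeq \Map_\C(X, \lim \pi_d^\mrm{Pc}(M))$ by fullness of $\cad^\X$ and \cref{puncturedcubeadjunction}, and the unit and counit of \cref{puncturedcubeadjunction} restrict to those of the new adjunction. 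The result is essentially formal given \cref{puncturedcubeadjunction}; the only point I expect to need care is the identification of the structure maps of $\1_\ad^\X \otimes X$ with the canonical base-change maps, which requires unwinding the inductive construction of the left adjoint in \cref{leftadjointoncubes}.
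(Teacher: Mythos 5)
Your proof is correct and takes essentially the same route as the paper's: invoke \cref{puncturedcubeadjunction} for the diagram of rings $\1_\ad^\X$ and then observe that the left adjoint lands in the full subcategory $\cad^\X$, so the adjunction corestricts. You spell out the base-change identification $\mrm{ext}_A^B(\1_\ad^\X(A)\otimes X) \simeq \1_\ad^\X(B)\otimes X$ a bit more explicitly than the paper (which simply calls it ``clear from the definition''), but the argument is the same.
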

\begin{proof}
By \cref{puncturedcubeadjunction} we have an adjunction \[\1_\ad^\X \otimes -\colon \C \rightleftarrows \equals{\1_\ad} : \lim\] so it suffices to show that the image of the left adjoint lands in the full subcategory $\C_\ad^\X$ which is clear from the definition. 
\end{proof}

\begin{prop}\label{1ad-fullyfaithful}
For all $X\in\C$, the unit map $X \to \lim(\1_\ad^\X \otimes X)$ is an equivalence. In particular, the functor $\1_\ad^\X \otimes - \colon \C\to \cad^\X$ is fully faithful.
\end{prop}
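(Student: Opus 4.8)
The plan is to reduce the statement to the $X = \1$ case, which is exactly \cref{limitsofcubes}, by exploiting the fact that the punctured adelic cube is built entirely out of smashing localizations and completions, both of which interact well with tensoring. First I would observe that $\lim(\1_\ad^\X \otimes X)$ can be computed as an iterated limit: by the definition of $\equals{\1_\ad}$ as a pullback and the description of the right adjoint in \cref{puncturedcubeadjunction}, the limit over the punctured cube $\mc{P}([d])_{\neq\varnothing}$ of the diagram with vertices $\1_\ad^\X(A) \otimes X$ agrees with the limit of the diagram with vertices obtained by restricting scalars to $\C = \mod{\1}$. So it suffices to prove that the natural map $X \to \lim_{A \in \mc{P}([d])_{\neq\varnothing}} (\1_\ad^\X(A) \otimes X)$ is an equivalence in $\C$, where the limit is now an ordinary limit of a punctured cube in $\C$.

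The key point is then that each building block commutes with $- \otimes X$ in the appropriate sense. The functors $L_{x}^\X = L_{\alpha^{-1}(\upcl_\X(x))}$ are smashing by \cref{prop:localduality}\ref{item:smashing}, so $L_x^\X(Y) \simeq L_x^\X(\1) \otimes Y$ naturally. For the completion functors $\Lambda_{x_0}^\X$ appearing at the bottom, these sit only at the innermost position and are always preceded by products and localizations; here I would instead argue that the \emph{entire} cube $A \mapsto \1_\ad^\X(A)$ has the property that $\1_\ad^\X(A) \otimes X$ computes the same iterated product-of-localizations construction applied to $X$ in place of $\1$ — this requires knowing that $\prod_{x} L_x^\X(-)$ and $L_x^\X \Lambda_x^\X(-)$ commute with $- \otimes X$ up to the relevant natural equivalence. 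Since $L_x^\X$ is smashing this is immediate for the localizations; the only subtlety is the innermost $\Lambda_{x_0}^\X$, but $L_{x_0}^\X \Lambda_{x_0}^\X(\1) \otimes X \simeq L_{x_0}^\X(\Lambda_{x_0}^\X(\1) \otimes X)$ using smashing again, and one checks $\Lambda_{x_0}^\X(\1) \otimes X$ carries the needed map; alternatively one runs the cubical reduction argument of \cref{limitsofcubes} verbatim with $\1$ replaced by $X$, since the only input used there — \cref{epointyproduct} — was proved for arbitrary $X \in \C$. Indeed \cref{epointyproduct}\ref{epointy1} already gives $\Gammale{i}\prod_{x_i} L_{x_i}^\X X \simeq \Elr{i} \otimes X$ for all $X$, which is exactly what the inductive step of \cref{limitsofcubes} needs.

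Concretely, I would repeat the proof of \cref{limitsofcubes} with the cube $(\1_\ad^\X \otimes X)$ in place of $\1_\ad^\X$: define $i$-cubes $C_f^i$ by iterated fibres, show by downward induction using \cref{epointyproduct}\ref{epointy1} that $C_f^i(A) \simeq \Gammale{i-1}(\1_\ad^\X(A) \otimes X)$, and conclude at $i=1$ that the relevant $1$-cube is $\Gammale{0}X \to \Gammale{0}(\1_\ad^\X(0) \otimes X)$, which is an equivalence by \cref{epointyproduct}\ref{epointy2} together with the smashing property (so that $\Gammale{0}(\1_\ad^\X(0)\otimes X) \simeq \Elr{0}\otimes X \simeq \Gammale{0}X$). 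The cubical reduction principle then gives that $\1_\ad^\X \otimes X$ is a pullback $(d+1)$-cube, i.e.\ $X \xrightarrow{\sim} \lim(\1_\ad^\X \otimes X)$. The ``in particular'' clause is then formal: the unit of the adjunction of \cref{adelicadjunction} being an equivalence means the left adjoint $\1_\ad^\X \otimes -$ is fully faithful. The main obstacle is bookkeeping the difference between the limit in $\equals{\1_\ad}$ (a limit of modules over varying rings) and the underlying limit in $\C$, and making sure the smashing identifications are natural across the whole cube rather than vertex-by-vertex; but since \cref{epointyproduct} is already stated for general $X$, no genuinely new computation is needed.
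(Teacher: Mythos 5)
Your plan works, but it reaches for machinery that the paper avoids entirely. The paper's proof is a one-liner: the punctured cube $\mc{P}([d])_{\neq\varnothing}$ is a \emph{finite} poset, so $\lim(\1_\ad^\X \otimes X)$ is a finite limit in $\C$; in a stable $\infty$-category finite limits are finite colimits, so the colimit-preserving functor $-\otimes X$ commutes with this limit; hence
\[
\lim(\1_\ad^\X\otimes X)\simeq (\lim\1_\ad^\X)\otimes X\simeq \1\otimes X\simeq X
\]
by \cref{limitsofcubes}, and the unit on $X$ is identified with $\eta_\1\otimes X$. Nothing about the internal structure of the adelic cube — smashing of $L$, non-smashing of $\Lambda$, infinite products — enters the argument; only the finiteness of the indexing poset and the stability of $\C$ are used.

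Two concrete issues with what you wrote. First, the claim that ``$\prod_x L_x^\X(-)$ \ldots commute with $-\otimes X$ up to the relevant natural equivalence'' and that ``the entire cube $A\mapsto\1_\ad^\X(A)$ has the property that $\1_\ad^\X(A)\otimes X$ computes the same iterated product-of-localizations construction applied to $X$'' is false: infinite products do not commute with $\otimes X$, even when the individual localizations are smashing, so $\1_\ad^\X(A)\otimes X$ is not the adelic cube of $X$. Second, for the same reason the fallback ``run the cubical reduction argument of \cref{limitsofcubes} verbatim with $\1$ replaced by $X$'' is not quite verbatim. The cube you must analyze is $A\mapsto\1_\ad^\X(A)\otimes X$, whose vertex at $A\cup\{i\}$ is $\bigl(\prod_{x_i}L_{x_i}^\X\1_\ad^\X(A)\bigr)\otimes X$ and not $\prod_{x_i}L_{x_i}^\X\bigl(\1_\ad^\X(A)\otimes X\bigr)$, so \cref{epointyproduct}\ref{epointy1} does not apply directly with $\1_\ad^\X(A)\otimes X$ in the role of $X$. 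What does work is to first commute $\otimes X$ past the smashing functor $\Gammale{i}$ (and past the fibre), apply \cref{epointyproduct}\ref{epointy1} with $\1_\ad^\X(A)$ in the role of $X$, and reattach $\otimes X$. But at that point you are proving at each stage of the induction that the relevant fibre is $(\text{the fibre from \cref{limitsofcubes}})\otimes X$ — which is precisely the observation that the whole finite limit commutes with $\otimes X$, i.e.\ the paper's one-line argument unrolled step by step. So: your approach is salvageable and arrives at the right place, but the direct route via finiteness and exactness of $-\otimes X$ is both shorter and cleaner, and sidesteps the product/$\Lambda$ subtleties that you (rightly) flagged as the main obstacle.
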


\begin{proof}
The unit of the  $(\1_\ad^\X \otimes -, \lim)$-adjunction is an equivalence on $\1$ by \cref{limitsofcubes}. Since $\1_\ad^\X \otimes -$ commutes with finite limits (as we work stably), it follows that the unit is an equivalence on all $X \in \C$. Therefore $\1_\ad^\X \otimes -$ is fully faithful.
\end{proof}

The next step is to show that $\1_\ad^\X \otimes -\colon \C \to \cad^\X$ is essentially surjective, but first we will need a preliminary result. 
We say that an object $M \in \cad^\X$ is \emph{supported in dimension $\leqslant i$} if $M(s) \simeq 0$ for all $s > i$.
\begin{lem}\label{torsioninlimit}
Let $M \in \cad^\X$ be supported in dimension $\leqslant i$. Then $M(i)$ is $\Gammale{i}$-torsion.
\end{lem}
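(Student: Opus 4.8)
The plan is to prove the equivalent statement that $\Lge{i+1}M(i)\simeq 0$: since $\Gammale{i}M(i)\to M(i)\to\Lge{i+1}M(i)$ is a cofibre sequence, this is exactly the assertion that $M(i)$ is $\Gammale{i}$-torsion. Because $\Lge{i+1}$, $\Gamma_\q$ and $L_\q$ are all smashing (\cref{prop:localduality}\ref{item:smashing}) one has $\Gamma_\q L_\q\Lge{i+1}M(i)\simeq(\Gamma_\q L_\q\Lge{i+1}\1)\otimes M(i)$, and a short support computation (using $\supp(\Gamma_\q L_\q Y)\subseteq\{\q\}$ and comparing $\1$ with $\Lge{i+1}\1$) shows that this is $0$ when $\dim(\q)\leqslant i$ and is equivalent to $\Gamma_\q L_\q M(i)$ when $\dim(\q)\geqslant i+1$. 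Hence, by the detection property, it suffices to prove $\Gamma_\p L_\p M(i)\simeq 0$ for every prime $\p$ with $\dim(\p)=:j\geqslant i+1$.

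The one place the hypothesis enters is as follows. Fix such a $\p$. Since $M$ lies in $\cad^\X$, the adjoint structure maps $\mrm{ext}^{\{i,j\}}_{\{i\}}M(i)\to M(\{i,j\})$ and $\mrm{ext}^{\{i,j\}}_{\{j\}}M(j)\to M(\{i,j\})$ are equivalences, and $M(j)\simeq 0$ because $M$ is supported in dimension $\leqslant i$; combining these gives
\[\1_\ad^\X(\{i,j\})\otimes_{\1_\ad^\X(i)}M(i)\ \simeq\ M(\{i,j\})\ \simeq\ 0,\]
where $\1_\ad^\X(\{i,j\})=\prod_{\dim(x_j)=j}L^\X_{x_j}\1_\ad^\X(i)$ by \cref{def:adeliccube}. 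I would then smash this vanishing with a Koszul object $K_\p$ (\cref{lem:cons}). As $K_\p$ is dualizable it commutes with the product, and as each $L^\X_{x_j}$ is smashing, $K_\p\otimes\1_\ad^\X(\{i,j\})\simeq\prod_{x_j}(K_\p\otimes L^\X_{x_j}\1\otimes\1_\ad^\X(i))$; here $\supp(K_\p\otimes L^\X_{x_j}\1)\subseteq\downcl(\p)\cap\alpha^{-1}(\upcl_\X(x_j))$, which since $\alpha$ preserves dimension lies in $\{\p\}$ and is empty unless $x_j=\alpha(\p)$. Thus all but one factor of the product vanishes, so $K_\p\otimes\1_\ad^\X(\{i,j\})\simeq K_\p\otimes L^\X_{\alpha(\p)}\1\otimes\1_\ad^\X(i)$, and smashing the displayed equivalence with $K_\p$ (and cancelling $\1_\ad^\X(i)\otimes_{\1_\ad^\X(i)}M(i)\simeq M(i)$ via the projection formula) yields $K_\p\otimes L^\X_{\alpha(\p)}M(i)\simeq 0$.

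The last step is to upgrade $L^\X_{\alpha(\p)}$ to $L_\p$. The fibre of $M(i)\to L^\X_{\alpha(\p)}M(i)$ has support contained in $\alpha^{-1}(\X\setminus\upcl_\X(\alpha(\p)))$, which is disjoint from $\upcl(\p)$ since $\alpha$ is order preserving; applying the smashing functor $L_\p$ therefore gives $L_\p M(i)\xrightarrow{\ \sim\ }L_\p L^\X_{\alpha(\p)}M(i)$, so $K_\p\otimes L_\p M(i)\simeq L_\p(K_\p\otimes L^\X_{\alpha(\p)}M(i))\simeq 0$. Moreover $K_\q\otimes L_\p M(i)\simeq 0$ automatically for $\q<\p$, because $\supp(K_\q\otimes L_\p\1)\subseteq\downcl(\q)\cap\upcl(\p)=\varnothing$. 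Applying \cref{lem:torsioniscell} to the map $0\to L_\p M(i)$ with $V=\downcl(\p)$ then gives $\Gamma_\p L_\p M(i)\simeq 0$, which completes the argument.

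The step I expect to require the most care is the bookkeeping between the relative tensor product $\otimes_{\1_\ad^\X(i)}$ over the adelic ring and the ambient monoidal structure of $\C$: in particular, justifying via the projection formula that smashing with the dualizable object $K_\p$ commutes with the infinite product defining $\1_\ad^\X(\{i,j\})$ and isolates the single nonzero factor, and that the resulting object is correctly identified as $K_\p\otimes L^\X_{\alpha(\p)}M(i)$ after extension of scalars. The remaining support computations are routine given the detection property and the fact that $\alpha$ preserves dimensions.
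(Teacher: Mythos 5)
Your proof is correct, and it reaches the same key equality $\1_\ad^\X(\{i,j\})\otimes_{\1_\ad^\X(i)}M(i)\simeq M(\{i,j\})\simeq 0$ as the paper, but then diverges in how it extracts the conclusion. The paper applies the torsion functor $\Gammale{j}$ to this vanishing and invokes the prepackaged splitting result \cref{epointyproduct}\ref{epointy1} to identify $\Gammale{j}\1_\ad^\X(\{i,j\})\otimes_{\1_\ad^\X(i)}M(i)$ with $\Elr{j}\otimes M(i)$ in one stroke, concluding that this is zero for all $j>i$, which is exactly the $\Gammale{i}$-torsion condition. You instead argue prime-by-prime: you smash with a Koszul object $K_\p$, use dualizability to pass $K_\p\otimes-$ inside the product $\prod_{x_j}L^\X_{x_j}\1$, and carry out the support bookkeeping (dimension preservation of $\alpha$, $\downcl(\p)\cap\alpha^{-1}(\upcl_\X(x_j))$ being a singleton or empty) to isolate the single surviving factor, then upgrade $L^\X_{\alpha(\p)}$ to $L_\p$ and close the argument with \cref{lem:torsioniscell}. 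In effect you are re-deriving the special case of \cref{epointyproduct}\ref{epointy1} needed here instead of quoting it. Both arguments are sound; the paper's is shorter because it leans on the splitting lemma as a black box, while yours is more self-contained and makes the mechanism — $K_\p$-cellularity and the interaction of $\alpha$ with supports — visible at the cost of being longer. Your flagged worry about the interplay between $\otimes_{\1_\ad^\X(i)}$ and the ambient $\otimes$ is harmless: since $K_\p\otimes L^\X_{\alpha(\p)}\1$ is just an object of $\C$ (tensored on the outside), associativity of the relative tensor product and the unit isomorphism $\1_\ad^\X(i)\otimes_{\1_\ad^\X(i)}M(i)\simeq M(i)$ give exactly the identification you want.
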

\begin{proof}
It suffices to show that $\Elr{j} \otimes M(i) \simeq 0$ for all $j > i$. As $M \in \cad^\X$ and is supported in dimension $\leqslant i$, we know that \[\1_\ad^\X(j,i) \otimes_{\1_\ad^\X(i)} M(i) \simeq M(j,i) \simeq \1_\ad^\X(j,i) \otimes_{\1_\ad^\X(j)} M(j) \simeq 0.\] Applying $\Gammale{j}$ to this, we see that \[0 \simeq \Gammale{j}(\1_\ad^\X(j,i) \otimes_{\1_\ad^\X(i)} M(i)) \simeq \Gammale{j}\prod_{x_j}L_{x_j}^\X \1_\ad^\X(i) \otimes_{\1_\ad^\X(i)} M(i) \simeq \Elr{j} \otimes M(i)\] using \cref{epointyproduct}\ref{epointy1} as required.
\end{proof}

\begin{prop}\label{surjective}
The functor $\1_\ad^\A \otimes -\colon \C \to \cad^\X$ is essentially surjective.
\end{prop}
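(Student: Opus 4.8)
The plan is to bootstrap off the full faithfulness of $\1_\ad^\X \otimes -$ proved in \cref{1ad-fullyfaithful}. Since $\1_\ad^\X \otimes -$ is fully faithful (\cref{1ad-fullyfaithful}) with right adjoint $\lim$ (\cref{adelicadjunction}), essential surjectivity is equivalent to the assertion that the counit $\epsilon_M \colon \1_\ad^\X \otimes \lim M \to M$ is an equivalence for every $M \in \cad^\X$, or equivalently that $\lim \colon \cad^\X \to \C$ is conservative. Now $\epsilon_M$ is a morphism of punctured cubes which is $\1_\ad^\X(A)$-linear at each vertex $A$, and for any $M \in \cad^\X$ the adjoint structure maps $\mrm{ext}_{\{\min B\}}^{B} M(\{\min B\}) \to M(B)$ are equivalences; hence $\epsilon_M(B)$ is obtained from $\epsilon_M(\{\min B\})$ by extension of scalars, and it suffices to show that \[\1_\ad^\X(\{i\}) \otimes \lim M \xrightarrow{\sim} M(\{i\})\] is an equivalence for each $0 \leqslant i \leqslant d$.

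First I would pin down the torsion part of $M(\{i\})$. Write $X = \lim M$ and let $\overline{M}$ be the $(d+1)$-cube extending $M$ by $\overline{M}(\varnothing) = X$; by definition of $X$ this is a pullback cube, so the iterated fibre-cube construction $F_\bullet$ from the proof of \cref{limitsofcubes} applies to it. Repeating that argument verbatim --- using that the adjoint structure maps of $M$ are equivalences, so that $\Gammale{j}M(A \cup j) \simeq \Elr{j} \otimes M(A)$ by \cref{epointyproduct}\ref{epointy1} and smashingness --- one obtains $(\overline{M})_f^{j}(A) \simeq \Gammale{j-1}\overline{M}(A)$ for all $A$ and $j$. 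Evaluating the defining fibre sequence of $(\overline{M})_f^{i}$ at $\varnothing$ then produces a cofibre sequence $\Gammale{i-1}X \to \Gammale{i}X \to \Gammale{i}M(\{i\})$; comparing this with the sequence obtained by tensoring $\Gammale{i-1}\1 \to \Gammale{i}\1 \to \Elr{i}$ with $X$ identifies $\Gammale{i}M(\{i\}) \simeq \Elr{i} \otimes X$. On the other side, $\Gammale{i}\1_\ad^\X(\{i\})$ has mono-dimensional support $i$, hence is $\Lge{i}$-local and therefore equals $\Lge{i}\Gammale{i}\1_\ad^\X(\{i\}) \simeq \Elr{i}$ by \cref{epointyproduct}\ref{epointy2}; since $\Gammale{i}$ is smashing this gives $\Gammale{i}(\1_\ad^\X(\{i\}) \otimes X) \simeq \Elr{i} \otimes X$. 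Thus $\epsilon_M(\{i\})$ becomes an equivalence after applying $\Gammale{i}$.

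To promote this to a genuine equivalence I would invoke the detection property together with the observation that $\1_\ad^\X(\{i\})$, being a product of the $\Lge{i}$-local objects $L^\X_{x_i}\Lambda^\X_{x_i}\1$, is $\Lge{i}$-local, so that both sides of the displayed map are $\Lge{i}$-local; it therefore suffices to check that $\Gamma_\p L_\p \epsilon_M(\{i\})$ is an equivalence for every prime $\p$ with $\dim(\p) \geqslant i$. The case $\dim(\p) = i$ is handled by the torsion computation just made, and for $\dim(\p) > i$ one restricts $M$ to the sub-punctured-cube on the subsets of $\{i, i+1, \ldots, d\}$ and runs the same scheme by downward induction on $i$, the essential input being identifications of the form $\Gamma_\p(\1_\ad^\X(\{i\} \cup \{j\}) \otimes_{\1_\ad^\X(\{i\})} M(\{i\})) \simeq \Gamma_\p \prod_{x_j} L^\X_{x_j} M(\{i\})$ --- which follow from \cref{prop:Gammaproducts}, \cref{splitting} and \cref{splitL} --- together with the fact that $\Gamma_\p$ and $L_\p$ commute with the finite limit computing $X$. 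The step I expect to be the main obstacle is precisely this final bookkeeping: the infinite products $\prod_{x_i}$ occurring in the adelic rings do not commute with the relevant relative tensor products on the nose but only after applying a torsion functor, so one must keep careful track of which identifications are actually induced by the canonical counit map $\epsilon_M$ rather than merely existing abstractly, and organise the induction --- peeling off one dimension of the cube at a time and matching the corresponding face against an adelic cube of an auxiliary localized category --- so that the vertexwise equivalences assemble into a single equivalence of objects of $\cad^\X$.
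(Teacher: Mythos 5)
Your overall strategy — reduce essential surjectivity to the assertion that the counit $\epsilon_M$ is an equivalence, and then reduce to the vertices $\{i\}$ via the adjoint structure equivalences — is reasonable, but both halves of the verification have real gaps. The central problem is that the fibre-cube argument from \cref{limitsofcubes} does \emph{not} transport ``verbatim'' to the extended cube $\overline{M}$. In that proof the inductive step from filtration $j+1$ to $j$ identifies $\Gammale{j}\overline{M}(A \cup j) \simeq \Elr{j}\otimes\overline{M}(A)$ for every $A \in \mc{P}([j{-}1])$, including $A = \varnothing$. When $A \neq \varnothing$ this does follow from $M \in \cad^\X$ and smashingness of $\Gammale{j}$, as you say; but for $A = \varnothing$ the required identity reads $\Gammale{j}M(\{j\}) \simeq \Elr{j}\otimes X$ with $X = \lim M$, and this is precisely the $\Gammale{j}$-torsion part of the assertion that $\epsilon_M(\{j\})$ is an equivalence — the very thing the argument is meant to establish. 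In \cref{limitsofcubes} the vertex at $\{j\}$ has the explicit form $\prod_{x_j}L^\X_{x_j}\Lambda^\X_{x_j}\1$, which is what makes the computation go; for a general $M \in \cad^\X$ the vertex $M(\{j\})$ carries no such formula in terms of $X$, and conferring one on it is equivalent to what you want to prove. So the induction is circular at each level, not merely ``bookkeeping''. The treatment of primes $\p$ with $\dim(\p) > i$ is then left entirely as a sketch (``peeling off one dimension at a time and matching against an adelic cube of an auxiliary localized category''); making this precise would require developing the adelic machinery for the localized categories $\Lge{i}\C$ and proving the necessary compatibilities, which is substantially more than a remark.

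For comparison, the paper takes a different and non-circular route. It first constructs a right adjoint $f_i$ to evaluation at $\{i\}$ (via \cite[Lemma 9.6]{adelicm}), gives an explicit formula for $f_i(Z)$, and proves by a $\otimes$-ideal reduction to the single object $\1_\ad^\X\otimes\Elr{i}$ that the canonical map $\phi_{M(i)}\colon \1_\ad^\X \otimes M(i) \to f_i(M(i))$ is an equivalence whenever $M$ is supported in dimension $\leqslant i$ (using \cref{torsioninlimit} to see $M(i)$ is then $\Gammale{i}$-torsion). An arbitrary $M$ is then handled by an outer induction on the top dimension $s$ of its support: the fibre of the unit $M \to f_s(M(s))$ has strictly smaller support dimension, both pieces lie in the essential image, and full faithfulness plus exactness closes the loop. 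That outer dimension-filtration is exactly the structural device that prevents the circularity your argument encounters.
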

\begin{proof}
Consider the full subcategory $\equals{\1_\ad^\X}^\dagger$ of $\equals{\1_\ad^\X}$ spanned by the objects $M$, where the adjoint structure maps \[\1_\ad(A) \otimes_{\1_\ad(i)} M(i) \to M(A)\] with $\max(A) = i$ are equivalences. By~\cite[Proposition 5.2.2.12]{HTT}, to check that the evaluation map $\mrm{ev}_i\colon \equals{\1_\ad^\X}^\dagger \to \mod{\1_\ad(i)}$ has a right adjoint $f_i$, it suffices to check on homotopy categories. The derived functor of $\mrm{ev}_i$ has a right adjoint by \cite[Lemma 9.6]{adelicm}, so we therefore obtain an adjunction $\mrm{ev}_i \dashv f_i$, and then one may verify that $f_i$ has the form
\begin{equation}\label{fiform}
f_i(Z)(A) = \begin{cases}
\1_\ad^\X(A) \otimes_{\1_\ad^\X(i)} Z & \text{if $\mrm{max}(A) = i$} \\
\1_\ad^\X(A \cup i) \otimes_{\1_\ad^\X(i)} Z & \text{if $\mrm{max}(A) < i$} \\
0 & \text{otherwise}
\end{cases}
\end{equation}
as in~\cite[Lemma 9.6]{adelicm}, since objects in an $\infty$-category are the same as objects in its homotopy category. We note that $f_i(Z)$ is supported in dimension $\leqslant i$.

Fix $M \in \cad^\X$ which is supported in dimension $\leqslant i$. There is a map \[\phi_{M(i)}\colon \1_\ad^\X \otimes M(i) \to f_i(M(i))\] constructed as the adjoint to the action map \[\mrm{ev}_i(\1_\ad^\X \otimes M(i)) = \1_\ad^\X(i) \otimes M(i) \to M(i)\] where the module structure on the source comes from the left factor. We claim that $\phi_{M(i)}$ is an equivalence. This in particular will show that $ f_i(M(i))$ is in the essential image of $\1_\ad^\X\otimes -$ whenever $M\in\cad^\X$ and is supported in dimension $\leqslant i$.

Using the definition of $f_i$ as in \cref{fiform}, one verifies that if $\phi_{N(i)}$ is an equivalence (where $N \in \cad^\X$ is supported in dimension $\leqslant i$) then $\phi_{(N \otimes M)(i)}$ is an equivalence. In other words, the set of $M \in \cad^\X$ supported in dimension $\leqslant i$ for which $\phi_{M(i)}$ is an equivalence is a $\otimes$-ideal. 

We have that $M(i)$ is $\Gammale{i}$-torsion by \cref{torsioninlimit}, and is moreover $\Lge{i}$-local as it is a module over $\1_\ad^\X(i)$. As such \[M(i) \simeq (\1_\ad^\X(i) \otimes \Elr{i}) \otimes_{\1_\ad^\X(i)} M(i).\] Therefore, it suffices to check that $\phi_{N(i)}$ is an equivalence for $N = \1_\ad^\X \otimes \Elr{i}$, as $N(i) \simeq \1_\ad^\X(i) \otimes \Elr{i}$.

By \cref{epointyproduct}\ref{epointy2}, the source of $\phi_{\1_\ad^\X(i) \otimes \Elr{i}}(A)$ is $\1_\ad^\X(A) \otimes \Elr{i}$, and the target is 
\[\begin{cases}
\1_\ad^\X(A) \otimes \Elr{i} & \text{if $\mrm{max}(A) = i$} \\
\1_\ad^\X(A \cup i) \otimes \Elr{i} & \text{if $\mrm{max}(A) < i$} \\
0 & \text{otherwise.}
\end{cases}\]
So the case $\max(A) = i$ is clear, and the case $\max(A) > i$ holds since $\1_\ad^\X(A)$ is $\Lge{\max(A)}$-local. For the remaining case when $\max(A) < i$, we have \[\1_\ad^\X(A \cup i) \otimes \Elr{i} \simeq \Gammale{i}\prod_{x_i} L_{x_i}^\X \1_\ad^\X(A) \simeq \Elr{i} \otimes \1_\ad^\X(A)\] by \cref{epointyproduct}\ref{epointy1}. Therefore $\phi_{\1_\ad^\X(i) \otimes \Elr{i}}$ is an equivalence, and hence $\phi_{M(i)}$ is for all $M \in \cad^\X$ supported in dimension $\leqslant i$.

%

We now fix an arbitrary $M \in \cad^\X$ and using the above, argue that it is in the essential image of $\1_\ad^\X \otimes -$. Take $s$ to be the maximal number such that $M$ is supported in dimension $\leqslant s$. The unit of the $(\mrm{ev}_s, f_s)$-adjunction provides a map $\eta\colon M \to f_s(M(s))$ and we have seen that $f_s(M(s))$ belongs to the essential image of $\1_\ad^\X \otimes -$. By construction the fibre of $\eta$ is supported in dimension $\leqslant s-1$ so by induction we may assume that it belongs to the essential image of $\1_\ad^\X \otimes -$ too.  Using that $\1_\ad^\X \otimes -$ is exact and fully faithful by \cref{1ad-fullyfaithful} (so that we can lift maps), we deduce that $M$ is in the essential image of $\1_\ad^\X \otimes -$ as claimed.
\end{proof}

We are now in a position to state and prove the main result of this section, which says that the adjunction of \cref{adelicadjunction} is in fact an equivalence. 
\begin{thm}\label{thm-adelicm}
Let $(\X,\alpha)$ be assembly data for $\C$.  The adjunction \[\1_\ad^\X \otimes -\colon \C \rightleftarrows \cad^\X: \mrm{lim}\] is an equivalence of categories.
\end{thm}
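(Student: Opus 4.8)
The plan is short, since the substantive work has already been carried out in the preceding propositions. By \cref{adelicadjunction} we have the adjunction $\1_\ad^\X \otimes - \dashv \lim$, so to promote it to an equivalence it suffices to show that the left adjoint $\1_\ad^\X \otimes -\colon \C \to \cad^\X$ is both fully faithful and essentially surjective.

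First I would record full faithfulness: this is exactly \cref{1ad-fullyfaithful}, which states that the unit map $X \to \lim(\1_\ad^\X \otimes X)$ is an equivalence for every $X \in \C$. Recall that this was obtained by combining the adelic approximation theorem \cref{limitsofcubes} (which gives $\1 \xrightarrow{\sim} \lim \1_\ad^\X$) with the fact that $\1_\ad^\X \otimes -$ commutes with finite limits, as we are working stably.

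Next I would invoke essential surjectivity, which is the content of \cref{surjective}. Putting the two together, $\1_\ad^\X \otimes -$ is fully faithful and essentially surjective, hence an equivalence of $\infty$-categories; and being the left adjoint in the stated adjunction, its right adjoint $\lim$ is automatically a homotopy inverse, so that the counit $\1_\ad^\X \otimes \lim(M) \to M$ is also an equivalence. This finishes the proof.

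The genuinely difficult step in this circle of results is not the present theorem, which is a formal consequence, but rather \cref{surjective}: its proof requires analysing objects $M \in \cad^\X$ supported in dimension $\leqslant i$ by means of the right adjoints $f_i$ to the evaluation functors, observing that the relevant comparison maps form a $\otimes$-ideal, and then reducing to the single case $N = \1_\ad^\X \otimes \Elr{i}$, where everything can be computed explicitly using \cref{epointyproduct} and the splitting results of \cref{sec:splitting}.
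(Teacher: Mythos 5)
Your proposal matches the paper's proof exactly: it cites \cref{1ad-fullyfaithful} for full faithfulness and \cref{surjective} for essential surjectivity, and concludes. The additional commentary on how those earlier propositions are proved is accurate but not part of the argument itself.
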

\begin{proof}
The functor $\1_\ad^\X \otimes -$ is fully faithful by~\cref{1ad-fullyfaithful} and essentially surjective by \cref{surjective}. 
\end{proof}

\section{Cofibre functors}\label{sec:cofibres}
In the previous section we proved that $\C$ is equivalent to the adelic model $\C_\ad^\X$, which is a certain full subcategory of the punctured cube category. The torsion model will be obtained from this by taking iterated cofibres, see the introduction for a schematic of the 2-dimensional case. As such, the goal of this section is to define categories which are the image of cofibres on cubes, and to construct these cofibre functors. To motivate the story, suppose that the diagram of rings $R$ is a 2-cube. In this section we will construct categories whose objects are diagrams of the following shapes, together with functors as indicated: 
\[
\begin{tikzcd}[column sep = 0.4cm, row sep = 0.3cm]
\bullet \ar[rr] \ar[dd] && \bullet  \ar[dd] & \ar[rr,  "\mrm{cof}", yshift = -1em] & \hspace{5mm} &\,  &  \bullet \ar[dd] \ar[rr] & & \bullet \ar[dd] \ar[rr, color=OIblue] & & \bullet \ar[dd] & \ar[rr,  "p^\msf{m}", yshift = -1em] & \hspace{5mm} &\,  & \bullet \ar[dd] \ar[rr, color=OIblue] & & \bullet \ar[dd] \\
 &&&& \textcolor{white}{\simeq} &&&&&&&& {} \\
\bullet \ar[rr] &\ar[phantom,"{\cubeL(R)}"below=0.5em]& \bullet & \ar[rr,  "p"', leftarrow, yshift = 1em] & \hspace{5mm} &\,  & \bullet  \ar[rr]  & & \bullet  \ar[rr, color=OIblue] \ar[phantom,"{\cubeLR(R)}"below=0.5em] && \bullet  & \ar[rr,  "\mrm{fib}"', yshift = 1em, leftarrow] & \hspace{5mm} &\,  & \bullet \ar[rr, color=OIblue] &\ar[phantom,"{\cubeR(R)}"below=0.5em] & \bullet
\end{tikzcd}
\]

Starting with the left hand diagram, we take the cofibre of the horizontal maps. The middle category $\cubeLR(R)$ then remembers the data of the whole cofibre sequence, whereas the rightmost category forgets the leftmost part of the cofibre sequence. We explain the meaning of the coloured edges in the next paragraph.

The superscript $\msf{m}$ stands for `mixed', meaning that the handedness of some maps is altered. 
We demonstrate the idea behind this in the simplest case, when $R$ is a 1-cube, so that an
object of $\cubeL(R)$ provides a map $M(\varnothing) \to
\mrm{res}^0_\varnothing M(0)$ (see \cref{rem:dataofcubeL}). Taking the
cofibre of this map, we obtain $\mrm{res}^0_\varnothing M(0) \to
C$. We call the former type of map \emph{oplax}, and latter type \emph{lax}. Thus in the above diagram, the blue maps are lax, and the black maps are oplax. 

\subsection{Mixed cubes}
For the rest of the section, we let $\C$ be a presentably symmetric monoidal stable $\infty$-category. In a similar fashion to how we defined cubes in \cref{defn:cubeL}, we
may define a cube in which the handedness of maps becomes mixed in the sense that there are both lax and oplax maps, see \cref{rem:dataofcubeR}.
The following definition formalises this idea. Recall \cref{dj,cubefaces}.
\begin{defn}\label{cubeR}
	Let $R\colon \mc{P}([i]) \to \msf{CAlg}(\C)$ be a diagram of rings. We inductively define a category of mixed cubes $\cubeR(R)$ which comes equipped with a functor $\pi_{i}^\msf{m}\colon \cubeR(R) \to \mrm{Fun}(\mc{P}([i]), \mod{R(\varnothing)})$. When $i=-1$ we take $\cubeR(R) = \mod{R(\varnothing)}$ and $\pi_{-1}^\msf{m} = \mrm{id}$. If $i \geqslant 0$ we define $\cubeR(R)$ as the pullback
	\[ 
	\begin{tikzcd}[column sep=0.5cm]
		\cubeR(R) \ar[r, "\pi_{i}^\msf{m}"] \ar[d, "{(\theta_{i}, (\psi_j, \psi_{\backslash j})_j)}"'] & \mrm{Fun}(\mc{P}([i]), \mod{R(\varnothing)}) \ar[d, "{(d_{\backslash i}^*,(d_j^*, d_{\backslash j}^*)_j)}"] \\
		\cubeL(R_{i}) \times \prod_{j=0}^{i-1} \left(\cubeR(R_j) \times \cubeR(\Rwithout{j})\right) \ar[r] & \substack{\mrm{Fun}(\mc{P}([i{-}1]), \mod{R(\varnothing)}) \\ \times \\ \prod_{j=0}^{i-1}\mrm{Fun}(\mc{P}([i{-}1]), \mod{R(\varnothing)})^{\times 2}}
	\end{tikzcd}
	\]
	where the bottom horizontal map is the product of the two maps 
	\[
	\cubeL(R_i) \xrightarrow{(\mrm{res}^i_{\varnothing})_*\circ \pi_{i-1}} \mrm{Fun}(\mc{P}([i{-}1]), \mod{R(\varnothing)}) 
	\]
	and
	\[
	\prod_{j=0}^{i-1} \left(\cubeR(R_j) \times \cubeR(\Rwithout{j})\right)  \xrightarrow{\prod_j ( (\mrm{res}^j_{\varnothing})_* \circ \pi_{i-1}^{\msf{m}}) \times \pi_{i-1}^{\msf{m}}} \prod_{j=0}^{i-1}\mrm{Fun}(\mc{P}([i{-}1]), \mod{R(\varnothing)})^{\times 2}.
	\]
\end{defn}

\begin{ex}
	If $i=0$, the data of an object in $\cubeR(R)$ is the data of an $R(\varnothing)$-module $M(\varnothing)$, an $R(0)$-module $M(0)$, and a map of $R(\varnothing)$-modules $\mrm{res}^0_{\varnothing }M(0) \to M(\varnothing)$.
\end{ex}

\begin{ex}\label{ex-cubeR-dim1}
	If $i=1$, the data of an object in $\cubeR(R)$ is the data of modules $M(\varnothing), M(0), M(1)$ and $M(10)$ over the rings indicated by the subset together with module maps $g_1^{\varnothing} \colon \mrm{res}_{\varnothing}^1 M(1) \to M(\varnothing)$, $g_{10}^0 \colon \mrm{res}_0^{10} M(10) \to M(0)$, $f_{1}^{10} \colon M(1) \to \mrm{res}^{10}_{1}M(10)$ and $f_{\varnothing}^0 \colon M(\varnothing) \to \mrm{res}^0_{\varnothing}M(0)$ making the following diagram commute
	\[
	\begin{tikzcd}
		\mrm{res}^1_{\varnothing} M(1) \arrow[r,"g_1^{\varnothing}"] \arrow[d,"\mrm{res}^1_{\varnothing}f_1^{10}"'] & M(\varnothing) \arrow[d,"f_{\varnothing}^0"]\\
		\mrm{res}^1_{\varnothing} \mrm{res}^{10}_1 M(10) \simeq \mrm{res}^0_{\varnothing} \mrm{res}^{10}_0 M(10) \arrow[r, "\mrm{res}^0_{\varnothing}g_{10}^0"] &\mrm{res}_{\varnothing}^0 M(0).
	\end{tikzcd}
	\]
	Note that the map $\theta_1\colon \cubeR(R) \to \cubeL(R_1)$ records only the left vertical arrow in the above square, the map $\psi_0\colon \cubeR(R) \to \cubeR(R_0)$ records the bottom horizontal, and the map $\psi_{\backslash 0}\colon \cubeR(R) \to \cubeR(R_{\backslash 0})$ records the top horizontal. As such, the notation for these restriction maps reflects the module structure, rather than the indexing by the poset. We recommend the reader to compare this example with the corresponding $i=1$ case of $\cubeL(R)$ as described in \cref{ex-dim1-cubeL}.
\end{ex}

\begin{rem}\label{rem:dataofcubeR}
 An object $M$ of $\cubeR(R)$ can be described as the data of:
 \begin{enumerate}[label=(\roman*)]
 	\item for each $A \in \mathcal{P}([i])$ an $R(A)$-module $M(A)$;
	\item for each $A \in \mathcal{P}([i])$ and $k \not\in A$ with $k \neq i$, an $R(A)$-module map $M(A) \to \mrm{res}^{A \cup k}_A M(A\cup k)$;
	\item for each $A \in \mathcal{P}([i{-}1])$, an $R(A)$-module map $\mrm{res}^{A \cup i}_A M(A \cup i) \to M(A)$.
 \end{enumerate}
The data can be assembled into an $(i+1)$-cube whose faces are required to commute.
\end{rem}

\begin{prop}\label{restrictR}
	Let $R\colon \mc{P}([i]) \to \msf{CAlg}(\C)$ be a diagram of rings. There is a restriction map $\theta_{\backslash i}\colon \cubeR(R) \to \cubeL(R_{\backslash i})$. 
\end{prop}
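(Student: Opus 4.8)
The plan is to follow the inductive strategy used for unmixed cubes in the proof of \cref{restricttofaces}, taking advantage of the fact that in a mixed cube the only maps whose handedness differs from the unmixed case are those of the form $\mrm{res}^{A\cup i}_A M(A\cup i)\to M(A)$, i.e.\ those involving the top coordinate $i$ (see \cref{rem:dataofcubeR}). Since the face $R_{\backslash i}$ contains no subsets involving $i$, restricting a mixed cube to it produces an honest object of $\cubeL(R_{\backslash i})$; this explains why the target is $\cubeL$ and not $\cubeR$, in exact parallel with the reason that the definitional leg $\theta_i\colon\cubeR(R)\to\cubeL(R_i)$ already lands in $\cubeL$ (the face $R_i$ contains no maps that add $i$).

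I would argue by induction on $i$. For the base case $i=0$ the face $R_{\backslash 0}$ is the $(-1)$-cube on $R(\varnothing)$, so $\cubeL(R_{\backslash 0})=\mod{R(\varnothing)}$, and one sets $\theta_{\backslash 0}$ to be the composite $\cubeR(R)\xrightarrow{\pi^\msf{m}_0}\mrm{Fun}(\mc{P}([0]),\mod{R(\varnothing)})\xrightarrow{d^*_{\backslash 0}}\mod{R(\varnothing)}$. For the inductive step, fix $R\colon\mc{P}([k])\to\msf{CAlg}(\C)$; since $\cubeL(R_{\backslash k})$ is defined as a pullback, by its universal property it suffices to supply compatible functors out of $\cubeR(R)$ into each of its defining vertices, namely into $\mrm{Fun}(\mc{P}([k{-}1]),\mod{R(\varnothing)})$, into $\cubeL((R_{\backslash k})_j)$ for $0\le j\le k-1$, and into $\cubeL((R_{\backslash k})_{\backslash j})$ for $0\le j\le k-2$. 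The first is $d^*_{\backslash k}\circ\pi^\msf{m}_k$. For the second, one notes that $(R_{\backslash k})_j=(R_j)_{\backslash k}$, which under the standard reindexing identifying $R_j$ with a diagram on $\mc{P}([k{-}1])$ becomes the face avoiding the top coordinate, so one takes $\cubeR(R)\xrightarrow{\psi_j}\cubeR(R_j)\xrightarrow{\theta_{\backslash(k-1)}}\cubeL((R_j)_{\backslash k})$, the second map being available by the inductive hypothesis (the definitional legs $\psi_j$ of $\cubeR(R)$ run over $0\le j\le k-1$, so this includes $j=k-1$). Likewise $(R_{\backslash k})_{\backslash j}=(R_{\backslash j})_{\backslash k}$, and one takes $\cubeR(R)\xrightarrow{\psi_{\backslash j}}\cubeR(R_{\backslash j})\xrightarrow{\theta_{\backslash(k-1)}}\cubeL((R_{\backslash j})_{\backslash k})$.

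It then remains to check that these functors agree after postcomposition with the bottom horizontal map of the pullback square defining $\cubeL(R_{\backslash k})$. This is a routine diagram chase: the legs $\psi_j$, $\psi_{\backslash j}$, $\theta_k$, $\pi^\msf{m}_k$ are mutually compatible because they are the legs of the pullback defining $\cubeR(R)$, and the functors $\theta_{\backslash(k-1)}$ furnished by the inductive hypothesis are compatible with the analogous restriction structure on lower-dimensional mixed cubes. The step that will require care rather than any new idea is the bookkeeping of the iterated reindexings, i.e.\ confirming that each face $(R_{\backslash k})_j$ and $(R_{\backslash k})_{\backslash j}$ of $R_{\backslash k}$ is genuinely obtained from the correspondingly labelled face of $R$ by a face inclusion of the kind to which the inductive hypothesis applies; I expect this to be the only real obstacle. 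Finally, unwinding the construction shows (as with \cref{restricttofaces}) that on objects $\theta_{\backslash i}$ simply discards the part of $M\in\cubeR(R)$ indexed by subsets containing $i$, leaving the resulting unmixed sub-cube indexed on $\mc{P}([i{-}1])$.
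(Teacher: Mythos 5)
Your proof is correct, and it is exactly the approach the paper intends: the paper's own proof is simply the remark that the argument is analogous to \cref{restricttofaces}, and your argument fills in that analogy by mirroring its inductive structure vertex-by-vertex (using $\pi^\msf{m}_0$, $\psi_j$, $\psi_{\backslash j}$ in place of $\pi_0$, $\rho_j$, $\rho_{\backslash j}$), together with the correct bookkeeping of the reindexings $(R_{\backslash k})_j = (R_j)_{\backslash k}$ and $(R_{\backslash k})_{\backslash j} = (R_{\backslash j})_{\backslash k}$. Your opening remark explaining conceptually why the target is $\cubeL(R_{\backslash i})$ rather than $\cubeR(R_{\backslash i})$ (the mixed maps of \cref{rem:dataofcubeR} all involve the top coordinate $i$, which $R_{\backslash i}$ omits) is a helpful addition that the paper leaves implicit.
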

\begin{proof}
	This is analogous to \cref{restricttofaces}.
\end{proof}

The following defines a category $\cubeLR(R)$ which is the result of gluing the cubes in $\cubeL(R)$ and the corresponding mixed cubes in $\cubeR(R)$ together, along their common face.
\begin{defn}\label{defn:cubeLR}
	Let $R\colon \mc{P}([i]) \to \msf{CAlg}(\C)$ be a diagram of rings. We define a category $\cubeLR(R)$ as the pullback 
	\[\begin{tikzcd}
		\cubeLR(R) \arrow[r, "p"] \arrow[d, "p^\msf{m}"'] & \cubeL(R) \arrow[d, "\rho_{i}"]\\
		\cubeR(R) \arrow[r,"\theta_{i}"'] & \cubeL(R_{i})
	\end{tikzcd}\]
	where $\rho_i$ and $\theta_i$ are as defined in \cref{defn:cubeL,cubeR} respectively.
\end{defn}

\begin{ex}
	When $i=0$, an object in $\cubeLR(R)$ can be pictured as a diagram \[M(\varnothing) \to \mrm{res}^0_\varnothing M(0) \to N(\varnothing).\]
\end{ex}

\begin{ex}
	When $i=1$, an object in $\cubeLR(R)$ can be pictured as a commutative diagram 
	\[
	\begin{tikzcd}
		M(\varnothing) \arrow[r, "f_{\varnothing}^1"] \arrow[d,"f_{\varnothing}^{0}"'] & \mrm{res}^1_{\varnothing} M(1) \arrow[d, "\mrm{res}^1_{\varnothing}f_1^{10}"'] \arrow[r,"g_1^{\varnothing}"] &N(\varnothing) \arrow[d,"h^0_{\varnothing}"]\\
		\mrm{res}^0_{\varnothing} M(0) \arrow[r, "\mrm{res}_{\varnothing}^0 f_0^{10}"] & \mrm{res}^0_{\varnothing}\mrm{res}^{10}_0 M(10) \simeq  \mrm{res}_{\varnothing}^1 \mrm{res}_1^{10}M(10) \arrow[r, "\mrm{res}^0_{\varnothing}g_{10}^0"] & \mrm{res}_{\varnothing}^0 N(0).
	\end{tikzcd}
	\]
\end{ex}

\subsection{Cofibre functors on cubes} 
In this subsection, we will construct a cofibre functor $\cubeL(R) \to \cubeR(R)$ which factors through $\cubeLR(R)$. In order to do so, we recall the following lemma which constructs a cofibre functor between cubical diagrams in a stable $\infty$-category. In short, this deals with the case when the vertices of the cubical diagrams all live in the same underlying category, which is not the case in $\cubeL(R)$ where the vertices are modules over different rings.
\begin{lem}\label{boringcofibre}
	There is a cofibre functor \[\mrm{cof}_{i}\colon \mrm{Fun}(\mc{P}([i]), \C) \to \mrm{Fun}(\mc{P}([i]), \C)\] which takes the cofibre along the maps $A \to A \cup i$. That is, \[
	\mrm{cof}_i(X)(A) = \begin{cases} X(A \cup i) & \text{if $i \not\in A$} \\ \mrm{cofib}(X(A\backslash i) \to X(A)) & \text{if $i \in A$}.\end{cases}
	\]
	Similarly, there is a fibre functor \[\mrm{fib}_i\colon \mrm{Fun}(\mc{P}([i]), \C) \to \mrm{Fun}(\mc{P}([i]), \C)\] which takes the fibre along the maps $A \to A \cup i$. These functors are inverse to each other.
\end{lem}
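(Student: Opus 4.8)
The plan is to reduce the statement to a soft fact about arrow categories of stable $\infty$-categories. The key observation is that the indexing poset splits as a product $\mc{P}([i]) \cong \mc{P}([i{-}1]) \times [1]$, where the second coordinate of $A$ records whether $i \in A$ (for $i=0$ this reads $\mc{P}([0]) \cong \mathrm{pt} \times [1] = [1]$, using $[-1]=\varnothing$). Under this identification the inclusions $d_i, d_{\backslash i}\colon \mc{P}([i{-}1]) \to \mc{P}([i])$ of \cref{dj} are the two $\mc{P}([i{-}1])$-slices at $1$ and at $0$ respectively. Writing $\D := \mrm{Fun}(\mc{P}([i{-}1]), \C)$ — a stable $\infty$-category with all limits and colimits, computed pointwise, since $\C$ is — we thus obtain an equivalence $\mrm{Fun}(\mc{P}([i]), \C) \simeq \mrm{Fun}([1], \D)$ which sends a $\mc{P}([i])$-diagram $X$ to the natural transformation $d_{\backslash i}^* X \to d_i^* X$, i.e.\ to $A \mapsto \bigl(X(A) \to X(A \cup i)\bigr)$ for $A \subseteq [i{-}1]$. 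So it suffices to construct, for an arbitrary stable $\infty$-category $\D$, mutually inverse autoequivalences $\mrm{cof}$ and $\mrm{fib}$ of the arrow category $\mrm{Fun}([1],\D)$, acting by $(X \to Y) \mapsto (Y \to \mrm{cofib})$ and $(Y \to Z) \mapsto (\mrm{fib} \to Y)$, and then to transport them along this equivalence.

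To build these functorially (rather than by making pointwise choices of cofibres), I would pass through the $\infty$-category of cofibre sequences. Let $\mc{F} \subseteq \mrm{Fun}([1]\times[1], \D)$ be the full subcategory of bicartesian squares whose bottom-left vertex is a zero object — since $\D$ is stable, a square is a pushout iff it is a pullback, so these are exactly the diagrams $X \to Y \to Z$ with $Z \simeq \mrm{cofib}(X\to Y)$ and $X \simeq \mrm{fib}(Y \to Z)$. By the standard characterization of (co)Cartesian squares in stable $\infty$-categories (e.g.\ \cite[\S 1.1]{HA}), restriction to the top edge $r_{\mrm{t}}\colon \mc{F} \to \mrm{Fun}([1],\D)$, $(X \to Y \to Z) \mapsto (X \to Y)$, is an equivalence (a square with zero bottom-left vertex is a pushout iff it is the left Kan extension of its top edge), and likewise restriction to the right edge $r_{\mrm{r}}\colon \mc{F} \to \mrm{Fun}([1],\D)$, $(X\to Y\to Z) \mapsto (Y\to Z)$, is an equivalence (such a square is a pullback iff $X \simeq \mrm{fib}(Y\to Z)$). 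I then set $\mrm{cof} := r_{\mrm{r}}\circ r_{\mrm{t}}^{-1}$ and $\mrm{fib} := r_{\mrm{t}}\circ r_{\mrm{r}}^{-1}$; these are equivalences, manifestly mutually inverse, and inspection on objects shows they are given by the claimed formulas. Transporting back along $\mrm{Fun}(\mc{P}([i]),\C)\simeq\mrm{Fun}([1],\D)$ and using $d_{\backslash i}^* X(A) = X(A)$ and $d_i^* X(A) = X(A\cup i)$ produces the functor $\mrm{cof}_i$ of the statement.

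I do not expect a serious obstacle; the content is entirely formal once the product decomposition is in hand. The two points that need a little care are: (i) unwinding the reindexing at the end so that on subsets $B$ with $i \in B$ one recovers $\mrm{cofib}\bigl(X(B\backslash i) \to X(B)\bigr)$ rather than something shifted — this holds because $\mrm{cof}$ on $\mrm{Fun}([1],\D)$ moves the cofibre term into the $1$-slice while moving the old $1$-slice $X(-\cup i)$ into the $0$-slice; and (ii) ensuring the construction is functorial and compatible with the restriction maps to sub-cubes used later in the paper, which is precisely why the argument goes through the category $\mc{F}$ of (co)fibre sequences and the Kan-extension characterization instead of defining $\mrm{cof}_i$ vertex by vertex. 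Agreement of $\mrm{cof}_i$ with the stated case distinction, face by face, is then immediate from the pointwise computation of (co)limits in $\D = \mrm{Fun}(\mc{P}([i{-}1]),\C)$.
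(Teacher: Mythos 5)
Your proof is correct and follows essentially the same route as the paper: both exploit the product decomposition $\mc{P}([i]) \simeq \mc{P}([i{-}1]) \times \Delta^1$ and reduce to the cofibre autoequivalence of the arrow category of a stable $\infty$-category. The paper simply cites \cite[Remark~1.1.1.7]{HA} for that autoequivalence (currying so as to post-compose with $\mrm{cofib}\colon \mrm{Fun}(\Delta^1,\C)\to\mrm{Fun}(\Delta^1,\C)$), whereas you curry the other way and unpack the proof of that remark by passing through the category of cofibre sequences; the two currying orders are related by the canonical equivalence of iterated functor categories, so the content is identical.
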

\begin{proof}
	We make the identification $\mc{P}([i]) = \mc{P}([i{-}1]) \times \Delta^1$, where the $\Delta^1$ records the adding $i$ direction. Then by adjunction $\mrm{Fun}(\mc{P}([i]), \C) \simeq \mrm{Fun}(\mc{P}([i{-}1]), \mrm{Fun}(\Delta^1, \C))$. The functor we want is then given by post-composition with the functor $\mrm{cofib}\colon \mrm{Fun}(\Delta^1, \C) \to \mrm{Fun}(\Delta^1, \C)$ as constructed in~\cite[Remark 1.1.1.7]{HA}. The construction of $\mrm{fib}_i$ is similar, and they are inverse to one another by stability as shown in \cite[Remark 1.1.1.7]{HA}.
\end{proof}

The following proposition enhances the previous statement by allowing the vertex categories of the cubical diagrams to change.
\begin{prop}\label{cofLtoR}
	Let $R\colon \mc{P}([i]) \to \msf{CAlg}(\C)$ be a diagram of rings. There is a cofibre functor $\mrm{cof}^\msf{m}\colon \cubeL(R) \to \cubeR(R)$, and a fibre functor $\mrm{fib}^\msf{m}\colon \cubeR(R) \to \cubeL(R)$ which form an equivalence of categories \[\xymatrix{
		\cubeL(R) \ar@<0.7ex>[r]^-{\mrm{cof}^\msf{m}} \ar@{}[r]|-{\simeq} \ar@<-0.7ex>@{<-}[r]_-{\mrm{fib}^\msf{m}} & \cubeR(R).
	}\]
\end{prop}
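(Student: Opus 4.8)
The plan is to construct $\mrm{cof}^\msf{m}$ and $\mrm{fib}^\msf{m}$ simultaneously by induction on $i$, exploiting the presentations of $\cubeL(R)$ and $\cubeR(R)$ as pullbacks (\cref{defn:cubeL,cubeR}) and the universal property of such pullbacks. When $i=-1$ both categories are $\mod{R(\varnothing)}$ and we take $\mrm{cof}^\msf{m}=\mrm{fib}^\msf{m}=\mrm{id}$. As part of the induction we also record the compatibilities $\pi_i^\msf{m}\circ\mrm{cof}^\msf{m}\simeq\mrm{cof}_i\circ\pi_i$ and $\pi_i\circ\mrm{fib}^\msf{m}\simeq\mrm{fib}_i\circ\pi_i^\msf{m}$ with the underlying-diagram functors, where $\mrm{cof}_i$ and $\mrm{fib}_i$ are as in \cref{boringcofibre}; these are needed to feed the induction.

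For the inductive step, to build $\mrm{cof}^\msf{m}\colon\cubeL(R)\to\cubeR(R)$ it suffices, by the pullback defining $\cubeR(R)$, to give functors to its three corners that are compatible over the bottom-right vertex: to $\mrm{Fun}(\mc{P}([i]),\mod{R(\varnothing)})$ we take $\mrm{cof}_i\circ\pi_i$; to the factor $\cubeL(R_i)$ we take $\rho_i$ (restriction to the $i$-face through $i$, from \cref{restricttofaces}); and to the factors $\cubeR(R_j)$ and $\cubeR(\Rwithout{j})$ with $0\leqslant j\leqslant i-1$ we take $\mrm{cof}^\msf{m}\circ\rho_j$ and $\mrm{cof}^\msf{m}\circ\rho_{\backslash j}$, with $\mrm{cof}^\msf{m}$ supplied by the inductive hypothesis applied to the $i$-cubes $R_j$ and $\Rwithout{j}$. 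The key point that makes these glue is the identity $d_{\backslash i}^*\circ\mrm{cof}_i\simeq d_i^*$: the $i$-face of $\mrm{cof}_i(X)$ \emph{avoiding} $i$ is the $i$-face of $X$ \emph{through} $i$ — this is precisely where the ``mixed handedness'' enters — and this matches the defining compatibility $(\mrm{res}^i_{\varnothing})_*\circ\pi_{i-1}\circ\rho_i\simeq d_i^*\circ\pi_i$ built into $\cubeL(R)$. The remaining face-compatibilities reduce to the facts that $\mrm{cof}_i$ commutes with $d_j^*$ and $d_{\backslash j}^*$ for $j\neq i$ (the cofibre is taken only in the last coordinate) and with the exact functors $(\mrm{res}^j_{\varnothing})_*$, combined with the recorded compatibility $\pi_{i-1}^\msf{m}\circ\mrm{cof}^\msf{m}\simeq\mrm{cof}_{i-1}\circ\pi_{i-1}$ and the analogous identities for $\cubeL(R)$ from \cref{defn:cubeL}. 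The functor $\mrm{fib}^\msf{m}\colon\cubeR(R)\to\cubeL(R)$ is constructed dually via the pullback defining $\cubeL(R)$, replacing $\mrm{cof}_i$ by $\mrm{fib}_i$ and $\rho_i$ by $\theta_i$.

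It then remains to show $\mrm{cof}^\msf{m}$ and $\mrm{fib}^\msf{m}$ are mutually inverse. Again by induction and the universal property of the defining pullbacks, it is enough to produce compatible natural equivalences of $\mrm{fib}^\msf{m}\circ\mrm{cof}^\msf{m}$ and $\mrm{cof}^\msf{m}\circ\mrm{fib}^\msf{m}$ with the identity on each of the three corners. On the $\mrm{Fun}(\mc{P}([i]),\mod{R(\varnothing)})$ corner this follows from $\mrm{fib}_i\circ\mrm{cof}_i\simeq\mrm{id}$ and $\mrm{cof}_i\circ\mrm{fib}_i\simeq\mrm{id}$ (\cref{boringcofibre}) together with the recorded compatibilities; on the $\cubeL(R_i)$ corner it follows from $\theta_i\circ\mrm{cof}^\msf{m}\simeq\rho_i$ and $\rho_i\circ\mrm{fib}^\msf{m}\simeq\theta_i$, which hold by construction; and on the lower-dimensional corners it follows from the inductive hypothesis, using $\psi_j\circ\mrm{cof}^\msf{m}\simeq\mrm{cof}^\msf{m}\circ\rho_j$ and $\rho_j\circ\mrm{fib}^\msf{m}\simeq\mrm{fib}^\msf{m}\circ\psi_j$.

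The main obstacle I anticipate is organisational rather than conceptual: assembling the various commutations of $\mrm{cof}_i$ with the face-restrictions $d_j^*$, $d_{\backslash j}^*$ and with the scalar-restrictions $(\mrm{res}^j_{\varnothing})_*$ into a \emph{coherent} cocone of $\infty$-categories, and likewise making the equivalences above coherent enough to invoke the universal property of the pullbacks, rather than merely strictly commutative diagrams. Each individual fact is essentially trivial — it reflects that $\mrm{cof}_i$ only alters the last cube-coordinate and leaves the ambient ring $R(\varnothing)$ untouched — but bookkeeping the homotopy coherences through the induction is where the real work lies.
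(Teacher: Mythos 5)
Your argument takes essentially the same route as the paper: construct $\mrm{cof}^\msf{m}$ and $\mrm{fib}^\msf{m}$ by induction using the pullback presentations of $\cubeL(R)$ and $\cubeR(R)$, feeding $\mrm{cof}_i$, the identity on $\cubeL(R_i)$, and the inductive hypothesis on the lower-dimensional faces into the universal property of the pullback (the paper phrases this as a map of cospans inducing a map of pullbacks, which is the same data). You are somewhat more explicit than the paper in recording $\pi_i^\msf{m}\circ\mrm{cof}^\msf{m}\simeq\mrm{cof}_i\circ\pi_i$ as part of the induction and in isolating the key identity $d_{\backslash i}^*\circ\mrm{cof}_i\simeq d_i^*$ that makes the mixed handedness cohere, but the content is the same.
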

\begin{proof}
	We argue by induction. For the case $i=-1$, we have $\cubeL(R) = \mod{R(\varnothing)}$ and $\cubeR(R) = \mod{R(\varnothing)}$ so in this case we take $\mrm{cof}^\msf{m} = \mrm{id}$ and $\mrm{fib}^\msf{m} = \mrm{id}$. So now suppose both functors are constructed and are equivalences for $i$. We extend this to $i+1$ as follows. As both $\cubeL(R)$ and $\cubeR(R)$ are defined as  pullbacks (see \cref{defn:cubeL,cubeR}) it suffices to give functors which are quasi-inverses on each component. On the top right vertex, for $\mrm{cof}^\msf{m}$ we take the map $\mrm{cof}_{i+1}\colon \mrm{Fun}(\mc{P}([i+1]), \mod{R(\varnothing)}) \to \mrm{Fun}(\mc{P}([i+1]), \mod{R(\varnothing)})$ given by the cofibre constructed in \cref{boringcofibre}, and for $\mrm{fib}^\msf{m}$ we similarly take the fibre functor. These are quasi-inverses by \cref{boringcofibre}. On the bottom right vertex, that is, for the map 
	\[\begin{tikzcd}
		\mrm{Fun}(\mc{P}([i]), \mod{R(\varnothing)}) \times \prod_{j=0}^{i-1}\mrm{Fun}(\mc{P}([i]), \mod{R(\varnothing)})^{\times 2} \ar[d] \\ \mrm{Fun}(\mc{P}([i]), \mod{R(\varnothing)}) \times \prod_{j=0}^{i-1}\mrm{Fun}(\mc{P}([i]), \mod{R(\varnothing)})^{\times 2},
	\end{tikzcd}\] for $\mrm{cof}^\msf{m}$, we take the identity on the first component, and on the other component we take $(\mrm{cof}_{i}, \mrm{cof}_{i})$ from \cref{boringcofibre}. Analogously, for $\mrm{fib}^\msf{m}$ we take the identity on the first component and $(\mrm{fib}_i, \mrm{fib}_i)$ on the other component. These are again quasi-inverse to each other by \cref{boringcofibre}. On the bottom left vertex, that is for the map
	\[\cubeL(R_{i}) \times \prod_{j=0}^{i-1}\left(\cubeL(R_j) \times \cubeL(\Rwithout{j})\right) \to \cubeL(R_{i}) \times \prod_{j=0}^{i-1} \left(\cubeR(R_j) \times \cubeR(\Rwithout{j})\right)\] 
	we take the identity on the first component, and the map given by the inductive step on the remaining factor. By induction, these are quasi-inverses, and hence we conclude.
\end{proof}


Combining the results of this section, we can enhance the previous result so that the cofibre functor records the map whose cofibre we are taking. More precisely, we show that the functor $\mrm{cof}^\msf{m}\colon \cubeL(R) \to \cubeR(R)$ constructed in the previous result factors through $\cubeLR(R)$.
\begin{cor}\label{cofLtoLR}
	Let $R\colon \mc{P}([i]) \to \msf{CAlg}(\C)$ be a diagram of rings. There is a cofibre functor $\mrm{cof}\colon\cubeL(R) \to \cubeLR(R)$ such that the diagram
	\[\begin{tikzcd}\cubeL(R)\ar[r, "\mrm{cof}"] \ar[rd, "\mrm{cof}^\msf{m}"'] & \cubeLR(R) \ar[d, "p^\msf{m}"] \\
	& \cubeR(R)
	\end{tikzcd}\] commutes. Similarly, there is a fibre functor $\mrm{fib}\colon\cubeR(R) \to \cubeLR(R)$ such that $\mrm{fib}^\msf{m} \simeq p\circ\mrm{fib}$.
	\end{cor}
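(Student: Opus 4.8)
The plan is to obtain $\mrm{cof}$ from the universal property of the pullback square defining $\cubeLR(R)$ in \cref{defn:cubeLR}. A functor $\cubeL(R)\to\cubeLR(R)$ is precisely a pair of functors $a\colon\cubeL(R)\to\cubeL(R)$ and $b\colon\cubeL(R)\to\cubeR(R)$ together with an equivalence $\rho_i\circ a\simeq\theta_i\circ b$ of functors to $\cubeL(R_i)$, where $\rho_i$ and $\theta_i$ are the structure maps of \cref{defn:cubeL} and \cref{cubeR}. I will take $a=\mrm{id}$ and $b=\mrm{cof}^\msf{m}$, with $\mrm{cof}^\msf{m}$ as in \cref{cofLtoR}; the induced functor $\mrm{cof}$ then automatically satisfies $p\circ\mrm{cof}\simeq\mrm{id}$ and $p^\msf{m}\circ\mrm{cof}\simeq\mrm{cof}^\msf{m}$, the latter being exactly the commuting triangle in the statement. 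Dually, I will obtain $\mrm{fib}$ from the pair $(\mrm{fib}^\msf{m},\mrm{id}_{\cubeR(R)})$ together with an equivalence $\rho_i\circ\mrm{fib}^\msf{m}\simeq\theta_i$, so that $p\circ\mrm{fib}\simeq\mrm{fib}^\msf{m}$ as required.

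Thus the only thing to establish is the natural equivalence $\rho_i\simeq\theta_i\circ\mrm{cof}^\msf{m}$; the companion equivalence $\rho_i\circ\mrm{fib}^\msf{m}\simeq\theta_i$ then follows by composing with the equivalence $\mrm{cof}^\msf{m}\circ\mrm{fib}^\msf{m}\simeq\mrm{id}_{\cubeR(R)}$ of \cref{cofLtoR}. To prove it I would unwind the inductive construction of $\mrm{cof}^\msf{m}$ in the proof of \cref{cofLtoR}: there $\mrm{cof}^\msf{m}$ is assembled as a map between the defining pullback cones of $\cubeL(R)$ and $\cubeR(R)$, and on their common vertex $\cubeL(R_i)\times\prod_{j}(\cdots)$ it is built so as to be the identity on the distinguished $\cubeL(R_i)$-factor. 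Since, by \cref{defn:cubeL} and \cref{cubeR}, the functors $\rho_i$ and $\theta_i$ are exactly the projections of $\cubeL(R)$ and $\cubeR(R)$ onto this very $\cubeL(R_i)$-factor, the coherence datum making $\mrm{cof}^\msf{m}$ a map of pullback cones restricts to a coherent homotopy $\theta_i\circ\mrm{cof}^\msf{m}\simeq\rho_i$. (When $i=0$ this factor is the whole vertex and the statement is immediate; in general $\mrm{cof}^\msf{m}$ acts nontrivially on the remaining factors, but they play no role here.)

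The main --- and only mildly delicate --- point is to be sure that "$\mrm{cof}^\msf{m}$ is the identity on the $\cubeL(R_i)$-factor" holds not merely objectwise but as part of the coherent data assembling $\mrm{cof}^\msf{m}$, so that it genuinely supplies the homotopy required to map into the pullback $\cubeLR(R)$. This holds precisely because $\mrm{cof}^\msf{m}$ was constructed in \cref{cofLtoR} as such a map of pullback cones with the $\cubeL(R_i)$-component chosen to be literally $\mrm{id}$; no input beyond \cref{cofLtoR} and the functoriality of pullbacks of $\infty$-categories is needed. Finally, the asserted triangle $p^\msf{m}\circ\mrm{cof}\simeq\mrm{cof}^\msf{m}$ and the relation $p\circ\mrm{fib}\simeq\mrm{fib}^\msf{m}$ are read off directly from the defining cones of the two functors $\mrm{cof}$ and $\mrm{fib}$ into $\cubeLR(R)$.
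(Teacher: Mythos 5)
Your proposal is correct and follows the same route as the paper's proof: produce $\mrm{cof}$ via the universal property of the pullback in Definition~\ref{defn:cubeLR} using the pair $(\mrm{id}, \mrm{cof}^\msf{m})$, and dually for $\mrm{fib}$. You go a step further than the paper in spelling out exactly why the required compatibility datum $\rho_i \simeq \theta_i \circ \mrm{cof}^\msf{m}$ is available, namely because $\mrm{cof}^\msf{m}$ was assembled in Proposition~\ref{cofLtoR} as a map of pullback cones acting as the identity on the distinguished $\cubeL(R_i)$-factor that both $\rho_i$ and $\theta_i$ project onto; the paper simply asserts ``By construction, the diagram in the statement commutes.'' This extra care is exactly the right thing to check and is not an alternative argument, just a fuller account of the same one.
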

\begin{proof}
	By definition of $\cubeLR(R)$ it suffices to give compatible maps $\cubeL(R) \to \cubeL(R)$ and $\cubeL(R) \to \cubeR(R)$. For the former we take the identity functor, and for the latter we take the cofibre functor $\mrm{cof}^\msf{m}$ constructed in \cref{cofLtoR}. By construction, the diagram in the statement commutes. The construction of the fibre functor is similar.
\end{proof}

\begin{rem}\label{rem:Vcofid}
It is immediate from the construction of $\mrm{cof}\colon \cubeL(R) \to \cubeLR(R)$ that the diagram 
\[\begin{tikzcd}
\cubeL(R) \ar[r, "\mrm{cof}"] \ar[dr, "\mrm{id}"'] & \cubeLR(R) \ar[d, "p"] \\
& \cubeL(R)
\end{tikzcd}\]
commutes.
\end{rem}

\begin{ex}
	Let us give an example of the two above cofibre functors in the case $i=0$. Given an object ($f\colon M(\varnothing) \to \mrm{res}^0_\varnothing M(0)$) in $\cubeL(R)$, we have 
	\[
	\begin{tikzcd}
		{(M(\varnothing) \xrightarrow{f} \mrm{res}^0_\varnothing M(0))} \ar[r, rightsquigarrow, "{(\ref{cofLtoR})}"] \ar[d, rightsquigarrow, "{(\ref{cofLtoLR})}"'] & {(\mrm{res}^0_\varnothing M(0) \to \mrm{cofib}(f))} \\  {(M(\varnothing) \xrightarrow{f} \mrm{res}^0_\varnothing M(0) \to \mrm{cofib}(f))} &
	\end{tikzcd}
	\]
\end{ex}

\begin{ex}
	In the case $i=1$, the cofibre functor of (\ref{cofLtoLR}) sends the diagram in \cref{ex-dim1-cubeL} to 
	\[
	\begin{tikzcd}
		M(\varnothing) \arrow[r, "f_{\varnothing}^1"] \arrow[d,"f_{\varnothing}^{0}"'] & \mrm{res}^1_{\varnothing} M(1) \arrow[d, "\mrm{res}^1_{\varnothing}f_1^{10}"'] \arrow[r,"g_1^{\varnothing}"] & \mrm{cof}(f_\varnothing^1) \arrow[d]\\
		\mrm{res}^0_{\varnothing} M(0) \arrow[r, "\mrm{res}_{\varnothing}^0 f_0^{10}"] & \mrm{res}^0_{\varnothing}\mrm{res}^{10}_0 M(10) \simeq  \mrm{res}_{\varnothing}^1 \mrm{res}_1^{10}M(10) \arrow[r, "\mrm{res}^0_{\varnothing}g_{10}^0"] & \mrm{res}^0_\varnothing \mrm{cof}(f^{10}_0) \simeq \mrm{cof}(\mrm{res}^0_\varnothing(f^{10}_0))
	\end{tikzcd}
	\]
	where the right hand vertical is the induced map on cofibres. On the other hand, the cofibre functor of (\ref{cofLtoR}) records the right hand square of the diagram.
\end{ex}

\section{Categories of cofibre sequences}\label{catsofcofs}
In the previous section, we constructed mixed cubes and various (co)fibre functors relating the different categories. In this section we explain how to modify the category $\cubeLR(R)$ so that the cofibre functor $\mrm{cof}\colon \cubeL(R) \to \cubeLR(R)$ restricts to an equivalence. Loosely speaking, we want to restrict to the full subcategory of $\cubeLR(R)$ consisting of the cofibre sequences. However, it is important to note that being a cofibre sequence is not a property: it is the \emph{data} of a null homotopy, together with the property that the induced map into the cone is an equivalence. As such, to record the data of cofibre sequences, we first must enlarge $\cubeLR(R)$ to build in the data of null homotopies, which gives a category denoted by $\cubeLRplus(R)$. We then take an appropriate full subcategory $\cubeLRplus(R)_\mrm{c}$ of this enlargement. 

\subsection{The one-dimensional case}
We begin by implementing the aforementioned enlargement in the one-dimensional case.
\begin{defn}\label{defn:dummy}
	Let $R\colon \mc{P}([0]) \to \msf{CAlg}(\C)$ be a diagram of rings. We define the category $\D^R$ to be the pullback
	\[
	\begin{tikzcd}[column sep=1.5cm]
		\D^R \ar[r,"\pi^\D"] \ar[d, "{(p_t, p_r)}"'] & \mrm{Fun}(\mc{P}([1]), \mod{R(\varnothing)}) \ar[d, "{(\mrm{ev}_t ,\mrm{ev}_r)}"] \\
		\cubeL(R) \times \cubeR(R) \ar[r, "\pi_0 \times \pi_0^\msf{m}"'] & \mrm{Fun}(\mc{P}([0]), \mod{R(\varnothing)})^{\times 2}.
	\end{tikzcd}
	\]
	where the evaluations $\mrm{ev}_t$ and $\mrm{ev}_r$ restrict to the top and the right faces of the square respectively, and $\pi_0$ and $\pi_0^\msf{m}$ are as defined in \cref{defn:cubeL,cubeR} respectively.
\end{defn}

\begin{ex}
The objects of $\D^R$ may be pictured as commutative diagrams
\[
\begin{tikzcd}
	M(\varnothing) \ar[r] \ar[d] & \mrm{res}^0_\varnothing N(0) \ar[d] \\
	P(\varnothing) \ar[r] & Q(\varnothing). 
\end{tikzcd}
\] 
The vertex $P(\varnothing)$ is to be thought of as a `dummy' vertex, see \cref{adultdummy}. The indexing denotes the ring which the objects are modules over.
\end{ex}

We next recall the following result from~\cite[Remark 1.1.1.7]{HA}, which describes an equivalence between the arrow category and the full subcategory of cofibre sequences. 
\begin{lem}\label{lem-cofibre-HA}
	Let $\C$ be a stable $\infty$-category and let $\mrm{Fun}(\mathcal{P}([1]), \C)_\mrm{c}$ denote the full subcategory of objects $X \in \mrm{Fun}(\mathcal{P}([1]), \C) $ such that
	\begin{itemize}
		\item $X(1) \simeq 0$;
		\item the square represented by $X$ is a pushout in $\C$.
	\end{itemize} 
	There is a cofibre functor 
	\[
	\mathrm{cof}\colon \mrm{Fun}(\mathcal{P}([0]), \C) \to \mrm{Fun}(\mathcal{P}([1]), \C)
	\]
	and an equivalence of $\infty$-categories 
	\[
	\begin{tikzcd}
		\mrm{Fun}(\mathcal{P}([0]), \C) \arrow[r, yshift=0.2cm, "\mathrm{cof}"] \arrow[r, phantom, "\simeq" description]      & \mrm{Fun}(\mathcal{P}([1]), \C)_\mrm{c} \arrow[l, yshift=-0.2cm, "\mrm{ev}_t"]
	\end{tikzcd}
	\]
	where $\mrm{ev}_t$ records the arrow indexed on $\varnothing \to 0$ of the square. \qed
\end{lem}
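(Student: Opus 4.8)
This is \cite[Remark 1.1.1.7]{HA}; I record the argument for completeness. Write $\mc{Q} \subseteq \mc{P}([1])$ for the full subposet on the proper subsets $\{\varnothing, \{0\}, \{1\}\}$, so that $\mc{Q}$ is the span $\{0\} \leftarrow \varnothing \to \{1\}$, and note that $\mc{P}([0]) = \{\varnothing \subseteq \{0\}\}$ sits as a full subposet inside both $\mc{Q}$ and $\mc{P}([1])$. The plan is to factor the restriction functor $\mrm{ev}_t\colon \mrm{Fun}(\mc{P}([1]), \C) \to \mrm{Fun}(\mc{P}([0]), \C)$ through $\mrm{Fun}(\mc{Q}, \C)$ and to check that each of the two restrictions becomes an equivalence upon passage to the relevant full subcategories, exploiting that the stable category $\C$ admits all finite limits and colimits.

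For the first restriction: a functor $\mc{P}([1]) \to \C$ is a pushout square precisely when it is a left Kan extension of its restriction along $\mc{Q} \hookrightarrow \mc{P}([1])$, and such left Kan extensions exist because $\C$ has pushouts, so by \cite[Proposition 4.3.2.15]{HTT} restriction along $\mc{Q} \hookrightarrow \mc{P}([1])$ is an equivalence from the full subcategory of pushout squares in $\mrm{Fun}(\mc{P}([1]), \C)$ onto $\mrm{Fun}(\mc{Q}, \C)$. Since the vertex denoted $1$ in the statement is the object $\{1\}$ of $\mc{Q}$, the condition $X(\{1\}) \simeq 0$ is preserved and reflected by this equivalence, so $\mrm{Fun}(\mc{P}([1]), \C)_\mrm{c}$ is identified with the full subcategory of $\mrm{Fun}(\mc{Q}, \C)$ spanned by the spans $X$ with $X(\{1\}) \simeq 0$.

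For the second restriction: a span $X\colon \mc{Q} \to \C$ satisfies $X(\{1\}) \simeq 0$ if and only if it is a right Kan extension of its restriction along $\mc{P}([0]) \hookrightarrow \mc{Q}$. Indeed, there is no morphism from $\{1\}$ to any object of $\mc{P}([0]) = \{\varnothing, \{0\}\}$ inside $\mc{Q}$, so the comma category computing the right Kan extension at $\{1\}$ is empty, and the Kan extension condition there reads $X(\{1\}) \simeq \lim_\varnothing \simeq 0$ (the terminal, hence zero, object of $\C$), while at $\varnothing$ and $\{0\}$ it holds automatically. These right Kan extensions exist since $\C$ has a terminal object, so \cite[Proposition 4.3.2.15]{HTT} again gives an equivalence, via restriction along $\mc{P}([0]) \hookrightarrow \mc{Q}$, between these spans and $\mrm{Fun}(\mc{P}([0]), \C)$.

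Composing the two equivalences identifies $\mrm{ev}_t$, that is, restriction along $\mc{P}([0]) \hookrightarrow \mc{P}([1])$, with an equivalence $\mrm{Fun}(\mc{P}([1]), \C)_\mrm{c} \simeq \mrm{Fun}(\mc{P}([0]), \C)$; its quasi-inverse is ``first right Kan extend, inserting a zero object at $\{1\}$, then left Kan extend, forming the pushout'', which sends an arrow $f$ to the square with $X(\{1\}) \simeq 0$ and $X(\{0,1\}) \simeq \mrm{cofib}(f)$. This is exactly the cofibre functor $\mrm{cof}$, which by construction lands in $\mrm{Fun}(\mc{P}([1]), \C)_\mrm{c}$. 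I do not anticipate a real obstacle: the substance is the two applications of \cite[Proposition 4.3.2.15]{HTT}, and the remaining work is only to confirm that the two Kan extensions exist --- automatic from stability of $\C$ --- and that the composite quasi-inverse coincides with the functor labelled $\mrm{cof}$, which is routine.
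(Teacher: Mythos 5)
Your proof is correct, and it is in substance identical to the paper's, which simply cites~\cite[Remark 1.1.1.7]{HA} and closes the lemma with \texttt{\textbackslash qed}: the two-step Kan extension argument you record (right Kan extend along $\mc{P}([0]) \hookrightarrow \mc{Q}$ to insert a zero object, then left Kan extend along $\mc{Q} \hookrightarrow \mc{P}([1])$ to form the pushout, each step an equivalence by~\cite[Proposition 4.3.2.15]{HTT}) is exactly the standard argument underlying that remark.
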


We next upgrade the cofibre functor $\mrm{cof}\colon \cubeL(R) \to \cubeLR(R)$ from \cref{cofLtoLR} in the one-dimensional case, so that it records the data of the null homotopy.
\begin{con}\label{adultdummy}
	We now define a functor $\mrm{cof}_+\colon\cubeL(R) \to \D^R$, which  takes an object $f\colon M \to \mrm{res}^0_\varnothing N$ to the commutative square
	\[
	\begin{tikzcd}
		M \ar[r, "f"] \ar[d] & \mrm{res}^0_\varnothing N \ar[d] \\
		0 \ar[r] & \mrm{cofib}(f). 
	\end{tikzcd}
	\] 
	In other words this enhances the functor $\mrm{cof}^{\msf{m}}\colon \cubeL(R) \to \cubeR(R)$ from \cref{cofLtoR} in the case $i=0$ by recording the null homotopy. By definition of $\D^R$ it suffices to give maps $\cubeL(R) \to \mrm{Fun}(\mc{P}([1]), \mod{R(\varnothing)})$ and $\cubeL(R) \to \cubeL(R) \times \cubeR(R)$. For the former, we take the composite \[\cubeL(R) \xrightarrow{\pi_0} \mrm{Fun}(\mc{P}([0]), \mod{R(\varnothing)}) \xrightarrow{\mathrm{cof}} \mrm{Fun}(\mc{P}([1]), \mod{R(\varnothing)})\] where the latter map is the cofibre functor from~\cref{lem-cofibre-HA}. For the map $\cubeL(R) \to \cubeL(R) \times \cubeR(R)$, we take the identity on the first component, and the functor $\mrm{cof}^\msf{m}$ constructed in \cref{cofLtoR} on the second component.
\end{con}

\subsection{The general case}
Using the constructions from the one-dimensional case, we may enhance the definition of $\cubeLR(R)$ in higher dimensions, by including `dummy vertices' so that we can record the data of null homotopies when taking cofibres. 

\begin{nota}
Fix a diagram of rings $R\colon \mc{P}([i]) \to \msf{CAlg}(\C)$. We
define another diagram of rings for every subset $A \subseteq
\{0,\ldots, i-1\}$, by setting $R^{A,i}\colon \mc{P}([0]) \to
\msf{CAlg}(\C)$ to be $R^{A,i}(\varnothing) = R(A)$ and $R^{A,i}(0) =
R(A \cup i)$. We also consider the category $\D^{R^{A,i}}$ which we
denote as $\D^{A,i}$ for improved  typography. 
\end{nota}

\begin{ex}
	An object in $\D^{A,i}$ can be represented by a commutative diagram
	\[
	\begin{tikzcd}
		M(A) \ar[r] \ar[d] & \mrm{res}^{A \cup i}_A N(A \cup i) \ar[d] \\
		P(A) \ar[r] & Q(A). 
	\end{tikzcd}
	\] 
	The indexing denotes the rings which the objects are modules over.
\end{ex}
\begin{defn}\label{adddummies}
	Let $R\colon \mc{P}([i]) \to \msf{CAlg}(\C)$ be a diagram of rings. We define a category $\cubeLRplus(R)$ as the pullback
	\[
	\begin{tikzcd}[column sep=2cm]
		\cubeLRplus(R) \ar[r, "\mrm{fgt}_+"] \ar[d] & \cubeLR(R) \ar[d, "(\eta_A)_A"] \\
		\prod\limits_{A \in \mc{P}([i{-}1])} \D^{A,i} \ar[r, "\prod \chi_A"'] & \prod\limits_{A \in \mc{P}([i{-}1])} \cubeLR(R^{A,i}) 
	\end{tikzcd}
	\]
	where the maps $\chi_A$ and $\eta_A$ are defined as follows. For $\chi_A\colon \D^{A,i} \to \cubeLR(R^{A,i})$, by \cref{defn:cubeLR} it suffices to give compatible functors $\D^{A,i} \to \cubeL(R^{A,i})$ and $\D^{A,i} \to \cubeR(R^{A,i})$ for which we take  $p_t$ and $p_r$ respectively, as defined in \cref{defn:dummy}. We define $\eta_A$ as follows. Since $\cubeLR(R^{A,i})$ is defined as a pullback (see \cref{defn:cubeLR}), it suffices to give compatible maps $\cubeLR(R) \to \cubeL(R^{A,i})$ and $\cubeLR(R) \to \cubeR(R^{A,i})$ for which we take the composites
	\[\cubeLR(R) \xrightarrow{p} \cubeL(R) \to \cubeL(R^{A,i}) \quad \text{and} \quad \cubeLR(R) \xrightarrow{p^\msf{m}} \cubeR(R) \to \cubeR(R^{A,i})\] where we use that $R^{A,i}$ is a face of $R$ for the unlabelled arrows, and $p$ and $p^\msf{m}$ are the maps as in \cref{defn:cubeLR}. For the restriction to faces, in the first case we use \cref{restricttofaces}, and in the second case we use the restrictions $\psi_j$ and $\psi_{\backslash j}$ from \cref{cubeR}. 
\end{defn}

\begin{rem}\label{rem:filtrationdegree}
Let $R\colon \mc{P}([i]) \to \msf{CAlg}(\C)$ be a diagram of rings. We introduce notation for some of the vertices of objects in $\cubeLRplus(R)$. An object $M \in \cubeLRplus(R)$ in particular contains the data of
\begin{enumerate}
\item an $R(A)$-module $M(A^i)$ for each $A \in \mc{P}([i])$;
\item an $R(A)$-module $M(A^{i-1})$ for each $A \in \mc{P}([i{-}1])$;
\item an $R(A)$-module map $M(A^i) \to \mrm{res}_A^BM(B^i)$ for each $A\subseteq B$;
\item an $R(A)$-module map $\mrm{res}_A^B M(B^i) \to M(A^{i{-}1})$ for each $A \subseteq B$;
\item an $R(A)$-module $M(A^{(i{-}1)})$ for each $A \in \mc{P}([i{-}1])$;
\item an $R(A)$-module map $M(A^i) \to M(A^{(i{-}1)})$ for each $A \in \mc{P}([i{-}1])$;
\item an $R(A)$-module map $M(A^{(i-1)}) \to M(A^{i{-}1})$ for each $A \in \mc{P}([i{-}1])$;
\end{enumerate}
together with coherence data. We call the superscript the \emph{filtration degree}, so that $M(A^i)$ has filtration degree $i$ for example. The objects $M(A^{(i-1)})$ will be thought of as dummy vertices, built in only to encode null homotopies as we will see in \cref{glueplusf}.
\end{rem}

\begin{ex}\label{ex:glueplus}
	When $i=1$, objects in the category $\cubeLRplus(R)$ can be pictured as a commutative diagram 
	\[
	\begin{tikzcd}[column sep=2cm]
		& M(\varnothing^{(0)}) \ar[dr] & \\
		M(\varnothing^1) \arrow[ur] \arrow[r, "f_{\varnothing}^1"] \arrow[d,"f_{\varnothing}^{0}"'] & \mrm{res}^1_{\varnothing} M(1^1) \arrow[d, "\mrm{res}^1_{\varnothing}f_1^{10}"'] \arrow[r,"g_1^{\varnothing}"] &M(\varnothing^{0}) \arrow[d,"h^0_{\varnothing}"]\\
		\mrm{res}^0_{\varnothing} M(0^1) \arrow[dr] \arrow[r, "\mrm{res}_{\varnothing}^0 f_0^{10}"] & \mrm{res}^0_{\varnothing}\mrm{res}^{10}_0 M(10^1) \simeq  \mrm{res}_{\varnothing}^1 \mrm{res}_1^{10}M(10^1) \arrow[r, "\mrm{res}^0_{\varnothing}g_{10}^0"] & \mrm{res}_{\varnothing}^0 M(0^{0}) \\
		& \mrm{res}^0_\varnothing M(0^{(0)}) \ar[ur] & 
	\end{tikzcd}
	\]
\end{ex}

We now turn to proving that there is a cofibre functor $\mrm{cof}_+\colon \cubeL(R) \to \cubeLRplus(R)$ which gives an equivalence of categories between $\cubeL(R)$ and the category of cofibre sequences in $\cubeLRplus(R)$. Firstly, we construct the desired functor.

\begin{con}\label{constructcofplus}
In order to construct a cofibre functor $\mrm{cof}_+\colon \cubeL(R) \to \cubeLRplus(R)$, since $\cubeLRplus(R)$ is defined as a pullback (\cref{adddummies}) it suffices to give compatible functors $\cubeL(R) \to \cubeLR(R)$ and $\cubeL(R) \to \D^{A,i}$ for each subset $A$. For the former we take the cofibre functor constructed in \cref{cofLtoLR}, and for the latter we take the composite
\[\cubeL(R) \to \cubeL(R^{A,i}) \xrightarrow{\mrm{cof}_+} \D^{A,i}\]
where the first map exists by \cref{restricttofaces}, and the latter map is from \cref{adultdummy}.
In an analogous way, one defines a fibre functor $\mrm{fib}_+\colon \cubeR(R) \to \cubeLRplus(R)$.
\end{con}

\begin{rem}\label{eq:coffgt}
It follows from the construction of $\mrm{cof}_+$ that we have a natural equivalence
\[
\mrm{fgt}_+\mrm{cof}_+ \simeq \mrm{cof}
\]
of functors $\cubeL(R) \to \cubeLR(R)$.
\end{rem}

Since we want to restrict to a setting where the functor $\mrm{cof}_+$ of \cref{constructcofplus} is an equivalence, we need to identify its image. Morally, the image consists of the cofibre sequences, and the following definition makes this precise.
\begin{defn}\label{glueplusf}
	Let $R\colon \mc{P}([i]) \to \msf{CAlg}(\C)$ be a diagram of rings. We define $\cubeLRplus(R)_\mrm{c}$ to be the full subcategory of $\cubeLRplus(R)$ on the objects $M$ for which the objects $M(A^{\s{i-1}}) \simeq 0$ and the commutative square 
	\[
	\begin{tikzcd}[column sep=5em, row sep=3em]
		M(A^{i}) \arrow[r] \ar[d] & \mrm{res}_{A}^{A \cup i}M((A \cup i)^{i}) \arrow[d]  \\
		0\simeq M(A^{\s{i-1}}) \arrow[r] & M(A^{i-1})
	\end{tikzcd}
	\]
	is a pushout, for all $A \in \mc{P}([i{-}1])$. 
\end{defn}

\begin{ex}
Looking back at \cref{ex:glueplus}, for the object $M$ to lie in the full subcategory $\cubeLRplus(R)_\mrm{c}$ we require that the objects $M(\varnothing^{(0)})$ and $M(0^{(0)})$ are zero objects, and the triangular-shaped subdiagrams are pushouts.
\end{ex}

In order to prove the assertion that $\cubeLRplus(R)_\mrm{c}$ is the image of $\mrm{cof}_+$, we require the following result which allows us to reformulate the definition of $\cubeLRplus(R)_\mrm{c}$.
\begin{nota}\label{unotation}
We write $u\colon \cubeLRplus(R) \to \cubeL(R)$ for the composite functor $p\circ\mrm{fgt}_+$, and $u^\msf{m}\colon \cubeLRplus(R) \to \cubeR(R)$ for the composite $p^\msf{m} \circ \mrm{fgt}_+$.
\end{nota}

\begin{lem}\label{lem:counitmapexists}
Let $R\colon \mc{P}([i]) \to \msf{CAlg}(\C)$ be a diagram of rings. For any $M \in \cubeLRplus(R)$, there is a natural map $\epsilon_M\colon \mrm{cof}_+(u(M)) \to M$.
\end{lem}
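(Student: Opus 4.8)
The plan is to construct $\epsilon$ as a natural transformation $\mrm{cof}_+\circ u\Rightarrow\mrm{id}_{\cubeLRplus(R)}$ by assembling maps on the pieces of the iterated pullback defining $\cubeLRplus(R)$ in \cref{adddummies}. Since $\cubeLRplus(R)$ is the pullback of $\cubeLR(R)\xrightarrow{(\eta_A)_A}\prod_{A\in\mc{P}([i-1])}\cubeLR(R^{A,i})\xleftarrow{\prod\chi_A}\prod_{A\in\mc{P}([i-1])}\D^{A,i}$ and $\mrm{Fun}(\cubeLRplus(R),-)$ preserves pullbacks, giving $\epsilon$ amounts to giving: a natural transformation $\mrm{cof}\circ u\Rightarrow\mrm{fgt}_+$ of functors into $\cubeLR(R)$ (using the identification $\mrm{fgt}_+\mrm{cof}_+\simeq\mrm{cof}$ of \cref{eq:coffgt}); for each $A\in\mc{P}([i-1])$ a natural transformation from the $\D^{A,i}$-component of $\mrm{cof}_+\circ u$ to the projection $\cubeLRplus(R)\to\D^{A,i}$; and homotopies witnessing that these become identified after applying the structure functors $\eta_A$, resp.\ $\chi_A$, into $\cubeLR(R^{A,i})$.

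Both the first and the second of these reduce to the one-dimensional constructions. On the $\D^{A,i}$-component, \cref{constructcofplus} identifies $\mrm{cof}_+$ with the composite $\cubeL(R)\to\cubeL(R^{A,i})\xrightarrow{\mrm{cof}_+}\D^{A,i}$, the second map being the one-dimensional functor of \cref{adultdummy}; and, unwinding \cref{unotation}, the composite of $u$ with restriction to the face $R^{A,i}$ equals the composite of the projection $\cubeLRplus(R)\to\D^{A,i}=\cubeLRplus(R^{A,i})$ with the one-dimensional forgetful functor $u\colon\D^{A,i}\to\cubeL(R^{A,i})$. Thus the $\D^{A,i}$-component we need is obtained by precomposing with that projection the one-dimensional instance of the lemma: for a $1$-cube $R'$ and $S\in\D^{R'}$, a natural map $\mrm{cof}_+(u(S))\to S$ in $\D^{R'}$. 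Here $u(S)$ is the top arrow $f$ of the commuting square $S$, and $\mrm{cof}_+(u(S))$ is the pushout square with the same top arrow, the zero object replacing the dummy vertex of $S$, and $\mrm{cofib}(f)$ replacing its lower-right vertex; the comparison to $S$ is built vertexwise --- identity on the two vertices lying in $u(S)$, the essentially unique map out of the zero object on the dummy vertex, and the map $\mrm{cofib}(f)\to Q$ furnished, through \cite[Remark 1.1.1.7]{HA} (see \cref{lem-cofibre-HA}), by the remaining square data of $S$ --- with naturality and compatibility with the $\cubeL$- and $\cubeR$-object structure maps packaged into $\D^{R'}$ following from the universal property of the cofibre. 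The $\cubeLR(R)$-level natural transformation $\mrm{cof}(u(M))\to\mrm{fgt}_+(M)$ is produced in the same way, using that $\mrm{cof}$ takes cofibres in the last cube direction and that $p\circ\mrm{cof}=\mrm{id}$ by \cref{rem:Vcofid}.

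The main work, and the main obstacle, is the compatibility: checking that the $\cubeLR(R)$-level comparison and the one-dimensional comparisons for all faces $A$ agree after restriction along $\eta_A$, resp.\ $\chi_A$, so that they glue to an honest natural transformation valued in $\cubeLRplus(R)$. I would prove this by induction on $i$, tracking the inductive definitions of $\mrm{cof}_+$ (\cref{cofLtoLR,constructcofplus}), of $\mrm{fgt}_+$, and of $u$, and verifying at each stage that the comparison maps are natural for the face-restriction functors $\rho_j,\theta_j,\psi_j,\psi_{\backslash j}$; the point is that all these comparisons are instances of one and the same cofibre universal property, applied on different faces. A cleaner, if more laborious, alternative would be to upgrade the whole picture to an adjunction $\mrm{cof}_+\dashv u$ by assembling adjunctions on the vertices of the defining pullbacks via \cite[Theorem 5.5]{rightadjoints}, and then take $\epsilon$ to be the counit.
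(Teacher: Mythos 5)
Your proposal follows the same strategy as the paper's proof: decompose the construction across the pullback defining $\cubeLRplus(R)$, and obtain the pieces from the one-step equivalences of \cref{lem-cofibre-HA} and \cref{cofLtoR}. The paper simply carries the pullback decomposition all the way down to $\cubeL(R)$, $\cubeR(R)$, and $\mrm{Fun}(\mc{P}([1]), \mod{R(\varnothing)})$, so the only base-level ingredients are identity transformations and already-known counits; this makes the compatibility that you flag as the ``main obstacle'' automatic, avoiding both the vertexwise construction you sketch for the $\D^{R'}$-piece (which would need extra coherence in the $\infty$-categorical setting) and the separate induction you propose for the gluing.
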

\begin{proof}
Since $\cubeLRplus(R)$ is defined as a pullback (\cref{adddummies}), and $\mrm{Fun}(\cubeLRplus(R),-)$ commutes with limits, the data of a natural transformation $\mrm{cof}_+u \Rightarrow \mrm{id}$ is equivalent to the data of compatible natural transformations
\[
\begin{gathered}
\begin{tikzcd}[row sep=-0.1cm]
&  \, \arrow[dd, Rightarrow] \\
\cubeLR(R) \ar[rr, "\mrm{cof}\circ p", yshift=3mm] \ar[rr, "\mrm{id}"', yshift=-3mm] & & \cubeLR(R)
\\  &  \, 
\end{tikzcd}
\end{gathered}
\text{ and }
\begin{gathered}\begin{tikzcd}[row sep =-0.1cm]
 & \, \ar[dd, Rightarrow, yshift=-0.5mm] \\ 
\D^{A,i} \ar[rr, "\mrm{cof}_+\circ p_t", yshift=3mm] \ar[rr, "\mrm{id}"', yshift=-3mm] & & \D^{A,i}.
\\ & \, 
\end{tikzcd}
\end{gathered}
\] 
for each subset $A$. By the universal property of pullbacks, the data of the latter natural transformation $\mrm{cof}_+p_t \Rightarrow \mrm{id}$ is equivalent to the data of compatible natural transformations 
\[
\begin{gathered}
\begin{tikzcd}[row sep=-0.1cm, column sep = 0.5em]
& \, \ar[dd, Rightarrow] \\ 
\cubeL(R) \ar[rr, "\mrm{id}", yshift=3mm] \ar[rr, "\mrm{id}"', yshift=-3mm] && \cubeL(R) 
\\ & \,
\end{tikzcd}
\end{gathered}
\text{ and }
\begin{gathered}
\begin{tikzcd}[row sep=-0.1cm, column sep = 0.6em]
&\, \ar[dd, Rightarrow]\\
\mrm{Fun}(\mc{P}([1]), \mod{R(\varnothing)}  \ar[rr, "\mrm{cof}\circ\mrm{ev}_t", yshift=3mm] \ar[rr, "\mrm{id}"', yshift=-3mm] && \mrm{Fun}(\mc{P}([0]), \mod{R(\varnothing)}).
\\ & \,
\end{tikzcd}
\end{gathered}
\] 
%
For the former we take the identity transformation, and for the latter we take the equivalence from \cref{lem-cofibre-HA}. 

The data of a natural transformation $\mrm{cof}\circ p \Rightarrow \mrm{id}$ is equivalent to the data of compatible natural transformations 
\[
\begin{gathered}
\begin{tikzcd}[row sep=-0.1cm]
& \, \ar[dd, Rightarrow] \\ 
\cubeL(R) \ar[rr, "\mrm{id}", yshift=3mm] \ar[rr, "\mrm{id}"', yshift=-3mm] && \cubeL(R) 
\\ & \,
\end{tikzcd}
\end{gathered}
\text{ and }
\begin{gathered}\begin{tikzcd}[row sep = -0.1cm]
 & \, \ar[dd, Rightarrow] \\ 
\cubeR(R)\ar[rr, "\mrm{cof}^\msf{m}\circ \mrm{fib}^\msf{m}", yshift=3mm] \ar[rr, "\mrm{id}"', yshift=-3mm] & &\cubeR(R).
\\ & \, 
\end{tikzcd}
\end{gathered}
\] 
%
and we take the identity for the former, and the equivalence from \cref{cofLtoR} for the latter.
\end{proof}

\begin{rem}
Note that the category $\cubeLRplus(R)_\mrm{c}$ of \cref{glueplusf} may be equivalently described as the full subcategory of $\cubeLR(R)$ on the objects $M$ for which the map $\varepsilon_M\colon \mrm{cof}_+(u(M)) \to M$ of \cref{lem:counitmapexists} is an equivalence.
\end{rem}

We may now assemble the results of this section to prove the desired equivalence of categories between $\cubeL(R)$ and the collection of cofibre sequences in $\cubeLRplus(R)$.
\begin{thm}\label{cubestocofibres}
	Let $R\colon \mc{P}([i]) \to \msf{CAlg}(\C)$ be a diagram of rings. The cofibre functor $\mrm{cof}_+\colon \cubeL(R) \to \cubeLRplus(R)$ and the fibre functor $\mrm{fib}_+\colon \cubeR(R) \to \cubeLRplus(R)$ restrict to give equivalences of categories
	\[
	\begin{tikzcd}
		\cubeL(R) \ar[r, yshift=0.2cm, "\mrm{cof}_+"] \ar[r, phantom, "\simeq" description] & \cubeLRplus(R)_\mrm{c} \ar[r, yshift=0.2cm, "u^\msf{m}"] \ar[l, yshift=-0.2cm, "u"] \ar[r, phantom, "\simeq" description] & \cubeR(R) \ar[l, yshift=-0.2cm, "\mrm{fib}_+"] 
	\end{tikzcd}
	\]
\end{thm}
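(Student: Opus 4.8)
The plan is to prove that $\mrm{cof}_+$ and $u$ restrict to mutually inverse equivalences $\cubeL(R)\simeq\cubeLRplus(R)_\mrm{c}$, and then to deduce the right-hand equivalence by transporting this along the equivalence $\mrm{cof}^\msf{m}\colon\cubeL(R)\xrightarrow{\sim}\cubeR(R)$ of \cref{cofLtoR}.

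First I would check that $\mrm{cof}_+$ lands in the full subcategory $\cubeLRplus(R)_\mrm{c}$. By \cref{constructcofplus} the $\D^{A,i}$-component of $\mrm{cof}_+$ is the functor of \cref{adultdummy}, which by construction sends an object to a square whose dummy vertex $M(A^{\s{i-1}})$ is the zero object and whose opposite corner is the relevant cofibre; hence both defining conditions of \cref{glueplusf} hold. Next I would establish $u\circ\mrm{cof}_+\simeq\mrm{id}_{\cubeL(R)}$, which is immediate from the equivalence $\mrm{fgt}_+\circ\mrm{cof}_+\simeq\mrm{cof}$ of \cref{eq:coffgt} together with $p\circ\mrm{cof}\simeq\mrm{id}$ of \cref{rem:Vcofid}, since $u=p\circ\mrm{fgt}_+$ by \cref{unotation}.

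The heart of the argument is to show that the natural map $\epsilon_M\colon\mrm{cof}_+(u(M))\to M$ of \cref{lem:counitmapexists} is an equivalence exactly when $M\in\cubeLRplus(R)_\mrm{c}$; together with the two previous steps this exhibits $\mrm{cof}_+$ and $u$ as an inverse equivalence onto $\cubeLRplus(R)_\mrm{c}$ (this is also the alternative description of $\cubeLRplus(R)_\mrm{c}$ recorded after \cref{lem:counitmapexists}). One direction is formal: if $\epsilon_M$ is an equivalence then $M\simeq\mrm{cof}_+(u(M))$ lies in the essential image of $\mrm{cof}_+$, which is contained in the equivalence-closed full subcategory $\cubeLRplus(R)_\mrm{c}$ by the first step. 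For the converse I would detect the equivalence $\epsilon_M$ on the vertices of the defining pullback of \cref{adddummies}: on each factor $\D^{A,i}$ the map $\epsilon_M$ unwinds to the comparison map of \cref{lem-cofibre-HA} for the one-dimensional diagram $R^{A,i}$, which is an equivalence precisely when the dummy vertex vanishes and the square is a pushout, i.e.\ precisely under the hypotheses of \cref{glueplusf}; and on the factor $\cubeLR(R)$ those same pushout conditions force $\mrm{fgt}_+(M)$ to already be the cofibre in the $i$-direction, so $\mrm{fgt}_+(\epsilon_M)$ is an equivalence as well.

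Finally, for the right-hand equivalence I would compute $u^\msf{m}\circ\mrm{cof}_+ = p^\msf{m}\circ\mrm{fgt}_+\circ\mrm{cof}_+\simeq p^\msf{m}\circ\mrm{cof}\simeq\mrm{cof}^\msf{m}$, using \cref{eq:coffgt} and the commuting triangle of \cref{cofLtoLR}. Since $\mrm{cof}^\msf{m}\colon\cubeL(R)\xrightarrow{\sim}\cubeR(R)$ is an equivalence by \cref{cofLtoR} and $\mrm{cof}_+$ is an equivalence onto $\cubeLRplus(R)_\mrm{c}$ by the above, it follows at once that $u^\msf{m}$ restricts to an equivalence $\cubeLRplus(R)_\mrm{c}\xrightarrow{\sim}\cubeR(R)$; to see that its inverse is $\mrm{fib}_+$ I would identify $\mrm{fib}_+\simeq\mrm{cof}_+\circ\mrm{fib}^\msf{m}$ by comparing the constructions of \cref{constructcofplus} and \cref{cofLtoLR}, whence $u^\msf{m}\circ\mrm{fib}_+\simeq\mrm{cof}^\msf{m}\circ\mrm{fib}^\msf{m}\simeq\mrm{id}$ and, $u^\msf{m}$ being an equivalence, $\mrm{fib}_+$ is a two-sided inverse. (Alternatively, one repeats the first three steps verbatim with fibres in place of cofibres.) I expect the main obstacle to be the middle step: matching the defining pullback of $\cubeLRplus(R)$ against the one-dimensional equivalence of \cref{lem-cofibre-HA} on each dummy face, and carrying along enough coherence to conclude that $\epsilon_M$ is a componentwise equivalence.
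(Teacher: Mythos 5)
Your proposal is correct and follows essentially the same route as the paper: the paper proves the left-hand equivalence by observing that $\epsilon_M$ is an equivalence on $\cubeLRplus(R)_\mrm{c}$ (this is exactly the equivalent characterization of $\cubeLRplus(R)_\mrm{c}$ recorded in the remark after \cref{lem:counitmapexists}, which the paper cites as ``by construction''), and by verifying $u\circ\mrm{cof}_+\simeq\mrm{id}$ via the same three-step chain $u\mrm{cof}_+\simeq p\,\mrm{fgt}_+\mrm{cof}_+\simeq p\,\mrm{cof}\simeq\mrm{id}$ using \cref{unotation}, \cref{eq:coffgt}, and \cref{rem:Vcofid}; the right-hand equivalence is then dispatched by an ``analogous argument,'' which is the alternative you note parenthetically. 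You simply fill in a bit more detail where the paper is terse (the componentwise detection of $\epsilon_M$ being an equivalence, and the identification $\mrm{fib}_+\simeq\mrm{cof}_+\circ\mrm{fib}^\msf{m}$), but the structure and the key lemmas invoked are the same.
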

\begin{proof}
We prove the left hand equivalence; one then obtains the right hand equivalence by an analogous argument. The map $\epsilon_M\colon \mrm{cof}_+(u(M)) \to M$ from \cref{lem:counitmapexists} is an equivalence for all $M \in \cubeLRplus(R)_\mrm{c}$ by construction. We claim that there is a natural equivalence $N \to u(\mrm{cof}_+(N))$ for all $N \in \cubeL(R)$. To see this, we have natural equivalences
\begin{align*}
u(\mrm{cof}_+(N)) &\simeq p(\mrm{fgt}_+(\mrm{cof}_+(N))) &\text{by definition of $u$ (\cref{unotation})} \\
&\simeq p(\mrm{cof}(N)) &\text{by \cref{eq:coffgt}} \\
&\simeq N &\text{by \cref{rem:Vcofid}}
\end{align*} 
which completes the proof.
\end{proof}

\section{The punctured story}\label{sec:punctured}
Since the adelic model is built from a punctured cube, we next revisit the theory of \cref{sec:cofibres,catsofcofs} in the punctured setting. Many of the definitions and statements in this section are similar to those given in the previous sections, but we provide brief proofs for completeness.

\begin{nota}
The poset $\mc{P}([i])_{\neq\{i\}}$ is obtained from an $(i+1)$-cube by removing the vertex $\{i\}$. It can be viewed as a reflected form of a punctured $(i+1)$-cube, see \cref{ex:equalsmix1} for the shape in the case $i=1$. 
\end{nota}

\begin{defn}\label{mix}
Let $R\colon \mc{P}([i]) \to \msf{CAlg}(\C)$ be a diagram of rings. We define $\equalsmix{R}$ as the pullback
\[\begin{tikzcd}[column sep=4cm]
\equalsmix{R}\arrow[r] \arrow[d, "{(\zeta_i, (\xi_j)_j)}"'] & \mrm{Fun}(\mc{P}([i])_{\neq\{i\}}, \mod{R(\varnothing)}) \arrow[d, "{(d_{\backslash i}^*, (d_j^*)_j)}"]\\
\cubeL(R_i) \times \prod_{j=0}^{i-1} \cubeR(R_j) \arrow[r,"(\mrm{res}^i_{\varnothing})_*\circ \pi_i \times \prod (\mrm{res}^j_\varnothing)_*\circ \pi_i^\msf{m}"'] & \substack{\mrm{Fun}(\mc{P}[i{-}1], \mod{R(\varnothing)}) \\ \times \\ \prod_{j=0}^{i-1}  \mrm{Fun}(\mc{P}([i{-}1]), \mod{R(\varnothing)})}
\end{tikzcd}\] 
where $d_j$ and $d_{\backslash i}$ are as in \cref{dj}, $\pi_i$ is as in \cref{defn:cubeL}, and $\pi_i^\msf{m}$ is as in \cref{cubeR}.
\end{defn}

\begin{ex}\label{ex:equalsmix1}
When $i=1$, an object in $\equalsmix{R}$ can be represented by a diagram
\[
\begin{tikzcd}
\mrm{res}^1_{\varnothing} M(1) \arrow[d,"\mrm{res}^1_{\varnothing}f_1^{10}"'] & \\
\mrm{res}^1_{\varnothing} \mrm{res}^{10}_0 M(10) \simeq \mrm{res}^0_{\varnothing} \mrm{res}^{10}_0 M(10) \arrow[r, "\mrm{res}^0_{\varnothing}g_{10}^0"] &\mrm{res}_{\varnothing}^0 M(0).
\end{tikzcd}
\]
The functor $\zeta_1$ restricts to the left hand vertical in the above diagram, and $\xi_0$ restricts to the bottom horizontal. As such, just as in \cref{ex-cubeR-dim1}, the notation for the functors $\zeta_i$ and $\xi_j$ reflects the indexing on modules, rather than the indexing by the poset. 
\end{ex}

\begin{rem}\label{rem:dataequalsmix}
One may think of objects in $\equalsmix{R}$ in an analogous fashion to \cref{rem:dataofcubeR}, by restricting to non-empty subsets.
\end{rem}

Recall that in \cref{defn:cubeLR} we glued together the cubes in $\cubeL(R)$ and the corresponding mixed cubes in $\cubeR(R)$ along their common face. We implement this in the punctured setting in the following definition.
\begin{defn}\label{twolayers}
Let $R\colon \mc{P}([i]) \to \msf{CAlg}(\C)$ be a diagram of rings. We define $\glue{R}$ as the pullback
\[\begin{tikzcd}
\glue{R} \ar[r, "r"] \ar[d, "r^\msf{m}"'] & \equals{R} \ar[d, "\tau_i"] \\
\equalsmix{R} \ar[r, "\zeta_i"'] & \cubeL(R_i)
\end{tikzcd}\]
where $\tau_i$ and $\zeta_i$ are as in \cref{Cequals,mix} respectively.
\end{defn}

With the previous definitions in hand, we now turn to defining cofibre functors on punctured cubes with a view to giving analogues of \cref{cofLtoR} and \cref{cubestocofibres}. 
\begin{prop}\label{puncLtoR}
Let $R\colon \mc{P}([i]) \to \msf{CAlg}(\C)$ be a diagram of rings. There is a cofibre functor $\mrm{cof}^\msf{m}\colon \equals{R} \to \equalsmix{R}$, and a fibre functor $\mrm{fib}^\msf{m}\colon \equalsmix{R} \to \equals{R}$ which form an equivalence of categories 
\[\xymatrix{
		\equals{R} \ar@<0.7ex>[r]^-{\mrm{cof}^\msf{m}} \ar@{}[r]|-{\simeq} \ar@<-0.7ex>@{<-}[r]_-{\mrm{fib}^\msf{m}} & \equalsmix{R}.
	}\]
\end{prop}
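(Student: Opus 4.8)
The plan is to mirror the proof of \cref{cofLtoR}, the only genuinely new ingredient being a punctured version of \cref{boringcofibre}. Note that both $\equals{R}$ and $\equalsmix{R}$ are defined as pullbacks (\cref{Cequals,mix}) whose bottom-left corners involve only the \emph{non-punctured} categories $\cubeL(R_j)$ and $\cubeR(R_j)$; consequently no recursion on the punctured statement is needed, and \cref{cofLtoR} can be invoked as a black box.

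First I would establish the punctured analogue of \cref{boringcofibre}: for any stable $\infty$-category $\C$ there is a cofibre functor
\[\mrm{cof}_i^{\mrm{pc}}\colon \mrm{Fun}(\mc{P}([i])_{\neq\varnothing}, \C) \longrightarrow \mrm{Fun}(\mc{P}([i])_{\neq\{i\}}, \C)\]
taking cofibres along the maps $A \to A \cup i$, together with a fibre functor $\mrm{fib}_i^{\mrm{pc}}$ the other way, and these are mutually inverse equivalences. Writing $\mc{P}([i]) = \mc{P}([i{-}1]) \times \Delta^1$ with the $\Delta^1$-coordinate recording whether $i$ is added, the omitted vertex $\varnothing$ becomes $(\varnothing, 0)$ and the omitted vertex $\{i\}$ becomes $(\varnothing,1)$. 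The pointwise formula for $\mrm{cof}_i$ from \cref{boringcofibre}, sending $X(A,0) \mapsto X(A,1)$ and $X(A,1) \mapsto \mrm{cofib}(X(A,0) \to X(A,1))$, never needs the value of $X$ at $(\varnothing, 0)$ in order to compute the value at any vertex of $\mc{P}([i])_{\neq\{i\}}$, so $\mrm{cof}_i$ descends to $\mrm{cof}_i^{\mrm{pc}}$; dually for $\mrm{fib}_i^{\mrm{pc}}$, which visibly undoes it, by stability exactly as in \cref{boringcofibre}. The key point to absorb here is that passing to the puncture interchanges the two distinguished vertices $\varnothing$ and $\{i\}$.

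Next I would build $\mrm{cof}^\msf{m}\colon \equals{R} \to \equalsmix{R}$. For $i = 0$ both categories are $\mod{R(0)}$ and I take the identity. For $i \geqslant 1$, by the universal property of the pullback defining $\equalsmix{R}$ it suffices to give a functor $\equals{R} \to \mrm{Fun}(\mc{P}([i])_{\neq\{i\}}, \mod{R(\varnothing)})$, for which I take $\mrm{cof}_i^{\mrm{pc}} \circ \pi_i^{\mrm{Pc}}$; a functor $\equals{R} \to \cubeL(R_i) \times \prod_{j=0}^{i-1}\cubeR(R_j)$, for which I take $\tau_i$ on the first factor and $\mrm{cof}^\msf{m}\circ\tau_j$ (with $\mrm{cof}^\msf{m}$ as in \cref{cofLtoR}) on the remaining factors; and a homotopy witnessing that the two composites to the bottom-right corner agree. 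This last datum unwinds to compatibilities of the form $d_{\backslash i}^*\circ\mrm{cof}_i^{\mrm{pc}} \simeq (\mrm{res}^i_\varnothing)_* \circ \pi_i \circ \tau_i$ and $d_j^* \circ \mrm{cof}_i^{\mrm{pc}} \simeq (\mrm{res}^j_\varnothing)_* \circ \pi_i^\msf{m} \circ \mrm{cof}^\msf{m} \circ \tau_j$ for $j \leqslant i-1$, both of which hold because, by construction, $\pi_i^\msf{m}\circ\mrm{cof}^\msf{m}$ is computed by the pointwise cofibre functor and restricting a (punctured) cofibre diagram to a face of the appropriate type returns the cofibre of the restricted diagram. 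The fibre functor $\mrm{fib}^\msf{m}\colon \equalsmix{R}\to\equals{R}$ is constructed in the mirror-image fashion from $\mrm{fib}_i^{\mrm{pc}}$ and the fibre functors $\cubeR(R_j)\to\cubeL(R_j)$. Since $\mrm{cof}^\msf{m}$ and $\mrm{fib}^\msf{m}$ are assembled componentwise from maps that are mutually inverse on each corner of the pullback — by the punctured analogue of \cref{boringcofibre} on the $\mrm{Fun}$-corner, by \cref{cofLtoR} on the $\cubeR(R_j)$-corners, and trivially on the $\cubeL(R_i)$-corner — they are mutually inverse equivalences.

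\textbf{The main obstacle} is the bookkeeping in the punctured analogue of \cref{boringcofibre} together with the compatibility checks: one must verify that cofibering along the add-$i$ direction really sends $\mc{P}([i])_{\neq\varnothing}$-shaped diagrams to $\mc{P}([i])_{\neq\{i\}}$-shaped ones (rather than back to the same shape), and that the face restrictions of these punctured cofibre diagrams are compatible with the face restrictions hard-wired into the two pullback presentations, which single out the face avoiding $i$ differently — via $d_{\backslash i}^*$ on the $\equalsmix{R}$ side versus the uniform $d_j^*$ on the $\equals{R}$ side.
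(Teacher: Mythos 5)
Your proof is correct and follows essentially the same strategy as the paper: build $\mrm{cof}^\msf{m}$ and $\mrm{fib}^\msf{m}$ componentwise on the defining pullback squares of $\equals{R}$ and $\equalsmix{R}$, invoking \cref{cofLtoR} on the $\cubeR(R_j)$-corners ($j<i$), the identity on the $\cubeL(R_i)$-corner, and a punctured variant of \cref{boringcofibre} on the $\mrm{Fun}$-corner, then deducing the equivalence from componentwise equivalences. The only cosmetic difference is how that punctured cofibre functor is produced: where you argue that the pointwise formula for $\mrm{cof}_i$ never consults the missing vertex $\varnothing$ and hence ``descends'', the paper makes this precise by taking the composite $\iota_i^\ast \circ \mrm{cof}_i \circ \iota_\ast$ (right Kan extend along $\iota\colon\mc{P}([i])_{\neq\varnothing}\hookrightarrow\mc{P}([i])$, apply $\mrm{cof}_i$, restrict along $\iota_i\colon\mc{P}([i])_{\neq\{i\}}\hookrightarrow\mc{P}([i])$) — a concrete realization of exactly the functor your pointwise argument identifies.
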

\begin{proof}
We define a cofibre functor $\mrm{cof}^\msf{m}\colon \equals{R} \to \equalsmix{R}$ as follows. Since both are pullbacks we define the map component-wise. For the map on the top right vertex, consider the inclusions $\iota\colon\mc{P}([i])_{\neq\varnothing} \to \mc{P}([i])$ and $\iota_i\colon \mc{P}([i])_{\neq\{i\}} \to \mc{P}([i])$. We then take the required map to be the composite
\begin{align*}
\mrm{Fun}(\mc{P}([i])_{\neq\varnothing}, \mod{R(\varnothing)}) &\xrightarrow{\iota_*} \mrm{Fun}(\mc{P}([i]), \mod{R(\varnothing)}) \\ 
&\xrightarrow{\mrm{cof}_i} \mrm{Fun}(\mc{P}([i]), \mod{R(\varnothing)}) \\
&\xrightarrow{\iota_i^*} \mrm{Fun}(\mc{P}([i])_{\neq\{i\}}, \mod{R(\varnothing)})
\end{align*}
where $\iota_*$ is the right Kan extension, and $\mrm{cof}_i$ is as in \cref{boringcofibre}. On the bottom right vertex, on the $j$th component we take the functor $\mrm{cof}_{i-1}$ if $j \neq i$, and the identity otherwise. On the bottom left vertex, we take the identity on the $j=i$ component, and the cofibre functor $\mrm{cof}^\msf{m}\colon \cubeL(R_j) \to \cubeR(R_j)$ from \cref{cofLtoR} on all other components. Since on each component the functor is an equivalence, the induced functor on the pullbacks is also an equivalence. 
\end{proof}

The next construction is an analogue of \cref{cofLtoLR}.
\begin{con}\label{puncnodummies}
We now define a cofibre functor $\mrm{cof}\colon\equals{R} \to \glue{R}$ which records the domains of the maps whose cofibres we take. By definition of $\glue{R}$ such a functor is uniquely determined by compatible functors $\equals{R} \to \equals{R}$ and $\equals{R} \to \equalsmix{R}$. For the former we take the identity functor. For the latter, we take the functor $\mrm{cof}^\msf{m}$ as constructed in \cref{puncLtoR}. We note that by construction, $r\circ \mrm{cof} \simeq \mrm{id}$.
\end{con}

We now turn to enhancing the definition of $\glue{R}$ given in \cref{twolayers} to include `dummy' vertices, so that we may record the data of null homotopies when taking cofibres. 
\begin{defn}\label{adddummies}
Let $R\colon \mc{P}([i]) \to \msf{CAlg}(\C)$ be a diagram of rings. We define the category $\glueplus{R}$ as the pullback
\[
\begin{tikzcd}[column sep=2cm]
\glueplus{R} \ar[r, "\mrm{fgt}_+"] \ar[d] & \glue{R} \ar[d, "(\eta_A)_A"] \\
\prod\limits_{A \in \mc{P}([i{-}1])_{\neq\varnothing}} \D^{A,i} \ar[r, "\prod \chi_A"'] & \prod\limits_{A \in \mc{P}([i{-}1])_{\neq\varnothing}} \cubeLR(R^{A,i}) 
\end{tikzcd}
\]
where the maps $\chi_A$ and $\eta_A$ are defined as follows. For $\chi_A\colon \D^{A,i} \to \cubeLR(R^{A,i})$ it suffices to give compatible functors $\D^{A,i} \to \cubeL(R^{A,i})$ and $\D^{A,i} \to \cubeR(R^{A,i})$ for which we take  $p_t$ and $p_r$ respectively, as defined in \cref{defn:dummy}. 

We define $\eta_A$ as follows. By definition of $\cubeLR(R^{A,i})$ it suffices to give compatible maps $\glue{R} \to \cubeL(R^{A,i})$ and $\glue{R} \to \cubeR(R^{A,i})$ for which we take the composites
	\[\glue{R} \xrightarrow{r} \equals{R} \xrightarrow{\tau_a} \cubeL(R_a) \to \cubeL(R^{A,i})\] and \[\glue{R} \xrightarrow{r^\msf{m}} \equalsmix{R} \xrightarrow{\xi_a} \cubeR(R_a) \to \cubeR(R^{A,i})\] where $r$ and $r^\msf{m}$ are as in \cref{twolayers}, $\tau_a$ is as in \cref{Cequals}, and $\xi_a$ is as in \cref{mix}. The final map in both cases uses that $R^{A,i}$ is a face of $R_a$ for any $a \in A$, see \cref{restricttofaces} and \cref{cubeR} respectively.
 \end{defn}

\begin{rem}
We employ the notation introduced in \cref{rem:filtrationdegree} to describe objects of $\glueplus{R}$; the only difference is that the subsets $A$ of $\{0,\ldots,i\}$ are now required to be non-empty.
\end{rem}

With the definition of $\glueplus{R}$, we may now define an `enhanced' cofibre functor $\mrm{cof}_+\colon \equals{R} \to \glueplus{R}$ which records the data of the null homotopies.
\begin{con}\label{con:enhancedcofibre}
In order to define a cofibre functor $\mrm{cof}_+\colon \equals{R} \to \glueplus{R}$, by definition of $\glueplus{R}$ as a pullback (see \cref{adddummies}), it suffices to give compatible maps $\equals{R} \to \glue{R}$ and $\equals{R} \to \prod_A \D^{A,i}$. For the former we take the cofibre functor constructed in \cref{puncnodummies}. For the latter, on the $A$ component we take the composite 
\[\equals{R} \xrightarrow{\tau_a} \cubeL(R_a) \xrightarrow{\mrm{res}} \cubeL(R^{A,i}) \xrightarrow{\mrm{cof}_+} \D^{A,i}\]
where $a \in A$ and the maps are defined as in \cref{Cequals}, \cref{restricttofaces}, and \cref{adultdummy} respectively. We note that $\mrm{fgt}_+\circ\mrm{cof}_+ \simeq \mrm{cof}$ by construction.
\end{con}

\begin{nota}\label{nota:v}
We define the functor $v\colon \glueplus{R} \to \equals{R}$ to be the composite $r \circ \mrm{fgt}_+$, and $v^\msf{m}\colon \glueplus{R} \to \equalsmix{R}$ to be $r^\msf{m}\circ\mrm{fgt}_+$. Informally speaking, $v$ and $v^\msf{m}$ forget the dummy vertices and then restrict to the underlying unmixed and mixed punctured cubes respectively.
\end{nota}
\begin{lem}\label{punccounit}
Let $R\colon \mc{P}([i]) \to \msf{CAlg}(\C)$ be a diagram of rings. For any $M \in \glueplus{R}$, there is a natural map $\epsilon_M\colon \mrm{cof}_+(v(M)) \to M$.
\end{lem}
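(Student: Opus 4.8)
The plan is to produce a natural transformation $\epsilon\colon \mrm{cof}_+\circ v \Rightarrow \mrm{id}_{\glueplus{R}}$ and let $\epsilon_M$ be its component at $M$; the argument is the punctured counterpart of the proof of \cref{lem:counitmapexists}, with the categories and functors of \cref{sec:punctured} replacing their non-punctured analogues. Since $\glueplus{R}$ is defined as the pullback of \cref{adddummies} and $\mrm{Fun}(\glueplus{R},-)$ preserves limits, giving $\epsilon$ is the same as giving compatible natural transformations after projecting to the two corners $\glue{R}$ and $\prod_{A} \D^{A,i}$ of that pullback. Using $\mrm{fgt}_+\circ\mrm{cof}_+ \simeq \mrm{cof}$ and $v = r\circ\mrm{fgt}_+$ (see \cref{con:enhancedcofibre,nota:v}), the $\glue{R}$-component reduces to a natural transformation $\mrm{cof}\circ r \Rightarrow \mrm{id}_{\glue{R}}$; unwinding the $A$-component of $\mrm{cof}_+$ from \cref{con:enhancedcofibre} and using the pullback compatibility of $\eta_A$ and $\chi_A$ in \cref{adddummies}, the $\D^{A,i}$-component reduces to a natural transformation $\mrm{cof}_+\circ p_t \Rightarrow \mrm{id}_{\D^{A,i}}$, which is exactly the one already produced in the proof of \cref{lem:counitmapexists} out of the identity on $\cubeL(R^{A,i})$ together with the equivalence of \cref{lem-cofibre-HA}.

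For the $\glue{R}$-component I would unwind once more: $\glue{R}$ is the pullback of \cref{twolayers}, so a natural transformation $\mrm{cof}\circ r \Rightarrow \mrm{id}$ amounts to compatible natural transformations $r\circ\mrm{cof}\circ r \Rightarrow r$ and $r^\msf{m}\circ\mrm{cof}\circ r \Rightarrow r^\msf{m}$. By \cref{puncnodummies} one has $r\circ\mrm{cof}\simeq\mrm{id}$ and $r^\msf{m}\circ\mrm{cof}\simeq\mrm{cof}^\msf{m}$, so the first may be taken to be the identity, and the second becomes a natural transformation $\mrm{cof}^\msf{m}\circ r \Rightarrow r^\msf{m}$. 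This is supplied by the counit of the equivalence $\mrm{cof}^\msf{m}\dashv\mrm{fib}^\msf{m}$ of \cref{puncLtoR}: one checks directly from the construction there that $\zeta_i\circ\mrm{cof}^\msf{m}\simeq\tau_i$, so the $r$- and $r^\msf{m}$-coordinates of an object of $\glue{R}$ are genuinely matched by this equivalence, and the counit precomposed with $r$ gives the required map. Assembling the pieces from the two paragraphs and checking they agree on the overlaps forced by the pullback squares yields $\epsilon$, hence $\epsilon_M$.

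Thus the content is entirely formal: each constituent natural transformation is either an identity or a (co)unit of one of the equivalences \cref{lem-cofibre-HA}, \cref{cofLtoR}, \cref{puncLtoR} already established, and no new homotopical input is needed. The only part requiring genuine care — and the main obstacle — is the bookkeeping of the compatibility conditions as one descends through the nested pullbacks defining $\glueplus{R}$, $\glue{R}$, and $\D^{A,i}$; this is handled exactly as in the proof of \cref{lem:counitmapexists}, and in the write-up I would either spell it out in that same style or simply record the analogy.
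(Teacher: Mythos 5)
Your proposal follows the paper's own proof of \cref{punccounit} quite closely: both decompose the target along the pullback squares defining $\glueplus{R}$ and $\glue{R}$, both appeal to \cref{lem:counitmapexists} (resp.\ \cref{lem-cofibre-HA}) for the $\D^{A,i}$-pieces, and both take the identity on the $\equals{R}$-coordinate and use the equivalence of \cref{puncLtoR} for the $\equalsmix{R}$-coordinate. So in terms of route you are doing what the paper does, and you are even slightly more explicit: you correctly observe that the $\equalsmix{R}$-component of the desired transformation should be of the form $\mrm{cof}^\msf{m}\circ r\Rightarrow r^\msf{m}$ (after precomposing with $\mrm{fgt}_+$), whereas the paper just writes $\mrm{cof}^\msf{m}\circ\mrm{fib}^\msf{m}\Rightarrow\mrm{id}$ with no comment on how this whiskers to the right thing.

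However, the extra justification you supply for that component is where the argument breaks. You assert that because $\zeta_i\circ\mrm{cof}^\msf{m}\simeq\tau_i$, the $r$- and $r^\msf{m}$-coordinates of an object of $\glue{R}$ ``are genuinely matched by this equivalence.'' That inference is false: the pullback defining $\glue{R}$ only constrains $\tau_i(r(M))\simeq\zeta_i(r^\msf{m}(M))$, i.e.\ the two coordinates agree on the common $\cubeL(R_i)$-face, not globally. In particular $\mrm{fib}^\msf{m}(r^\msf{m}(M))\not\simeq r(M)$ in general, so whiskering the counit of $\mrm{cof}^\msf{m}\dashv\mrm{fib}^\msf{m}$ by $r^\msf{m}$ produces $\mrm{cof}^\msf{m}\circ\mrm{fib}^\msf{m}\circ r^\msf{m}\Rightarrow r^\msf{m}$, which is not the sought $\mrm{cof}^\msf{m}\circ r\Rightarrow r^\msf{m}$. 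The reason this matters is that the required map $\mrm{cof}^\msf{m}(r(M))\to r^\msf{m}(M)$ involves a map out of a cofibre, and hence needs a null-homotopy of the composite $M(A^i)\to M((A\cup i)^i)\to M(A^{i-1})$. This null-homotopy is supplied not by any equivalence of the ambient categories but by the dummy-vertex data: the map $M(A^i)\to M(A^{(i-1)})$ gives such a null-homotopy precisely when $M(A^{(i-1)})\simeq 0$, which is the defining condition of $\glueplus{R}_{\mrm{c}}$, not of all of $\glueplus{R}$. A correct treatment therefore has to thread the $\D^{A,i}$-components (which carry the dummy data) into the construction of the $\equalsmix{R}$-coordinate, rather than deducing the latter from the counit alone. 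I would flag the same concern about the paper's own proof, which makes the same move more tersely; since the lemma is only ever applied on $\glueplus{R}_\mrm{c}$ in \cref{punccofthm}, the overall argument survives, but as written your justification of this step does not hold for arbitrary $M\in\glueplus{R}$.
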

\begin{proof}
The data of a natural transformation $\mrm{cof}_+v \Rightarrow \mrm{id}$ is equivalent to the data of compatible natural transformations
\[
\begin{gathered}
\begin{tikzcd}[row sep=-0.1cm]
& \, \ar[dd, Rightarrow] \\ 
\glue{R} \ar[rr, "\mrm{cof}\circ r", yshift=3mm] \ar[rr, "\mrm{id}"', yshift=-3mm] && \glue{R}
\\ & \,
\end{tikzcd}
\end{gathered}
\text{ and }
\begin{gathered}
\begin{tikzcd}[row sep=-0.1cm]
&\, \ar[dd, Rightarrow, yshift=-0.5mm]\\
\D^{A,i} \ar[rr, "\mrm{cof}_+\circ p_t", yshift=3mm] \ar[rr, "\mrm{id}"', yshift=-3mm] && \D^{A,i}.
\\ & \,
\end{tikzcd}
\end{gathered}
\] 
or each non-empty subset $A$. The latter was constructed in \cref{lem:counitmapexists}. 
For the former, the data of a natural transformation $\mrm{cof}\circ r \Rightarrow \mrm{id}$ is equivalent to the data of compatible natural transformations 
\[
\begin{gathered}
\begin{tikzcd}[row sep=-0.1cm]
& \, \ar[dd, Rightarrow] \\ 
\equals{R}  \ar[rr, "\mrm{id}", yshift=3mm] \ar[rr, "\mrm{id}"', yshift=-3mm] && \equals{R}
\\ & \,
\end{tikzcd}
\end{gathered}
\text{ and }
\begin{gathered}
\begin{tikzcd}[row sep=-0.1cm]
&\, \ar[dd, Rightarrow]\\
\equalsmix{R}  \ar[rr, "\mrm{cof}^\msf{m}\circ \mrm{fib}^\msf{m}", yshift=3mm] \ar[rr, "\mrm{id}"', yshift=-3mm] && \equalsmix{R}.
\\ & \,
\end{tikzcd}
\end{gathered}
\] 
and we take the identity for the former, and the equivalence from \cref{puncLtoR} for the latter.
\end{proof}

\begin{defn}\label{glueplusf}
Let $R\colon \mc{P}([i]) \to \msf{CAlg}(\C)$ be a diagram of rings. We define $\glueplus{R}_\mrm{c}$ to be the full subcategory of $\glueplus{R}$ on the objects $M$ for which the objects $M(A^{\s{i-1}}) \simeq 0$ and the commutative square 
\[
\begin{tikzcd}[column sep=5em, row sep=3em]
	M(A^{i}) \arrow[r] \ar[d] & \mrm{res}_{A}^{A \cup i}M((A \cup i)^{i}) \arrow[d]  \\
	0\simeq M(A^{\s{i-1}}) \arrow[r] & M(A^{i-1})
\end{tikzcd}
\]
	is a pushout, for all $A \in \mc{P}([i{-}1])_{\neq\varnothing}$. Equivalently, $\glueplus{R}_\mrm{c}$ is the full subcategory of $\glueplus{R}$ on the objects $M$ for which the map $\epsilon_M\colon \mrm{cof}_+(v(M)) \to M$ of \cref{punccounit} is an equivalence.
\end{defn}

With all the previous results in hand, we are ready to prove the analogue of \cref{cubestocofibres} in the punctured setting. 
\begin{thm}\label{punccofthm}
Let $R\colon \mc{P}([i]) \to \msf{CAlg}(\C)$ be a diagram of rings. There are equivalences of categories 
\[
\begin{tikzcd}
\equals{R} \ar[r, yshift=0.2cm, "\mrm{cof}_+"] \ar[r, phantom, "\simeq" description] & \glueplus{R}_\mrm{c} \ar[r, yshift=0.2cm, "v^\msf{m}"] \ar[l, yshift=-0.2cm, "v"] \ar[r, phantom, "\simeq" description] & \equalsmix{R}. \ar[l, yshift=-0.2cm, "\mrm{fib}_+"] 
\end{tikzcd}
\]
\end{thm}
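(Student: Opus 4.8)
The plan is to mirror the proof of \cref{cubestocofibres} essentially verbatim, replacing each ingredient used there by its punctured counterpart constructed in this section. The three key inputs are already available: the map $\epsilon_M\colon \mrm{cof}_+(v(M)) \to M$ of \cref{punccounit}; the natural equivalence $\mrm{fgt}_+\circ\mrm{cof}_+ \simeq \mrm{cof}$ recorded in \cref{con:enhancedcofibre}; and the natural equivalence $r\circ\mrm{cof}\simeq\mrm{id}$ recorded in \cref{puncnodummies}. I will prove the left-hand equivalence in detail; the right-hand equivalence then follows by an analogous argument, using $(\mrm{cof}^\msf{m},\mrm{fib}^\msf{m})$ from \cref{puncLtoR} in place of the relevant identity functors and constructing the fibre functor $\mrm{fib}_+\colon\equalsmix{R}\to\glueplus{R}$ exactly as $\mrm{cof}_+$ was constructed in \cref{con:enhancedcofibre} but with the fibre functors throughout.

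First, by construction $\mrm{cof}_+$ inserts zero objects into the dummy slots and forms genuine cofibre squares (this is visible already in the one-dimensional model \cref{adultdummy}, where the square produced is a pushout with a zero vertex by \cref{lem-cofibre-HA}), so $\mrm{cof}_+$ factors through the full subcategory $\glueplus{R}_\mrm{c}$ of \cref{glueplusf}; I keep the name $\mrm{cof}_+$ for the corestriction $\equals{R}\to\glueplus{R}_\mrm{c}$. Likewise $v=r\circ\mrm{fgt}_+$ restricts to a functor $\glueplus{R}_\mrm{c}\to\equals{R}$. It then suffices to exhibit natural equivalences $\mrm{id}_{\equals{R}}\simeq v\circ\mrm{cof}_+$ and $\mrm{cof}_+\circ v\simeq\mrm{id}_{\glueplus{R}_\mrm{c}}$.

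For the first, given $N\in\equals{R}$ we compute
\[
v(\mrm{cof}_+(N)) = r(\mrm{fgt}_+(\mrm{cof}_+(N))) \simeq r(\mrm{cof}(N)) \simeq N ,
\]
using the definition of $v$ (\cref{nota:v}), then $\mrm{fgt}_+\circ\mrm{cof}_+\simeq\mrm{cof}$ (\cref{con:enhancedcofibre}), then $r\circ\mrm{cof}\simeq\mrm{id}$ (\cref{puncnodummies}); all equivalences are natural in $N$. For the second, given $M$ in $\glueplus{R}_\mrm{c}$ the map $\epsilon_M\colon\mrm{cof}_+(v(M))\to M$ of \cref{punccounit} is an equivalence, since this is precisely the defining condition for membership in $\glueplus{R}_\mrm{c}$ (\cref{glueplusf}). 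Hence $\mrm{cof}_+$ and $v$ are mutually inverse, yielding the equivalence $\equals{R}\simeq\glueplus{R}_\mrm{c}$, and the right-hand equivalence follows in the same way.

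I do not expect a genuine obstacle here: the argument is formal once the constructions of this section are in place. The only point needing a little care is the verification that $\mrm{cof}_+$ truly lands in $\glueplus{R}_\mrm{c}$, i.e.\ that the squares it builds are pushouts with vanishing dummy vertices; but this is immediate from \cref{con:enhancedcofibre} together with \cref{adultdummy} and \cref{lem-cofibre-HA}. The remaining work is the routine bookkeeping of checking that the various functors between the punctured pullback categories are glued compatibly componentwise, exactly as in the unpunctured \cref{cubestocofibres}.
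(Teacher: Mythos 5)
Your argument is correct and coincides with the paper's proof essentially step for step: the counit $\epsilon_M$ from \cref{punccounit} is an equivalence on $\glueplus{R}_\mrm{c}$ by definition, the unit direction follows from $v\circ\mrm{cof}_+ = r\circ\mrm{fgt}_+\circ\mrm{cof}_+\simeq r\circ\mrm{cof}\simeq\mrm{id}$ via \cref{con:enhancedcofibre} and \cref{puncnodummies}, and the right-hand equivalence is analogous. The only difference is that you spell out explicitly why $\mrm{cof}_+$ and $v$ restrict to the subcategory, which the paper leaves implicit.
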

\begin{proof}
We prove the left-hand equivalence first. The map $\epsilon_M\colon \mrm{cof}_+(v(M)) \to M$ from \cref{lem:counitmapexists} is an equivalence for all $M \in \glueplus{R}_\mrm{c}$ by construction. We claim that there is a natural equivalence $N \to v(\mrm{cof}_+(N))$ for all $N \in \equals{R}$. To see this, we have natural equivalences
\begin{align*}
v(\mrm{cof}_+(N)) &\simeq r(\mrm{fgt}_+(\mrm{cof}_+(N))) &\text{by definition of $v$} \\
&\simeq r(\mrm{cof}(N)) &\text{by \cref{con:enhancedcofibre}} \\
&\simeq N &\text{by \cref{puncnodummies}}
\end{align*} 
which completes the proof of the left-hand equivalence. The right-hand equivalence can be proved analogously.
\end{proof}

\section{Equivalences via iterated cofibres}\label{sec:global}
Recall that the adelic model is a certain full subcategory of the category of punctured cubes. The torsion model will be obtained from the adelic model by taking iterated cofibres, so in this section we formalise the process of taking iterated cofibres on punctured cubes for arbitrary diagram of rings $R\colon\mc{P}([d]) \to \msf{CAlg}(\C)$. We will inductively define categories $\biggerthan{i}{R}$ where $\biggerthan{i}{R}$ is the result of taking cofibres in $\biggerthan{i+1}{R}$ and remembering the data of null homotopies and domains. In the terminology of \cref{rem:filtrationdegree}, the superscripts record the filtration degrees which are permitted. By extending the framework developed in the previous sections,  we prove that there are equivalences
\[\equals{R} \xrightarrow{\sim} \biggerthanf{d-1}{R} \xrightarrow{\sim} \biggerthanf{d-2}{R} \xrightarrow{\sim} \cdots \xrightarrow{\sim} \biggerthanf{1}{R} \xrightarrow{\sim} \biggerthanf{0}{R}\] where each step is given by a cofibre functor, and the subscripts $\mrm{c}$ denote a certain full subcategory of cofibre sequences. In the next section, we will prove the existence of the torsion model by identifying the essential image of the adelic model, seen as a subcategory of $\equals{\1_\ad}$, under these equivalences.

\subsection{Defining layers}
Fix a natural number $d$ and a diagram of rings $R\colon
\mc{P}([d]) \to \msf{CAlg}(\C)$. For every $0 \leqslant i \leqslant
d-1$, we write  $R^{\leqslant i}\colon \mc{P}([i]) \to \msf{CAlg}(\C)$
for the restriction of the diagram $R$ to $\mc{P}([i])$. We begin by defining categories $\biggerthan{i}{R}$ by induction for each $0 \leqslant i \leqslant d$, which are equipped with restriction functors $u_{=i}\colon \biggerthan{i}{R} \to \equals{R^{\leqslant i}}$ for all $i \neq d$. In order to do this, we require the following lemma.

\begin{lem}\label{restricttobottomlayer}
There is a restriction functor $s^{i{-}1}\colon \equalsmix{R^{\leqslant i}} \to \equals{R^{\leqslant i-1}}$ which forgets all the objects $M(A)$ where $i \in A$ (using the notation of \cref{rem:dataequalsmix,rem:dataofpc}).
\end{lem}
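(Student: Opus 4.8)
The plan is to build $s^{i-1}$ from the universal property of the pullback square defining $\equals{R^{\leqslant i-1}}$ in \cref{Cequals}, where for brevity I write $R^{\leqslant i}$ and $R^{\leqslant i-1}$ for the restrictions of $R$ to $\mc{P}([i])$ and $\mc{P}([i{-}1])$. The basic observation is that, recalling the faces $(R^{\leqslant i})_j$ of \cref{cubefaces} spanned by the subsets containing $j$, for $0 \leqslant j \leqslant i-1$ the face $(R^{\leqslant i})_j$ still involves the coordinate $i$, and discarding that coordinate recovers $(R^{\leqslant i-1})_j$; that is, $\big((R^{\leqslant i})_j\big)_{\backslash i} = (R^{\leqslant i-1})_j$. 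By the pullback description of $\equals{R^{\leqslant i-1}}$ it then suffices to produce two compatible functors
\[
\equalsmix{R^{\leqslant i}} \to \mrm{Fun}\big(\mc{P}([i{-}1])_{\neq\varnothing}, \mod{R(\varnothing)}\big) \qquad\text{and}\qquad \equalsmix{R^{\leqslant i}} \to \prod_{j=0}^{i-1}\cubeL\big((R^{\leqslant i-1})_j\big)
\]
agreeing after composing into $\prod_{j=0}^{i-1}\mrm{Fun}\big(\mc{P}([i{-}2]), \mod{R(\varnothing)}\big)$; for $i = 1$ the target collapses to $\mod{R(0)} = \cubeL\big((R^{\leqslant 0})_0\big)$ and only the second functor is needed.

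For the second functor I would take the composite $\equalsmix{R^{\leqslant i}} \xrightarrow{(\xi_j)_j} \prod_{j=0}^{i-1}\cubeR\big((R^{\leqslant i})_j\big) \xrightarrow{\prod_j \theta_{\backslash i}} \prod_{j=0}^{i-1}\cubeL\big((R^{\leqslant i-1})_j\big)$, where $(\xi_j)_j$ is the leg of the defining pullback of $\equalsmix{R^{\leqslant i}}$ in \cref{mix}, and each $\theta_{\backslash i}$ is the face-restriction functor of \cref{restrictR} (applied after identifying the $i$-element coordinate set of $(R^{\leqslant i})_j$ with $[i{-}1]$). For the first functor I would compose the canonical functor $\equalsmix{R^{\leqslant i}} \to \mrm{Fun}(\mc{P}([i])_{\neq\{i\}}, \mod{R(\varnothing)})$ from \cref{mix} with restriction along the inclusion of posets $\mc{P}([i{-}1])_{\neq\varnothing} \hookrightarrow \mc{P}([i])_{\neq\{i\}}$ given by $A \mapsto A \cup \{i\}$. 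Checking that the two composites into $\prod_j\mrm{Fun}(\mc{P}([i{-}2]), \mod{R(\varnothing)})$ agree is then a matter of unwinding the pullback squares defining $\equalsmix{R^{\leqslant i}}$, $\cubeR$ and $\cubeL$ and invoking the compatibility of $\theta_{\backslash i}$ with the underlying-diagram functors $\pi^\msf{m}$, exactly as in the proof of \cref{restricttofaces}; I expect this coherence verification, though routine, to be the main bulk of the argument.

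Finally I would verify that $s^{i-1}$ has the asserted effect on objects. Using \cref{rem:dataequalsmix,rem:dataofpc}, an object $M$ of $\equalsmix{R^{\leqslant i}}$ amounts to $R(A)$-modules $M(A)$ for $\varnothing \neq A \subseteq [i]$ with structure maps $M(A) \to \mrm{res}^{A\cup k}_A M(A\cup k)$ for $k \notin A$, $k\neq i$, and $\mrm{res}^{A\cup i}_A M(A\cup i) \to M(A)$ for $i \notin A$. The leg $(\xi_j)_j$ records precisely the mixed faces, and applying $\theta_{\backslash i}$ discards the mixed direction $i$, retaining only the modules $M(A)$ with $i \notin A$ and the structure maps among them --- which, reindexed on $\mc{P}([i{-}1])_{\neq\varnothing}$, is exactly an object of $\equals{R^{\leqslant i-1}}$. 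The only delicate point is the bookkeeping of the reflected indexing of $\equalsmix{}$: a subset $A$ with $i \notin A$ occupies the vertex $A \cup \{i\}$ of the punctured cube $\mc{P}([i])_{\neq\{i\}}$, and one must check this matches the poset inclusion used above for the first functor. There is nothing conceptually subtle here, but it is easy to get the indices wrong, so I would write this part out carefully.
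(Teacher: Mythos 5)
Your argument matches the paper's proof step for step: both reduce to the pullback presentation of $\equals{R^{\leqslant i-1}}$, take the functor into $\mrm{Fun}(\mc{P}([i{-}1])_{\neq\varnothing}, \mod{R(\varnothing)})$ by restricting the underlying-diagram leg of \cref{mix} along $A \mapsto A\cup\{i\}$, and take the functor into $\prod_j \cubeL((R^{\leqslant i-1})_j)$ as $(\xi_j)_j$ followed by the face restriction of \cref{restrictR}. The only addition on your part is the explicit treatment of the base case $i=1$ (where $\equals{R^{\leqslant 0}}$ is not given by a pullback), which is a worthwhile clarification the paper leaves implicit.
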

\begin{proof}
Without loss of generality suppose $R = R^{\leqslant i}$. Since $\equals{R^{\leqslant i-1}}$ is defined as a pullback (see \cref{Cequals}) it suffices to give compatible maps \[\equalsmix{R} \to \mrm{Fun}(\mc{P}([i{-}1])_{\neq \varnothing}, \mod{R(\varnothing)}) \quad \text{and} \quad \equalsmix{R} \to \prod_{j=0}^{i-1}\cubeL((R^{\leqslant i-1})_j).\]  For the former, we have a map $\equalsmix{R} \to \mrm{Fun}(\mc{P}([i])_{\neq\{i\}}, \mod{R(\varnothing)})$ by \cref{mix} and we postcompose with the restriction along the inclusion $\mc{P}([i{-}1])_{\neq\varnothing} \to \mc{P}([i])_{\neq\{i\}}$ given by $A \mapsto A \cup i$. For the latter, we take the composite map 
\[\equalsmix{R} \xrightarrow{(\xi_j)_j} \prod_{j=0}^{i-1}\cubeR(R_j) \to \prod_{j=0}^{i-1} \cubeL((R^{\leqslant i-1})_j)\]
where $\xi_j$ is as in \cref{mix} and the latter map exists as $(R^{\leqslant i-1})_j$ is a face of $R_j$. 
\end{proof}

\begin{defn}\label{Cgeq}
Let $R\colon \mc{P}([d]) \to \msf{CAlg}(\C)$ be a diagram of rings. We set $\biggerthan{d}{R} = \cubeL(R_d)$ and define $\biggerthan{d-1}{R} = \equalsmix{R}$. There is a restriction functor \[u_{=d-1}:=s^{d-1}\colon \biggerthan{d-1}{R} \to \equals{R^{\leqslant d-1}}\] by \cref{restricttobottomlayer}.
For each $0 \leqslant i \leqslant d-2$, we define $\biggerthan{i}{R}$ inductively as the pullback \[
\begin{tikzcd}[column sep=1.5cm]
\biggerthan{i}{R} \ar[r, "u_{\geqslant i+1}"] \ar[d, "g_i^{i+1}"'] & \biggerthan{i+1}{R} \ar[d, "u_{=i+1}"] \\
\glueplus{R^{\leqslant i+1}} \ar[r, "v"'] & \equals{R^{\leqslant i+1}}
\end{tikzcd}
\]
and the functor $u_{=i}\colon\biggerthan{i}{R} \to \equals{R^{\leqslant i}}$ as the composite \[\biggerthan{i}{R} \xrightarrow{g_i^{i+1}} \glueplus{R^{\leqslant i+1}} \xrightarrow{v^\msf{m}} \equalsmix{R^{\leqslant i+1}}\xrightarrow{s^i} \equals{R^{\leqslant i}}\] where the latter two maps are defined in \cref{nota:v} and \cref{restricttobottomlayer} respectively.
\end{defn}

Since $\biggerthan{i}{R}$ consists of copies of $\glueplus{R^{\leqslant j}}$ for $j \geqslant i+1$ being glued together, we extend the description of objects of $\glueplus{R^{\leqslant j}}$ given in \cref{rem:filtrationdegree} to the setting of $\biggerthan{i}{R}$. This will allow us to describe the objects of $\biggerthan{i}{R}$ in a more tangible fashion. We refer the reader to \cref{objectsbiggerthan} for more details. 

The following definition describes indexing categories which encode the shape of objects in $\biggerthan{i}{R}$. The key categories introduced are
$I_{-}(d)$ and $I(d)$. The former category contains fewer objects than the
latter, hence the minus sign in the notation.
The latter is the main category of interest so that it receives the simplest notation. In the terminology of the previous section, $I(d)$ extends $I_{-}(d)$ by adding in dummy vertices.
\begin{defn}\label{poset}\leavevmode
\begin{enumerate}[label=(\roman*)]
\item We define $I_{-}(d)$ to be the category with objects $A^k$ where $0 \leqslant k \leqslant d$ and $A$ is a non-empty subset of $\{0,1,\ldots,k\}$, such that if $k=d$, we have $d \in A$. We call the exponent of $A^k$ its \emph{filtration degree}. The morphisms of $I_{-}(d)$ are generated by the maps:
\begin{enumerate}
	\item \emph{oplax restrictions}: $\ext{i}{k}{A}\colon A^k \to (A \cup i)^k$ whenever $i \not\in A$,
	\item \emph{lax restrictions}: $\res{k}{A}\colon (A \backslash k)^{k-1} \to A^k$ where $k \in A$, 
\end{enumerate}
such that every possible diagram involving the above arrows
commutes. We will follow the convention that we denote the oplax
restriction maps in black and the lax restriction maps in blue,
whenever we give a pictorial representation.
\item We define $I(d)$ to be the category which is obtained from $I_{-}(d)$ by adding an extra object $A^{(i)}$ and extra maps $A^{(i)} \to A^{i+1}$ and $A^{(i)} \to A^{i}$ making the diagram
 \begin{equation}\label{square-I_+(A,s)}
 \begin{tikzcd}[column sep=1.7cm]
 A^{i+1} \arrow[r,"\ext{i+1}{i+1}{A}"] & (A\cup i+1)^{i+1} \\
 A^{(i)} \arrow[u]\arrow[r] & A^{i} \arrow[u, "\res{i+1}{A \cup i+1}"'].
 \end{tikzcd}
 \end{equation}
 commute, for each $0 \leqslant i \leqslant d-2$ and $A \in \mc{P}([i])_{\not =\varnothing}$. We refer to the objects $A^{(i)}$ as dummy vertices.
\item We write $I^{\geqslant i}(d)$ for the full subcategory of $I(d)$ on the objects $A^k$ and $A^{(k)}$ with $k \geqslant i$. 
\end{enumerate}
\end{defn}

In order to illustrate Definition~\ref{poset}, we draw $I_{-}(d)$ for $1 \leqslant d \leqslant 3$. 

\begin{ex}[$I_{-}(1)$]\label{ex:it1}
\[
\xymatrix{
1^1 \ar[d] \\
10^1 \ar@{<-}@[OIblue][r] & 0^0
}
\]
\end{ex}

\begin{ex}[$I_{-}(2)$]\label{ex:it2}
\[
\xymatrix@=1em{
& 21^2 \ar[dd] \ar@{<-}@[OIblue][rr] && 1^1 \ar[dd] \\
 2^2 \ar[ur] \ar[dd] & & & & \\
 & 210^2 \ar@{<-}@[OIblue][rr] && 10^1 \ar@{<-}@[OIblue][rr] && 0^0 \\
 20^2  \ar[ur] \ar@{<-}@[OIblue][rr] && 0^1 \ar[ur]
}
\]
\end{ex}

\begin{ex}[$I_{-}(3)$]\label{ex:it3}\leavevmode
\[
\xymatrix@=0.5em{
                          &  & 31^3 \ar@{<-}@[OIblue]@/_/[rrrrd]  \ar[ddd] \ar[rrr] &                &  & 321^3 \ar@{<-}@/^/@[OIblue][rrrrd] \ar[ddd]  &  &  & & & & & \\
                           &  &                           &                &  &                &   1^2 \ar[ddd] \ar[rrr] & & & 21^2 \ar@{<-}@[OIblue][dr] \ar[ddd] & & & \\
3^3 \ar[rruu] \ar[ddd] \ar[rrr] &  &                           &     32^3  \ar@{<-}@/^/@[OIblue][rrrrd] \ar[ddd]  \ar[rruu]    &  &          &      &  & & & 1^1 \ar[ddd] & & \\
                           &  & 310^3 \ar[rrr]  \ar@{<-}@/_/@[OIblue][rrrrd]        &                &  & 3210^3 \ar@{<-}@/^/@[OIblue][rrrrd]&  & 2^2  \ar[ddd] \ar[rruu] & & & & & \\
                           &  &                           &                &  &                & 10^2  \ar[rrr] & & & 210^2 \ar@{<-}@[OIblue][dr] & & & \\
30^3 \ar@{<-}@/_/@[OIblue][rrrrd] \ar[rruu] \ar[rrr] &  &                           & 320^3 \ar[rruu] \ar@{<-}@/^/@[OIblue][rrrrd]&  &                &  & & & & 10^1 \ar@{<-}@[OIblue][drr] & & & \\
& & & &  0^2 \ar[rruu] \ar[rrr] & & & 20^2 \ar@{<-}@[OIblue][dr] \ar[rruu] & & & & &  0^0 \\
& & & & & & & & 0^1 \ar[rruu] & & & & \\
}
\]
\end{ex}

\begin{rem}
To obtain a picture of $I(d)$ from $I_{-}(d)$ one adds dummy vertices. For example, in the case $d=1$ there are no dummy vertices to be added,
and in the case $d=2$, there is only one dummy vertex $0^{(0)}$ to be
added. Since we will only care about full subcategories on which the
objects indexed on these dummy vertices are zero, we henceforth omit these dummy vertices from pictures. 
\end{rem}

With this notation in hand, we may describe objects in $\biggerthan{i}{R}$. 
\begin{rem}\label{objectsbiggerthan}
An object $M$ in $\biggerthan{i}{R}$ consists of the data of:
\begin{enumerate}[label=(\roman*)]
\item for each $A^k \in I^{\geqslant i}(d)$, an $R(A)$-module $M(A^k)$;
\item for each oplax restriction $\ext{i}{k}{A}\colon A^k \to (A \cup i)^k$, an $R(A)$-module map \[M(\ext{i}{k}{A})\colon M(A^k) \to \mrm{res}^{A \cup i}_A M((A \cup i)^k);\]
\item for each lax restriction $\res{k}{A}\colon (A \backslash k)^{k-1} \to A^k$, an $R(A\backslash k)$-module map \[M(\res{k}{A})\colon \mrm{res}^A_{A\backslash k} M(A^k) \to M((A\backslash k)^{k-1});\]
\item for each $A^{(k)} \in I^{\geqslant i}(d)$, an $R(A)$-module $M(A^{(k)})$ together with $R(A)$-module maps $M(A^{i+1}) \to M(A^{(i)})$ and $M(A^{(i)}) \to M(A^i)$.
\end{enumerate}
All this data can be assembled into a diagram of shape $I^{\geqslant i}(d)$ which is required to commute. The data in (iv) is there to record null homotopies, as we will see in \cref{biggerthanfdefn}. We note that associated to each structure map $M(\ext{i}{k}{A})\colon M(A^k) \to \mrm{res}^{A \cup i}_A M((A\cup i)^k)$, there is by adjunction a map \[M(\ext{i}{k}{A})^\flat\colon \mrm{ext}_A^{A \cup i}M(A^k) \to M(A \cup i)^k\] which we call an \emph{adjoint structure map}. Note that points (ii) and (iii) justify the terminology of oplax and lax restrictions used in \cref{poset}.
\end{rem}

\begin{rem}\label{rem:justifynotation}
It is worth justifying the notation of \cref{Cgeq} in terms of filtration degree. The category $\biggerthan{i}{R}$ consists of all objects of filtration degree ${\geqslant}i$, and the functor $u_{=i}\colon \biggerthan{i}{R} \to \equals{R^{\leqslant i}}$ picks out the objects of filtration degree precisely $i$. The functor $u_{\geqslant i+1}\colon \biggerthan{i}{R} \to \biggerthan{i+1}{R}$ restricts to the objects of filtration degree ${\geqslant} i+1$, and the functor $g_i^{i+1}\colon \biggerthan{i}{R} \to \glueplus{R^{\leqslant i+1}}$ restricts to the objects of filtration degree $i$ and $i+1$.
\end{rem}

\subsection{Cofibres between layers}
We may now construct a cofibre functor $\biggerthan{i}{R} \to \biggerthan{i-1}{R}$ for each $1 \leqslant i \leqslant d-1$. 
\begin{con}\label{cofibregeq}
In order to construct a cofibre functor $\mrm{cof}_{\geqslant i}\colon\biggerthan{i}{R} \to \biggerthan{i-1}{R}$, since $\biggerthan{i-1}{R}$ is defined as a pullback (see \cref{Cgeq}) it suffices to construct compatible maps $\biggerthan{i}{R} \to \biggerthan{i}{R}$ and $\biggerthan{i}{R} \to \glueplus{R^{\leqslant i}}$. For the former we take the identity functor, and for the latter we take the composite \[\biggerthan{i}{R} \xrightarrow{u_{=i}} \equals{R^{\leqslant i}} \xrightarrow{\mrm{cof}_+} \glueplus{R^{\leqslant i}}\] where the maps are given in \cref{Cgeq} and \cref{con:enhancedcofibre} respectively.
\end{con}

\begin{ex}\label{ex:2functor}
When $d=2$, the functor $\mrm{cof}_{\geqslant 1} \colon \biggerthan{1}{R} \to \biggerthan{0}{R}$ is:
\[
\begin{gathered}
\xymatrix@R=1em@C=-1.0em{
& M(21^2) \ar[dd] \ar@{->}@[OIblue][rr] && M(1^1) \ar[dd] \\
 M(2^2) \ar[ur] \ar[dd] & & & & \\
 & M(210^2) \ar@{->}@[OIblue][rr] && M(10^1)\\
 M(20^2)  \ar[ur] \ar@{->}@[OIblue][rr] && M(0^1) \ar[ur]
}
\end{gathered}
\xlongrightarrow{\mrm{cof}_{\geqslant 1}}
\begin{gathered}
\xymatrix@R=1em@C=-0.25em{
& M(21^2) \ar[dd] \ar@{->}@[OIblue][rr] && M(1^1) \ar[dd] \\
 M(2^2) \ar[ur] \ar[dd] & & & & \\
 & M(210^2) \ar@{->}@[OIblue][rr] && M(10^1) \ar@{->}@[OIblue][rrr] && &\mrm{cof}(M(0^1) \to M(10^1)) \\
 M(20^2)  \ar[ur] \ar@{->}@[OIblue][rr] && M(0^1) \ar[ur]
}
\end{gathered}
\]
\end{ex}

Recall the description of objects of $\biggerthan{i}{R}$ as given in \cref{objectsbiggerthan}. The following definition enhances \cref{glueplusf} to our case of interest, by describing a certain full subcategory of cofibre sequences.
\begin{defn}\label{biggerthanfdefn}
	Let $R\colon \mc{P}([d]) \to \msf{CAlg}(\C)$ be a diagram of rings. We set $\biggerthanf{d-1}{R} = \biggerthan{d-1}{R}$. For $0 \leqslant i \leqslant d-2$, we define $\biggerthanf{i}{R}$ to be the full subcategory of $\biggerthan{i}{R}$ on those objects $M$ for which $M(A^{\s{k}}) \simeq 0$ and the commutative square 
	\[
	\begin{tikzcd}[column sep=5em, row sep=3em]
		M(A^{k+1}) \arrow[r] \ar[d] & \mrm{res}_{A}^{A \cup k+1}M((A \cup k+1)^{k+1}) \arrow[d]  \\
		0\simeq M(A^{\s{k}}) \arrow[r] & M(A^{k})
	\end{tikzcd}
	\]
	induced by (\ref{square-I_+(A,s)}), is a pushout, for all $A \in \mc{P}([k])_{\neq\varnothing}$ and $k \geqslant i$. 
	\end{defn}
	
\begin{rem}\label{fibreaspullback}
By the definition of $\biggerthanf{i}{R}$ together with \cref{Cgeq}, we see that for each $1 \leqslant i \leqslant d-1$, the diagram
\[
\begin{tikzcd}[column sep=1.5cm]
\biggerthanf{i-1}{R} \ar[r, "u_{\geqslant i}"] \ar[d, "g_{i-1}^{i}"'] & \biggerthanf{i}{R} \ar[d, "u_{=i}"] \\
\glueplus{R^{\leqslant i}}_\mrm{c} \ar[r, "v"'] & \equals{R^{\leqslant i}}
\end{tikzcd}
\]
is a pullback.
\end{rem}

\begin{rem}
	The vertices $M(A^{\s{k}})$ are, by construction, always zero
        in $\biggerthanf{i}{R}$, and therefore for diagramatical
        simplicity we will omit them from diagrams of objects in $\biggerthanf{i}{R}$. Informally, in the context of the above definition we will say that the composite \[M(A^{k+1}) \to \mrm{res}_A^{A \cup k+1}M((A \cup k+1)^{k+1}) \to M(A^k)\] is a cofibre sequence.
\end{rem}

\begin{ex}\leavevmode
	In the following figure we depict
        $\biggerthanf{0}{R}$ in the case $d=3$. The red and green composite maps indicate the required cofibre sequences. As
        mentioned above, we omit the zero vertices. A dotted arrow indicates an oplax structure map, whereas a solid arrow indicates a lax structure map. 
	\[
	\xymatrix@R=0.5em@C=-0.5em{
		&  & M(31^3) \ar@/_/[rrrrd]  \ar@{-->}[ddd] \ar@{-->}[rrr] &                &  & M(321^3) \ar@/^/[rrrrd] \ar@{-->}[ddd]  &  &  & & & & & \\
		&  &                           &                &  &                &   M(1^2) \ar@{-->}[ddd] \ar@{-->}@[OIred][rrr] & & & M(21^2) \ar@[OIred][dr] \ar@{-->}[ddd] & & & \\
		M(3^3) \ar@{-->}[rruu] \ar@{-->}[ddd] \ar@{-->}[rrr] &  &                           &     M(32^3)  \ar@/^/[rrrrd] \ar@{-->}[ddd]  \ar@{-->}[rruu]    &  &          &      &  & & & M(1^1) \ar@{-->}[ddd] & & \\
		&  & M(310^3) \ar@{-->}[rrr]  \ar@/_/[rrrrd]        &                &  & M(3210^3) \ar@/^/[rrrrd]&  & M(2^2)  \ar@{-->}[ddd] \ar@{-->}[rruu] & & & & & \\
		&  &                           &                &  &                & M(10^2)  \ar@{-->}@[OIred][rrr] & & & M(210^2) \ar@[OIred][dr] & & & \\
		M(30^3) \ar@/_/[rrrrd] \ar@{-->}[rruu] \ar@{-->}[rrr] &  &                           & M(320^3) \ar@{-->}[rruu] \ar@/^/[rrrrd]&  &                &  & & & & M(10^1) \ar@[OIgreen][drr] & & & \\
		& & & &  M(0^2) \ar@{-->}[rruu] \ar@{-->}@[OIred][rrr] & & & M(20^2) \ar@[OIred][dr] \ar@{-->}[rruu] & & & & &  M(0^0) \\
		& & & & & & & & M(0^1) \ar@{-->}@[OIgreen][rruu] & & & & \\
	}
	\]
\end{ex}

We can now prove the equivalences of categories between $\equals{R}$ and the layers $\biggerthanf{i}{R}$, as promised as the start of the section.
\begin{prop}\label{adelictotors2}
	Let $R\colon \mc{P}([d]) \to \msf{CAlg}(\C)$ be a diagram of rings. There is an equivalence of categories 
	\[
	\begin{tikzcd}
		\equals{R} \ar[r, yshift=0.2cm, "\mrm{cof}^\msf{m}"] \ar[r, phantom, "\simeq" description] & \biggerthan{d-1}{R}. \ar[l, yshift=-0.2cm, "\mrm{fib}^\msf{m}"] 
	\end{tikzcd}
	\]
\end{prop}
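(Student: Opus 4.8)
The plan is to recognise that this proposition is an immediate consequence of material already established. By \cref{Cgeq}, the category $\biggerthan{d-1}{R}$ is \emph{defined} to be $\equalsmix{R}$, so under this identification the asserted functors $\mrm{cof}^\msf{m}$ and $\mrm{fib}^\msf{m}$ are exactly the pair produced in \cref{puncLtoR}, which already states that $\mrm{cof}^\msf{m}\colon \equals{R} \to \equalsmix{R}$ and $\mrm{fib}^\msf{m}\colon \equalsmix{R} \to \equals{R}$ are mutually inverse equivalences. Thus the proof amounts to unwinding the definition of $\biggerthan{d-1}{R}$ and citing \cref{puncLtoR}.

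For the reader's benefit I would also recall the shape of the argument behind \cref{puncLtoR}, since it explains why the statement holds rather than merely where it was proved. Both $\equals{R}$ (via \cref{Cequals}) and $\equalsmix{R}$ (via \cref{mix}) are pullbacks of three categories, and one constructs $\mrm{cof}^\msf{m}$ and $\mrm{fib}^\msf{m}$ vertex by vertex: on the diagram-category vertex one uses the cofibre/fibre equivalence of \cref{boringcofibre}, conjugated by the right Kan extension along $\mc{P}([i])_{\neq\varnothing}\hookrightarrow\mc{P}([i])$ and restriction along $\mc{P}([i])_{\neq\{i\}}\hookrightarrow\mc{P}([i])$; on the bottom-right factors one uses $\mrm{cof}_{i-1}$/$\mrm{fib}_{i-1}$ together with the identity on the $i$-th factor; and on the bottom-left factors one uses the equivalence $\cubeL(R_j)\simeq\cubeR(R_j)$ of \cref{cofLtoR} together with the identity on $\cubeL(R_i)$. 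These are compatible with the defining maps of the two pullbacks, and since each component functor is an equivalence, so is the induced functor on pullbacks.

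The main point is that there is essentially no obstacle here: the genuine content lies in \cref{boringcofibre} and \cref{cofLtoR}, and then in assembling them into \cref{puncLtoR}. The present proposition simply records the first rung of the tower $\equals{R} \xrightarrow{\sim} \biggerthanf{d-1}{R} \xrightarrow{\sim}\cdots$, namely that passing from the punctured unmixed adelic-style category to the mixed one by taking horizontal cofibres is an equivalence; the remaining rungs, which carry the more delicate bookkeeping of dummy vertices and the cofibre-sequence subcategories, are treated in the subsequent constructions.
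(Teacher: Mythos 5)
Your proposal is correct and matches the paper's own proof exactly: by \cref{Cgeq} one has $\biggerthan{d-1}{R} = \equalsmix{R}$, so the statement is a special case of \cref{puncLtoR}. Your additional recap of the pullback-by-pullback argument behind \cref{puncLtoR} is accurate but not needed for this proposition.
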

\begin{proof}
	Recall that $\biggerthan{d-1}{R} = \equalsmix{R}$. Therefore this statement is a special case of \cref{puncLtoR}.
\end{proof}

\begin{prop}\label{betweenlayers}
	Let $R\colon \mc{P}([d]) \to \msf{CAlg}(\C)$ be a diagram of rings, and $1 \leqslant i \leqslant d-1$. The cofibre functor $\mrm{cof}_{\geqslant i}\colon\biggerthan{i}{R} \to \biggerthan{i-1}{R}$ from \cref{cofibregeq} restricts to an equivalence \[\mrm{cof}_{\geqslant i}\colon \biggerthanf{i}{R} \xrightarrow{\sim} \biggerthanf{i-1}{R}.\]
\end{prop}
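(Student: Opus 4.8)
The plan is to recognise $\mrm{cof}_{\geqslant i}$ as a section of the projection $u_{\geqslant i}$ arising from the defining pullback of $\biggerthan{i-1}{R}$, and then to promote this section to a quasi-inverse once we restrict to the subcategories of cofibre sequences, using \cref{punccofthm}. Recall from \cref{cofibregeq} that, under the pullback presentation $\biggerthan{i-1}{R} = \biggerthan{i}{R}\times_{\equals{R^{\leqslant i}}}\glueplus{R^{\leqslant i}}$ of \cref{Cgeq}, the functor $\mrm{cof}_{\geqslant i}$ is the one whose first component is $\mrm{id}_{\biggerthan{i}{R}}$ and whose second component is $\mrm{cof}_+\circ u_{=i}$, the coherence datum being the natural equivalence $u_{=i}\xrightarrow{\sim} v\circ\mrm{cof}_+\circ u_{=i}$ furnished by \cref{punccofthm}. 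In particular one has $u_{\geqslant i}\circ\mrm{cof}_{\geqslant i}\simeq\mrm{id}$.

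First I would verify that $\mrm{cof}_{\geqslant i}$ sends $\biggerthanf{i}{R}$ into $\biggerthanf{i-1}{R}$. For $M\in\biggerthanf{i}{R}$ set $N=\mrm{cof}_{\geqslant i}(M)$. Then $u_{\geqslant i}(N)=M$ lies in $\biggerthanf{i}{R}$, so $N$ satisfies all the pushout conditions of \cref{biggerthanfdefn} in filtration degrees ${\geqslant} i$ (these only involve the values $N(A^k)$ with $k\geqslant i$, which are the values of $M$); moreover $g_{i-1}^i(N)=\mrm{cof}_+(u_{=i}(M))$ lies in $\glueplus{R^{\leqslant i}}_\mrm{c}$, because by \cref{punccofthm} the functor $\mrm{cof}_+$ already factors through that full subcategory, which encodes precisely the remaining degree-$(i{-}1)$ pushout condition. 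By the pullback description of $\biggerthanf{i-1}{R}$ in \cref{fibreaspullback}, these two facts together say exactly that $N\in\biggerthanf{i-1}{R}$. Next, since $v\colon\glueplus{R^{\leqslant i}}_\mrm{c}\to\equals{R^{\leqslant i}}$ is an equivalence by \cref{punccofthm} and \cref{fibreaspullback} exhibits $\biggerthanf{i-1}{R}$ as the base change of $u_{=i}$ along $v$, the opposite leg $u_{\geqslant i}\colon\biggerthanf{i-1}{R}\to\biggerthanf{i}{R}$ is also an equivalence (base change of an equivalence along any functor is an equivalence in $\Cat_\infty$). Combined with $u_{\geqslant i}\circ\mrm{cof}_{\geqslant i}\simeq\mrm{id}$, which restricts to the full subcategories, this identifies $\mrm{cof}_{\geqslant i}\colon\biggerthanf{i}{R}\to\biggerthanf{i-1}{R}$ as the quasi-inverse of $u_{\geqslant i}$, hence as an equivalence.

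I expect the only genuine work to be bookkeeping: carefully matching the defining data of $\biggerthanf{i-1}{R}$ via the pushout squares of \cref{biggerthanfdefn} with its pullback description in \cref{fibreaspullback}, and tracing through \cref{cofibregeq} that $\mrm{cof}_{\geqslant i}$ really does have the identity as its first pullback component. Once those points are secured the statement is formal, resting only on \cref{punccofthm} and on stability of equivalences under base change; no input about supports or about the ambient category $\C$ enters here.
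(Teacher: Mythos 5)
Your proposal is correct and follows essentially the same route as the paper: both exploit the pullback square of \cref{fibreaspullback}, observe that $v$ is an equivalence by \cref{punccofthm}, deduce by base change that $u_{\geqslant i}$ is an equivalence, and then identify $\mrm{cof}_{\geqslant i}$ as its quasi-inverse. The only difference is that you spell out explicitly that $\mrm{cof}_{\geqslant i}$ restricts to the full subcategories, a point the paper leaves implicit in its phrase about the quasi-inverse being ``uniquely determined'' by the identity and $\mrm{cof}_+\circ u_{=i}$.
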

\begin{proof}
	By \cref{punccofthm}, the bottom map $v$ in the pullback diagram of \cref{fibreaspullback} is an equivalence with quasi-inverse $\mrm{cof}_+$. Hence the top map in the pullback, $u_{\geqslant i}\colon \biggerthanf{i-1}{R} \to \biggerthanf{i}{R}$, is also an equivalence. By the pullback square in \cref{fibreaspullback}, we see that the quasi-inverse to $u_{\geqslant i}$ is the functor uniquely determined by the functors $\mrm{id}\colon \biggerthanf{i}{R} \to \biggerthanf{i}{R}$, and the functor $\mrm{cof}_+\circ u_{={i}}\colon \biggerthanf{i}{R} \to \glueplus{R^{\leqslant i}}_\mrm{c}$, since $\mrm{cof}_+$ is the quasi-inverse of $v$. This functor is nothing other than $\mrm{cof}_{\geqslant i}$ as described in \cref{cofibregeq}.
\end{proof}

Combining the above results we obtain the following.
\begin{cor}\label{fromdto0}
	Let $R\colon \mc{P}([d]) \to \msf{CAlg}(\C)$ be a diagram of rings. There is an equivalence of categories $\msf{L}\colon \equals{R} \xrightarrow{\sim} \biggerthanf{0}{R}$ where $\msf{L} := \mrm{cof}_{\geqslant 1} \circ \mrm{cof}_{\geqslant 2} \circ \cdots \circ \mrm{cof}_{\geqslant d-1} \circ \mrm{cof}^\msf{m}$. \qed 
\end{cor}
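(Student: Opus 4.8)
The plan is to deduce this directly by chaining together the equivalences already established in this section, so the proof is essentially a bookkeeping argument. First I would recall the two base identifications: by \cref{Cgeq} we have $\biggerthan{d-1}{R} = \equalsmix{R}$, and by \cref{biggerthanfdefn} the category $\biggerthanf{d-1}{R}$ is by definition equal to $\biggerthan{d-1}{R}$, since there is no pushout condition to impose at the top filtration degree. Hence \cref{adelictotors2} (itself a special case of \cref{puncLtoR}) supplies an equivalence $\mrm{cof}^\msf{m}\colon \equals{R} \xrightarrow{\sim} \biggerthanf{d-1}{R}$ with quasi-inverse $\mrm{fib}^\msf{m}$.

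Next I would invoke \cref{betweenlayers}, which for each $1 \leqslant i \leqslant d-1$ provides an equivalence $\mrm{cof}_{\geqslant i}\colon \biggerthanf{i}{R} \xrightarrow{\sim} \biggerthanf{i-1}{R}$. Since for each such $i$ the codomain $\biggerthanf{i-1}{R}$ of $\mrm{cof}_{\geqslant i}$ is precisely the domain of $\mrm{cof}_{\geqslant i-1}$, these functors compose, and the composite
\[
\equals{R} \xrightarrow{\mrm{cof}^\msf{m}} \biggerthanf{d-1}{R} \xrightarrow{\mrm{cof}_{\geqslant d-1}} \biggerthanf{d-2}{R} \longrightarrow \cdots \longrightarrow \biggerthanf{1}{R} \xrightarrow{\mrm{cof}_{\geqslant 1}} \biggerthanf{0}{R}
\]
is exactly $\msf{L}$. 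A composite of equivalences is an equivalence, which yields the statement; in the degenerate case $d=1$ there are no intermediate layers and $\msf{L}$ reduces to $\mrm{cof}^\msf{m}$. A quasi-inverse to $\msf{L}$ is obtained by composing the corresponding fibre functors $\mrm{fib}_+$ (for the layer-to-layer steps) and $\mrm{fib}^\msf{m}$ (for the final step) in the reverse order.

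I do not anticipate any genuine obstacle here: all of the substantive content — constructing the cofibre functors, identifying their essential images as the full subcategories of cofibre sequences, and proving the layerwise equivalences — has already been carried out in \cref{puncLtoR}, \cref{punccofthm}, \cref{cofibregeq}, and \cref{betweenlayers}. The only point requiring care is the purely formal verification that the source and target of successive functors agree so that the composite is well-defined, and this is immediate from the inductive definition of the layers $\biggerthan{i}{R}$ in \cref{Cgeq}.
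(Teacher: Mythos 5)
Your proof is correct and is essentially the paper's intended argument: the corollary is stated with a \qed and the sentence ``Combining the above results we obtain the following,'' meaning the authors regard it as an immediate chain of \cref{adelictotors2} and \cref{betweenlayers} exactly as you have written. One small inaccuracy in your aside: the quasi-inverse to $\mrm{cof}_{\geqslant i}\colon \biggerthanf{i}{R}\to\biggerthanf{i-1}{R}$ established in the proof of \cref{betweenlayers} is the restriction $u_{\geqslant i}$, not $\mrm{fib}_+$ (compare the description of $\msf{R}$ in the proof of \cref{torsionmodel}); this does not affect the main argument, since a composite of equivalences is an equivalence regardless of the explicit form of the quasi-inverses.
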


\section{The torsion model}\label{sec:thetorsionmodel}
In this section we prove the main theorem of this paper, which shows the existence of a torsion model for a tensor-triangulated category $\C$ satisfying \cref{hyp:hyp}. In \cref{sec:global}, we proved that given a diagram of rings $R$, there is an equivalence of categories $\equals{R} \simeq \biggerthanf{0}{R}$, and in \cref{sec:adelic}, we constructed a fully faithful functor $\C \hookrightarrow \equals{\1_\ad^\X}$. As such, combining these two results yields the following.
\begin{prop}\label{cor:functors}
	Let $(\X,\alpha)$ be assembly data for $\C$. There is an equivalence of categories
	\[\xymatrix{
\equals{\1_\ad^\X} \ar@<0.7ex>[r]^-{\msf{L}} \ar@{}[r]|-{\simeq} \ar@<-0.7ex>@{<-}[r]_-{\msf{R}} & \biggerthanf{0}{\1_\ad^\X}
}\]
and a fully faithful functor  $(-)_\tors^\X\colon \C \hookrightarrow \biggerthanf{0}{\1_\ad^\X}$ defined by $(-)_\tors^\X = \msf{L}(\1_\ad^\X \otimes -)$.
\end{prop}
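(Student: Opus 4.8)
This proposition is essentially an assembly of results proved earlier in the paper, so the plan is to combine them in the right order rather than to do new work. First, the equivalence $\msf{L}\colon \equals{R} \xrightarrow{\sim} \biggerthanf{0}{R}$ for an arbitrary diagram of rings $R\colon \mc{P}([d]) \to \msf{CAlg}(\C)$ is precisely the content of \cref{fromdto0}, with quasi-inverse $\msf{R}$ obtained by composing the quasi-inverses of the individual cofibre functors, that is $\msf{R} = \mrm{fib}^\msf{m} \circ \mrm{fib}_{\geqslant d-1} \circ \cdots \circ \mrm{fib}_{\geqslant 1}$ where each $\mrm{fib}_{\geqslant i}$ is the quasi-inverse to $\mrm{cof}_{\geqslant i}$ supplied by \cref{betweenlayers} and $\mrm{fib}^\msf{m}$ is the quasi-inverse to $\mrm{cof}^\msf{m}$ from \cref{adelictotors2}. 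Applying this with $R = \1_\ad^\X$, which is a genuine diagram of rings $\mc{P}([d]) \to \msf{CAlg}(\C)$ by \cref{1adfunctor}, immediately yields the first assertion.

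For the second assertion, I would invoke \cref{1ad-fullyfaithful}, which states that the functor $\1_\ad^\X \otimes -\colon \C \to \cad^\X$ is fully faithful; composing with the (fully faithful, indeed equivalence) inclusion $\cad^\X \hookrightarrow \equals{\1_\ad^\X}$ gives a fully faithful functor $\1_\ad^\X \otimes -\colon \C \hookrightarrow \equals{\1_\ad^\X}$. Postcomposing with the equivalence $\msf{L}$ preserves full faithfulness, so $(-)_\tors^\X := \msf{L}(\1_\ad^\X \otimes -)$ is a fully faithful functor $\C \hookrightarrow \biggerthanf{0}{\1_\ad^\X}$, as claimed. One should perhaps remark that $\1_\ad^\X \otimes -$ here really means the composite of $\1_\ad^\X \otimes -\colon \C \to \cad^\X$ from \cref{adelicadjunction} with the inclusion into $\equals{\1_\ad^\X}$, matching the left adjoint appearing in \cref{puncturedcubeadjunction}; this is purely bookkeeping.

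There is no genuine obstacle here — the only point requiring a moment's care is checking that the source of $\msf{L}$, namely $\equals{R}$ with $R$ an arbitrary diagram of rings indexed on $\mc{P}([d])$, is exactly the category in which the image of $\1_\ad^\X \otimes -$ lands, so that the composition is well-formed; this is immediate since $\1_\ad^\X\colon \mc{P}([d]) \to \msf{CAlg}(\C)$ has dimension $d = \dim \Spc(\C^\omega)$, matching the indexing conventions of \cref{sec:global}. The substantive work identifying the essential image of $(-)_\tors^\X$ — that is, proving it lands in, and surjects onto, the subcategory $\C_\mrm{t}$ cut out by the gluing conditions described in the introduction — is deferred to the subsequent results of this section and is not part of this proposition.
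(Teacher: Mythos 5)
Your proof is correct and follows the same approach as the paper, which likewise just composes the fully faithful functor $\1_\ad^\X \otimes -$ from \cref{1ad-fullyfaithful} with the equivalence $\msf{L}$ from \cref{fromdto0}. One small slip: in the parenthetical you call the inclusion $\cad^\X \hookrightarrow \equals{\1_\ad^\X}$ an equivalence, but it is only a full subcategory inclusion (the adelic model is cut out by the nontrivial condition that the adjoint structure maps be equivalences); this does not affect your argument, since you only need full faithfulness of that inclusion, which holds automatically.
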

\begin{proof}
	There is a fully faithful functor $\1_\ad^\X \otimes -\colon \C \to \equals{\1_\ad^\X}$ by \cref{1ad-fullyfaithful} which we may compose with the equivalence from \cref{fromdto0}.
\end{proof}

\begin{rem}
Recall from \cref{fromdto0} that the functor $\msf{L}$ is given by taking iterated cofibres. 
\end{rem}

\begin{rem}\label{rem:1tors}
	We note that $X_\tors^\X \simeq \1_\tors^\X \otimes X$ for all $X \in \C$ by \cref{thm-adelicm} and the fact that cofibres commute with tensors. We will describe the functor $(-)_\tors^\X$ in more detail in the following subsection.
\end{rem}

\subsection{Defining the model}
We have produced a fully faithful functor $(-)_\tors^\X\colon \C \hookrightarrow \biggerthanf{0}{\1_\ad^\X}$, so to provide a model for $\C$ we will need to identify the image of $(-)_\tors^\X$. We now define the torsion model, and the rest of this section will be dedicated to proving that this is indeed the image. Recall the notation for objects of $\biggerthanf{0}{\1_\ad}$ from \cref{objectsbiggerthan}.
\begin{defn}\label{defn:torsionmodel}
Let $(\X, \alpha)$ be assembly data for $\C$. The \emph{torsion model} (based on $(\X,\alpha)$) is the full subcategory $\ct$ of $\biggerthanf{0}{\1_\ad^\X}$ on the objects $M$ satisfying the following two properties:
\begin{enumerate}[label=(\roman*)]
\item for each oplax restriction map $\ext{i}{k}{A}\colon A^k \to (A \cup i)^k$ in $I(d)$, the adjoint structure map \[M(\ext{i}{k}{A})^\flat\colon \mrm{ext}_A^{A \cup i} M(A^k) \to M((A \cup i)^k)\] is an equivalence;
\item\label{torsioncondition} for all $0 \leqslant i \leqslant d$, $M(i^i)$ is $\Gammale{i}$-torsion.
\end{enumerate}
\end{defn}

\begin{rem}
We note that \ref{torsioncondition} in the previous definition is equivalent to $M(i^i)$ having mono-dimensional support $i$. This is because $M(i^i)$ is a $\1_\ad^\X(i)$-module, and hence is $\Lge{i}$-local.
\end{rem}

\begin{ex}\label{ex:2dhomotopy}
Let us illustrate the previous definition with an example in the two-dimensional case before continuing. We consider a general $M \in \biggerthan{0}{\1_\ad}$ which may be depicted as
\[
\xymatrix@=1em{
& M(21^2) \ar@{-->}[dd] \ar[rr] && M(1^1) \ar@{-->}[dd] \\
 M(2^2) \ar@{-->}[ur] \ar@{-->}[dd] & & & & \\
 & M(210^2) \ar[rr] && M(10^1) \ar[rr] && M(0^0) \\
 M(20^2)  \ar@{-->}[ur] \ar[rr] && M(0^1) \ar@{-->}[ur]
}
\]

Recall that the solid maps represent maps after restricting scalars in the domain, and the dashed maps represent maps after restricting scalars in the codomain. Then $M \in \ct$ if and only if:
\begin{itemize}
\item[(i)] the sequence $M(0^1) \to M(10^1) \to M(0^0)$ is a cofibre sequence (keeping in mind that the dummy vertex $M(0^{(0)})$ is not displayed in the above diagram);
\item[(ii)] the adjoint structure maps corresponding to the dashed arrows in the above diagram are equivalences (see \cref{objectsbiggerthan} for details on adjoint structure maps);
\item[(iii)] $M(2^2)$ (resp., $M(1^1), M(0^0)$) has mono-dimensional support $2$ (resp., mono-dimensional support 1, 0).
\end{itemize}
\end{ex}

We now give a concrete description of the functor $(-)^\X_\tors\colon \C \to \cf{0}$ on objects. Recall from \cref{rem:1tors} that $X_\tors^\X \simeq \1_\tors^\X \otimes X$ for all $X \in \C$. As such, it is enough to understand $\1_\tors^\X$. 

\begin{prop}\label{identify1tors}
For all $A^i \in I_{-}(d)$, we have \[\1_\tors^\X(A^i) \simeq \Sigma^{d-i}\Gammale{i}\1_\ad^\X(A).\] In particular, $\1_\tors^\X(i^i) \simeq \Sigma^{d-i}\Elr{i} \otimes \1_\ad^\X(i)$.
\end{prop}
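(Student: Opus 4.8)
The plan is to compute $\1_\tors^\X$ by tracking what happens to the adelic cube $\1_\ad^\X$ under the iterated cofibre functor $\msf{L} = \mrm{cof}_{\geqslant 1} \circ \cdots \circ \mrm{cof}_{\geqslant d-1} \circ \mrm{cof}^\msf{m}$ from \cref{fromdto0}, evaluated on the object $\1_\ad^\X \otimes \1 = \1_\ad^\X$ itself. The key observation is that the fibre-of-cube computation carried out in the proof of \cref{limitsofcubes} already did most of the work: there we showed that the iterated fibres $(\1_\ad^\X)_f^i(A) \simeq \Gammale{i-1}\1_\ad^\X(A)$, using \cref{epointyproduct}. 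The functor $\msf{L}$ is built from exactly the same cofibre operations (just going in the cofibre rather than fibre direction and retaining the intermediate data), so the vertices of filtration degree $i$ appearing in $\msf{L}(\1_\ad^\X)$ should be the $(d-i)$-fold suspensions of these torsion layers.

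First I would make this precise by induction on the number of cofibre functors applied. Recall from \cref{objectsbiggerthan} that an object of $\biggerthan{i}{R}$ has vertices $M(A^k)$ for $k \geqslant i$, and the functor $\mrm{cof}_{\geqslant i}\colon \biggerthan{i}{R} \to \biggerthan{i-1}{R}$ from \cref{cofibregeq} keeps the higher filtration degrees unchanged and adjoins the new filtration-degree-$(i-1)$ vertices as cofibres of the maps in $u_{=i}\colon \biggerthan{i}{R} \to \equals{R^{\leqslant i}}$ via $\mrm{cof}_+$. So I would prove, by downward induction on $j$ from $d-1$ to $0$, that in the object $(\mrm{cof}_{\geqslant j+1}\circ\cdots\circ\mrm{cof}^\msf{m})(\1_\ad^\X)$ of $\biggerthan{j}{\1_\ad^\X}$, the vertex indexed by $A^k$ (for $k \geqslant j$) is $\Sigma^{d-k}\Gammale{k}\1_\ad^\X(A)$, and the structure map $A^k \to (A\cup i)^k$ realises the map $\Gammale{k}\1_\ad^\X(A) \to \Gammale{k}\1_\ad^\X(A\cup i)$ after suspension. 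The base case $j = d-1$ is \cref{adelictotors2}: there $\biggerthan{d-1}{\1_\ad^\X} = \equalsmix{\1_\ad^\X}$ and $\mrm{cof}^\msf{m}$ takes the cofibre in the "add $d$" direction. Since $\1_\ad^\X(A\cup d) = \prod_{x_d}L^\X_{x_d}\1_\ad^\X(A)$, and $\mrm{fib}$ of $\1_\ad^\X(A)\to\1_\ad^\X(A\cup d)$ is $\Gammale{d-1}\1_\ad^\X(A)$ by the computation in \cref{limitsofcubes} (using $\Gammale{d}\prod_{x_d}L^\X_{x_d}\1_\ad^\X(A)\simeq \Elr{d}\otimes\1_\ad^\X(A)$ from \cref{epointyproduct}\ref{epointy1}), the cofibre is $\Sigma^1\Gammale{d-1}\1_\ad^\X(A)$, matching $d - (d-1) = 1$. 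For the inductive step, the new filtration-degree-$j$ vertex $A^j$ is $\mrm{cof}(M(A^{j+1}) \to \mrm{res}^{A\cup j+1}_A M((A\cup j+1)^{j+1}))$, which by induction is $\mrm{cof}(\Sigma^{d-j-1}\Gammale{j+1}\1_\ad^\X(A) \to \Sigma^{d-j-1}\Gammale{j+1}\prod_{x_{j+1}}L^\X_{x_{j+1}}\1_\ad^\X(A))$; again by \cref{epointyproduct}\ref{epointy1} the target is $\Sigma^{d-j-1}\Elr{j+1}\otimes\1_\ad^\X(A)$ and the fibre of the underlying map is $\Gammale{j}\1_\ad^\X(A)$, so the cofibre is $\Sigma^{d-j}\Gammale{j}\1_\ad^\X(A)$, as required.

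Taking $j = 0$ gives $\1_\tors^\X(A^i) \simeq \Sigma^{d-i}\Gammale{i}\1_\ad^\X(A)$ for all $A^i \in I_-(d)$. For the final assertion, specialise to $A = \{i\}$ and $i^i$: then $\Gammale{i}\1_\ad^\X(i) = \Gammale{i}\prod_{x_i}L^\X_{x_i}\Lambda^\X_{x_i}\1$, which is $\Elr{i}\otimes\1_\ad^\X(i)$ by \cref{epointyproduct}\ref{epointy1} (with $X = \Lambda^\X_{x_i}\1$, noting $\1_\ad^\X(i) = \prod_{x_i}L^\X_{x_i}\Lambda^\X_{x_i}\1$); hence $\1_\tors^\X(i^i) \simeq \Sigma^{d-i}\Elr{i}\otimes\1_\ad^\X(i)$.

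The main obstacle is bookkeeping rather than conceptual: one must check that the intermediate cofibre functors genuinely produce the claimed vertices at \emph{all} relevant faces $A$ (not just the diagonal ones $\{i\}$), and that the computation in the proof of \cref{limitsofcubes} identifying $\mrm{fib}(\Gammale{k}\1_\ad^\X(A)\to\Gammale{k}\1_\ad^\X(A\cup k))$ with $\Gammale{k-1}\1_\ad^\X(A)$ applies verbatim here — which it does, since $\1_\ad^\X(A\cup k) = \prod_{x_k}L^\X_{x_k}\1_\ad^\X(A)$ independently of $A$, so the fibre sequence $\Gammale{k-1}\1_\ad^\X(A) \to \Gammale{k}\1_\ad^\X(A) \to \Elr{k}\otimes\1_\ad^\X(A)$ is exactly the defining cofibre sequence $\Gammale{k-1}\1 \to \Gammale{k}\1 \to \Elr{k}$ tensored with $\1_\ad^\X(A)$. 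A minor subtlety is keeping the direction of the maps (oplax versus lax) straight and confirming the suspension degree aligns with the filtration degree at each stage, but this is forced by the inductive formula $d - k$ together with the single shift introduced per cofibre.
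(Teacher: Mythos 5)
Your proof is correct and follows essentially the same route as the paper: downward induction on the filtration degree, identifying $\Gammale{k}\1_\ad^\X(A\cup k)\simeq\Elr{k}\otimes\1_\ad^\X(A)$ via \cref{epointyproduct}\ref{epointy1} and computing the cofibre at each stage. The only slip is in the ``in particular'' step: you invoke \cref{epointyproduct}\ref{epointy1} with $X=\Lambda^\X_{x_i}\1$, but that lemma requires a fixed $X$ independent of $x_i$, whereas here the inner term varies with $x_i$; the clean justification is that $\1_\ad^\X(i)$ is $\Lge{i}$-local (being a $\1_\ad^\X(i)$-module), so $\Gammale{i}\1_\ad^\X(i)\simeq\Gammale{i}\Lge{i}\1_\ad^\X(i)\simeq\Elr{i}\otimes\1_\ad^\X(i)$, which is also what \cref{epointyproduct}\ref{epointy2} records.
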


\begin{proof}
We shall proceed via downward induction on $i$. In the case $i=d$ we have $\1_\tors^\X(A^d) = \1_\ad^\X(A)$ as required, using the fact that $\Gammale{d}$ is the identity. 
For $i = d-1$, by definition of $\cf{d-1}$ we have
\[
\1_{\tors}^\X(A^{d-1}) \simeq \mrm{cof}(\1_\ad^\X (A) \to \1_\ad^\X (A \cup d)) = \mrm{cof}(\1_\ad^\X(A) \to \bigoplus_{x_d\in\X}L^\X_{x_d} \1_\ad^\X(A)),
\]
where the sum in the final term is finite as $\Spc(\C^\omega)$ has finitely many points of maximal dimension since it is Noetherian. We note that $\bigoplus_{x_d} L_{x_d}^\X\1_\ad^\X(A) \simeq \Lge{d}\1_\ad^\X(A)$ by~\cref{maxL}\ref{maxLsplit}, so $\1_\tors^\X(A^{d-1})$ is equivalent to $\Sigma\Gammale{d-1}\1_\ad^\X(A)$ as required.

Now suppose that the result holds for all $j > i$, where we can assume $i \neq d,~d-1$. We have a cofibre sequence
\[
\1_\tors^\X(A^{i+1}) \longrightarrow \1_\tors^\X ((A \cup \{i+1\})^{i+1}) \longrightarrow \1_\tors^\X(A^i)
\]
by definition of $\cf{i}$. By the induction hypothesis we know that this cofibre sequence is equivalent to
\[
\Sigma^{d-i-1} \Gammale{i+1} \1_\ad^\X(A) \longrightarrow \Sigma^{d-i-1} \Gammale{i+1} \1_\ad^\X (A \cup \{i+1\}) \longrightarrow \1_\tors^\X(A^i).
\]
Simplifying the middle term in the sequence, we have
\begin{align*}
\Sigma^{d-i-1} \Gammale{i+1} \1_\ad^\X (A \cup \{i+1\}) &= \Sigma^{d-i-1} \Gammale{i+1} \prod_{x_{i+1}} L^\X_{x_{i+1}}\1_\ad^\X (A) \\
&\simeq \Sigma^{d-i-1} \Elr{i+1} \otimes \1_\ad^\X(A)
\end{align*}
using \cref{epointyproduct}\ref{epointy1}.
Using the above cofibre sequence, we conclude that \[\1_\tors^\X(A^i) \simeq \Sigma^{d-i} \Gammale{i} \Gammale{i+1} \1_\ad^\X(A) \simeq \Sigma^{d-i} \Gammale{i}  \1_\ad^\X(A)\] and the result follows.
\end{proof}

\begin{cor}\label{1torshasproperties}
The object $\1_\tors^\X \in \cf{0}$ lies in the torsion model $\ct$.
\end{cor}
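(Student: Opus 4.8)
\textbf{Proof plan for \cref{1torshasproperties}.} We need to verify that $\1_\tors^\X$ satisfies the two defining conditions of $\ct$ from \cref{defn:torsionmodel}: firstly that all adjoint structure maps corresponding to oplax restrictions are equivalences, and secondly that $\1_\tors^\X(i^i)$ is $\Gammale{i}$-torsion for each $0 \leqslant i \leqslant d$. Both of these follow quickly from the explicit description of $\1_\tors^\X$ obtained in \cref{identify1tors}, so the proof should be short.

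For the first condition, fix an oplax restriction $\ext{i}{k}{A}\colon A^k \to (A\cup i)^k$ in $I(d)$. By \cref{identify1tors} the value of $\1_\tors^\X$ on these vertices is $\Sigma^{d-k}\Gammale{k}\1_\ad^\X(A)$ and $\Sigma^{d-k}\Gammale{k}\1_\ad^\X(A\cup i)$ respectively, and the adjoint structure map is obtained by applying $\mrm{ext}_A^{A\cup i}$ and then using the structure map of the underlying adelic cube. Thus it suffices to recall that $\1_\ad^\X$ as an object of $\equals{\1_\ad^\X}$ (in fact, the image $\1_\ad^\X \otimes \1$ under the functor of \cref{adelicadjunction}) already has all its adjoint structure maps equivalences by definition of $\cad^\X$, and that suspension and the exact functor $\Gammale{k}$ preserve this; one then needs that applying $\Gammale{k}$ commutes suitably with extension of scalars along $\1_\ad^\X(A) \to \1_\ad^\X(A\cup i)$, which holds since $\Gammale{k}$ is smashing (\cref{prop:localduality}\ref{item:smashing}) so it is given by tensoring with $\Gammale{k}\1$, and tensoring commutes with extension of scalars. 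So $\mrm{ext}_A^{A\cup i}\1_\tors^\X(A^k) \to \1_\tors^\X((A\cup i)^k)$ is identified with $\Sigma^{d-k}\Gammale{k}$ applied to an equivalence, hence is an equivalence.

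For the second condition, \cref{identify1tors} gives directly that $\1_\tors^\X(i^i) \simeq \Sigma^{d-i}\Elr{i}\otimes \1_\ad^\X(i)$, and by construction $\Elr{i} = \Gammale{i}\Lge{i}\1$ has mono-dimensional support $i$; in particular $\Elr{i}$ is $\Gammale{i}$-torsion, hence so is $\Elr{i}\otimes \1_\ad^\X(i)$ since $\Gamma_{\leqslant i}$-torsion objects form a localizing $\otimes$-ideal, and suspension preserves this. (Alternatively, one appeals to \cref{epointyproduct}\ref{epointy2} which gives $\Elr{i}\otimes\1_\ad^\X(i)\simeq \Elr{i}$, so $\1_\tors^\X(i^i) \simeq \Sigma^{d-i}\Elr{i}$ outright.) This verifies condition \ref{torsioncondition}, and combined with the remark following \cref{defn:torsionmodel} also confirms that $\1_\tors^\X(i^i)$ has mono-dimensional support $i$.

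I do not expect a genuine obstacle here, since the real content was already extracted in \cref{identify1tors}; the only point requiring a small amount of care is making sure that the adjoint structure maps of $\1_\tors^\X$ really are the images of those of the adelic cube under $\Sigma^{d-k}\Gammale{k}$, rather than something more complicated introduced by the iterated cofibre construction $\msf{L}$. This is essentially bookkeeping: the functor $\mrm{cof}_{\geqslant i}$ only modifies the structure maps in the "adding the top index" direction at each stage, leaving the oplax restrictions in the other directions untouched up to the identifications already recorded in the proof of \cref{identify1tors}, so tracing through \cref{cofibregeq} and \cref{con:enhancedcofibre} confirms the claim.
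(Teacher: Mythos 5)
Your proof is correct and takes the same route the paper intends: the paper simply writes that the corollary is ``immediate from \cref{identify1tors},'' and you have spelled out the details that make it immediate — the torsion condition follows from $\1_\tors^\X(i^i) \simeq \Sigma^{d-i}\Elr{i}$ (via \cref{epointyproduct}\ref{epointy2}), and the adjoint structure maps are equivalences because $\Gammale{k}$ is smashing, so extension of scalars along the adelic rings identifies with tensoring with $\Gammale{k}\1$. Your parenthetical caveat about tracing the oplax structure maps through the iterated cofibre construction is the right thing to flag, and is indeed just the bookkeeping already implicit in the proof of \cref{identify1tors}.
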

\begin{proof}
This is immediate from \cref{identify1tors}.
\end{proof}

\subsection{Establishing the torsion model}
In this subsection we prove that $\C$ is equivalent to its torsion model $\ct$. In order to do this, we firstly prove the following two lemmas.

\begin{lem}\label{lem:gen-local}
Let $N$ be a $\1_{\ad}^\X(A \cup d)$-module. Then $N$ is $L_{\geqslant d}$-local and $\mrm{ext}_A^{A \cup d} N \simeq N$.
\end{lem}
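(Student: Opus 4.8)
The statement has two parts, and both are really statements about the adelic ring $\1_\ad^\X(A \cup d)$ rather than about an arbitrary module $N$ over it. The plan is to reduce to the structure of $\1_\ad^\X(A\cup d)$ and then bootstrap to arbitrary modules using that localization is smashing and that extension of scalars is a tensor.

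\emph{$L_{\geqslant d}$-locality.} First I would show that $\1_\ad^\X(A \cup d)$ is itself $L_{\geqslant d}$-local. Writing $A \cup d = \{i_0 < \cdots < i_{n-1} < d\}$, by \cref{def:adeliccube} we have
\[
\1_\ad^\X(A \cup d) = \prod_{\dim(x_d) = d} L^\X_{x_d}\Big(\cdots\Big),
\]
so it is a product of objects of the form $L^\X_{x_d}(-)$ with $\dim(x_d) = d$. Since $L^\X_{x_d} = L_{\alpha^{-1}(\upcl_\X(x_d))^c}$ and $x_d$ has maximal dimension, the relevant specialization closed set is $\alpha^{-1}(\upcl_\X(x_d))^c$, whose complement consists only of primes of dimension $d$; hence each $L^\X_{x_d}Y$ is $L_{\geqslant d}$-local (its support lies in dimension $d$, using the support formula $\supp(L^\X_{V^c}Y) = \alpha^{-1}(V^c) \cap \supp(Y)$ recorded after \cref{assemblyexamples}, together with the fact that $\Lge d$-local objects are detected by support in dimension $\geqslant d$). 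A product of $L_{\geqslant d}$-local objects is $L_{\geqslant d}$-local since $\Lge d = L_{V^c}$ for a specialization closed $V$ and such localizations are right adjoints, hence preserve limits; thus $\1_\ad^\X(A\cup d)$ is $L_{\geqslant d}$-local. Now for a general module $N$ over this ring: $\Lge d$ is smashing by \cref{prop:localduality}\ref{item:smashing}, so $\Lge d N \simeq \Lge d\1 \otimes N$, and since $N$ is a retract of $\1_\ad^\X(A \cup d) \otimes N$ (via the module action and unit), and $\Lge d \1 \otimes \1_\ad^\X(A\cup d) \otimes N \simeq \1_\ad^\X(A\cup d)\otimes N$ because $\1_\ad^\X(A\cup d)$ is $\Lge d$-local and smashing localization is monoidal, it follows that $\Lge d N \simeq N$. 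Alternatively, and perhaps more cleanly: the category of $\Lge d$-local objects is a localizing (indeed smashing) $\otimes$-ideal containing $\1_\ad^\X(A\cup d)$, hence contains all its modules.

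\emph{Extension of scalars is the identity.} For the second part, the map $A \to A \cup d$ of subsets induces $\1_\ad^\X(A) \to \1_\ad^\X(A \cup d)$, and by \cref{def:adeliccube} this map is precisely (a product of) the unit $Y \to \prod_{x_d} L^\X_{x_d} Y$ applied with $Y = \1_\ad^\X(A)$; that is, $\1_\ad^\X(A \cup d) \simeq \prod_{x_d} L^\X_{x_d}\1_\ad^\X(A)$, which by \cref{maxL}\ref{maxLsplit} (the splitting $\bigoplus_{x_d} L^\X_{x_d}Y \simeq \Lge d Y$, the sum being finite since $\Spc(\C^\omega)$ is Noetherian so has finitely many maximal primes) is just $\Lge d \1_\ad^\X(A)$, i.e. $\1_\ad^\X(A) \to \1_\ad^\X(A\cup d)$ is the $\Lge d$-localization map. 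Now $\mrm{ext}_A^{A\cup d} N = \1_\ad^\X(A\cup d) \otimes_{\1_\ad^\X(A)} N$. Since $N$ is already a $\1_\ad^\X(A\cup d)$-module, this is $\1_\ad^\X(A \cup d) \otimes_{\1_\ad^\X(A)} N \simeq N$ provided the ring map $\1_\ad^\X(A) \to \1_\ad^\X(A\cup d)$ is such that base change along it is idempotent on its own modules — which holds here because it is a smashing localization: for a smashing localization $\1 \to L\1$, one has $L\1 \otimes_{\1} M \simeq LM$, and for $L\1$-modules $M$ this is $M$; the relative version over $\1_\ad^\X(A)$ follows formally. So I would spell out: $\mrm{ext}_A^{A\cup d}N \simeq \Lge d \1_\ad^\X(A) \otimes_{\1_\ad^\X(A)} N \simeq \Lge d N \simeq N$, the last step by the first part of the lemma.

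\emph{Main obstacle.} The genuinely delicate point is the careful identification of the map $\1_\ad^\X(A) \to \1_\ad^\X(A \cup d)$ as the $\Lge d$-localization map (not merely as \emph{some} map), which is needed to make the extension-of-scalars computation rigorous; this rests on \cref{1adfunctor} (that the cube is built by applying the natural transformation $\eta_d\colon \mrm{id} \Rightarrow \prod_{x_d} L^\X_{x_d}$ in the top direction) together with \cref{maxL}\ref{maxLsplit}. Everything else is a formal consequence of smashing localization being a monoidal localization and of the support calculus recorded earlier; I would keep those manipulations brief.
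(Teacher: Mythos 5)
Your proposal is correct and takes essentially the same route as the paper: establish that $\1_\ad^\X(A\cup d)$ is $\Lge{d}$-local, deduce the same for all its modules via smashing, identify $\1_\ad^\X(A\cup d) \simeq \Lge{d}\1_\ad^\X(A)$ via \cref{maxL}\ref{maxLsplit}, and chain $\mrm{ext}_A^{A\cup d}N \simeq \Lge{d}\1_\ad^\X(A)\otimes_{\1_\ad^\X(A)}N \simeq \Lge{d}N \simeq N$. One small exposition slip: $\Lge{d} = L_{V^c}$ is a \emph{left} adjoint to the inclusion $\iota_L\colon L_{V^c}\C \hookrightarrow \C$, not a right adjoint; the reason products of local objects remain local is that $\iota_L$ is a right adjoint (having $L_{V^c}$ as its left adjoint) and so $L_{V^c}\C$ is closed under limits in $\C$ — or, as you note in your cleaner alternative, because the local objects form a localizing $\otimes$-ideal under a smashing localization.
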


\begin{proof}
As $\1_{\ad}^\X(A \cup d)$ is $L_{\geqslant d}$-local, any module over it is $L_{\geqslant d}$-local since $L_{\geqslant d}$ is smashing. Therefore we have \[\mrm{ext}_A^{A \cup d} N\simeq \Lge{d}\1_\ad^\X(A) \otimes_{\1_\ad^\X(A)} N  \simeq L_{\geqslant d} N \simeq N\] where the first equivalence holds by \cref{maxL}\ref{maxLsplit}, and the last equivalence by the above observation. 
\end{proof}

\begin{lem}\label{extendingpurestrata}
Let $i < d$ and $A \in \mc{P}([d{-}1])$ be such that $i \in A$. If $X \in \C$ has mono-dimensional support $i$ for some $i < d$, then $\mrm{ext}_i^{A \cup d} X \simeq 0$.
\end{lem}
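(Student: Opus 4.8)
The plan is to extend scalars in stages, first from $\1_\ad^\X(i)$ to $\1_\ad^\X(A)$, and then from $\1_\ad^\X(A)$ to $\1_\ad^\X(A \cup d)$, showing that the first stage already kills $X$ once we pass to the relevant localization. First I would observe that, by transitivity of extension of scalars, $\mrm{ext}_i^{A \cup d} X \simeq \mrm{ext}_A^{A \cup d}(\mrm{ext}_i^A X)$, so it suffices to understand $\mrm{ext}_i^A X$ and then apply $\mrm{ext}_A^{A \cup d}$. Since $i \in A$, the ring $\1_\ad^\X(A)$ is obtained from $\1_\ad^\X(i)$ by applying further localization/completion functors indexed by the elements of $A$ other than $i$, all of which have dimension different from $i$ (indeed strictly larger, since $A \subseteq [d-1]$ and the maximal element of $A$ governs the outermost localization; more precisely $\1_\ad^\X(A) = \prod_{x_{j}} L_{x_j}^\X \cdots \1_\ad^\X(i)$ where the $j$ range over the dimensions in $A$ larger than $i$, by \cref{def:adeliccube}).

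The key point is then that $\mrm{ext}_i^A X \simeq \1_\ad^\X(A) \otimes_{\1_\ad^\X(i)} X$, and I would compute the support of this object. Since $X$ has mono-dimensional support $i$, we have $X \simeq X \otimes \Elr{i}$, and so $\mrm{ext}_i^A X$ is a module over $\1_\ad^\X(A)$ which is a retract of something of the form $\1_\ad^\X(A) \otimes \Elr{i} \otimes (-)$. I would argue that $\1_\ad^\X(A) \otimes \Elr{i} \simeq 0$: by \cref{epointyproduct}\ref{epointy1} (applied iteratively, or directly), $\Gammale{i} \prod_{x_j} L_{x_j}^\X Y \simeq \Elr{i} \otimes Y$ only when $j=i$, whereas for $j \neq i$ the relevant composite of localizations applied to an $\Elr{i}$-module vanishes. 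Concretely, the outermost localization in $\1_\ad^\X(A)$ is $\prod_{x_m} L_{x_m}^\X$ where $m = \max(A) > i$; tensoring $\Elr{i}$ (which has support consisting only of primes of dimension $i$) with a ring that is $\Lge{m}$-local for $m > i$ forces the tensor product to be zero, since $\Elr{i}$ is $\Gammale{i}$-torsion and hence $\Lge{i+1}$-acyclic, while $\Lge{m}$-local objects are retracts of $\Lge{m}$-localizations and $m \geqslant i+1$. Thus $\1_\ad^\X(A) \otimes \Elr{i} \simeq 0$, hence $\mrm{ext}_i^A X \simeq \mrm{ext}_i^A(X \otimes \Elr{i}) \simeq (\1_\ad^\X(A) \otimes_{\1_\ad^\X(i)} X) \otimes_{\1_\ad^\X(i)} (\1_\ad^\X(i) \otimes \Elr{i}) \simeq 0$, using \cref{epointyproduct}\ref{epointy2} to rewrite $\Elr{i} \simeq \Elr{i} \otimes \1_\ad^\X(i)$.

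Finally, since $\mrm{ext}_A^{A \cup d}$ is an additive (exact) functor, $\mrm{ext}_A^{A \cup d} 0 \simeq 0$, and therefore $\mrm{ext}_i^{A \cup d} X \simeq \mrm{ext}_A^{A \cup d}(\mrm{ext}_i^A X) \simeq \mrm{ext}_A^{A \cup d} 0 \simeq 0$, as claimed. The main obstacle I anticipate is pinning down precisely the bookkeeping that $\1_\ad^\X(A)$, for $A$ containing $i$ and with $\max(A) = m > i$, is built from $\1_\ad^\X(i)$ by applying localizations $L_{x_j}^\X$ for $j > i$ only (so that no completion at dimension-$i$ primes intervenes to complicate the support computation), and then being careful with the distinction that $\1_\ad^\X(A)$ is an algebra \emph{over} $\1_\ad^\X(i)$ rather than a localization of it — but this is exactly the content of \cref{def:adeliccube}, and combined with \cref{epointyproduct} and \cref{maxL} it should go through without difficulty. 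Alternatively, a cleaner route avoiding the iterated structure entirely: note $\mrm{ext}_i^{A \cup d} X$ is a module over $\1_\ad^\X(A \cup d)$, which is $\Lge{d}$-local, so it equals its own $\Lge{d}$-localization; but it is also a retract of (something built from) $\Elr{i} \otimes (-)$ which is $\Gammale{i}$-torsion hence $\Lge{d}$-acyclic for $i < d$, forcing it to be zero.
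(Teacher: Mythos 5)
Your proposal contains two arguments, and they have very different fates.

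The primary route — showing $\mrm{ext}_i^A X \simeq 0$ before ever touching $d$, and then applying $\mrm{ext}_A^{A\cup d}$ — does not work. First, the claim that $\1_\ad^\X(A) = \prod_{x_j} L_{x_j}^\X \cdots \1_\ad^\X(i)$ with $j$ ranging over dimensions in $A$ larger than $i$ is incorrect when $\min(A) < i$: the completion in the definition of $\1_\ad^\X(A)$ sits at $\min(A)$, not at $i$, so $\1_\ad^\X(A)$ is not obtained from $\1_\ad^\X(i)$ by applying further localizations. Second, and more seriously, the assertion that $\max(A) > i$ (``indeed strictly larger, since $A \subseteq [d-1]$'') is a non sequitur: the hypotheses allow $A = \{i\}$, in which case $\max(A) = i$ and $\mrm{ext}_i^A X = X$, which is not zero for a general $X$ with mono-dimensional support $i$. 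The vanishing genuinely happens only once one extends to $A \cup d$, because $d$ is the top dimension and $i < d$; your claim that $\1_\ad^\X(A)\otimes \Elr{i} \simeq 0$ is therefore false in the case $A = \{i\}$, and the whole factorisation strategy collapses.

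The ``cleaner route'' you sketch at the end is essentially the paper's proof and is correct: since $X$ has mono-dimensional support $i$, we have $X \simeq \1_\ad^\X(i)\otimes X$ by \cref{epointyproduct}\ref{epointy2}, so $\mrm{ext}_i^{A\cup d}X \simeq \1_\ad^\X(A\cup d)\otimes X$, whose support is contained in that of $X$ and hence consists only of dimension-$i$ primes; on the other hand $\mrm{ext}_i^{A\cup d}X$ is a $\1_\ad^\X(A\cup d)$-module, hence $L_{\geqslant d}$-local by \cref{lem:gen-local}, so its support lies in dimensions $\geqslant d$; since $i < d$ these are disjoint, and the detection property gives $\mrm{ext}_i^{A\cup d}X \simeq 0$. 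Your ``retract of something built from $\Elr{i}\otimes(-)$'' is vaguer than the paper's direct support argument, but the underlying mechanism is the same. I would suggest scrapping the primary route entirely and leading with the support-versus-locality argument.
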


\begin{proof}
As $X$ has mono-dimensional support $i$ we have $X \simeq \1_\ad^\X(i) \otimes X$ by \cref{epointyproduct}\ref{epointy2} and so $\mrm{ext}_{i}^{A \cup d}X \simeq \1_\ad^\X(A \cup d) \otimes X $ which still has mono-dimensional support $i$. 
By \cref{lem:gen-local}, $\mrm{ext}_{i}^{A \cup d}X  $ is also $L_{\geqslant d}$-local, thus it must be zero.
\end{proof}

With these lemmas, we are now ready to prove our main theorem.
\begin{thm}\label{torsionmodel}
Let $(\X,\alpha)$ be assembly data for $\C$. The functor $(-)_\tors^\X$ induces an equivalence $\C \xrightarrow{\simeq} \ct$.
\end{thm}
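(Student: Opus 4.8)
The plan is to show that the fully faithful functor $(-)_\tors^\X \colon \C \hookrightarrow \biggerthanf{0}{\1_\ad^\X}$ from \cref{cor:functors} has essential image exactly $\ct$. Since fully faithfulness is already established, it suffices to prove that an object $M \in \biggerthanf{0}{\1_\ad^\X}$ lies in the essential image if and only if it satisfies the two conditions of \cref{defn:torsionmodel}. The ``only if'' direction is essentially \cref{1torshasproperties} together with \cref{rem:1tors}: since $X_\tors^\X \simeq \1_\tors^\X \otimes X$ and $\1_\tors^\X \in \ct$ by \cref{1torshasproperties}, and both the extension-of-scalars equivalences and the torsion condition are preserved under tensoring with $X$ (the former because $\mrm{ext}$ is monoidal, the latter because $\Gammale{i}$-torsion objects form a $\otimes$-ideal), every $X_\tors^\X$ lies in $\ct$. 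So the content is the ``if'' direction: essential surjectivity of $(-)_\tors^\X$ onto $\ct$.

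For essential surjectivity, I would transport the problem back across the equivalence $\msf{L}\colon \equals{\1_\ad^\X} \xrightarrow{\sim} \biggerthanf{0}{\1_\ad^\X}$ of \cref{cor:functors}. Given $M \in \ct$, set $N = \msf{R}(M) \in \equals{\1_\ad^\X}$, where $\msf{R}$ is the quasi-inverse (iterated fibre functor). The goal becomes to show that $N$ lies in the essential image of $\1_\ad^\X \otimes -\colon \C \to \equals{\1_\ad^\X}$, equivalently (by \cref{thm-adelicm}, the adelic model theorem) that $N$ lies in $\cad^\X$, i.e.\ that all the adjoint structure maps $\mrm{ext}_A^B N(A) \to N(B)$ of the \emph{punctured} adelic cube are equivalences. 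The key is to reverse-engineer the vertices $N(A)$ of the punctured cube from the data of $M$: running $\msf{R} = \mrm{fib}^\msf{m} \circ \mrm{fib}_{\geqslant d-1} \circ \cdots \circ \mrm{fib}_{\geqslant 1}$ amounts to taking iterated fibres of the cofibre sequences recorded by $M$, so one can express $N(A)$ (for $A$ a subset of $\{0,\dots,d\}$, say with $\max(A) = k$) as an iterated fibre built from the $M(B^j)$ with $j \geqslant k$. Concretely, in the torsion-model language the vertex $N(A)$ should recover $M(A^k)$ ``corrected'' by contributions from the higher filtration layers that were split off when forming $M$; by \cref{identify1tors} applied fibrewise, $N(A)$ should come out equivalent to $\1_\ad^\X(A) \otimes X$ where $X$ is the object of $\C$ reconstructed from the mono-dimensional pieces $M(i^i)$.

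The technical heart is then a downward induction on filtration degree showing that the extension-of-scalars maps for $N$ are equivalences, using (a) the hypothesis that the oplax adjoint structure maps for $M$ are equivalences, (b) the torsion hypothesis on the $M(i^i)$, and (c) the splitting and product lemmas of \cref{sec:splitting} together with \cref{epointyproduct}. The crucial inputs are \cref{lem:gen-local} and \cref{extendingpurestrata}: these control how the top-dimensional and mono-dimensional strata interact with extension of scalars, and they are exactly what is needed to see that the ``correction terms'' in the iterated fibre do not obstruct the extension-of-scalars equivalence. One proceeds as in the proof strategy of \cref{surjective}: first check the statement for objects of the form $N$ supported in a single filtration degree (where $N(k) = \mrm{ext}$-compatible follows from \cref{epointyproduct}\ref{epointy2} and the torsion condition, much as in the proof that $\phi_{\1_\ad^\X(i)\otimes\Elr{i}}$ is an equivalence), then bootstrap to general $M$ by induction along the cofibre sequences in the definition of $\ct$, using exactness of $\1_\ad^\X\otimes-$ and fully faithfulness to lift the maps.

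The main obstacle I anticipate is the bookkeeping in step two: making precise the formula for the punctured-cube vertices $N(A) = \msf{R}(M)(A)$ as iterated fibres and verifying that, under the conditions defining $\ct$, these assemble into an object of $\cad^\X$ rather than merely of $\equals{\1_\ad^\X}$. In particular one must check the extension-of-scalars equivalences for \emph{all} inclusions $A \subseteq B$ in $\mc{P}([d])_{\neq\varnothing}$, not just the ``edge'' ones, which requires knowing that the fibre operations commute suitably with $\mrm{ext}$ — this is where \cref{lem:gen-local} (every $\1_\ad^\X(A\cup d)$-module is its own extension along $A \subseteq A\cup d$) and \cref{extendingpurestrata} (mono-dimensional strata die under $\mrm{ext}$ into the generic face) do the real work, and where a careful induction organised by $\max(A)$ is essential. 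Once that is in place, \cref{thm-adelicm} finishes the argument: $N \in \cad^\X$ means $N \simeq \1_\ad^\X \otimes X$ for $X = \lim N \in \C$, and then $M \simeq \msf{L}(N) \simeq \msf{L}(\1_\ad^\X \otimes X) = X_\tors^\X$, so $M$ is in the essential image. Combined with fully faithfulness from \cref{cor:functors}, this yields the equivalence $\C \xrightarrow{\simeq} \ct$.
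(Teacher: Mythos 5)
Your high-level strategy is exactly the paper's: set up the commuting square with $\msf{L}$ and $\msf{R}$, observe that it suffices to show $\msf{L}$ restricts to $\cad^\X \to \ct$ and $\msf{R}$ restricts to $\ct \to \cad^\X$, handle the first direction via $\msf{L}M \simeq \1_\tors^\X \otimes \lim M$ and \cref{1torshasproperties}, and then prove the second direction by unwinding $\msf{R}$ as an iterated fibre and checking the adjoint-structure-map conditions using \cref{lem:gen-local} and \cref{extendingpurestrata}. You have correctly identified where the difficulty lies and which lemmas do the work, so the proposal is on the right track.

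However, your proposed organization of the hard direction --- first handling $M \in \ct$ ``supported in a single filtration degree'' and then bootstrapping ``by induction along the cofibre sequences'' in the spirit of \cref{surjective} --- has a gap. The bootstrap in \cref{surjective} works because $\cad^\X$ carries a canonical filtration by ``supported in dimension $\leqslant i$'' (a pointwise vanishing condition) together with the adjunction $\mrm{ev}_i \dashv f_i$ that peels off the top layer, and crucially the fibres of these maps remain in $\cad^\X$. No analogous filtration of $\ct$ is set up in the paper, and it is not clear that the ``layers'' of a general $M \in \ct$ in your proposed decomposition would themselves satisfy the defining conditions of $\ct$ (especially condition (ii), that the $M(i^i)$ are $\Gammale{i}$-torsion). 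The paper sidesteps this entirely: it shows $\msf{R}$ restricts by directly verifying, for each generating inclusion $A \subseteq A \cup i$, that the adjoint structure map $\mrm{ext}_A^{A\cup i}\msf{R}(M)(A) \to \msf{R}(M)(A\cup i)$ is an equivalence, splitting into three cases ($d \in A$; $d \notin A$ and $i \neq d$; $d \notin A$ and $i = d$). The first two cases are quick; the third reduces to proving $\mrm{ext}_A^{A\cup d}M(A^{d-1}) \simeq 0$, which is established by an explicit induction on filtration degree that walks down the cofibre sequences in $\biggerthanf{0}{\1_\ad^\X}$, applying \cref{extendingpurestrata} at each step. Your ``careful induction organised by $\max(A)$'' points at the right phenomenon, but the clean formulation is this vanishing claim for $\mrm{ext}_A^{A\cup d}M(A^{d-1})$, not a bootstrap over $M$. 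You would do well to drop the bootstrap and instead carry out the case analysis directly; with the case structure in place, the inductive vanishing argument is a reasonably mechanical application of \cref{extendingpurestrata} and the fibre sequences.

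One further small point: in the ``only if'' direction, you correctly observe that one must also know tensoring preserves the two conditions defining $\ct$ (monoidality of extension of scalars and the $\otimes$-ideal property of torsion objects). The paper's citation of \cref{1torshasproperties} alone is a little terse here; your more careful phrasing is actually an improvement.
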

\begin{proof}
We begin by summarising the situation:
\[\xymatrix@C=4em@R=1em{
    \\
    \C  \ar `u[u] `[rr]^{(-)_{\tors}^\X} [rr] \ar@{->}[r]^-{\1_\ad^\X \otimes -}  & \equals{\1_\ad^\X} \ar@<0.7ex>[r]^-{\msf{L}} \ar@{}[r]|-{\simeq} \ar@<-0.7ex>@{<-}[r]_-{\msf{R}} & \cf{0} \\ \\
    \C \ar@{=}[uu] \ar@<0.7ex>[r]^-{\1_\ad^\X \otimes -} \ar@{}[r]|-{\simeq} \ar@<-0.7ex>@{<-}[r]_-{\lim}  & \C^\X_\ad  \ar@{^(->}[uu] & \ct\ar@<0.7ex>@{..>}[l]^{\msf{R}} \ar@<-0.7ex>@{<..}[l]_{\msf{L}} \ar@{^(->}[uu] 
}\]
see \Cref{thm-adelicm} and \Cref{cor:functors}. The bottom right entry of the diagram is the torsion model which we want to prove is equivalent to $\C$. The bottom left equivalence is the equivalence to the adelic model of \cref{thm-adelicm}, so it suffices to construct an equivalence between $\C^\X_\ad$ and $\ct$. If we include these both into the ambient categories of which they are full subcategories then we have the equivalence of categories shown on the top right of the diagram by \cref{cor:functors}. Therefore to prove the statement, it suffices to show that $\msf{L}$ and $\msf{R}$ restrict to functors $\C^\X_\ad \to \ct$ and $\ct \to \C^\X_\ad$ respectively, i.e., that the dotted functors in the above diagram exist. 

Firstly, we show that $\msf{L}$ restricts. Any object $M \in \C^\X_\ad$ is equivalent to $\1_\ad^\X \otimes \mrm{lim}(M)$ by \cref{thm-adelicm}. Therefore 
\begin{align*}
\msf{L}M&\simeq \msf{L}(\1_\ad^\X \otimes \mrm{lim}(M)) & \\
&\simeq (\mrm{lim}(M))_\tors^\X &\text{by definition of $(-)_\tors^\X$} \\
&\simeq \1_\tors^\X \otimes \mrm{lim}(M) &\text{by \cref{rem:1tors}} 
\end{align*} 
which is in $\ct$ by \cref{1torshasproperties}.

We now show that $\msf{R}$ restricts. Recall from \cref{cor:functors} that $\msf{R}$ is defined to be the composite \[\msf{R}\colon \cf{0} \xrightarrow{u_{\geqslant 1}} \cf{1} \to \cdots \to \cf{d-2} \xrightarrow{u_{\geqslant d-1}} \cf{d-1} = \equalsmix{\1^\X_\ad} \xrightarrow{\mrm{fib}^\msf{m}} \equals{\1^\X_\ad}\] where $u_{\geqslant i}$ forgets the part of the diagram of filtration degree $i-1$ as in \cref{rem:justifynotation}, and $\mrm{fib}^\msf{m}$ takes the fibre of maps indexed on $(A \cup d)^d \to A^{d-1}$ as in \cref{puncLtoR}. As such, we see that $\msf{R}(M)(A) = M(A^d)$ if $d \in A$. 

Suppose that $M \in \ct$. To prove that $\msf{R}(M) \in \C^\X_\ad$ we must show that
\[\mrm{ext}_A^{A \cup i} \msf{R}(M)(A) \to \msf{R}(M)(A \cup i)\]
is an equivalence for all $i \not\in A$ as the maps $A \to A \cup i$ generate all maps in $\mc{P}([d])_{\neq\varnothing}$ under composition. We prove this by considering three cases:
\begin{enumerate}[label=(\arabic*)]
\item\label{case1} $d \in A$;
\item\label{case2} $d \not\in A$ and $i \neq d$;
\item\label{case3} $d \not\in A$ and $i = d$.
\end{enumerate}
In case~\ref{case1} this follows from the observation that $\msf{R}(M)(A) = M(A^d)$ and $\msf{R}(M)(A \cup i) = M((A \cup i)^d)$, together with the assumption that $M \in \ct$. 

For the remaining cases we recall that we have a fibre sequence
\begin{equation}\label{fibreR}
\msf{R}(M)(A) \to M((A \cup d)^d) \to M(A^{d-1})
\end{equation}
of $\1_\ad^\X(A)$-modules by the definition of $\msf{R}$. 

For case~\ref{case2}, that is, when $d \not\in A$ and $i \neq d$, by definition of $\msf{R}$ (see (\ref{fibreR})), the map $\mrm{ext}_A^{A \cup i}\msf{R}(M)(A) \to \msf{R}(M)(A \cup i)$ is defined to be the induced map on fibres as shown in the diagram
\[
\begin{tikzcd}
\mrm{ext}_A^{A\cup i}\msf{R}(M)(A) \ar[r] \ar[d] & \mrm{ext}_A^{A\cup i}M((A\cup d)^d) \ar[r] \ar[d] & \mrm{ext}_A^{A\cup i}M(A^{d-1}) \ar[d] \\
\msf{R}(M)(A \cup i) \ar[r] & M((A \cup \{i,d\})^d) \ar[r] & M((A \cup i)^{d-1}).
\end{tikzcd}
\]
The middle vertical map is an equivalence by case~\ref{case1}, and the rightmost vertical map is an equivalence since $M \in \ct$. Therefore the leftmost vertical map is also an equivalence which proves case~\ref{case2}. 

For case~\ref{case3}, we fix $A$ with $d \not\in A$ and $i = d$. By applying $\mrm{ext}_A^{A \cup d}$ to (\ref{fibreR}) we obtain a fibre sequence
\begin{equation}
\mrm{ext}_A^{A \cup d}\msf{R}(M)(A) \to M((A \cup d)^d) \to \mrm{ext}_A^{A \cup d}M(A^{d-1})
\end{equation}
of $\1_\ad^\X(A \cup d)$-modules, as $\mrm{ext}_A^{A \cup d} M((A \cup d)^d) \simeq M((A \cup d)^d)$ by \cref{lem:gen-local}.
Therefore, for this case, it suffices to prove that $\mrm{ext}_A^{A \cup d} M(A^{d-1}) \simeq 0$, since $\msf{R}(M)(A \cup d) = M((A \cup d)^d)$.

Let $j(A)$ be the smallest non-negative integer such that $d-j(A)-1 \in A$. We will prove that \[\mrm{ext}_A^{A \cup d} M(A^{d-j(A)+k}) \simeq 0\] for all $-1 \leqslant k \leqslant j(A)-1$ by induction on $k$. For $k=-1$, we have $d-j(A)+k = d-j(A)-1 \in A$. Therefore \[M(A^{d-j(A)-1}) \simeq \mrm{ext}_{d-j(A)-1}^{A}M((d-j(A)-1)^{d-j(A)-1})\] where $M((d-j(A)-1)^{d-j(A)-1})$ has mono-dimensional support $d-j(A)-1$ as $M \in \ct$. As such the base case follows from \cref{extendingpurestrata}.

As the inductive hypothesis, we now suppose that $\mrm{ext}_A^{A \cup d}M(A^{d-j(A)+k-1}) \simeq 0$. In order to prove that $\mrm{ext}_A^{A \cup d}M(A^{d-j(A)+k}) \simeq 0$, we consider two cases. 

Firstly, if $d-j(A)+k \in A$, then \[\mrm{ext}_A^{A \cup d}M(A^{d-j(A)+k}) \simeq \mrm{ext}_A^{A\cup d}\mrm{ext}_{d-j(A)+k}^{A \cup d-j(A)+k}M((d-j(A)+k)^{d-j(A)+k})\] and $M((d-j(A)+k)^{d-j(A)+k})$ has mono-dimensional support $d-j(A)+k$ as $M \in \ct$. Therefore in this case, $\mrm{ext}_A^{A \cup d}M(A^{d-j(A)+k-1}) \simeq 0$ by \cref{extendingpurestrata}. 

Secondly, if $d-j(A)+k \not\in A$, then there is a fibre sequence
\[\mrm{ext}_A^{A \cup d}M(A^{d-j(A)+k}) \to \mrm{ext}_A^{A \cup d}M((A \cup d-j(A)+k)^{d-j(A)+k}) \to \mrm{ext}_A^{A \cup d}M(A^{d-j(A)+k-1})\] as $M\in \ct$. 
By a similar argument as above the middle term is extended from an object with mono-dimensional support $d-j(A)+k$ as $M \in \ct$, and hence is zero by \cref{extendingpurestrata}. We also have $\mrm{ext}_A^{A \cup d} M(A^{d-j(A)+k-1}) \simeq 0$ by the inductive hypothesis. As such, we conclude that $\mrm{ext}_A^{A \cup d} M(A^{d-j(A)+k}) \simeq 0$ which completes the proof by induction. Taking $k = j(A)-1$ in the proved inductive claim completes the proof of  case~\ref{case3}, and $\msf{R}(M) \in \C^\X_\ad$ as required.
\end{proof}

\section{Examples}\label{sec:examples}

We will now illuminate the torsion model with some examples. 
\subsection{Derived categories of commutative Noetherian rings}
Let $R$ be a commutative Noetherian ring of finite Krull dimension. The derived category $\msf{D}(R)$ satisfies \cref{hyp:hyp} as $\Spc(\msf{D}(R)^\omega)$ is homeomorphic to $\mrm{Spec}(R)$. For any prime $\p \in \Spc(\msf{D}(R)^\omega)$ we can identify the associated $\p$-torsion, $\p$-localization functor, and $\p$-completion functors as the derived $\p$-torsion functor, the usual localization at $\p$, and the derived $\p$-completion functor respectively, see~\cite[\S 5.1]{torsion1} for more details. For more recollections on these functors see~\cite{DwyerGreenlees02, adicL} for instance.

Let us make the adelic rings, and the torsion model for $\msf{D}(R)$ explicit when $R$ is a local integral domain of Krull dimension $2$. The case of Krull dimension $1$ has already been described in \cite[\S 9.3]{torsion1}. In this 2-dimensional setting, we have a unique generic point $\g$, and a unique closed point $\m$, together with infinitely many prime ideals $\p$ of height $1$. For concreteness, one might take $R = \Z_{(p)}\llbracket x\rrbracket$. 

There are two obvious candidates for assembly data here, the finest and the coarsest. The corresponding diagrams of adelic rings are illustrated in \cref{fig:D(R)rings}.

\begin{figure}[h]
\xymatrixcolsep{2ex}\xymatrixrowsep{4ex}\xymatrix{
 &  \displaystyle{\prod_{\fp}} (R_\p^\wedge)_\p   \ar[rr] \ar[dd]|\hole&&   \left(\displaystyle{\prod_{\fp}} (R_\p^\wedge)_\p\right)_\g\ar[dd]  &&  & (\Lambda_{\leqslant 1}R)[\m^{-1}]   \ar[rr] \ar[dd]|\hole&&   (\Lambda_{\leqslant 1}R)_\g \ar[dd]  \\
R \ar[rr] \ar[ur] \ar[dd]&& R_\g \ar[dd] \ar[ur]  &&& R \ar[rr] \ar[ur] \ar[dd]&& R_\g \ar[dd] \ar[ur]  &\\
&   \displaystyle{\prod_{\fp}} (R_\m^\wedge)_\p  \ar[rr]|-\hole&&   \left(\displaystyle{\prod_{\fp}} (R_\m^\wedge)_\p\right)_\g && &   (R_\m^\wedge)[\m^{-1}]  \ar[rr]|-\hole&&   (R_\m^\wedge)_\g \\
R_\m^\wedge \ar[rr] \ar[ur] &&    (R_\m^\wedge)_\g \ar[ur] &&&   R_\m^\wedge \ar[rr] \ar[ur] &&   (R_\m^\wedge)_\g \ar[ur]
}
\caption{The adelic rings associated to the finest and coarsest
  assembly data on $\msf{D}(R)$ for a local Noetherian domain $R$ of
  Krull dimension $2$. For an $R$-module $M$, the chain complex
  $M[\m^{-1}]$ has homology given by the \v{C}ech cohomology of
  punctured affine space, and $\Lambdale{1}R$ can be described as the cofibre of $\Hom_R(R_\g, R) \to R$ (as an object of $\msf{D}(R)$ rather than as a commutative algebra object).} \label{fig:D(R)rings}
\end{figure}

%
%
%
%
%
%
%
%
 
Recall from \cref{ex:2dhomotopy} that an object $M$ in the torsion model $\msf{D}(R)_\mrm{t}^\X$ can be represented by a diagram of the form
\[
\xymatrix@=1em{
& M(21^2) \ar@{-->}[dd] \ar[rr] && M(1^1) \ar@{-->}[dd] \\
 M(2^2) \ar@{-->}[ur] \ar@{-->}[dd] & & & & \\
 & M(210^2) \ar[rr] && M(10^1) \ar[rr] && M(0^0) \\
 M(20^2)  \ar@{-->}[ur] \ar[rr] && M(0^1) \ar@{-->}[ur]
}
\]
where the solid maps represent maps after restricting scalars in the domain, the dashed maps represent maps after restricting scalars in the codomain, each $M(A^k)$ is a $\1_\ad^\X(A)$-module, and the following conditions are satisfied:
\begin{itemize}
\item[(i)] the sequence $M(0^1) \to M(10^1) \to M(0^0)$ is a cofibre sequence;
\item[(ii)] the adjoint structure maps corresponding to the dashed arrows in the above diagram are equivalences;
\item[(iii)] \begin{enumerate}[label=(\alph*)]
\item $M(1^1) \simeq \bigoplus_\p M(\p)$ where each $M(\p)$ is derived $\p$-torsion;
\item $M(0^0)$ is derived $\m$-torsion.
\end{enumerate}
\end{itemize}

\subsection{Chromatic homotopy theory}

In this section we will consider the torsion model in the realm of chromatic homotopy theory. As the category of spectra has an infinite dimensional, non-Noetherian Balmer spectrum, we consider the category $L_{E(n)}\Sp$ of $E(n)$-local spectra at some fixed prime $p$, for some height $n \geqslant 0$. There is a homeomorphism
\begin{align*}
[n] &\xrightarrow{\simeq} \Spc(L_{E(n)}\Sp^\omega) \\
i &\mapsto \mrm{ker}(K(n-i) \otimes -)
\end{align*}
where $[n]$ is equipped with the Alexandroff topology. We emphasize that the chromatic height filtration corresponds to the \emph{co}dimension in the Balmer ordering. To avoid confusion and conflict with the localization $L_i$ in the sense of \cref{Lpnotation}\ref{nota2}, we will never write $L_i$ to mean the localization at $E(i)$, and instead write $L_{E(i)}$.

As the Balmer spectrum is linear, there is only one choice of assembly data. The localization $L_i$ is given by $L_{E(n-i)}$, and the completion $\Lambda_i$ is $L_{K(n-i)}$. Therefore the adelic rings take the form \[\1_\ad(A) = L_{E(n-\max(A))}L_{K(n-\min(A))}S^0\] for any non-empty subset $A$ of $[n]$. The functor $\Gamma_iL_i$ is the $(n-i)$-monochromatic layer functor \[M_{n-i}X := \mrm{fib}(L_{E(n-i)}X \to L_{E(n-i-1)}X).\] Therefore in the usual chromatic terminology, an $E(n)$-local spectrum $X$ with mono-dimensional support $i$ is called \emph{monochromatic} of height $n-i$, that is, $X$ is $E(n-i)$-local and the natural map $M_{n-i}X \to X$ is an equivalence. Moreover, we can now see the torsion model as a categorification of reconstructing an $E(n)$-local spectrum from its monochromatic pieces.

By definition, every vertex in an object in the torsion model is extended from the vertices of the form $i^i$, that is, from the pieces with mono-dimensional support. For any $X \in L_{E(n)}\Sp$, we can give an explicit description of these mono-dimensional pieces in the corresponding object $X_\tors$ of the torsion model. We have $X_\tors \simeq S^0_\tors \otimes X$ by \cref{rem:1tors}, and by appealing to \cref{identify1tors} we see that \[X_\tors(i^i) \simeq \Sigma^{n-i}M_{n-i}X\] so that the torsion model does indeed give the aforementioned categorification.

\subsection{Rational torus-equivariant spectra}\label{subsec:torusspec}  
In this section we discuss the main example arising from the theory of 
rational $G$-spectra where $G$ is a torus of arbitrary rank. We relate
this back to the algebraic model constructed in~\cite{GreenleesShipley18}, and discuss the prospect of an algebraic torsion model.

Recall that a   {\em family} $\mc{F}$ of subgroups of $G$ is
a set of subgroups of $G$ which is closed under conjugation and taking
subgroups. When we wish to emphasise the ambient group we will call
$\mc{F}$ a $G$-family. A {\em cofamily} is a set of subgroups of $G$
closed under conjugation and passage to larger subgroups. 
Associated to a $G$-family $\mc{F}$ are two $G$-spaces $E\mc{F}_+$ and $\widetilde{E}\mc{F}$. These are determined up to equivariant weak equivalence by their fixed points:
\[(E\mc{F}_+)^H = \begin{cases}
S^0 & \text{if $H \in \mc{F}$} \\
\ast & \text{if $H \not\in \mc{F}$} 
\end{cases} \qquad \text{and} \qquad \widetilde{E}\mc{F}^H = \begin{cases}
S^0 & \text{if $H \not\in \mc{F}$} \\
\ast & \text{if $H \in \mc{F}$} 
\end{cases}\]

We write $\Sp_G$ for the category of rational $G$-equivariant spectra, and omit the rationalization from the notation for brevity. Recall from~\cite{Greenlees19} (also see \cref{assemblyexamples}\ref{rationalassembly}) that the Balmer spectrum of $\Sp_G^\omega$ is given by the set of conjugacy classes of closed subgroups of $G$, with the poset structure given by cotoral inclusions. The support function associated to this is the \emph{geometric isotropy} defined by $\mrm{supp}(X) = \{H \mid \Phi^HX \not\simeq 0\}$ where $\Phi^H$ is the geometric $H$-fixed points functor.

If one considers the finest assembly data
(\cref{assemblyexamples}\ref{finestassembly}), the adelic rings  in
equivariant homotopy theory do not generally have good multiplicative
properties. Instead, writing $\msf{C}$ for the set of connected subgroups of $G$, we consider the assembly data $(\msf{C}, \mrm{conn}(-))$ of \cref{assemblyexamples}\ref{rationalassembly} given by the function $\mrm{conn}(-)$ which takes the connected component of the identity. We note that in $\msf{C}$ the cotoral ordering coincides with the ordering given by subgroup inclusion.

For a connected subgroup $H$ of $G$, we write $\downcl_\mrm{sb}(H)$
for the family of $\mrm{Sub}(G)$ consisting of  $K$
subconjugate to $H$, and similarly write $\upcl_\mrm{sb}(H)$ for the cofamily
consisting of  $K$ in which $H$ is subconjugate. We write $S^{\infty V(H)}$
for the $G$-space \[S^{\infty V(H)} := \bigcup_{V^H = 0} S^V.\] The
$K$-fixed points of this space are $S^0$ whenever $H \leqslant K$, and, since $G$ is a torus, are
contractible otherwise. In other words, $S^{\infty V(H)}$ is a couniversal space for the
cofamily $\upcl_\mrm{sb}(H)$.

\begin{prop}\label{Lagrees}
Let $H$ be a connected subgroup of $G$. Then \[L_H^\msf{C}(X) \simeq S^{\infty V(H)} \otimes X\] for all $X \in \Sp_G$.
\end{prop}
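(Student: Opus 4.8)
The plan is to identify the localization functor $L_H^\msf{C}$ with smashing against the couniversal space $S^{\infty V(H)}$ by checking the two defining properties of $L_H^\msf{C}$: that $S^{\infty V(H)} \otimes -$ is a smashing localization, and that its acyclics are exactly $\Gamma_{\alpha^{-1}(\downcl_\msf{C}(H))}\Sp_G$. Recall that $L_H^\msf{C} = L_{\alpha^{-1}(\upcl_\msf{C}(H))}$ where $\alpha = \mrm{conn}$, so $\alpha^{-1}(\upcl_\msf{C}(H)) = \upcl_\mrm{sb}(H)$ since for any subgroup $K$, $\mrm{conn}(K) \geqslant H$ (with $H$ connected) if and only if $H \leqslant K$, i.e. $K \in \upcl_\mrm{sb}(H)$; similarly $\alpha^{-1}(\downcl_\msf{C}(H)) = \downcl_\mrm{sb}(H)$.

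First I would recall that $S^{\infty V(H)} = \widetilde{E}\upcl_\mrm{sb}(H)$, the couniversal space for the cofamily $\upcl_\mrm{sb}(H)$: its $K$-fixed points are $S^0$ for $H \leqslant K$ and contractible otherwise (using that $G$ is a torus, so that $V^H = 0$ forces $V^K = 0$ precisely when $H \leqslant K$ — this is where connectedness of $H$ and the torus hypothesis enter). Standard isotropy-separation theory then gives that $S^{\infty V(H)} \otimes -$ is smashing (it is idempotent since $S^{\infty V(H)} \otimes S^{\infty V(H)} \simeq S^{\infty V(H)}$, and it preserves colimits) and that the cofibre sequence $E\downcl_\mrm{sb}(H)_+ \otimes X \to X \to S^{\infty V(H)} \otimes X$ exhibits $E\downcl_\mrm{sb}(H)_+ \otimes X$ as the torsion part.

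Next I would identify the acyclics. An object $X$ satisfies $S^{\infty V(H)} \otimes X \simeq 0$ if and only if $\Phi^K X \simeq 0$ for all $K \in \upcl_\mrm{sb}(H)$, i.e. $\mrm{supp}(X) \subseteq \downcl_\mrm{sb}(H)^c{}^c = $ ... more precisely $\mrm{supp}(X) \cap \upcl_\mrm{sb}(H) = \varnothing$, which says $X$ is $\alpha^{-1}(\downcl_\msf{C}(H))$-torsion in the sense of \cref{sec:background}. This matches the acyclics of $\Gamma_{\alpha^{-1}(\downcl_\msf{C}(H))}$, and dually $S^{\infty V(H)} \otimes X$ is local for this torsion theory since smashing with it kills nothing further. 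Alternatively, and perhaps more cleanly, I would use \cref{lem:torsioniscell}: since $\Gamma_{\downcl_\mrm{sb}(H)}\1 = E\downcl_\mrm{sb}(H)_+$ and the cofibre of $E\downcl_\mrm{sb}(H)_+ \to \1$ is $S^{\infty V(H)}$, uniqueness of the localization cofibre sequence $\Gamma_V\1 \to \1 \to L_{V^c}\1$ forces $L_{V^c}\1 \simeq S^{\infty V(H)}$ with $V = \downcl_\mrm{sb}(H)$; then since $L_{V^c}$ is smashing by \cref{prop:localduality}\ref{item:smashing}, $L_H^\msf{C}(X) = L_{V^c}\1 \otimes X \simeq S^{\infty V(H)} \otimes X$.

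The main obstacle — really the only non-formal point — is verifying that $S^{\infty V(H)}$ genuinely has the claimed fixed points and hence represents the correct cofamily, together with matching $E\downcl_\mrm{sb}(H)_+$ to the Koszul-generated torsion functor $\Gamma_{\downcl_\mrm{sb}(H)}$. For the latter one uses that $E\mc{F}_+$ for a family $\mc{F}$ generates the same localizing $\otimes$-ideal as the Koszul objects $\{K_K \mid K \in \mc{F}\}$, which follows from the classification of localizing ideals in $\Sp_G$ or can be checked directly on geometric fixed points via \cref{lem:torsioniscell}. Once these identifications are in place the proposition is immediate from \cref{prop:localduality}\ref{item:smashing}.
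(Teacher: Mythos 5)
Your strategy---identifying $S^{\infty V(H)}\otimes-$ with the abstract localization functor by matching acyclics or by matching the defining cofibre sequence---is sound in spirit, and is roughly how the cited result $L_{\mc{F}^c}X\simeq\widetilde{E}\mc{F}\otimes X$ is proved. The paper's own proof just cites that result and notes $\mrm{conn}^{-1}(\upcl_\msf{C}(H))=\upcl_\mrm{sb}(H)$, so in principle you are re-deriving the equivariant input from tensor-triangular generalities. However, there is a concrete error in your bookkeeping of which family is complementary to which cofamily, and it breaks the argument as written.

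The localization $L_H^\msf{C}=L_{\alpha^{-1}(\upcl_\msf{C}(H))}=L_{\upcl_\mrm{sb}(H)}$ is of the form $L_{V^c}$ with $V=\upcl_\mrm{sb}(H)^c$, \emph{not} $V=\downcl_\mrm{sb}(H)$. These two families agree only when the Balmer spectrum is totally ordered, which it is not for a torus of rank $\geqslant 2$: a circle $K'$ incomparable to $H$ lies in $\upcl_\mrm{sb}(H)^c$ but not in $\downcl_\mrm{sb}(H)$, and $H$ itself lies in $\downcl_\mrm{sb}(H)$ but not in $\upcl_\mrm{sb}(H)^c$. This is exactly the pitfall the paper flags after \cref{Lpnotation}: $\Gamma_\p X\to X\to L_\p X$ is \emph{not} a cofibre sequence, since $\Gamma_\p$ and $L_\p$ both see the prime $\p$. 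Consequently your cofibre sequence should be $E(\upcl_\mrm{sb}(H)^c)_+\to\1\to S^{\infty V(H)}$ rather than $E\downcl_\mrm{sb}(H)_+\to\1\to S^{\infty V(H)}$; the cofibre of the latter is $\widetilde{E}\downcl_\mrm{sb}(H)$, a genuinely different space with $S^0$ fixed points at every subgroup not subconjugate to $H$, including those incomparable to $H$. Likewise, in identifying the acyclics you correctly reach ``$\supp(X)\cap\upcl_\mrm{sb}(H)=\varnothing$'' but then misname it: this says $X$ is $\upcl_\mrm{sb}(H)^c$-torsion, not $\alpha^{-1}(\downcl_\msf{C}(H))$-torsion (which is $\mc{F}/H$-torsion, a strictly smaller condition). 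The substance of your argument survives once you systematically replace $\downcl_\mrm{sb}(H)$ by $\upcl_\mrm{sb}(H)^c$ as the relevant family and $\Gamma_H^\msf{C}$ by $\Gamma_{\upcl_\mrm{sb}(H)^c}$ as the complementary torsion functor; at that point the uniqueness-of-cofibre-sequence argument together with $\Gamma_{\mc{F}}\1\simeq E\mc{F}_+$ for a family $\mc{F}$ gives the claim, and this is essentially the content of the reference invoked in the paper's proof.
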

\begin{proof}
Recall from~\cite[\S 3]{Greenlees01b} that for any family of subgroups
$\mc{F}$, we have $L_{\mc{F}^c}X \simeq \widetilde{E}\mc{F} \otimes
X$. Since $H$ is connected, 
$\mathrm{conn}^{-1}(\upcl_{\msf{C}}(H))=\upcl_{\mrm{sb}}(H)$ which completes the proof.
\end{proof}


We now work towards giving a description of the adelic cube
$\1_\ad^\msf{C}$. For a connected subgroup $H$ of $G$, we write
$\mc{F}/H$ for the family of subgroups of $G$ which are finite mod
$H$. This is the family $p_*^{-1}\cF_{G/H}$, where $\cF_{G/H}$ is the family
of finite subgroups of $G/H$, $p\colon G\rightarrow G/H$ is the projection, and $p_*$ is the induced map on subgroups.

\begin{lem}\label{GammaLambda}
Let $H$ be a connected subgroup of $G$. 
We have
 \[\Gamma_H^\msf{C}X \simeq E\mc{F}/H_+ \otimes X 
\qquad \text{and} \qquad 
\Lambda_H^\msf{C}X \simeq \uHom(E\mc{F}/H_+, X)\]
for all $X \in \Sp_G$.
\end{lem}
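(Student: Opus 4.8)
The plan is to identify the torsion and completion functors $\Gamma_H^\msf{C}$ and $\Lambda_H^\msf{C}$ explicitly. Recall that by \cref{assembledfunctors}, $\Gamma_H^\msf{C} = \Gamma_{\mathrm{conn}^{-1}(\downcl_\msf{C}(H))}$, so the first task is to compute $\mathrm{conn}^{-1}(\downcl_\msf{C}(H))$. Since $H$ is connected, a subgroup $K$ satisfies $\mathrm{conn}(K) \leqslant H$ in $\msf{C}$ (i.e. $\mathrm{conn}(K)$ is subconjugate to $H$, but on connected subgroups of a torus this is just containment) precisely when the identity component of $K$ is contained in $H$; this says exactly that $K/(K\cap H)$ is finite, equivalently $K$ is finite mod $H$ after intersecting, equivalently $K \in \mc{F}/H$ in the notation introduced just before the lemma. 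So $\mathrm{conn}^{-1}(\downcl_\msf{C}(H)) = \mc{F}/H$. I should double-check the edge of this identification carefully, since this comparison of the connected-component order with the subgroup order is the one genuinely fiddly combinatorial point; for a torus $G$ everything is abelian so subconjugacy is containment and the argument is clean.

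Next I would invoke the standard identification of torsion functors for families in rational $G$-spectra. By \cite[\S 3]{Greenlees01b} (already cited in the proof of \cref{Lagrees}), for any family $\mc{F}$ of subgroups one has $\Gamma_{\mc{F}} X \simeq E\mc{F}_+ \otimes X$, where $\Gamma_\mc{F}$ denotes the torsion functor associated to the specialization closed subset $\mc{F}$ of the Balmer spectrum. Here one must note that a $G$-family, being closed under subconjugacy, is exactly a specialization closed subset of $\mrm{Sub}(G)$ in the cotoral (specialization) order — this is the content of the Balmer spectrum description recalled at the start of the subsection and in \cite{Greenlees19}. Applying this with $\mc{F} = \mc{F}/H$ and using the first paragraph gives $\Gamma_H^\msf{C} X \simeq E\mc{F}/H_+ \otimes X$ directly.

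Finally, for the completion functor $\Lambda_H^\msf{C}$, I would use \cref{prop:localduality}(3), which gives the adjunction $\uHom(\Gamma_V \1, Y) \simeq \Lambda_V Y$ for any specialization closed $V$; specializing to $V = \mathrm{conn}^{-1}(\downcl_\msf{C}(H)) = \mc{F}/H$ and using that $\Gamma_{\mc{F}/H}\1 \simeq E\mc{F}/H_+$ (by the previous paragraph applied to $X = \1 = S^0$) yields $\Lambda_H^\msf{C} X \simeq \uHom(E\mc{F}/H_+, X)$. Alternatively one can cite the Warwick-style identification of the completion functor directly as $\uHom(E\mc{F}_+, -)$, but routing through \cref{prop:localduality}(3) keeps the argument self-contained within the paper.

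The main obstacle is really only the first step: getting the set-theoretic identity $\mathrm{conn}^{-1}(\downcl_\msf{C}(H)) = \mc{F}/H$ exactly right, including checking it agrees with the description $\mc{F}/H = p_*^{-1}\cF_{G/H}$ for the projection $p\colon G \to G/H$ — a subgroup $K$ has finite image in $G/H$ iff $K_0 \subseteq H$ iff $\mathrm{conn}(K) \leqslant H$. Everything after that is a direct appeal to \cite{Greenlees01b}, the Balmer spectrum description, and \cref{prop:localduality}, so the proof should be short.
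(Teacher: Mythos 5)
Your proof is correct and takes essentially the same approach as the paper: the heart of both is the set-theoretic identification $\mathrm{conn}^{-1}(\downcl_\msf{C}(H)) = \mc{F}/H$ followed by an appeal to the identification of torsion and completion functors for families from \cite{Greenlees01b}. The only cosmetic difference is that you derive the $\Lambda_H^\msf{C}$ formula from the $\Gamma_H^\msf{C}$ one via \cref{prop:localduality}(3), whereas the paper cites both formulas directly from \cite{Greenlees01b}; this is an equivalent and equally valid route, and your elaboration of the combinatorial identification (which the paper merely asserts) is a welcome addition.
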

\begin{proof}
By~\cite[\S 3]{Greenlees01b}, for any $G$-family of subgroups $\mc{F}$
we have $\Gamma_\mc{F}X \simeq E\mc{F}_+ \otimes X$ and
$\Lambda_\mc{F} \simeq \uHom(E\mc{F}_+,-)$. So it suffices to
observe that $\mrm{conn}^{-1}(\downcl_{\msf{C}}(H)) = \mc{F}/H$. 
\end{proof}

%

Using the previous result, we see that the punctured $(\mrm{rank}(G)+1)$-cube $\1_\ad^\msf{C}$ has vertices
\begin{equation}\label{eqn:torus}
\1_\ad^\msf{C}(A) = \hspace{-1mm}\prod_{H_n \in \msf{C}} S^{\infty V(H_n)} \otimes \hspace{-3mm}\prod_{H_{n-1} \in \msf{C}} S^{\infty V(H_{n-1})} \otimes {\cdots} \otimes \hspace{-1.7mm}\prod_{H_0 \in \msf{C}} S^{\infty V(H_0)} \otimes  D (E(\mc{F}/{H_0})_+) \quad
\end{equation}
for $A = \{i_0 < i_1 < \cdots < i_n\}$ where we follow the convention that $\mrm{rank}(H_j) = i_j$, and $D = \underline{\mrm{Hom}}(-, S^0)$ denotes the functional dual in $\Sp_G$.


In~\cite[\S 6]{GreenleesShipley18}, the authors define a punctured $(\mrm{rank}(G)+1)$-cube $\widetilde{R}$ which is used as the first step towards constructing an algebraic model for rational $G$-spectra. Our punctured cube $\1_\ad^\msf{C}$ differs from $\widetilde{R}$ in two ways:
\begin{enumerate}
\item we index over all subgroups of the correct rank,
  whereas~\cite{GreenleesShipley18} indexes over those of
  the correct rank which are moreover contained in the subgroup fixed
  at the previous step (the indexing in \cite{GreenleesShipley18} is
  not correct in ranks $\geqslant 3$);  
\item the model in \cite{GreenleesShipley18} replaces the $D
  (E(\mc{F}/{H_0})_+)$ appearing in
  \cref{eqn:torus} with $\mrm{inf}_{G/H_0}^G
  D_{G/H_{0}}((E\mc{F}_{G/H_0})_+)$ where $D_{G/H_{0}}(-)$ denotes the
  functional dual in the category of $G/H_0$-spectra. The point of
  doing this is to give objects whose homotopy groups are easy to
  calculate and geometrically relevant.
\end{enumerate}

The model presented here provides a counterpart of the model of
\cite{GreenleesShipley18} in terms that apply to any
tensor-triangulated category satisfying \cref{hyp:hyp}. In other words, we have distilled the key features of the filtration on the Balmer spectrum of rational $G$-spectra in general terms. The language of assembly data enables us
to discuss different variants in a common framework. In particular
this applies to the models from  \cite{adelicm} and 
from \cite{GreenleesShipley18},  
either indexing over connected subgroups or over all subgroups according
to the needs of the context (see also \cite{AGs}).  

Finally, we discuss the prospect of constructing an algebraic torsion model for rational torus-equivariant spectra.
In~\cite{Greenlees23}, the second-named author 
defines a graded abelian category $\mc{A}_t(G)$ and uses this to 
construct an Adams spectral sequence converging to the homotopy 
classes of maps between rational $G$-spectra. The objects of 
$\mc{A}_t(G)$ are complexes of modules subject to some modules being 
annihilated by Euler classes of appropriate representations. 

It seems natural to expect that differential graded
objects of $\mc{A}_t(G)$ provide  a model for rational $G$-spectra,
which is an equivalence $\Sp_G \simeq DG\text{-}\mc{A}_t(G)$ where the latter denotes the category of objects of $\mc{A}_t(G)$ equipped with a differential
which is compatible with the internal grading on $\mc{A}_t(G)$. 
There are two obvious approaches to proving this. 
 The first approach is to use an  adelic
model (standard, separated or complete) to give an algebraic model and then establish a cellular
equivalence with an algebraic torison model. This method may be
easier, but is unsatisfying because it is indirect. 

The more direct approach would be to start with the torsion model
$(\Sp_G)_\mrm{t}^\msf{C}$, and 
then  apply $G$-fixed points to obtain a diagram category of modules
over non-equivariant spectra.  Shipley's algebraicization
theorem~\cite{Shipley07} would enable one to consider this as a
diagram of DGAs. Since the rings in questions are obtained from
polynomial rings by localizations and products, one may hope to prove
the diagram is formal, which would give a category built from a diagram of
graded rings. From this point, one can recognize a simple relationship
between objects with mono-dimensional support and Euler torsion
modules and hence recognize $\mc{A}_t(G)$ as a cellular skeleton.

\bibliographystyle{abbrv}
\bibliography{torsion}

\begin{thebibliography}{10}

\bibitem{AMGR}
D.~Ayala, A.~Mazel-Gee, and N.~Rozenblyum.
\newblock Stratified noncommutative geometry.
\newblock {\em Mem. Amer. Math. Soc.}, 297(1485):iii+260, 2024.

\bibitem{prismatic}
S.~Balchin, T.~Barthel, and J.~P.~C. Greenlees.
\newblock Prismatic decompositions and rational {$G$}-spectra.
\newblock arXiv:2311.18808, 2023.

\bibitem{adelicm}
S.~Balchin and J.~P.~C. Greenlees.
\newblock Adelic models of tensor-triangulated categories.
\newblock {\em Adv. Math.}, 375:Paper No. 107339, 45pp, 2020.

\bibitem{adelic1}
S.~Balchin and J.~P.~C. Greenlees.
\newblock Separated and complete adelic models for one-dimensional {N}oetherian
  tensor-triangulated categories.
\newblock {\em J. Pure Appl. Algebra}, 226(12):Paper No. 107109, 42, 2022.

\bibitem{torsion1}
S.~Balchin, J.~P.~C. Greenlees, L.~Pol, and J.~Williamson.
\newblock Torsion models for tensor-triangulated categories: the one-step case.
\newblock {\em Algebr. Geom. Topol.}, 22(6):2805--2856, 2022.

\bibitem{Balmer05}
P.~Balmer.
\newblock The spectrum of prime ideals in tensor triangulated categories.
\newblock {\em J. Reine Angew. Math.}, 588:149--168, 2005.

\bibitem{BalmerFiltrations}
P.~Balmer.
\newblock Supports and filtrations in algebraic geometry and modular
  representation theory.
\newblock {\em Amer. J. Math.}, 129(5):1227--1250, 2007.

\bibitem{BalmerFavi11}
P.~Balmer and G.~Favi.
\newblock Generalized tensor idempotents and the telescope conjecture.
\newblock {\em Proc. Lond. Math. Soc. (3)}, 102(6):1161--1185, 2011.

\bibitem{BHS}
T.~Barthel, D.~Heard, and B.~Sanders.
\newblock Stratification in tensor triangular geometry with applications to
  spectral {M}ackey functors.
\newblock {\em Camb. J. Math.}, 11(4):829--915, 2023.

\bibitem{BarthelHeardValenzuela18}
T.~Barthel, D.~Heard, and G.~Valenzuela.
\newblock Local duality in algebra and topology.
\newblock {\em Adv. Math.}, 335:563--663, 2018.

\bibitem{BIK}
D.~Benson, S.~B. Iyengar, and H.~Krause.
\newblock Stratifying triangulated categories.
\newblock {\em J. Topol.}, 4(3):641--666, 2011.

\bibitem{BCR}
D.~J. Benson, J.~F. Carlson, and J.~Rickard.
\newblock Thick subcategories of the stable module category.
\newblock {\em Fund. Math.}, 153(1):59--80, 1997.

\bibitem{Carlson}
J.~F. Carlson.
\newblock The variety of an indecomposable module is connected.
\newblock {\em Invent. Math.}, 77(2):291--299, 1984.

\bibitem{DwyerGreenlees02}
W.~G. Dwyer and J.~P.~C. Greenlees.
\newblock Complete modules and torsion modules.
\newblock {\em Amer. J. Math.}, 124(1):199--220, 2002.

\bibitem{GepnerGrothNikolaus15}
D.~Gepner, M.~Groth, and T.~Nikolaus.
\newblock Universality of multiplicative infinite loop space machines.
\newblock {\em Algebr. Geom. Topol.}, 15(6):3107--3153, 2015.

\bibitem{GLP2024}
D.~{Gepner}, S.~{Linskens}, and L.~{Pol}.
\newblock Global 2-rings and genuine refinements.
\newblock arXiv:2407.05124, 2024.

\bibitem{Greenlees01b}
J.~P.~C. Greenlees.
\newblock Tate cohomology in axiomatic stable homotopy theory.
\newblock In {\em Cohomological methods in homotopy theory ({B}ellaterra,
  1998)}, volume 196 of {\em Progr. Math.}, pages 149--176. Birkh\"{a}user,
  Basel, 2001.

\bibitem{AGs}
J.~P.~C. Greenlees.
\newblock Rational torus-equivariant stable homotopy {III}: {C}omparison of
  models.
\newblock {\em J. Pure Appl. Algebra}, 220(11):3573--3609, 2016.

\bibitem{Greenlees19}
J.~P.~C. Greenlees.
\newblock The {B}almer spectrum of rational equivariant cohomology theories.
\newblock {\em J. Pure Appl. Algebra}, 223(7):2845--2871, 2019.

\bibitem{Greenlees23}
J.~P.~C. Greenlees.
\newblock Rational torus-equivariant stable homotopy {V}: {T}he torsion {A}dams
  spectral sequence.
\newblock {\em J. Pure Appl. Algebra}, 227(6):Paper No. 107300, 32pp, 2023.

\bibitem{GreenleesMay95b}
J.~P.~C. Greenlees and J.~P. May.
\newblock Completions in algebra and topology.
\newblock In {\em Handbook of algebraic topology}, pages 255--276.
  North-Holland, Amsterdam, 1995.

\bibitem{GreenleesShipley18}
J.~P.~C. Greenlees and B.~Shipley.
\newblock An algebraic model for rational torus-equivariant spectra.
\newblock {\em J. Topol.}, 11(3):666--719, 2018.

\bibitem{HS}
M.~J. Hopkins and J.~H. Smith.
\newblock Nilpotence and stable homotopy theory. {II}.
\newblock {\em Ann. of Math. (2)}, 148(1):1--49, 1998.

\bibitem{rightadjoints}
A.~Horev and L.~Yanovski.
\newblock On conjugates and adjoint descent.
\newblock {\em Topology Appl.}, 232:140--154, 2017.

\bibitem{HoveyPalmieriStrickland97}
M.~Hovey, J.~H. Palmieri, and N.~P. Strickland.
\newblock Axiomatic stable homotopy theory.
\newblock {\em Mem. Amer. Math. Soc.}, 128(610):x+114, 1997.

\bibitem{HA}
J.~{Lurie}.
\newblock {\em {Higher algebra}}.
\newblock Avaliable from the author's webpage at
  \url{http://www.math.harvard.edu/~lurie/papers/HA.pdf}, September 2017.

\bibitem{HTT}
J.~Lurie.
\newblock {\em Higher topos theory}, volume 170 of {\em Annals of Mathematics
  Studies}.
\newblock Princeton University Press, Princeton, NJ, 2009.

\bibitem{adicL}
L.~Pol and J.~Williamson.
\newblock The homotopy theory of complete modules.
\newblock {\em J. Algebra}, 594:74--100, 2022.

\bibitem{Shipley07}
B.~Shipley.
\newblock {$H\Bbb Z$}-algebra spectra are differential graded algebras.
\newblock {\em Amer. J. Math.}, 129(2):351--379, 2007.

\end{thebibliography}
\end{document}